\documentclass[11pt]{amsart}

\usepackage{amsmath,amssymb,amsfonts,amsthm}
\usepackage{mathtools}
\usepackage{amsmath}
\usepackage{graphics}
\usepackage{tikz-cd}
\newtheorem{theorem}{Theorem}[section]
\newtheorem{corollary}[theorem]{Corollary}
\newtheorem{lemma}[theorem]{Lemma}
\newtheorem{proposition}[theorem]{Proposition}

\newtheorem{definition}[theorem]{Definition}
\newtheorem{remark}[theorem]{Remark}

\title{
On spherical Milnor Classifying Spaces I:\\
differential geometry
}

\author{Jean-Pierre Magnot}

\address{{SFR MATHSTIC, LAREMA, Universit\'e d’Angers, 2 Bd Lavoisier, 
49045 Angers cedex 1, France;  Lyc\'ee Jeanne d'Arc, 40 avenue de Grande Bretagne, 63000 Clermont-Ferrand, 
France}; Lepage Research Institute, 17 novembra 1, 081 16 Presov, Slovakia}
\email{\small magnot@math.cnrs.fr; jean-pierr.magnot@ac-clermont.fr; jp.magnot@gmail.com}

\subjclass[2020]{53C23, 57R19, 58B25, 53C27}
\keywords{Milnor classifying space, diffeology, gerbes, Dirac operator, Riemannian geometry, infinite-dimensional geometry}

\begin{document}

\maketitle

\begin{abstract}
We develop a geometric framework for generalized Milnor classifying spaces in the setting of diffeological spaces and infinite-dimensional geometry.

Starting from Milnor's construction, we introduce spherical and projective models endowed with natural diffeological structures compatible with gluing operations. We then investigate their tangent structures, with particular attention to the behavior at the boundary of simplices and to the distinction between tangential and higher-order normal directions.

A natural Riemannian metric is constructed from a barycentric energy functional, leading to a consistent differential calculus on these spaces. This allows us to define differential forms, a Hodge-type theory based on formal adjoints, and a Laplacian without relying on the Hodge star operator.

We further introduce Clifford structures and Dirac operators adapted to this framework, and study twisted versions associated with $\mathbb{Z}_2$-local systems. These structures naturally relate to non-abelian extensions and gerbe-type geometries in the sense of infinite-dimensional Lie theory.

The results provide a coherent geometric setting combining classifying spaces, diffeology, and higher geometric structures.
\end{abstract}

\section{Introduction}

The construction of classifying spaces plays a central role in geometry and topology, notably in the theory of principal bundles, characteristic classes, and differential forms \cite{Milnor1956I,Milnor1956II,BottTu1982}. Among the various models, Milnor's construction provides a particularly flexible approach, based on infinite joins and barycentric coordinates. Its extension to generalized smooth settings is naturally related to diffeology, introduced by Souriau \cite{Souriau1980} and systematically developed by Iglesias-Zemmour \cite{IglesiasZemmour2013}, as well as to other categories of smooth spaces and convenient frameworks \cite{BaezHoffnungRogers2010,Stacey2011}.

In the diffeological setting, classifying spaces and their smooth structures have been studied from several complementary viewpoints. The \(D\)-topology and smooth classifying spaces were investigated in \cite{ChristensenSinnamonWu2014,ChristensenWuSmooth}, while tangent constructions and tangent bundles for diffeological spaces were developed in \cite{ChristensenWu2016}. The diffeology of Milnor's classifying space was analyzed in \cite{MagnotWatts2017}, and diffeological covariant derivatives were studied in \cite{MagnotConnections}. Related aspects of diffeological geometry, including global and differential structures, also appear in \cite{GMW2023,Magnotloops2019,Savelyev2026,Scalbi2024,Watts2012Thesis}.

The present work continues this line of investigation by developing a systematic study of generalized Milnor classifying spaces endowed with additional geometric structures. The key idea is to replace the usual simplicial barycentric coordinates by spherical or projective coordinates, so that the normalization becomes quadratic. This point of view is also naturally related to information-geometric constructions, where square-root coordinates and Fisher--Rao type metrics play a central role \cite{AmariNagaoka2000,AyJostLeSchwachhofer2017,Rao1945}. The relationship of these constructions with information geometry is outlines in an appendix.

A central part of the paper is devoted to differential-geometric structures on these generalized Milnor spaces. We construct a natural Riemannian metric derived from a barycentric energy functional. We then develop a Hodge-type formalism using adjoints rather than relying primarily on the Hodge star operator.
We also introduce Clifford structures and Dirac operators adapted to the effective tangent pseudo-bundles of the spherical Milnor spaces. These constructions are meant to provide a first-order differential framework compatible with the diffeological and stratified nature of the spaces considered here. 

Another important aspect concerns twisted and higher geometric structures. The spherical and projective constructions naturally lead to \(\mathbb Z_2\)-actions, local systems, and twisted objects. Such structures are closely related to extensions of infinite-dimensional Lie groups \cite{Neeb2004,Neeb2007}, bundle gerbes and non-abelian gerbes \cite{Breen2008,BreenMessing2005,Murray1996,NikolausWaldorf2013,Waldorf2007}, and higher categorical or higher bundle structures \cite{BaezLauda2004,Wockel2008}. They also connect with index-theoretic and field-theoretic constructions involving gerbes and determinant-type objects \cite{CareyMickelssonMurray1997,Freed1995}. Some of these aspects are only developed under their premices here, but will be fully developed in follow-up papers. More precisely, the present paper is the first of a serie of works. It intends to set-up safely the geometrical strucures needed for future developments.
The main results of this article can be summarized as follows:
\begin{itemize}
\item we construct spherical and projective models of Milnor classifying spaces compatible with diffeological structures;
\item we provide a detailed analysis of their tangent structures, including boundary phenomena and higher-order effects;
\item we define a natural Riemannian metric and develop a differential calculus adapted to this setting;
\item we introduce Clifford structures and Dirac operators;
\item we relate the resulting twisted structures to extensions and gerbe-type objects.
\end{itemize}

These results establish a coherent geometric framework combining classifying spaces, diffeology, Riemannian-type structures, Dirac operators, and higher geometric data. They are intended as a foundation for further developments, in particular for the study of integrable systems and geometric flows.

The paper is organized as follows. In Section~2, we recall Milnor classifying spaces and their diffeological counterparts. Section~3 is devoted to spherical and projective models. In Section~4, we analyze diffeological smoothness and gluing properties. Section~5 discusses tangent structures and higher-order phenomena. In Section~6, we construct the Riemannian metric and study its non-degeneracy properties. Section~7 develops the differential calculus and Hodge-type theory. Section~8 introduces Clifford structures and Dirac operators. Finally, Section~9 is devoted to twisted structures and their relation with gerbes and section 10 and 11 develops few examples.

\section{Diffeological preliminaries and Milnor classifying spaces}
\label{sec:diff-prelim}

\subsection{Diffeological spaces}

We recall the basic notions from diffeology that will be used throughout the paper. General references are \cite{IglesiasZemmour2013} and the survey \cite{GMW2023}. Tangent structures are mainly taken from \cite{ChristensenWu2016,GMW2023}, while pseudo-bundles follow \cite{Pervova2016,Magnot-Cauchy,MagnotConnections}. Related references include \cite{Pervova2017,Pervova2019}.

Diffeology provides a flexible framework for generalized smooth spaces by replacing atlases with families of parametrizations. This makes it possible to treat manifolds, infinite-dimensional spaces, quotients, and singular spaces in a common smooth category.

\begin{definition}[Diffeology]
Let \(X\) be a set. A \emph{parametrization} of \(X\) is a map
\[
p:U\to X,
\]
where \(U\) is an open subset of some Euclidean space. A \emph{diffeology} \(\mathcal P\) on \(X\) is a set of parametrizations, called \emph{plots}, satisfying:
\begin{enumerate}
\item constant maps are plots;
\item plots satisfy the locality condition;
\item plots are stable under smooth reparametrization.
\end{enumerate}
A set equipped with a diffeology is called a \emph{diffeological space}.
\end{definition}

\begin{definition}[Smooth maps]
Let \((X,\mathcal P_X)\) and \((Y,\mathcal P_Y)\) be diffeological spaces. A map
\[
F:X\to Y
\]
is \emph{smooth} if \(F\circ p\in\mathcal P_Y\) for every plot \(p\in\mathcal P_X\).
\end{definition}

The category of diffeological spaces is complete, cocomplete, and forms a quasi-topos \cite{BaezHoffnungRogers2010}. We shall use standard constructions such as pull-back, push-forward, product, subset, and quotient diffeologies \cite{Sou,IglesiasZemmour2013}.

\begin{definition}[Pull-back and push-forward diffeologies]
Let \(f:X\to X'\) be a map.
\begin{itemize}
\item If \(X'\) is diffeological, the pull-back diffeology on \(X\) is
\[
f^*(\mathcal P')=\{p:O_p\to X\mid f\circ p\in\mathcal P'\}.
\]
\item If \(X\) is diffeological, the push-forward diffeology on \(X'\) is the coarsest diffeology making all maps \(f\circ p\) plots.
\end{itemize}
A map \(f:X\to X'\) is called a \emph{subduction} if the diffeology of \(X'\) is the push-forward of the diffeology of \(X\).
\end{definition}

\begin{definition}[Product, subset and quotient diffeologies]
Products are endowed with the coarsest diffeology making all projections smooth. Subsets are endowed with the subset diffeology. Quotients are endowed with the push-forward diffeology induced by the quotient projection.
\end{definition}

When \(X\) is a finite- or infinite-dimensional smooth manifold modeled on a complete locally convex space, it carries its natural \emph{nebulae diffeology}, consisting of the usual smooth parametrizations from open subsets of Euclidean spaces.

\subsection{Frölicher spaces}

We shall occasionally use Frölicher spaces as an intermediate framework between manifolds and diffeological spaces; see \cite{KM,Ma2006-3,Watts2012Thesis,BIgKWa2014}.

\begin{definition}[Frölicher space]
A \emph{Frölicher space} is a triple
\[
(X,\mathcal F,\mathcal C),
\]
where \(\mathcal C\) is a set of contours \(c:\mathbb R\to X\) and \(\mathcal F\) is a set of functions \(f:X\to\mathbb R\), determined by the mutual conditions:
\[
f\in\mathcal F \iff f\circ c\in C^\infty(\mathbb R,\mathbb R)
\quad\text{for all } c\in\mathcal C,
\]
and
\[
c\in\mathcal C \iff f\circ c\in C^\infty(\mathbb R,\mathbb R)
\quad\text{for all } f\in\mathcal F.
\]
\end{definition}

Every Frölicher space carries a natural diffeology \(\mathcal P_\infty(\mathcal F)\), whose plots are the maps \(p:O\to X\) such that \(f\circ p\) is smooth for all \(f\in\mathcal F\).

\begin{proposition}[\cite{Ma2006-3}]
A map between Frölicher spaces is smooth in the Frölicher sense if and only if it is smooth for the associated nebulae diffeologies.
\end{proposition}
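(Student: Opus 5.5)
We take a map \(F:X\to Y\) between Frölicher spaces \((X,\mathcal F_X,\mathcal C_X)\) and \((Y,\mathcal F_Y,\mathcal C_Y)\). Frölicher smoothness means \(F\circ c\in\mathcal C_Y\) for every \(c\in\mathcal C_X\), or equivalently \(f\circ F\in\mathcal F_X\) for every \(f\in\mathcal F_Y\). These two formulations agree by the mutual determination of contours and functions: if \(F\) maps contours to contours, then for \(f\in\mathcal F_Y\) and \(c\in\mathcal C_X\) the map \((f\circ F)\circ c=f\circ(F\circ c)\) is smooth, so \(f\circ F\in\mathcal F_X\). The converse is symmetric. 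We prove both implications with the function formulation, using that the plots of \(\mathcal P_\infty(\mathcal F)\) are the \(p:O\to X\) with \(f\circ p\) smooth for all \(f\in\mathcal F\).

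\emph{Frölicher smooth implies diffeologically smooth.} Let \(p:O\to X\) be a plot of \(\mathcal P_\infty(\mathcal F_X)\) and let \(f\in\mathcal F_Y\). Then \(f\circ(F\circ p)=(f\circ F)\circ p\). Since \(f\circ F\in\mathcal F_X\), this composite is smooth by the definition of a plot. Hence \(F\circ p\in\mathcal P_\infty(\mathcal F_Y)\).

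\emph{Diffeologically smooth implies Frölicher smooth.} Let \(c\in\mathcal C_X\). We first check that \(c\), viewed as a map on the open set \(\mathbb R\), is a plot of \(\mathcal P_\infty(\mathcal F_X)\). This holds because \(f\circ c\) is smooth for every \(f\in\mathcal F_X\), by the defining property of \(\mathcal F_X\). By hypothesis, \(F\circ c\) is then a plot of \(\mathcal P_\infty(\mathcal F_Y)\). Thus \(f\circ F\circ c\) is smooth for every \(f\in\mathcal F_Y\), and by the defining property of \(\mathcal C_Y\) this gives \(F\circ c\in\mathcal C_Y\).

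The only point needing care is that \(\mathcal P_\infty(\mathcal F)\) really is a diffeology, so that the statement makes sense. Constant maps are plots, and locality holds because smoothness of \(f\circ p\) is a local property. Stability under reparametrization holds because \(f\circ p\circ g\) is smooth whenever \(f\circ p\) and \(g\) are smooth. No Boman-type theorem is needed, since plots are defined directly through the functions \(\mathcal F\) rather than through the contours.
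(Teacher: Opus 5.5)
Your proof is correct and complete, and it follows the standard route: contours are one-dimensional plots of $\mathcal P_\infty(\mathcal F_X)$, membership in $\mathcal P_\infty(\mathcal F_Y)$ is tested by composing with $\mathcal F_Y$, and $\mathcal F_Y\circ F\subset\mathcal F_X$ then gives the forward direction. The paper does not prove this statement itself (it only cites Magnot's earlier work), and this direct unwinding of the definitions is the expected argument there, so there is nothing further to compare.
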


Thus one has the informal hierarchy
\[
\text{smooth manifolds}
\;\Longrightarrow\;
\text{Frölicher spaces}
\;\Longrightarrow\;
\text{diffeological spaces}.
\]

Conversely, every diffeological space admits a canonical Frölicher completion generated by its smooth real-valued functions \cite{Watts2012Thesis,BIgKWa2014}. In a Fréchet setting, Frölicher smoothness, \(c^\infty\)-smoothness, and Gâteaux smoothness coincide \cite[Theorem 4.11]{KM}.

\subsection{Diffeological groups and pseudo-bundles}

A \emph{diffeological vector space} is a vector space equipped with a diffeology for which addition and scalar multiplication are smooth. A \emph{diffeological group} is a group equipped with a diffeology for which multiplication and inversion are smooth \cite{IglesiasZemmour2013,Pervova2016}.

\begin{definition}[Diffeological fiber pseudo-bundle]
Let \(E\) and \(X\) be diffeological spaces. A smooth surjective map
\[
\pi:E\to X
\]
is called a \emph{diffeological fiber pseudo-bundle} if \(\pi\) is a subduction.
\end{definition}

This notion generalizes both diffeological vector pseudo-bundles \cite{Pervova2016} and Souriau's quantum structures \cite{Sou}. No local triviality is assumed, and no fixed typical fiber is required.

\begin{definition}[Vector pseudo-bundle]
Let \(\mathbb K\) be a diffeological field. A diffeological fiber pseudo-bundle
\[
\pi:E\to X
\]
is a \(\mathbb K\)-vector pseudo-bundle if each fiber \(E_x\) is a diffeological \(\mathbb K\)-vector space and if fiberwise addition and scalar multiplication are smooth.
\end{definition}

When all fibers are isomorphic as diffeological vector spaces, we shall call \(E\to X\) a diffeological vector bundle.

\subsection{Tangent and cotangent structures}

There are several non-equivalent notions of tangent space in diffeology. The review \cite{GMW2023} identifies five main constructions, originating either from germs of paths or from derivations of smooth functions.

The \emph{internal tangent cone} is defined using germs of paths and generalizes the kinematic tangent construction \cite{DN2007-1,KM}. Its vector-space completion gives the \emph{internal tangent space}, studied for diffeological spaces in \cite{He1995,ChristensenSinnamonWu2014}. Another construction, the \emph{external tangent space}, is defined using derivations \cite{Ma2013,GW2020}. These tangent spaces coincide for finite-dimensional manifolds but differ in general diffeological spaces \cite{ChristensenWu2016,GMW2023}.

The cotangent space and the algebra of differential forms are less ambiguous: differential forms are defined plotwise, by assigning to each plot a classical differential form on its domain, compatibly with smooth reparametrizations \cite{IglesiasZemmour2013,Ma2013}.

\subsection{Diffeological Lie groups}

For a diffeological group \(G\), the internal tangent cone at the identity is often a vector space in the classical examples. However, the existence of a canonical Lie algebra structure in full generality remains subtle. We shall work with diffeological groups for which the tangent space at the identity is equipped with a compatible Lie algebra structure; these will be called \emph{diffeological Lie groups}. For technical aspects, see \cite{Les}.

\subsection{Differential forms with values in pseudo-bundles}

Let \(\pi:E\to X\) be a diffeological vector pseudo-bundle. An \(E\)-valued differential \(n\)-form on \(X\) is defined plotwise: for every plot \(p:O_p\to X\), one assigns a smooth fiberwise alternating map
\[
\alpha_p:\wedge^n TO_p\to p^*E,
\]
compatible with smooth reparametrizations and with changes of plots. The resulting space is denoted
\[
\Omega^n(X,E),
\qquad
\Omega(X,E)=\bigoplus_{n\geq 0}\Omega^n(X,E).
\]

The space \(\Omega^n(X,E)\) is endowed with the functional diffeology: a parametrized family of forms is smooth if its evaluation on every plot is smooth. This generalizes the usual diffeology on scalar differential forms \cite{Ma2013}.

One must be careful with wedge products and exterior differentials in this setting. A wedge product requires a smooth fiberwise multiplication on \(E\), and a natural exterior differential for \(E\)-valued forms generally requires additional connection data.

\subsection{Connections and covariant derivatives}

For principal bundles, a connection is classically described by a connection form satisfying the usual equivariance condition. In the diffeological setting, such connection forms can be defined on suitable frame bundles, but this requires care because vector pseudo-bundles need not have a fixed typical fiber and because the diffeology of the frame bundle may depend on the chosen construction.

For this reason, we shall mainly use covariant derivatives, defined plotwise.

\begin{definition}[Covariant derivative]
Let \(\pi:E\to X\) be a diffeological vector pseudo-bundle. A \emph{covariant derivative} on \(E\) is a family
\[
(p^*\nabla)_{p\in\mathcal P_X}
\]
such that for every plot \(p:O_p\to X\), \(p^*\nabla\) is a covariant derivative on the pullback bundle \(p^*E\to O_p\), and these covariant derivatives are compatible with smooth reparametrizations of plots.
\end{definition}

We denote by \(\mathcal{CD}(E)\) the space of covariant derivatives on \(E\).

\begin{theorem}
If \(\mathcal{CD}(E)\) is non-empty, then it is a diffeological affine space modeled on
\[
\Omega^1(X,\operatorname{End}(E)).
\]
\end{theorem}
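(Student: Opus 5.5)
The plan is to copy the classical argument, working one plot at a time. After that we check that everything is compatible with smooth reparametrizations. Finally we show that the affine action is smooth for the functional diffeologies.

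\emph{Step 1: the difference of two covariant derivatives is an $\operatorname{End}(E)$-valued 1-form.} Fix $\nabla,\nabla'\in\mathcal{CD}(E)$. For each plot $p:O_p\to X$, put $A_p=p^*\nabla'-p^*\nabla$. Both terms are covariant derivatives on $p^*E\to O_p$, so they satisfy the same Leibniz rule, and the derivative terms $df\otimes s$ cancel. Hence $A_p$ is $C^\infty(O_p)$-linear in the section argument. It therefore defines a fiberwise linear map $TO_p\to p^*\operatorname{End}(E)$, smooth in the plot parameters, which is the plotwise datum of $\Omega^1(X,\operatorname{End}(E))$. Compatibility with reparametrizations is then checked directly. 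If $g:O'\to O_p$ is smooth, the definition of $\mathcal{CD}(E)$ gives $g^*(p^*\nabla)=(p\circ g)^*\nabla$, and likewise for $\nabla'$. Subtracting gives $g^*A_p=A_{p\circ g}$. The same identity handles changes of plots, i.e.\ two plots related through common reparametrizations. So $(A_p)_p$ is an element of $\Omega^1(X,\operatorname{End}(E))$.

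\emph{Step 2: adding a form to a covariant derivative gives a covariant derivative.} Take $\nabla\in\mathcal{CD}(E)$ and $A\in\Omega^1(X,\operatorname{End}(E))$, and set $(p^*(\nabla+A))=p^*\nabla+A_p$. Plotwise this is a covariant derivative, since adding a tensorial term preserves the Leibniz rule. The family is compatible with reparametrizations because both summands are. The action is free, because $\nabla+A=\nabla$ forces $A_p=0$ for every $p$. It is transitive by Step 1. The usual cocycle identities, such as $(\nabla'-\nabla)+(\nabla''-\nabla')=\nabla''-\nabla$, hold plotwise, so $\mathcal{CD}(E)$ is an affine space over $\Omega^1(X,\operatorname{End}(E))$.

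\emph{Step 3: the diffeological structure.} This is the main obstacle, because the paper has not yet fixed a diffeology on $\mathcal{CD}(E)$. Two routes are available.

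\begin{itemize}
\item The first route, which I would try, is to transport structure. Fix a base point $\nabla_0$. Give $\mathcal{CD}(E)$ the diffeology that makes the bijection $A\mapsto\nabla_0+A$ a diffeomorphism, using the functional diffeology on $\Omega^1(X,\operatorname{End}(E))$. Independence of $\nabla_0$ then reduces to showing that translation by the fixed form $\nabla_1-\nabla_0$ is a diffeomorphism of $\Omega^1$. This holds because $\Omega^1$ is a diffeological vector space: its addition is smooth, since it is smooth plotwise. One should first check that $\Omega^1(X,\operatorname{End}(E))$ really is a diffeological vector space. This amounts to verifying that addition and scalar multiplication of $\operatorname{End}(E)$ are smooth fiberwise, which the vector pseudo-bundle hypothesis supplies.
\item The second route defines the diffeology intrinsically. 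A family $u\mapsto\nabla_u$ is a plot if, for every plot $p$, $(u,x)\mapsto p^*\nabla_u$ depends smoothly on $(u,x)$ when expressed on local smooth sections. Under this definition one has to verify that the map $(\nabla,\nabla')\mapsto\nabla'-\nabla$ and the map $(\nabla,A)\mapsto\nabla+A$ are smooth. Both are plotwise additions or subtractions, which are smooth.
\end{itemize}

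The delicate point in either route is that $E$ has no fixed typical fiber. Smoothness of a difference therefore has to be tested against smooth sections of $p^*E$ rather than in a trivialization. Since the operations are plotwise linear combinations, this should not obstruct the argument.
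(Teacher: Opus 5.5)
The paper gives no argument of its own here. The theorem is stated and attributed to Magnot's earlier work on diffeological covariant derivatives, so your proposal has to be judged as the standard argument it transcribes. Your Step 3 is fine: route 1 just transports the diffeology of $\Omega^1(X,\operatorname{End}(E))$, which is a diffeological vector space because the evaluation diffeology on $\operatorname{End}(E)$ makes fiberwise addition smooth.

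The genuine gap is in Steps 1 and 2, which silently use the local-frame property of ordinary vector bundles. In Step 1, you pass from $C^\infty(O_p)$-linearity of $A_p$ on sections to a fiberwise map $TO_p\to p^*\operatorname{End}(E)$, smooth for the evaluation diffeology. That ``therefore'' is the tensoriality lemma, and its proof needs two things:
\begin{enumerate}
\item a section vanishing at $u$ must be a $C^\infty(O_p)$-combination of sections with coefficients vanishing at $u$;
\item smooth sections must reach every vector of $E_{p(u)}$, and in fact every plot of $p^*E$, since smoothness in $\operatorname{End}(E)$ is tested on plots and not on sections.
\end{enumerate}
In Step 2, freeness ($\nabla+A=\nabla\Rightarrow A_p=0$) needs smooth sections of $p^*E$ to span every fiber. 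None of this is part of the definition of a vector pseudo-bundle, and the paper stresses that no local triviality is assumed. You did notice the ``no typical fiber'' issue, but you placed it in Step 3, where it is indeed harmless. It bites in Steps 1--2.

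Here is a concrete failure of freeness. On $V=\mathbb R\times\mathbb R\to\mathbb R$, declare $u\mapsto (x(u),v(u))$ a plot iff $x,v$ are smooth and, near each point, either $x\equiv 0$ or $|v|\le C_N|x|^N$ for every $N$.
\begin{itemize}
\item This is a diffeological vector pseudo-bundle with all fibers equal to $\mathbb R$. Its smooth sections are exactly the functions flat at $0$.
\item The operator $\sigma\mapsto\sigma'\,dx$ (plotwise $v\mapsto dv$, which preserves the growth condition) lies in $\mathcal{CD}(V)$.
\item Let $\phi$ be the endomorphism field equal to $\mathrm{id}$ on $V_0$ and to $0$ on $V_x$ for $x\neq 0$. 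It is smooth for the evaluation diffeology: locally on any plot of $V$ it acts either as the identity over $x\equiv 0$ or yields $0$. So $A=\phi\,dx$ is a nonzero element of $\Omega^1(\mathbb R,\operatorname{End}(V))$.
\item Yet for every plot $p$ and every section $u\mapsto(p(u),v(u))$ of $p^*V$, one has $\phi(p(u))\,v(u)\,dp(u)=0$. Indeed, where $p(u)=0$, either $p\equiv 0$ near $u$ or $v(u)=0$. Hence $\nabla+A=\nabla$, and the action is not free.
\end{itemize}

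So, with covariant derivatives acting on sections as in your Step 1, the statement needs one of the following repairs:
\begin{itemize}
\item an extra hypothesis: each $p^*E$ locally trivial, or at least smooth local sections that span the fibers, admit a Hadamard-type decomposition, and detect smoothness in $\operatorname{End}(E)$;
\item $p^*\nabla$ defined pointwise rather than on sections;
\item $\mathcal{CD}(E)$ modeled on the quotient of $\Omega^1(X,\operatorname{End}(E))$ by the forms that annihilate all sections.
\end{itemize}
The same issue reappears in families in your route 2, when you compare a section-based diffeology on $\mathcal{CD}(E)$ with the evaluation diffeology on $\Omega^1(X,\operatorname{End}(E))$.
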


The bundle \(\operatorname{End}(E)\) is defined fiberwise by smooth linear endomorphisms of the fibers, with a diffeology ensuring smooth evaluation. This construction follows the approach developed in \cite{Magnot-Cauchy,MagnotConnections}.

\subsection{Pseudo-metrics and Riemannian structures}

Since tangent objects in diffeology are often pseudo-bundles rather than genuine vector bundles, Riemannian structures are naturally formulated as pseudo-metrics.

\begin{definition}
A \emph{pseudo-metric} on a diffeological vector pseudo-bundle \(V\to X\) is a smooth symmetric fiberwise bilinear form
\[
g:V\times_X V\to\mathbb R.
\]
\end{definition}

This is the standard substitute for Riemannian metrics in diffeology and is used in the study of Levi-Civita connections and de Rham-type operators \cite{Pervova2017,GW2020}.

Given a pseudo-metric \(g\) and a covariant derivative \(\nabla\), one defines covariant differentiation of sections along tangent directions whenever the chosen tangent model permits such a contraction. In the present article, the relevant metrics and connections will first be constructed on finite-dimensional local models and then transported through the diffeological gluing procedure. This avoids ambiguity at the level of abstract tangent functors.

\subsection{Milnor classifying spaces in diffeology}

We now recall the diffeological Milnor construction and its main properties.

\subsubsection{The Milnor total space}

Let \(G\) be a diffeological group. Define
\[
E_{\mathrm{Mil}}(G):=
\left\{(t_i,g_i)_{i\in\mathbb N}\ \middle|\
t_i\ge 0,\ \sum_i t_i=1,\ g_i\in G,\ \text{and }t_i=0\text{ for }i\gg 0
\right\}.
\]

For each finite subset \(I\subset \mathbb N\), let
\[
\Delta_I=\left\{(t_i)_{i\in I}\in \mathbb R^I\ \middle|\ t_i\ge 0,\ \sum_{i\in I}t_i=1\right\},
\]
endowed with the subset diffeology, and define
\[
\phi_I:\Delta_I\times G^I\to E_{\mathrm{Mil}}(G)
\]
by extending the coordinates by zero outside \(I\).

\begin{definition}
The \emph{Milnor diffeology} on \(E_{\mathrm{Mil}}(G)\) is the final diffeology generated by the family \((\phi_I)_I\).
\end{definition}

This is precisely the diffeology used in \cite{MagnotWatts2017}. Note that the boundary behavior is encoded by the face inclusions \(\Delta_J\hookrightarrow \Delta_I\) for \(J\subset I\): when some \(t_i\) vanish, one passes smoothly to lower-dimensional simplices. :contentReference[oaicite:8]{index=8}

\subsubsection{The classifying space and the universal bundle}

The group \(G\) acts on \(E_{\mathrm{Mil}}(G)\) by
\[
h\cdot (t_i,g_i)_{i\in\mathbb N}=(t_i,hg_i)_{i\in\mathbb N}.
\]

\begin{definition}
The \emph{Milnor classifying space} is the quotient
\[
B_{\mathrm{Mil}}(G):=E_{\mathrm{Mil}}(G)/G
\]
equipped with the quotient diffeology.
\end{definition}

The main results of Magnot--Watts can be summarized as follows.

\begin{theorem}[Magnot--Watts]\label{thm:MW}
Let \(G\) be a diffeological group.
\begin{enumerate}
\item The space \(E_{\mathrm{Mil}}(G)\) is diffeologically contractible.
\item The projection
\[
E_{\mathrm{Mil}}(G)\to B_{\mathrm{Mil}}(G)
\]
is a principal diffeological \(G\)-bundle.
\item For a diffeological space \(X\), smooth homotopy classes of smooth maps
\[
X\to B_{\mathrm{Mil}}(G)
\]
classify \(D\)-numerable principal diffeological \(G\)-bundles over \(X\).
\end{enumerate}
\end{theorem}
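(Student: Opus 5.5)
We prove the three assertions of Theorem~\ref{thm:MW} in order, since each one feeds the next. Throughout, smoothness is checked only on the generating maps \(\phi_I\), because the Milnor diffeology is final for \((\phi_I)_I\).

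\textbf{Contractibility.} We follow Milnor's two-stage homotopy, adapted to smooth parameters.

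First, we deform the identity into the map \(s\) that moves all coordinates to odd indices: \((t_i,g_i)\mapsto (t'_{2i+1},g'_{2i+1})=(t_i,g_i)\), with all even coordinates set to zero. We do this one index at a time, with the \(k\)-th step performed during the time interval \([1-2^{-k},1-2^{-k-1}]\).

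Second, we join \(s(x)\) to a fixed point \(e_0=(1,e)\) placed at index \(0\). We use
\[
\bigl((1-\tau)\,t_{\mathrm{odd}},\ \tau \text{ at index } 0\bigr).
\]

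To make both stages smooth rather than merely continuous:
\begin{itemize}
\item We replace the linear time parameters by a smooth cutoff \(\rho:\mathbb R\to[0,1]\) that is flat near \(0\) and near \(1\). This lets consecutive steps glue smoothly.
\item On each \(\phi_I\), only finitely many steps move the finitely many nonzero coordinates. Hence locally each homotopy factors through a single \(\phi_{I'}\) composed with a smooth map \(\mathbb R\times\Delta_I\times G^I\to\Delta_{I'}\times G^{I'}\).
\end{itemize}

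Where a weight vanishes, the corresponding group coordinate is irrelevant. We choose the \(g\)'s in the formula so that they are defined smoothly, by copying \(g_i\); there is no division.

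\textbf{Principal bundle.} The action \(h\cdot\phi_I(t,g)=\phi_I(t,hg)\) is smooth because left multiplication on \(G^I\) is smooth. It is also free.

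The quotient map is a subduction by the definition of the quotient diffeology. It remains to show that the action map \(G\times E\to E\times_B E\) is an induction, that is, that the map \((x,hx)\mapsto h\) is smooth.

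To prove this, we cover \(B\) by the open sets \(U_i=\{t_i>0\}\), which are \(D\)-open since their preimages under each \(\phi_I\) are open. Over \(U_i\) we have the smooth local section and trivialization \(x\mapsto g_i\). Indeed, on \(\{t_i>0\}\) the coordinate \(g_i\) is well defined, and it is locally smooth along plots because a plot landing in \(\{t_i>0\}\) locally lifts through some \(\phi_I\) with \(i\in I\). This gives the local triviality \(\pi^{-1}(U_i)\cong U_i\times G\) via \(x\mapsto([x],g_i)\), and hence the principal bundle structure.

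\textbf{Classification.} The functions \(t_i\) descend to \(B\) and form a smooth partition of unity subordinate to \((U_i)\). Pulling such a partition back along a classifying map shows that every pullback bundle is \(D\)-numerable.

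Conversely, let \(P\to X\) be trivialized over \(V_i\) by \(\sigma_i\), with a smooth partition of unity \(\lambda_i\). We define
\[
F(p)=(\lambda_i(\pi p),\ \sigma_i(p)),
\]
where \(\sigma_i(p)\in G\) is the local trivializing coordinate. This map is smooth and \(G\)-equivariant, and it descends to a map \(X\to B\).

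For uniqueness up to homotopy, we run Milnor's argument. Given two equivariant maps \(F_0,F_1\), we push one onto the even indices and the other onto the odd indices using the first-stage homotopy above. We then join them linearly, using smooth cutoffs in time.

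\textbf{Main obstacle.} We expect the hardest part to be the smoothness of the first-stage infinite "shift" homotopy near \(\tau=1\), together with the requirement that the \(D\)-numerability hypothesis provides a countable, locally finite partition of unity. We would handle both by insisting on local finiteness in plots and on flat cutoffs. This ensures that every plot meets only finitely many active steps, so smoothness reduces to a finite-dimensional computation on \(\Delta_I\times G^I\).
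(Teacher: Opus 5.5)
The paper does not prove this theorem. It quotes it from Magnot--Watts, so your outline has to stand on its own. The principal-bundle part does: the division map is smooth via the local coordinates $g_i$ on $\{t_i>0\}$. The other two parts have genuine gaps.

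\emph{Contraction.} The first-stage homotopy cannot be run in the order you give. A single-index step slides the weight of index $a$ onto index $b$ and copies $g_a$. It is only defined if index $b$ carries zero weight for \emph{every} point, because two positive weights with different group elements cannot be merged at one index. At $\tau=0$ your map is the identity, so every index is occupied, and the step on $[0,\tfrac12]$ has no admissible target: sending index $0$ to $1$ collides with the not-yet-moved coordinate $1$. The fix is to reverse the order. Perform the move $k\mapsto 2k+1$ on $[2^{-k-1},2^{-k}]$, after all higher indices have been moved. Then index $2k+1$ is vacant when needed, since its original coordinate has already gone to $4k+3$. On $\phi_I$ with $I\subset\{0,\dots,N\}$ the homotopy is the identity for $\tau<2^{-N-1}$. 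So the infinitely many steps accumulate at the identity end, where this is harmless; $\tau=1$ is not the real difficulty. The same fix is needed for the even-index homotopy your uniqueness argument uses.

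\emph{Classification.} Three ingredients are missing here.

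(a) Take $\Delta_I$ with the subset diffeology, as in the definition preceding the theorem. Then every $U_j=\{t_j>0\}$ is $D$-dense in $B$. Indeed, for $b=\pi\phi_I(t,g)$ and $j\notin I$, the plot $\varepsilon\mapsto\pi\phi_{I\cup\{j\}}\bigl((1-\varepsilon^2)t,\varepsilon^2,g,e\bigr)$ lies in $U_j$ for $\varepsilon\neq 0$. Hence the $D$-closed support of each $t_j$ is all of $B$. The family $(t_j)$ is therefore neither locally finite nor subordinate to $(U_j)$ in the sense required for $D$-numerability. Local finiteness along plots is a weaker notion and does not replace it. This is repairable, for instance with $M=\sum_j t_j^2$, $w_j=\theta(t_j-M/2)$ for a smooth $\theta$ vanishing exactly on $(-\infty,0]$, and $\zeta_j=w_j/\sum_k w_k$. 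But the repair has to be carried out.

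(b) Your map $F(p)=(\lambda_i(\pi p),\sigma_i(p))$ needs a countable trivializing partition of unity. A $D$-numerable bundle only comes with an arbitrary index set. The classical Milnor--Dold reduction to a countable cover is built from minima and maxima of the $\lambda_j$; these are not smooth and must be replaced by smooth expressions.

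(c) Most seriously, you never show that $[f]\mapsto f^*E$ is well defined on smooth homotopy classes. Your even/odd argument gives injectivity: isomorphic pullbacks come from homotopic maps. Your map $F$ gives surjectivity. You also need that smoothly homotopic maps have isomorphic pullbacks. That is a smooth analogue of Dold's bundle homotopy theorem for $D$-numerable bundles over $X\times\mathbb R$. It is an independent ingredient and must be proved or quoted.
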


This is the precise analogue, in the diffeological category, of the classical Milnor classification theorem. :contentReference[oaicite:9]{index=9}

\subsubsection{Canonical connection}

Another crucial point for the present article is that the Milnor bundle carries a natural connection. In Magnot--Watts, this connection is written by barycentric averaging of the Maurer--Cartan forms of the factors.

\begin{proposition}[Magnot--Watts]
The principal bundle
\[
E_{\mathrm{Mil}}(G)\to B_{\mathrm{Mil}}(G)
\]
carries a canonical connection whose local connection form is given by a weighted sum
\[
\theta=\sum_i t_i\,\theta_i,
\]
where \(\theta_i\) denotes the Maurer--Cartan form pulled back from the \(i\)-th copy of \(G\).
\end{proposition}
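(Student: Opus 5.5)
To prove the Proposition I will define $\theta$ plotwise on the generating maps $\phi_I$, check that these definitions glue along faces, and then verify the two axioms of a connection form: reproduction of fundamental vector fields and $\mathrm{Ad}$-equivariance. Since the Milnor diffeology is the final diffeology generated by $(\phi_I)_I$, every plot of $E_{\mathrm{Mil}}(G)$ locally factors through some $\phi_I$. It therefore suffices to produce a $\mathfrak g$-valued $1$-form $\theta^I$ on each $\Delta_I\times G^I$ that is compatible with the face inclusions $\Delta_J\hookrightarrow\Delta_I$ for $J\subset I$. Here $\mathfrak g$ is the Lie algebra of the diffeological Lie group $G$, in the sense fixed earlier in the paper.

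On $\Delta_I\times G^I$ I set
\[
\theta^I=\sum_{i\in I} t_i\,\mathrm{pr}_i^*\theta_{MC},
\]
where $\theta_{MC}$ is the right-invariant Maurer--Cartan form. Right-invariant is the correct choice here, because $G$ acts on the left, $g_i\mapsto hg_i$. The form $\theta_{MC}$ is defined plotwise: for a plot $c$ of $G$ one takes $(dc)\,c^{-1}$, evaluated through the tangent cone at the identity. Each $t_i$ is a smooth function on $\Delta_I$, so $\theta^I$ is a smooth $1$-form on the product.

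Compatibility with faces is the key point. Let $J\subset I$. On the face $\{t_i=0,\ i\notin J\}$, the terms indexed by $i\notin J$ vanish identically, and $\theta^I$ restricts to $\theta^J$ pulled back along the projection $G^I\to G^J$. This matters because $\phi_I$ identifies points whose $g_i$ differ only in indices with $t_i=0$: the vanishing coefficient kills precisely the ambiguous coordinates. So the $\theta^I$ define a form on $E_{\mathrm{Mil}}(G)$ that does not depend on the chosen plot representation. To make this rigorous I must show that whenever two plots $\phi_I\circ p$ and $\phi_{I'}\circ p'$ agree, the pulled-back forms agree. I will reduce this to the case $I\cup I'$, using the locality axiom of the diffeology together with the fact that the identification happens exactly on the vanishing loci of the $t_i$.

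Next come the two axioms. For the vertical direction, take $\xi\in\mathfrak g$ and the curve $h(s)=\exp$-type curve with $h'(0)=\xi$, acting by $g_i\mapsto h(s)g_i$. Right-invariance gives $\theta_{MC}(\xi g_i)=\xi$, so
\[
\theta(\xi^\sharp)=\sum_i t_i\,\xi=\xi,
\]
using $\sum_i t_i=1$. For equivariance, left translation by $h$ turns the right Maurer--Cartan form into $\mathrm{Ad}_h\theta_{MC}$. This relation holds for every $i$ and the weights $t_i$ are invariant under the action, so $L_h^*\theta=\mathrm{Ad}_h\theta$.

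The main obstacle is the well-definedness of $\theta$ at the boundary strata. There, tangent vectors in the $g_i$-directions with $t_i=0$ are collapsed, and the internal tangent cone can contain higher-order normal directions coming from plots that leave a face. I will first argue at the level of plots, where everything is classical calculus on $O_p\times G^I$, so that no abstract tangent functor is needed. Smoothness in $t$ of the coefficient $t_i$ then ensures that the form pulled back along any plot passing through the face is smooth and continuous across it.
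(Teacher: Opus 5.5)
The paper gives no proof of this proposition. It is quoted from Magnot--Watts, so there is no internal argument to compare yours with. Your plotwise verification is correct for the diffeology the paper actually fixes, namely the final diffeology generated by the $\phi_I$, and it works with either of the diffeologies on $\Delta_I$ mentioned in Sections 2 and 3. The right-invariant form $dg\,g^{-1}$ is the correct choice for the left action. The constraint $\sum_i t_i=1$ gives the vertical normalisation, and $L_h^*(dg\,g^{-1})=\mathrm{Ad}_h(dg\,g^{-1})$ gives equivariance. It is cleaner to state the vertical axiom as $o_x^*\theta=dh\,h^{-1}$ for the orbit map $o_x(h)=h\cdot x$, since a diffeological Lie group need not have an exponential map.

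Two points should be sharpened.

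\textbf{Well-definedness is pointwise.} This step needs none of the tangent-cone or normal-direction discussion. Suppose $\phi_I\circ P=\phi_{I'}\circ P'$ on $U$. Then $t_i\equiv 0$ for $i\notin I\cap I'$, so those terms vanish on both sides. For $i\in I\cap I'$ and $u\in U$ there are two cases:
\begin{itemize}
\item if $t_i(u)>0$, then $g_i=g_i'$ on the open set $\{t_i>0\}\ni u$, so the two terms agree at $u$;
\item if $t_i(u)=0$, then both terms vanish at $u$.
\end{itemize}
Hence $P^*\theta^I=P'^*\theta^{I'}$.

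\textbf{Source of smoothness.} The smoothness of $P^*\theta^I$ does not come from the smoothness of the coefficients $t_i$, as your last sentence suggests. It comes from the fact that plots lift locally to $\Delta_I\times G^I$ with every $g_i$ smooth on the whole neighbourhood, including where $t_i=0$. In a coarser, Milnor-topology-style diffeology, where $g_i$ need only be smooth on $\{t_i>0\}$, the formula breaks down. Take $G=\mathbb R$, $t_2(u)=\tfrac12 e^{-1/u^2}$ and $g_2(u)=\sin(e^{1/u^4})$ for $u>0$, $t_2=0$ for $u\le 0$, $t_1=1-t_2$, and $g_1$ constant. The pulled-back form is then $t_2\,g_2'\,du$, which is unbounded as $u\to 0^+$, even though $t_2$ is flat (stationary) at $0$. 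So your argument should rest explicitly on the local smooth lift supplied by the final diffeology, not on properties of the $t_i$.
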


This canonical connection is one of the main reasons why the Milnor model is especially well suited for the geometric constructions developed later in the paper. :contentReference[oaicite:10]{index=10}

\subsubsection{Comparison with Christensen--Wu}

Christensen and Wu develop a parallel theory of smooth classifying spaces for diffeological groups and prove that \(D\)-numerable smooth principal bundles are classified by smooth maps into their classifying space. Their construction is not based on the explicit Milnor join model, but on a general smooth homotopy-theoretic approach to \(D\)-numerable bundles. :contentReference[oaicite:11]{index=11}

\begin{remark}
For the purposes of the present article, the two approaches play different roles:
\begin{itemize}
\item Christensen--Wu provide a very general classification framework for smooth principal bundles;
\item the Magnot--Watts Milnor model provides explicit barycentric coordinates and a canonical connection.
\end{itemize}
It is this second feature that makes the Milnor model particularly suitable for the introduction of spherical/projective coordinates, metrics, Clifford structures and Dirac operators.
\end{remark}

\subsubsection{What is retained for the present work}

From the preceding results, we retain the following facts:
\begin{enumerate}
\item the Milnor total space is built by final diffeology from finite-dimensional simplex charts;
\item its boundary behavior is controlled by face inclusions and thus by vanishing barycentric coordinates;
\item it carries a universal principal bundle structure and a canonical barycentric connection;
\item it admits an explicit quotient model suitable for geometric refinements.
\end{enumerate}

These are exactly the ingredients that will be modified in the next section, where we replace the simplicial normalization \(\sum_i t_i=1\) by a spherical normalization \(\sum_i x_i^2=1\).
\section{Spherical and projective models} \label{3}

\subsection{From simplicial to spherical coordinates}

Recall that in Milnor's construction the barycentric coordinates satisfy
\[
t_i \ge 0, \qquad \sum_i t_i = 1,
\]
and that the diffeology on the simplices $\Delta_I$ is not the standard subset diffeology, but is adapted so that plots are stationary at the boundary, see~\cite{MagnotWatts2017}.

In particular, if a plot $u \mapsto (t_i(u))$ reaches a boundary point where some $t_i(u)=0$, then the corresponding coordinate is locally constant.

\medskip

We now introduce spherical coordinates by setting
\[
t_i = x_i^2,
\qquad \sum_i x_i^2 = 1.
\]

However, in contrast with the simplicial case, the spheres
\[
S_I = \left\{ (x_i)_{i\in I} \in \mathbb{R}^I \;\middle|\; \sum_{i\in I} x_i^2 = 1 \right\}
\]
are endowed with the subset diffeology induced from $\mathbb{R}^I$.

\begin{remark}
In particular, no stationarity condition is imposed when a coordinate $x_i$ vanishes. This creates a fundamental asymmetry between the simplicial and spherical models.
\end{remark}

\begin{remark}
As a consequence, the map
\[
t_i \mapsto \sqrt{t_i}
\]
is smooth for the Milnor diffeology, while the inverse map
\[
x_i \mapsto x_i^2
\]
is not smooth in general.
\end{remark}

\subsection{Spherical Milnor space and group actions}

\begin{definition}
Define
\[
\widetilde E^{\mathrm{sph}}(G)
:=
\left\{
(x_i,g_i)_{i\in\mathbb{N}}
\;\middle|\;
\sum_i x_i^2 = 1,\ 
x_i \in \mathbb{R},\ 
g_i \in G,\ 
\text{finite support}
\right\}
\]
and introduce the equivalence relation
\[
(x_i,g_i)\sim (x_i,g_i')
\quad\Longleftrightarrow\quad
g_i=g_i' \ \text{whenever } x_i\neq 0.
\]
\end{definition}

\begin{definition}
The \emph{spherical Milnor space} is the quotient
\[
E^{\mathrm{sph}}(G) := \widetilde E^{\mathrm{sph}}(G)/\sim.
\]
\end{definition}

\begin{remark}
This relation ensures that group coordinates corresponding to vanishing weights are irrelevant, in exact analogy with Milnor's construction.
\end{remark}

\medskip

For each finite $I \subset \mathbb{N}$, define
\[
S_I := \left\{ (x_i)_{i\in I} \in \mathbb{R}^I \;\middle|\; \sum_{i\in I} x_i^2 = 1 \right\}.
\]

\begin{definition}
The diffeology on $E^{\mathrm{sph}}(G)$ is the final diffeology generated by the maps
\[
\psi_I : S_I \times G^I \to E^{\mathrm{sph}}(G),
\]
defined by extension by zero and passage to the quotient.
\end{definition}

\begin{remark}
This diffeology is compatible with the equivalence relation, since only coordinates with $x_i \neq 0$ contribute.
\end{remark}

\subsubsection{Group actions}

\begin{definition}
The group $G$ acts on $E^{\mathrm{sph}}(G)$ by
\[
h \cdot (x_i,g_i) = (x_i, hg_i).
\]
\end{definition}

\begin{proposition}
This action is well-defined and smooth.
\end{proposition}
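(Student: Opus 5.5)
The plan is to lift the action to the charts and then use the universal property of the final diffeology. The proof splits into well-definedness on the quotient and smoothness.

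\emph{Well-definedness.} Define first the action on the unreduced space $\widetilde E^{\mathrm{sph}}(G)$ by $h\cdot(x_i,g_i)=(x_i,hg_i)$. Left multiplication preserves the weights $x_i$ and is injective on each component. Hence if $(x_i,g_i)\sim(x_i,g_i')$, that is $g_i=g_i'$ whenever $x_i\neq 0$, then $hg_i=hg_i'$ whenever $x_i\neq0$. So the action descends to $E^{\mathrm{sph}}(G)$. The group-action axioms $e\cdot z=z$ and $h\cdot(h'\cdot z)=(hh')\cdot z$ follow componentwise from those of left multiplication in $G$.

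\emph{Smoothness.} I will prove that the map $A:G\times E^{\mathrm{sph}}(G)\to E^{\mathrm{sph}}(G)$ is smooth for the product diffeology. This gives in particular smoothness of each $z\mapsto h\cdot z$.

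\begin{enumerate}
\item \emph{Reducing to the generating family.} Let $u\mapsto(h(u),z(u))$ be a plot of $G\times E^{\mathrm{sph}}(G)$ on a domain $U$. Since $E^{\mathrm{sph}}(G)$ carries the final diffeology generated by $(\psi_I)_I$, there is, near each $u_0\in U$, an open $V\ni u_0$, a finite $I$ and a plot $q=(x,\gamma):V\to S_I\times G^I$ with $z|_V=\psi_I\circ q$. If $z$ is locally constant near $u_0$, then $A\circ(h,z)$ is locally $u\mapsto h(u)\cdot z_0$. This case is covered by the same formula below, with $q$ taken constant.
\item \emph{The action on each chart.} On $V$ we have
\[
A(h(u),z(u))=\psi_I\bigl(x(u),(h(u)\gamma_i(u))_{i\in I}\bigr).
\]
Multiplication $G\times G\to G$ is smooth, so each $u\mapsto h(u)\gamma_i(u)$ is a plot of $G$. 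Therefore $u\mapsto (x(u),(h\gamma_i)_i)$ is a plot of $S_I\times G^I$, and composing with $\psi_I$ gives a plot of $E^{\mathrm{sph}}(G)$.
\item \emph{Gluing.} By the locality axiom of diffeologies, $A\circ(h,z)$ is a plot on all of $U$.
\end{enumerate}

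Equivalently, the identity $A\circ(\mathrm{id}_G\times\psi_I)=\psi_I\circ\widehat A_I$ holds, where $\widehat A_I(h,x,\gamma)=(x,(h\gamma_i)_i)$ is smooth.

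\emph{Main obstacle.} The one subtle point is the use of local lifts in step 1. One needs that $\mathrm{id}_G\times\prod_I\psi_I$ is still a subduction onto $G\times E^{\mathrm{sph}}(G)$, that is, that products of plots locally lift through $\mathrm{id}_G\times\psi_I$. Here this is immediate: a plot of the product is just a pair of plots, and only the second factor needs lifting. The locality axiom then handles the gluing. Finally, smoothness of the inverse map $z\mapsto h^{-1}\cdot z$ is the same statement with $h^{-1}$ in place of $h$, so each $h$ acts by a diffeomorphism.
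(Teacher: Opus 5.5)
Your proof is correct and follows the same route as the paper. The paper gets well-definedness from the fact that $g_i$ is irrelevant when $x_i=0$, and smoothness from the local models $S_I\times G^I$; you spell out the local-lift argument through the charts $\psi_I$ that the paper compresses into one line, and you obtain joint smoothness in $(h,z)$ as a bonus.
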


\begin{proof}
If $x_i=0$, the coordinate $g_i$ is irrelevant, hence the action is compatible with the equivalence relation. Smoothness follows from the local model $S_I \times G^I$.
\end{proof}

\begin{definition}
The group $\mathbb{Z}_2$ acts on $E^{\mathrm{sph}}(G)$ by
\[
\varepsilon \cdot (x_i,g_i) = (\varepsilon x_i,g_i),
\qquad \varepsilon \in \{\pm 1\}.
\]
\end{definition}

\begin{proposition}
This action is well-defined and smooth.
\end{proposition}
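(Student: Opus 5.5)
The plan has two parts: show the action descends to the quotient $E^{\mathrm{sph}}(G)=\widetilde E^{\mathrm{sph}}(G)/\sim$, then show it is smooth by checking it against the generating maps $\psi_I$.

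\emph{Well-definedness.} The action $(x_i,g_i)\mapsto(\varepsilon x_i,g_i)$ is first defined on $\widetilde E^{\mathrm{sph}}(G)$. It preserves the normalization, since $\sum_i(\varepsilon x_i)^2=\sum_i x_i^2=1$. It also preserves finite support. Next I check compatibility with $\sim$. Since $\varepsilon\in\{\pm1\}$, we have $\varepsilon x_i\neq 0$ if and only if $x_i\neq 0$. Now suppose $(x_i,g_i)\sim(x_i,g_i')$, that is, $g_i=g_i'$ whenever $x_i\neq0$. Then $g_i=g_i'$ whenever $\varepsilon x_i\neq 0$, so the images are equivalent. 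The action therefore descends to $E^{\mathrm{sph}}(G)$. The group axioms hold because $\varepsilon\mapsto \varepsilon x_i$ is multiplication by $\pm1$.

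\emph{Smoothness.} Each $\varepsilon$ acts by a map $R_\varepsilon$, and I will show $R_\varepsilon$ is smooth. Since $\mathbb Z_2$ is discrete, this also gives smoothness of the joint map $\mathbb Z_2\times E^{\mathrm{sph}}(G)\to E^{\mathrm{sph}}(G)$: a plot of the product is locally constant in the $\mathbb Z_2$ factor.

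The diffeology on $E^{\mathrm{sph}}(G)$ is the final diffeology generated by the $\psi_I$. Hence a plot $p:U\to E^{\mathrm{sph}}(G)$ locally has one of two forms: it is constant, or it factors as $p=\psi_I\circ q$ with $q:U\to S_I\times G^I$ a plot. It suffices to treat such local factorizations, because constant plots stay constant and the locality axiom lets us patch. Let $m_\varepsilon:S_I\times G^I\to S_I\times G^I$ be $(x,g)\mapsto(\varepsilon x,g)$. This map is smooth because $x\mapsto \varepsilon x$ is a linear map of $\mathbb R^I$ preserving $S_I$, and $S_I$ carries the subset diffeology. The key identity is the commutation relation $R_\varepsilon\circ\psi_I=\psi_I\circ m_\varepsilon$, which holds because extension by zero commutes with sign change. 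Consequently $R_\varepsilon\circ p=\psi_I\circ(m_\varepsilon\circ q)$ locally, and this is a plot.

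The only point needing care is this last step: the final diffeology is generated by the $\psi_I$, so its plots are only \emph{locally} of the form $\psi_I\circ q$, with $I$ possibly varying from one open set to another. The argument handles this by working on each such open set separately and invoking the locality axiom for plots. No boundary or stationarity issue arises, in contrast with the simplicial model, because $S_I$ carries the plain subset diffeology and $m_\varepsilon$ is the restriction of a linear map. Finally, $R_\varepsilon\circ R_\varepsilon=\mathrm{id}$, so each $R_\varepsilon$ is in fact a diffeomorphism.
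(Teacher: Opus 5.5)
Your proof is correct and follows the same route as the paper's: the sign change preserves the constraint $\sum_i x_i^2=1$, and smoothness is checked on each local model $S_I\times G^I$ through the maps $\psi_I$. You also spell out two points the paper leaves implicit: that $\varepsilon x_i\neq 0$ exactly when $x_i\neq 0$, which gives compatibility with $\sim$, and the intertwining relation $R_\varepsilon\circ\psi_I=\psi_I\circ m_\varepsilon$, which handles the locally-factorized plots of the final diffeology.
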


\begin{proof}
It acts only on the spherical coordinates and preserves the constraint $\sum x_i^2=1$. Smoothness is clear on each $S_I$.
\end{proof}

\begin{remark}
More generally, one may consider independent sign changes $(\mathbb{Z}_2)^I$, but in the present work we restrict to the global $\mathbb{Z}_2$-action.
\end{remark}

\subsubsection{Quotient spaces}

\begin{definition}
We define the following quotient spaces:
\[
E^{\mathrm{sph}}(G)/G,
\qquad
E^{\mathrm{sph}}(G)/\mathbb{Z}_2,
\qquad
B^{\mathrm{sph}}(G) := E^{\mathrm{sph}}(G)/(G \times \mathbb{Z}_2).
\]
\end{definition}

\begin{proposition}
The actions of $G$ and $\mathbb{Z}_2$ commute.
\end{proposition}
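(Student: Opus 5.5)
The plan is a direct computation on representatives, followed by a check that the computation descends to the quotient $E^{\mathrm{sph}}(G)=\widetilde E^{\mathrm{sph}}(G)/\sim$. Fix $h\in G$, $\varepsilon\in\{\pm1\}$, and a class $[(x_i,g_i)_{i\in\mathbb N}]$. By the definitions of the two actions,
\[
h\cdot\bigl(\varepsilon\cdot(x_i,g_i)\bigr)=h\cdot(\varepsilon x_i,g_i)=(\varepsilon x_i,hg_i),
\qquad
\varepsilon\cdot\bigl(h\cdot(x_i,g_i)\bigr)=\varepsilon\cdot(x_i,hg_i)=(\varepsilon x_i,hg_i).
\]
The two results agree coordinatewise. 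The underlying reason is that $h$ acts only on the group coordinates $g_i$ by left multiplication, while $\varepsilon$ acts only on the spherical coordinates $x_i$ by scalar multiplication. These two sets of coordinates are independent factors of $S_I\times G^I$.

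To make this rigorous on the quotient, I will use the two preceding propositions, which show that both actions are well defined on $E^{\mathrm{sph}}(G)$. Hence each composite $h\circ\varepsilon$ and $\varepsilon\circ h$ can be evaluated on any representative. I will also note that $\sim$ depends only on the vanishing locus $\{i : x_i\neq 0\}$, and this locus is unchanged by $x_i\mapsto\varepsilon x_i$. So the representative $(\varepsilon x_i,hg_i)$ determines the same class whichever order the actions are applied in.

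Since commutation holds pointwise for every pair $(h,\varepsilon)$, the two actions combine into a single action of the direct product $G\times\mathbb Z_2$, given by $(h,\varepsilon)\cdot[(x_i,g_i)]=[(\varepsilon x_i,hg_i)]$. I would also record that this combined action is smooth. The reason is that on each local model $S_I\times G^I$ it is the product map $(x,g)\mapsto(\varepsilon x,hg)$, which is smooth, and the diffeology on $E^{\mathrm{sph}}(G)$ is final for the maps $\psi_I$. This justifies the definition $B^{\mathrm{sph}}(G)=E^{\mathrm{sph}}(G)/(G\times\mathbb Z_2)$, and it identifies this space with both iterated quotients $(E^{\mathrm{sph}}(G)/G)/\mathbb Z_2$ and $(E^{\mathrm{sph}}(G)/\mathbb Z_2)/G$.

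There is no real obstacle. The only point needing care is the well-definedness on equivalence classes: the sign change must not alter which $g_i$ are irrelevant. This holds because $\varepsilon x_i=0$ exactly when $x_i=0$.
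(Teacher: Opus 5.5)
Your proof is correct and is essentially the paper's. The paper's proof consists exactly of the coordinatewise computation $h\cdot(\varepsilon\cdot(x_i,g_i))=(\varepsilon x_i,hg_i)=\varepsilon\cdot(h\cdot(x_i,g_i))$; your extra checks (that $x_i\mapsto\varepsilon x_i$ preserves the support $\{i : x_i\neq 0\}$, which you slightly misname the ``vanishing locus'', so the computation descends to $\sim$-classes, and that the combined $G\times\mathbb{Z}_2$-action is smooth via the local models $S_I\times G^I$) are correct additions that the paper leaves implicit or defers to the subsequent corollary on iterated quotients.
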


\begin{proof}
We have
\[
h \cdot (\varepsilon \cdot (x_i,g_i))
= h \cdot (\varepsilon x_i, g_i)
= (\varepsilon x_i, hg_i),
\]
and
\[
\varepsilon \cdot (h \cdot (x_i,g_i))
= \varepsilon \cdot (x_i, hg_i)
= (\varepsilon x_i, hg_i),
\]
hence the actions commute.
\end{proof}

\begin{corollary}
There are canonical identifications
\[
(E^{\mathrm{sph}}(G)/G)/\mathbb{Z}_2
\simeq
(E^{\mathrm{sph}}(G)/\mathbb{Z}_2)/G
\simeq
B^{\mathrm{sph}}(G).
\]
\end{corollary}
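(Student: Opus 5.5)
The strategy is to reduce the statement to the universal property of quotient (push-forward) diffeologies, combined with the commutation of the two actions from the preceding proposition. Let $\pi_G:E^{\mathrm{sph}}(G)\to E^{\mathrm{sph}}(G)/G$, $\pi_2:E^{\mathrm{sph}}(G)\to E^{\mathrm{sph}}(G)/\mathbb{Z}_2$ and $\pi:E^{\mathrm{sph}}(G)\to B^{\mathrm{sph}}(G)$ be the quotient projections. Each is a subduction by definition of the quotient diffeology.

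First I will show that the residual actions are well defined and smooth. Because the actions commute, $\mathbb{Z}_2$ maps $G$-orbits to $G$-orbits. Hence $\varepsilon\cdot[x_i,g_i]_G:=[\varepsilon x_i,g_i]_G$ is well defined on $E^{\mathrm{sph}}(G)/G$. For smoothness, note that the action map composed with the subduction $\mathrm{id}\times\pi_G$ equals $\pi_G$ composed with the smooth $\mathbb{Z}_2$-action on $E^{\mathrm{sph}}(G)$. I use here that a product of a subduction with the identity of the discrete group $\mathbb{Z}_2$ is again a subduction. The $G$-action on $E^{\mathrm{sph}}(G)/\mathbb{Z}_2$ is handled symmetrically. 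This is the one place needing care, since $G$ is not discrete. Plots of $G\times(E^{\mathrm{sph}}(G)/\mathbb{Z}_2)$ locally have the form $(\gamma,\pi_2\circ q)$, because the product of $\mathrm{id}_G$ with the subduction $\pi_2$ is a subduction locally on plots. The action then sends such a plot to $\pi_2\circ(\gamma\cdot q)$, which is smooth.

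Next, the set-theoretic identification. In each of the three spaces, the class of $(x_i,g_i)$ is its orbit under $G\times\mathbb{Z}_2$. Indeed, the iterated orbit $\mathbb{Z}_2\cdot(G\cdot y)$ equals $\{(\varepsilon x_i,hg_i)\}=(G\times\mathbb{Z}_2)\cdot y$, and commutativity makes this also equal to $G\cdot(\mathbb{Z}_2\cdot y)$. So we get canonical bijections
\[
\Phi_1:(E^{\mathrm{sph}}(G)/G)/\mathbb{Z}_2\to B^{\mathrm{sph}}(G),\qquad \Phi_2:(E^{\mathrm{sph}}(G)/\mathbb{Z}_2)/G\to B^{\mathrm{sph}}(G),
\]
each satisfying $\Phi\circ(\text{iterated projection})=\pi$.

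Finally, diffeological equivalence. A composite of subductions is a subduction. Therefore the iterated projection $p_1=\pi'_{\mathbb{Z}_2}\circ\pi_G$ is a subduction onto $(E^{\mathrm{sph}}(G)/G)/\mathbb{Z}_2$, and so is $\pi$ onto $B^{\mathrm{sph}}(G)$. If two subductions from the same source have the same fibres, the induced bijection between their targets is a diffeomorphism. To see this, let $P$ be a plot of the target of $p_1$. Locally it lifts as $p_1\circ q$, so $\Phi_1\circ P=\pi\circ q$ locally, which is a plot. The converse direction is identical. Hence $\Phi_1$ and $\Phi_2$ are diffeomorphisms, and $\Phi_2^{-1}\circ\Phi_1$ gives the first identification.

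I expect no real obstacle. The only point needing attention is the smoothness of the induced $G$-action on the $\mathbb{Z}_2$-quotient, which relies on products of subductions with identities. Strictly speaking, the identifications themselves do not need that smoothness: the diffeomorphism argument above uses only compositions of subductions.
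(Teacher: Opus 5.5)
Your proposal is correct and follows the paper's route: the paper derives the corollary directly from the preceding proposition that the two actions commute, via $h\cdot(\varepsilon\cdot(x_i,g_i))=(\varepsilon x_i,hg_i)=\varepsilon\cdot(h\cdot(x_i,g_i))$, and leaves the passage to the quotients implicit. Your argument that the canonical bijections are diffeomorphisms supplies exactly the diffeological detail the paper omits. It rests on two facts: composites of subductions are subductions, and two subductions from $E^{\mathrm{sph}}(G)$ with the same fibres, namely the $(G\times\mathbb{Z}_2)$-orbits, have diffeomorphic targets. Your caution about $G$ being non-discrete is unnecessary, since a product of subductions is always a subduction.
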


\begin{remark}
Thus the quotients by $G$ and $\mathbb{Z}_2$ can be performed in any order.
\end{remark}

\subsubsection{Interpretation}

\begin{remark}
The space $E^{\mathrm{sph}}(G)$ plays the role of a universal space endowed with an additional $\mathbb{Z}_2$-symmetry.

The quotient $E^{\mathrm{sph}}(G)/G$ corresponds to a spherical version of the classifying space, while $E^{\mathrm{sph}}(G)/\mathbb{Z}_2$ corresponds to a projectivized version.

The double quotient $B^{\mathrm{sph}}(G)$ provides a refined classifying space incorporating both symmetries.
\end{remark}
\subsection{Embedding of the Milnor space}

\begin{definition}
Define
\[
\iota : E(G) \longrightarrow E^{\mathrm{sph}}(G),
\qquad
(t_i,g_i) \longmapsto (\sqrt{t_i},g_i).
\]
\end{definition}

\begin{proposition}
The map $\iota$ is well-defined.
\end{proposition}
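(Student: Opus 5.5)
To show that $\iota$ is well-defined, I will check three things: that the formula produces an element of $\widetilde E^{\mathrm{sph}}(G)$; that this element's class in $E^{\mathrm{sph}}(G)$ depends only on the point of $E(G)=E_{\mathrm{Mil}}(G)$; and hence that we get a map on equivalence classes. The point is that Milnor's space is itself a quotient, in which the group coordinate $g_i$ is irrelevant when $t_i=0$. The relation $\sim$ defining $E^{\mathrm{sph}}(G)$ was chosen to mirror exactly this.

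First, take a representative $(t_i,g_i)_{i\in\mathbb N}$ with $t_i\ge 0$, $\sum_i t_i=1$ and finite support. Since $t_i\ge 0$, the real number $x_i:=\sqrt{t_i}$ is defined, and $x_i=0$ exactly when $t_i=0$. So $(x_i)$ has the same finite support as $(t_i)$. Moreover
\[
\sum_i x_i^2=\sum_i t_i=1,
\]
so $(x_i,g_i)\in\widetilde E^{\mathrm{sph}}(G)$.

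Second, recall that in Milnor's construction two representatives $(t_i,g_i)$ and $(t_i',g_i')$ define the same point when $t_i=t_i'$ for all $i$ and $g_i=g_i'$ whenever $t_i\neq 0$. This is implicit in the definition via the maps $\phi_I$ and in the face identifications. Suppose two representatives are identified in this way. Then $\sqrt{t_i}=\sqrt{t_i'}$ for all $i$. Also, $\sqrt{t_i}\neq 0$ if and only if $t_i\neq 0$, and in that case $g_i=g_i'$. This is precisely the condition $(\sqrt{t_i},g_i)\sim(\sqrt{t_i'},g_i')$.

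Third, the same support comparison handles the extension-by-zero conventions. The representation via $\phi_I$ on different index sets $I\supset\operatorname{supp}(t)$ gives the same sequence after composing with $\sqrt{\cdot}$, and hence the same class in $E^{\mathrm{sph}}(G)$.

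The main step needing care is the second one: I must make explicit which identification on $E_{\mathrm{Mil}}(G)$ is in force. The definition recalled in the paper writes elements as sequences, with the face behaviour encoded by the $\phi_I$. I would state the convention that the $g_i$ attached to vanishing $t_i$ are discarded, then verify compatibility using only the equivalence $\sqrt{t}=0\iff t=0$ on $[0,\infty)$.

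I would finish with two remarks. The map is also $G$-equivariant, since $h$ acts on the $g_i$ only. Smoothness is not part of this statement; it would follow later from the earlier remark that $t\mapsto\sqrt t$ is smooth for the Milnor (stationary-at-the-boundary) diffeology.
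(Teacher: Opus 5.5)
Your proposal is correct and is essentially the paper's argument. The paper's proof is only your second step: $g_i$ is irrelevant when $t_i=0$ in $E(G)$, and likewise when $x_i=\sqrt{t_i}=0$ in $E^{\mathrm{sph}}(G)$. Your other checks — that $\sum_i x_i^2=1$ with the same finite support, and stating explicitly the Milnor identification the paper assumes without writing it down — are details the paper leaves unstated.
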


\begin{proof}
If $t_i = 0$, the coordinate $g_i$ is irrelevant in $E(G)$, and similarly in $E^{\mathrm{sph}}(G)$ the corresponding coordinate $x_i=0$ makes $g_i$ irrelevant. Hence the definition is consistent with both quotient structures.
\end{proof}

\begin{proposition}
The map $\iota$ is smooth.
\end{proposition}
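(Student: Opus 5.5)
Since $E(G)$ carries the final diffeology generated by the maps $\phi_I:\Delta_I\times G^I\to E(G)$, I would reduce the statement to a chartwise claim. By the universal property of final diffeologies, $\iota$ is smooth as soon as each composite $\iota\circ\phi_I$ is smooth. So I will fix a finite $I\subset\mathbb N$ and a plot $u\mapsto (t_i(u),g_i(u))_{i\in I}$ of $\Delta_I\times G^I$ defined on an open set $O$. The goal is to show that $u\mapsto \psi_I\big((\sqrt{t_i(u)})_{i\in I},(g_i(u))_{i\in I}\big)$ is a plot of $E^{\mathrm{sph}}(G)$.

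The key point is the factorisation
\[
\iota\circ\phi_I=\psi_I\circ(\sigma_I\times \mathrm{id}_{G^I}),
\qquad \sigma_I(t)=(\sqrt{t_i})_{i\in I}.
\]
This identity holds because the coordinates outside $I$ are zero on both sides, and because $\iota$ is well-defined by the previous proposition. The map $\psi_I$ is smooth by definition of the diffeology of $E^{\mathrm{sph}}(G)$, and products of smooth maps are smooth for the product diffeology. It therefore suffices to prove that $\sigma_I:\Delta_I\to S_I$ is smooth. Here $S_I$ carries the subset diffeology of $\mathbb R^I$, and $\sigma_I$ lands in $S_I$ since $\sum_i(\sqrt{t_i})^2=\sum_i t_i=1$. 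So I only need that each $u\mapsto\sqrt{t_i(u)}$ is smooth as a real function on $O$.

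This last step is the main obstacle. With the plain subset diffeology on $\Delta_I$ the claim fails: for example $t(u)=u^2$ gives $\sqrt{t}=|u|$, which is not smooth. I will therefore use the Magnot--Watts diffeology on the simplices recalled at the start of Section~\ref{3}, in which plots are stationary at the boundary. Concretely, if $t_i(u_0)=0$, then $t_i$ vanishes identically on a neighbourhood of $u_0$.

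Granting this, the argument splits $O$ into two open pieces. On $\{u: t_i(u)>0\}$, the function $\sqrt{t_i}$ is the composite of a smooth positive function with $\sqrt{\cdot}$, which is smooth on $(0,\infty)$. Near any point with $t_i(u_0)=0$, stationarity gives $t_i\equiv 0$ on a neighbourhood, so $\sqrt{t_i}\equiv 0$ there. These open sets cover $O$, and by the locality axiom $\sqrt{t_i}$ is smooth on all of $O$.

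Before relying on this, I would double-check the precise form of the stationarity condition in \cite{MagnotWatts2017}. If the condition there only requires $t_i$ to be flat to infinite order at boundary points rather than locally zero, the square root need not be smooth. In that case I would have to adopt the locally-constant formulation exactly as it is stated in Section~\ref{3}, and make that choice explicit.
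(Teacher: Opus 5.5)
Your proof is correct and is essentially the paper's argument. The paper also lifts a plot locally through $\Delta_I\times G^I$ and concludes that $u\mapsto(\sqrt{t_i(u)},g_i(u))$ is a plot of $S_I\times G^I$ because each $t_i$ is stationary, i.e.\ locally zero, at the boundary; you phrase the reduction through the universal property of the final diffeology and make the open-cover step explicit. Your caveat is also well founded: infinite-order flatness at zeros would not suffice (e.g.\ $t(u)=e^{-1/u^2}\sin^2(1/u)+e^{-2/u^2}$ is flat at its only zero, yet $(\sqrt{t})''$ is unbounded near $u=0$), so the result genuinely rests on the locally-constant boundary condition stated at the start of Section~3, exactly as the paper's proof does.
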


\begin{proof}
Let $p:U\to E(G)$ be a plot. By definition of the Milnor diffeology, locally
\[
p(u) = (t_i(u),g_i(u))
\]
with $(t_i(u))$ factoring through a simplex $\Delta_I$ and such that each $t_i$ is smooth and stationary at the boundary.

Then
\[
\iota \circ p(u) = (\sqrt{t_i(u)},g_i(u)).
\]

Since $t_i$ is stationary at $0$, the function $\sqrt{t_i(u)}$ is smooth. Hence the map
\[
u \mapsto (\sqrt{t_i(u)},g_i(u))
\]
is smooth as a map into $S_I \times G^I$, and therefore defines a plot of $E^{\mathrm{sph}}(G)$.
\end{proof}

\subsection{Non-smoothness of the inverse map}

\begin{proposition}
The map
\[
\Phi : E^{\mathrm{sph}}(G) \to E(G),
\qquad
(x_i,g_i) \mapsto (x_i^2,g_i)
\]
is not smooth in general.
\end{proposition}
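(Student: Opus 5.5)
To show that \(\Phi\) fails to be smooth, I will exhibit one explicit plot of \(E^{\mathrm{sph}}(G)\) whose image under \(\Phi\) is not a plot of \(E(G)\). Since smoothness means that composition with every plot is a plot, a single counterexample suffices. I will take the simplest possible case: \(G\) arbitrary (even trivial), index set \(I=\{0,1\}\), and group coordinates held constant at the identity, so that only the barycentric part matters.

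\textbf{Step 1: the candidate plot.} Consider the curve
\[
c:\mathbb R\to S_{\{0,1\}},\qquad c(u)=\bigl(\cos u,\ \sin u\bigr),
\]
and compose it with \(\psi_{\{0,1\}}\), constant group entries \(e\). This is a plot of \(E^{\mathrm{sph}}(G)\), because \(S_{\{0,1\}}\) carries the subset diffeology of \(\mathbb R^2\) and \(c\) is smooth into \(\mathbb R^2\). It passes through \(x_1=0\) at \(u=0\) with nonzero velocity. Its image under \(\Phi\) is \(u\mapsto (t_0,t_1)=(\cos^2u,\sin^2u)\), with the group coordinates unchanged.

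\textbf{Step 2: the image is not a Milnor plot.} I will use the description recalled at the beginning of Section~\ref{3}: plots of \(E(G)\) locally factor through simplices \(\Delta_J\) with barycentric coordinates that are smooth and \emph{stationary at the boundary}. Near \(u=0\), the coordinate \(t_1(u)=\sin^2 u\) vanishes only at \(u=0\) and is strictly positive on a punctured neighbourhood. Any local factorization through a simplex \(\Delta_J\) must therefore have \(1\in J\) for \(u\neq 0\).

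Stationarity would require \(t_1\) to be locally constant near the boundary point where it vanishes. That fails here. Even under the weaker reading in which stationarity means flatness, the condition still fails, since \(t_1''(0)=2\neq 0\). Hence \(\Phi\circ\psi_{\{0,1\}}\circ c\) is not a plot, and \(\Phi\) is not smooth. The case of general \(G\) reduces to this one because the group coordinates are constant and \(G\) plays no role.

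\textbf{Main obstacle.} The delicate step is Step 2: making rigorous that \(u\mapsto(\sin^2u,\dots)\) is not in the final diffeology. A plot of a final diffeology need only \emph{locally} lift through some \(\phi_J\), and the equivalence relation identifies points across faces. I would handle this as follows.

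First, I will argue that the weight functions \(t_i\) are well defined on \(E(G)\), independently of the representative. Any local lift through \(\phi_J\) must therefore reproduce \(t_1(u)=\sin^2u\) as a coordinate of a plot of \(\Delta_J\).

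Second, I will invoke the Magnot--Watts characterization that such coordinates are stationary (flat) at zero. This contradicts the nonzero second derivative at \(u=0\). If one prefers not to rely on that characterization, an alternative is to compare with \(\iota\): if \(\Phi\) were smooth, then \(\iota\circ\Phi\) would be smooth. The argument instead checks directly that \(\sqrt{\sin^2 u}=|\sin u|\) must be smooth for every plot of \(E(G)\), which is the mechanism used in the smoothness proof of \(\iota\). Since \(|\sin u|\) is not smooth at \(0\), this gives the contradiction.
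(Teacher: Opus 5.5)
Your proof is correct and is essentially the paper's argument made explicit. The paper likewise takes a plot in which some $x_i$ crosses $0$ with nonzero derivative and observes that $x_i^2$ is then not stationary at the boundary, hence not a Milnor plot; your circle $u\mapsto(\cos u,\sin u)$ with $t_1=\sin^2 u$ is a concrete instance.

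As in the paper, everything rests on the stationary-at-the-boundary description of the Milnor diffeology recalled at the start of Section~3. If $\Delta_I$ carried the plain subset diffeology literally stated in Section~2, the map $x\mapsto x^2$ from $S_I$ to $\Delta_I$ would be smooth, and so would $\Phi$. Your fallback via $\iota\circ\Phi$ does not avoid this dependence, since the paper's proof that $\iota$ is smooth is itself derived from stationarity.
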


\begin{proof}
Consider a plot $(x_i(u))$ in $E^{\mathrm{sph}}(G)$ such that some coordinate $x_i(u)$ crosses $0$ with nonzero derivative. Then $x_i(u)^2$ is not stationary at $0$, hence does not define a plot in the Milnor diffeology on $E(G)$.
\end{proof}

\begin{remark}
This reflects the asymmetry of the Milnor--Watts diffeology: the map $t \mapsto \sqrt{t}$ is smooth, while $x \mapsto x^2$ is not.
\end{remark}

\subsection{Contractibility}

\begin{theorem}
The space $E^{\mathrm{sph}}(G)$ is diffeologically contractible.
\end{theorem}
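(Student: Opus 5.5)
Our plan is to adapt Milnor's classical two-stage contraction of the join, as used by Magnot--Watts for Theorem~\ref{thm:MW}(1), to spherical coordinates. The point is that rotations in each two-dimensional plane of $\mathbb{R}^I$ are smooth for the subset diffeology on $S_I$, so no stationarity at the boundary is needed. Fix $e\in G$ and a smooth cut-off $\lambda:\mathbb{R}\to[0,1]$ with $\lambda=0$ near $(-\infty,0]$ and $\lambda=1$ near $[1,\infty)$. We will build a smooth map
\[
H: E^{\mathrm{sph}}(G)\times\mathbb{R}\to E^{\mathrm{sph}}(G)
\]
with $H(\cdot,0)=\mathrm{id}$ and $H(\cdot,1)$ constant, equal to $(x_0=1,g_0=e)$. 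We obtain $H$ by concatenating two homotopies, reparametrized by $\lambda$ so that the concatenation is smooth.

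\emph{Step 1 (shift to odd indices).} The map $\sigma:(x_i,g_i)_i\mapsto(y_j,h_j)_j$ with $y_{2i+1}=x_i$, $h_{2i+1}=g_i$ and $y_{2i}=0$ sends $\psi_I(S_I\times G^I)$ into $\psi_{2I+1}(S_{2I+1}\times G^{2I+1})$ by a linear isometry. We connect $\mathrm{id}$ to $\sigma$ by moving coordinates one index at a time, as in Milnor's proof. In the $k$-th time interval $[1-2^{-k},1-2^{-k-1}]$, which is reparametrized by $\lambda$, we rotate in the plane spanned by the current slot of $x_k$ and its target slot. The rotation is
\[
(a,b)\mapsto\bigl(\cos(\tfrac{\pi}{2}\lambda(s))a,\ \sin(\tfrac{\pi}{2}\lambda(s))a\bigr)\quad\text{applied when } b=0,
\]
and the group label is carried along with it. Rotations preserve $\sum x_i^2=1$. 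The equivalence relation $\sim$ is respected because a slot whose weight vanishes carries an irrelevant label. A cleaner alternative, which we prefer, is to first rotate each $x_i$ into an auxiliary empty slot via an infinite product of commuting plane rotations $R_s=\prod_i R^{(i,i')}_{\theta(s)}$. This is possible because the supports of the rotations are disjoint and, on any plot, only finitely many of them act nontrivially. We then use the standard even/odd interleaving.

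\emph{Step 2 (straight-line-type contraction on the sphere).} Once a point is supported on odd indices, we use the rotation in the plane spanned by $e_0$ and the unit vector $x$. Since $x\perp e_0$, the formula $(\cos\theta)\,x+(\sin\theta)\,e_0$ with $g_0=e$ stays on $S_{\{0\}\cup 2I+1}$ and is smooth in $(x,\theta)$. At $\theta=\pi/2$ every point maps to the base point $(e_0,e)$. The labels $g_i$ on odd slots become irrelevant exactly at the end, when $x_i$ vanishes, which is compatible with $\sim$.

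\emph{Smoothness verification.} For a plot $p$, locally $p=\psi_I\circ(x(u),g(u))$ with $I$ finite. The composite $H\circ(p\times\mathrm{id})$ then factors through $\psi_J$ for a fixed finite $J$ (for instance $J=\{0\}\cup I\cup(2I+1)$), as an explicit smooth map into $S_J\times G^J$. Hence it is a plot. We must also check that $H$ is compatible with $\sim$, meaning that modifying labels on slots with $x_i=0$ does not change the image class. This holds because every operation is linear on the weights and only transports labels.

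The main obstacle is Step 1. Naive index shifting forces the homotopy to be defined on all of $E^{\mathrm{sph}}(G)$ at once while remaining locally finite in $J$. It also requires keeping the sign-ambiguous, non-stationary behaviour of the $x_i$ near $0$ harmless. We expect the orthogonal-rotation formulation to handle the latter automatically, since no squaring or square roots ever occur. Local finiteness follows because $p$ locally lands in a fixed $S_I$.
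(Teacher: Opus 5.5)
There is a genuine gap in Step~1, and it sits exactly where the group labels matter.

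\textbf{The gap.} In your schedule, $x_k$ is moved during the $k$-th interval, so low indices move first. But then the target slot $2k+1$ is still occupied by $x_{2k+1}$, which has not moved yet. This already happens for $k=0$: $x_0$ is rotated into slot $1$ while $x_1$ sits there with its own label $g_1$. Your formula is stated only ``when $b=0$'', and nothing in the schedule guarantees $b=0$. An actual rotation of that plane produces the weights $\cos\theta\,x_k-\sin\theta\,x_{2k+1}$ and $\sin\theta\,x_k+\cos\theta\,x_{2k+1}$. If $x_k,x_{2k+1}\neq 0$ and $g_k\neq g_{2k+1}$, each new weight has two competing labels, so no point of $E^{\mathrm{sph}}(G)$ is defined. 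Your claim that every operation ``only transports labels'' fails precisely here.

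The ``cleaner alternative'' is circular. No slot is empty for all points of $E^{\mathrm{sph}}(G)$, so rotations in disjoint planes $(i,i')$ mix occupied slots in the same way. Creating empty slots is the whole content of Step~1.

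\textbf{The repair.} Reverse the order: rotate $x_k$ from slot $k$ into slot $2k+1$ during $[2^{-k-1},2^{-k}]$, flatly reparametrized by $\lambda$, so that higher indices move first. During that interval, the $x_j$ with $j>k$ already sit in slots $2j+1\geq 2k+3$, and those with $j<k$ still sit in slots $j<k$. Hence slot $2k+1$ is empty and your $b=0$ formula is legitimate.

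The intervals now accumulate at $s=0$, which is harmless. On a plot factoring locally through $S_I\times G^I$ with $I\subset\{0,\dots,N\}$, every move with $k>N$ involves only zero weights and acts trivially on the points of the plot. So locally $H$ is the identity for $s\leq 2^{-N-1}$, followed by finitely many flatly glued rotations, and your local-finiteness argument applies.

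\textbf{Comparison with the paper.} Your Step~2 is correct, since it only uses that slot $0$ is empty. Up to reparametrization it agrees with the paper's second homotopy $H^{(2)}$ onto $e_1$.

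The paper's first homotopy, the normalized segment from $x$ to the one-step shift $Sx$, has the same defect as your Step~1. Its assertion that $x$ and $Sx$ have disjoint supports fails whenever two consecutive weights are nonzero. The interpolated weight $(1-t)x_j+t\,x_{j-1}$ then carries the two labels $g_j$ and $g_{j-1}$. So the top-down scheduling above is needed in either version. Your flat cutoff for smooth concatenation also settles a point the paper leaves implicit.
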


\begin{proof}
We construct a contraction using the shift operator.

Define
\[
S(x_1,x_2,x_3,\dots) = (0,x_1,x_2,\dots),
\]
and extend it to $E^{\mathrm{sph}}(G)$ by inserting the identity element of $G$ at the first coordinate.

Define
\[
H_t^{(1)}(x) = \frac{(1-t)x + t Sx}{\|(1-t)x + t Sx\|}.
\]
Since $x$ and $Sx$ have disjoint supports, they are orthogonal, hence the denominator never vanishes.

This defines a smooth homotopy from the identity to the shift.

Then define
\[
H_t^{(2)}(y) = \frac{(1-t)Sy + t e_1}{\|(1-t)Sy + t e_1\|},
\]
where $e_1 = (1,0,0,\dots)$.

This defines a contraction of the shift to the base point.

Composing the two homotopies yields a contraction of $E^{\mathrm{sph}}(G)$.
\end{proof}

\subsection{Geometric interpretation}

\begin{remark}
The spherical normalization replaces the affine constraint
\[
\sum_i t_i = 1
\]
by a quadratic constraint
\[
\sum_i x_i^2 = 1,
\]
which is invariant under orthogonal transformations.
\end{remark}

\begin{remark}
The embedding $\iota$ identifies $E(G)$ with a distinguished subspace of $E^{\mathrm{sph}}(G)$ adapted to the Milnor diffeology, while the ambient spherical space is better suited for metric and Dirac-type constructions.
\end{remark}

\subsection{Summary}

The spherical Milnor space:
\begin{itemize}
\item is a quotient space compatible with vanishing coordinates,
\item carries a natural diffeological structure,
\item contains $E(G)$ as a smooth subspace,
\item and is diffeologically contractible.
\end{itemize}
\section{Diffeological smoothness and gluing} \label{4}

\subsection{Final diffeology and local models}

Recall that $E^{\mathrm{sph}}(G)$ is endowed with the final diffeology generated by the maps
\[
\psi_I : S_I \times G^I \longrightarrow E^{\mathrm{sph}}(G),
\]
where $I \subset \mathbb{N}$ is finite.

\begin{definition}
A map $p : U \to E^{\mathrm{sph}}(G)$ is a plot if and only if for every $u \in U$ there exist:
\begin{itemize}
\item an open neighborhood $V \subset U$ of $u$,
\item a finite index set $I$,
\item a smooth map $\widetilde p : V \to S_I \times G^I$,
\end{itemize}
such that
\[
p|_V = \psi_I \circ \widetilde p.
\]
\end{definition}

\subsection{Directed system and gluing maps}

For $J \subset I$, define the inclusion
\[
\iota_{J,I} : S_J \times G^J \longrightarrow S_I \times G^I
\]
by extending coordinates by zero and identities.

\begin{proposition}
The family $(S_I \times G^I, \iota_{J,I})$ forms a directed system in the category of diffeological spaces.
\end{proposition}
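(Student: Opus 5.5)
The plan is to check the three defining properties of a directed system in turn, using only the explicit formula for $\iota_{J,I}$. On the $S_J$ factor it extends $(x_j)_{j\in J}$ by $x_i=0$ for $i\in I\setminus J$. On the $G^J$ factor it extends $(g_j)_{j\in J}$ by $g_i=e$ for $i\in I\setminus J$.

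First, well-definedness and smoothness of each $\iota_{J,I}$. The extended vector still satisfies $\sum_{i\in I}x_i^2=\sum_{j\in J}x_j^2=1$, so the map lands in $S_I\times G^I$. For smoothness I will view $\iota_{J,I}$ as a product of two maps.
\begin{itemize}
\item The sphere factor is the restriction to $S_J$ of the linear inclusion $\mathbb R^J\hookrightarrow\mathbb R^I$. This is smooth for the subset diffeologies, because a plot of $S_J$ composed with a linear map is a smooth map into $\mathbb R^I$ with values in $S_I$.
\item The group factor is $G^J\to G^I$, $(g_j)\mapsto (g_j,e)$. Each component is either a projection or a constant map, and both are smooth. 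Hence the map is smooth for the product diffeology.
\end{itemize}
A product of smooth maps is smooth into the product diffeology, which gives smoothness of $\iota_{J,I}$.

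Second, the functorial identities. We need $\iota_{I,I}=\mathrm{id}$, which is immediate, and $\iota_{J,I}\circ\iota_{K,J}=\iota_{K,I}$ for $K\subset J\subset I$. Both sides extend $(x_k,g_k)_{k\in K}$ by $(0,e)$ on $I\setminus K$. Componentwise they agree, because the indices in $J\setminus K$ get $(0,e)$ at the first step and are then left unchanged.

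Third, directedness of the index set. The finite subsets of $\mathbb N$, ordered by inclusion, form a directed poset, since any $I,I'$ have the upper bound $I\cup I'$. This gives a directed system indexed by that poset.

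For compatibility with the rest of the section, I would also note that $\psi_I\circ\iota_{J,I}=\psi_J$. This holds because the extra coordinates have $x_i=0$, so their group entries are irrelevant under $\sim$. It is not needed for the proposition itself, but it will be needed later for the colimit.

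The only step needing any care is the smoothness of the sphere factor: one must confirm that the subset diffeology on $S_I$ is the one induced from $\mathbb R^I$, as stated in Section~\ref{3}. Unlike the Milnor simplices, this diffeology imposes no stationarity condition at $x_i=0$, so there is no boundary subtlety. Everything else is formal.
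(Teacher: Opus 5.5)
Your verification is correct and is the natural argument. The paper states this proposition with no proof at all, treating it as immediate from the description of $\iota_{J,I}$ as extension by $0$ and by the identity element. Your checks are exactly what it leaves implicit: the image lies in $S_I$, smoothness follows from the subset diffeology on $S_I\subset\mathbb R^I$ and the product diffeology on $G^I$, the identities $\iota_{I,I}=\mathrm{id}$ and $\iota_{J,I}\circ\iota_{K,J}=\iota_{K,I}$ hold, and finite subsets are directed under union.

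Your side remark $\psi_I\circ\iota_{J,I}=\psi_J$ is also right, but it only makes $E^{\mathrm{sph}}(G)$ a cocone over the system. Points of a single $S_I\times G^I$ with some $x_i=0$ but different $g_i$ remain distinct in the colimit, which is why the paper later passes to $Q_I=(S_I\times G^I)/\!\sim_I$.
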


\begin{proposition}
The space $E^{\mathrm{sph}}(G)$ is the colimit of this system in the category of diffeological spaces.
\end{proposition}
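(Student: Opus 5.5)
The plan is to verify the universal property directly, using the standard description of colimits in the category of diffeological spaces. A colimit there is the set-theoretic colimit of the underlying sets, that is, the disjoint union \(\coprod_I S_I\times G^I\) modulo the equivalence relation generated by the structure maps. It carries the final diffeology, namely the push-forward of the sum diffeology along the quotient map. So three things have to be checked: that \((\psi_I)_I\) is a cocone, that the induced map from the set-theoretic colimit to \(E^{\mathrm{sph}}(G)\) is a bijection, and that the two final diffeologies agree.

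\emph{Step 1 (cocone).} For \(J\subset I\) I will check that \(\psi_I\circ\iota_{J,I}=\psi_J\). This holds because \(\iota_{J,I}\) extends the spherical coordinates by \(0\). The group entries it inserts sit at indices with \(x_i=0\), so they are killed by \(\sim\). Smoothness of \(\iota_{J,I}\) is clear, since it is the restriction of a linear map \(\mathbb R^J\to\mathbb R^I\) times an inclusion \(G^J\to G^I\).

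\emph{Step 2 (bijection).} Surjectivity is immediate: every point of \(E^{\mathrm{sph}}(G)\) has finite support, so it lies in the image of some \(\psi_I\). Injectivity is where I expect the main obstacle. Two points \((x,g),(x,g')\in S_I\times G^I\) that differ only at an index \(i\) with \(x_i=0\) are identified by \(\psi_I\). However, if \(g_i\neq e\), neither point is in the image of \(\iota_{I\setminus\{i\},I}\), so the relation generated by the \(\iota_{J,I}\) alone does not identify them. I will therefore make precise that the directed system is to be read with its canonical identifications. Concretely, I will take as connecting data the maps \(S_J\times G^I\to S_I\times G^I\) (extension by zero in the sphere factor, arbitrary group entries off \(J\)). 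Equivalently, I will note that each \(\psi_I\) already factors through the quotient \((S_I\times G^I)/\!\sim_I\), and run the colimit over these quotients, on which the \(\iota_{J,I}\) induce maps. With this reading, \(\psi_I(x,g)=\psi_{I'}(x',g')\) forces \(x\) and \(x'\) to have the same support \(K\subset I\cap I'\) and the same entries of \(g\) on \(K\). Both points then come from the same point of \(S_K\times G^K\), which gives injectivity.

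\emph{Step 3 (diffeologies).} Once the underlying sets are identified, the diffeology on the colimit is by construction the final diffeology for the family of maps \(S_I\times G^I\to\operatorname{colim}\). Under the bijection of Step~2 these maps are exactly the \(\psi_I\). The diffeology of \(E^{\mathrm{sph}}(G)\) was defined as the final diffeology for \((\psi_I)_I\), and its plots were characterized in the preceding definition as the maps that locally lift through some \(\psi_I\); this is the same description. Finally, for any cocone \((f_I\colon S_I\times G^I\to Y)\), the induced set map \(f\colon E^{\mathrm{sph}}(G)\to Y\) is smooth. Indeed, a plot locally has the form \(\psi_I\circ\widetilde p\), and then \(f\circ\psi_I\circ\widetilde p=f_I\circ\widetilde p\) is a plot of \(Y\). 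This gives the universal property. The only genuinely delicate point is the bookkeeping of the relation \(\sim\) in Step~2; everything else is formal.
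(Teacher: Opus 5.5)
Your proposal is correct, and it is more complete than the paper's proof. The paper's proof is the single remark that the final diffeology generated by the $\psi_I$ \emph{precisely encodes the colimit structure}. That remark is your Step~3 alone, and it takes for granted that the underlying set of the colimit of $(S_I\times G^I,\iota_{J,I})$ is $E^{\mathrm{sph}}(G)$.

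Your Step~2 correctly shows that this fails for nontrivial $G$. Colimits of diffeological spaces are computed on underlying sets, and the $\iota_{J,I}$ are injective maps in a directed system, so every canonical map $S_I\times G^I\to\operatorname{colim}$ is injective. Hence, for $h\neq e$, the points $((1,0),(e,h))$ and $((1,0),(e,e))$ of $S_{\{1,2\}}\times G^{\{1,2\}}$ remain distinct in the colimit, although $\psi_{\{1,2\}}$ identifies them. This filtered-colimit argument is also the clean way to justify your remark that the generated relation does not identify them; "not in the image of $\iota_{I\setminus\{i\},I}$" alone does not rule out chains through larger index sets. So $E^{\mathrm{sph}}(G)$ is only a quotient of the colimit of the system as literally defined. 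Your reinterpretation is a necessary correction of the statement, not bookkeeping.

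Of your two repairs, the $Q_I=(S_I\times G^I)/\!\sim_I$ version is the better one. It is also exactly the object the paper itself introduces later, in Section~5. In that version the induced maps $Q_J\to Q_I$ are smooth and injective, the system stays directed, and each $Q_I\to E^{\mathrm{sph}}(G)$ is injective. Step~3 then needs only one further fact: $S_I\times G^I\to Q_I$ is a subduction, so the final diffeologies for $(Q_I\to E^{\mathrm{sph}}(G))_I$ and for $(\psi_I)_I$ coincide.

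Your other repair should be phrased either as a diagram with extra objects $S_J\times G^I$, or as a family of maps indexed by $G^{I\setminus J}$. In the first form, each $S_J\times G^I$ maps to $S_I\times G^I$ by extension by zero and to $S_J\times G^J$ by projection. Either way it works, but the diagram is no longer directed.

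Finally, in the universal-property check, state explicitly that the cocones are those of the corrected diagram. A cocone over the original system need not be constant on $\sim_I$-classes, and then the induced map on $E^{\mathrm{sph}}(G)$ is not even well defined.
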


\begin{proof}
By construction, the diffeology on $E^{\mathrm{sph}}(G)$ is the final diffeology generated by the maps $\psi_I$, which precisely encodes the colimit structure.
\end{proof}

\subsection{Local finiteness}

\begin{proposition}
Every plot $p : U \to E^{\mathrm{sph}}(G)$ is locally supported on a fixed finite index set $I$.
\end{proposition}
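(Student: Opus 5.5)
The statement follows almost directly from the description of plots given in the definition at the start of this section, combined with the directed-system structure of the local models. I would unwind that definition and then note that the index set can be chosen locally constant and stable under enlargement.

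\textbf{Step 1: a local lift at each point.} Let $p:U\to E^{\mathrm{sph}}(G)$ be a plot and fix $u_0\in U$. By the characterization of plots of the final diffeology, there is an open neighborhood $V\ni u_0$, a finite $I\subset\mathbb N$ and a smooth $\widetilde p=(\widetilde x,\widetilde g):V\to S_I\times G^I$ with $p|_V=\psi_I\circ\widetilde p$. This already says that on $V$ every point $p(u)$ has all spherical coordinates $x_i(u)$ vanishing for $i\notin I$.

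\textbf{Step 2: independence of the representative.} I would check that this vanishing is intrinsic, i.e.\ it does not depend on the chosen lift. The coordinates $x_i$ of a point of $E^{\mathrm{sph}}(G)$ are well defined, since $\sim$ only identifies group coordinates $g_i$ with $x_i=0$. So the support $\{i: x_i(u)\neq 0\}$ is a well-defined finite set attached to $p(u)$. On $V$ it is contained in $I$.

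\textbf{Step 3: the precise meaning of ``fixed''.} The content to verify is that $I$ need not vary with $u$ inside $V$. The support itself may jump, since coordinates can cross zero because no stationarity is imposed in the spherical model. What is fixed is the ambient finite set $I$. If two such neighborhoods $V_1,V_2$ with index sets $I_1,I_2$ overlap, then on $V_1\cap V_2$ one may pass to $I_1\cup I_2$ via the inclusions $\iota_{I_k,I_1\cup I_2}$. These are smooth, and $\psi_{I_1\cup I_2}\circ\iota_{I_k,I_1\cup I_2}=\psi_{I_k}$ by the directed-system proposition. So a finite union of local index sets is again admissible. By local compactness of $U$, on any relatively compact open subset one can then use a single finite $I$.

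\textbf{Main obstacle.} The only delicate point is the well-definedness in Step 2, namely that the $x_i$ descend to the quotient $E^{\mathrm{sph}}(G)$. This holds because the equivalence relation does not touch the $x_i$. Otherwise the argument is a direct unwinding of the definition of final diffeology.
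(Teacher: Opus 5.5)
Your Step 1 is exactly the paper's argument (the paper simply says the claim is immediate from the definition of the final diffeology), and Steps 2--3 are correct but supplementary. One caution on Step 3: the compactness argument gives a single finite $I$ containing all supports on a relatively compact $W$, with $p$ factoring through $\psi_I$ near each point, but it does not give one smooth lift $W\to S_I\times G^I$; local lifts can differ in the $g_i$ where $x_i=0$ and need not patch (for example, $g_i$ can lie in different components of a disconnected $G$ on either side of an interval where $x_i\equiv 0$).
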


\begin{proof}
Immediate from the definition of the final diffeology.
\end{proof}

\subsection{Compatibility under change of charts}

\begin{proposition}
Let $p : U \to E^{\mathrm{sph}}(G)$ be a plot. If on overlapping neighborhoods $V_1, V_2 \subset U$ the map $p$ factors through
\[
S_{I_1} \times G^{I_1}, \qquad S_{I_2} \times G^{I_2},
\]
then on $V_1 \cap V_2$ it factors through
\[
S_{I_1 \cup I_2} \times G^{I_1 \cup I_2}.
\]
\end{proposition}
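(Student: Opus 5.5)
The plan is to use the inclusions $\iota_{J,I}$ of the directed system and compose them with the given local lifts. On $V_1$ we have $p|_{V_1}=\psi_{I_1}\circ\widetilde p_1$ with $\widetilde p_1:V_1\to S_{I_1}\times G^{I_1}$ smooth, and on $V_2$ similarly $p|_{V_2}=\psi_{I_2}\circ\widetilde p_2$. Put $K=I_1\cup I_2$, which is finite, and consider on $V_1\cap V_2$ the map
\[
\widetilde q:=\iota_{I_1,K}\circ \widetilde p_1|_{V_1\cap V_2}:V_1\cap V_2\longrightarrow S_K\times G^K .
\]
This is a composite of smooth maps, since $\iota_{I_1,K}$ is smooth: it extends the $S_{I_1}$-coordinates by zero and the group coordinates by the identity, and both operations are smooth for the subset and product diffeologies.

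Next I check that $\psi_K\circ\iota_{I_1,K}=\psi_{I_1}$. Both maps extend the coordinates by zero outside $I_1$. They differ at most in the group entries for indices in $K\setminus I_1$, and these carry $x_i=0$, so they are identified by the relation $\sim$. Consequently $\psi_K\circ\widetilde q=p$ on $V_1\cap V_2$, which gives the required factorization through $S_{I_1\cup I_2}\times G^{I_1\cup I_2}$. By symmetry, $\iota_{I_2,K}\circ\widetilde p_2$ is a second lift through the same chart.

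I would then add a remark on how the two lifts relate. They agree in their sphere components, because both project to the same point of $E^{\mathrm{sph}}(G)$ and the $x$-coordinates are well defined on the quotient. Their group components agree wherever the corresponding $x_i\neq0$. The remaining freedom is exactly the $\sim$-ambiguity.

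The main obstacle is conceptual rather than computational. The statement only asks for \emph{existence} of a factorization, and the argument above gives that directly. One should not claim that the two lifts coincide, nor that a canonical smooth lift exists on $V_1\cap V_2$. Indeed, where some $x_i(u)$ vanishes, $\widetilde p_1$ and $\widetilde p_2$ may carry different and even non-matching smooth group data, and gluing them would require a choice. I would therefore state the result only in the existence form, using one of the two lifts, and avoid any claim of uniqueness of lifts.
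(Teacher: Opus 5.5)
Your argument is correct and is essentially the paper's proof made explicit. The paper's one-line proof just invokes the compatibility $\psi_{I_1\cup I_2}\circ\iota_{I_1,I_1\cup I_2}=\psi_{I_1}$ of the inclusions, which is exactly the identity you verify before composing $\iota_{I_1,K}$ with the lift $\widetilde p_1$; your remarks that only the lift on $V_1$ is needed for existence, and that the two lifts need not coincide where some $x_i$ vanishes, are both accurate.
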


\begin{proof}
This follows from the compatibility of the inclusions $\iota_{J,I}$.
\end{proof}

\subsection{Stratification by support}

\begin{definition}
For each finite $I \subset \mathbb{N}$, define
\[
E_I := \psi_I(S_I \times G^I).
\]
\end{definition}

\begin{proposition}
Each $E_I$ is a diffeological subspace of $E^{\mathrm{sph}}(G)$ and is diffeomorphic to $S_I \times G^I$.
\end{proposition}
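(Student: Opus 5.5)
The plan is to show that the restriction
\[
\psi_I : S_I\times G^I \longrightarrow E_I
\]
is a bijection that is smooth in both directions, with \(E_I\) carrying the subset diffeology from \(E^{\mathrm{sph}}(G)\). The steps are: (i) surjectivity, which holds by the definition of \(E_I\); (ii) injectivity; (iii) smoothness of \(\psi_I\); (iv) smoothness of the inverse.

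For (iii), the map \(\psi_I\) is one of the generating maps of the final diffeology on \(E^{\mathrm{sph}}(G)\), so it is smooth into \(E^{\mathrm{sph}}(G)\). Its image lies in \(E_I\), so it is also smooth into \(E_I\) with the subset diffeology.

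For (iv), let \(p:U\to E_I\) be a plot. By the definition of plots of the final diffeology and by the Proposition on local finiteness, locally \(p=\psi_J\circ\widetilde p\) for some finite \(J\) and some smooth \(\widetilde p\). By the Proposition on compatibility under change of charts, we may enlarge \(J\) to \(J\cup I\). Since \(p\) takes values in \(E_I\), the spherical coordinates indexed by \((J\cup I)\setminus I\) vanish identically. These are smooth real functions, so dropping them gives a smooth map into \(S_I\). The group coordinates \(g_i\) for \(i\in I\) are read off as the corresponding components of \(\widetilde p\), which are smooth into \(G\). This lifts \(p\) smoothly to \(S_I\times G^I\), and the lift equals \(\psi_I^{-1}\circ p\) provided (ii) holds. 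Hence \(\psi_I^{-1}\) is smooth.

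The main obstacle is step (ii), injectivity, at points where some \(x_i=0\) with \(i\in I\). There the relation \(\sim\) forgets \(g_i\), so two points \((x,g)\) and \((x,g')\) that differ only in such a \(g_i\) have the same image under \(\psi_I\). On the open stratum, where all \(x_i\neq 0\) for \(i\in I\), injectivity is immediate, since \(\sim\) is trivial there. At boundary points, I would first try to read \(S_I\times G^I\) in the statement as the local model with the paper's convention, stated in the Remark after the definition of \(\sim\), that \(G\)-coordinates with vanishing weight are not recorded. With that reading the argument above goes through. Without it, the bijection, and hence the diffeomorphism, holds only on the open stratum. I would verify at exactly this point that \(\psi_I\) separates points, and state this convention explicitly.
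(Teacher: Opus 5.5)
Your diagnosis is correct, and it goes further than the paper's own justification. The paper's proof is the single sentence that the claim ``follows directly from the definition of the quotient and of the final diffeology''; it never addresses injectivity. But you should not leave the point hedged. Taken literally, the statement is false as soon as $G$ is nontrivial and $|I|\geq 2$, and not merely for the particular map $\psi_I$.

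Here is a counterexample. Take $G=\mathbb Z_2$ with the discrete diffeology and $I=\{1,2\}$. Then $S_I\times G^I$ is the disjoint union of the four circles $C_{ab}=S_I\times\{(a,b)\}$, and it has four smooth path components, since plots into a discrete $G$ are locally constant. In $E_I$, however, the points $(0,\pm1)$ of $C_{ab}$ and $C_{a'b}$ are identified for $a'\neq a$, because $g_1$ is forgotten where $x_1=0$. Likewise the points $(\pm1,0)$ of $C_{ab}$ and $C_{ab'}$ are identified. Now choose a smooth $x(t)\in S_I$ with $x(t)=(0,1)$ for $|t|\leq 1$. The path equal to $\psi_I\circ(x(t),(a,b))$ on $(-\infty,1)$ and to $\psi_I\circ(x(t),(a',b))$ on $(-1,\infty)$ is well defined, and by locality it is a plot. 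So $E_I$ is smoothly path-connected and cannot be diffeomorphic to $S_I\times G^I$. The check announced in your last sentence (that $\psi_I$ separates points) therefore fails, and you should commit to a corrected statement rather than to a reading convention.

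The correct statement uses the model $Q_I=(S_I\times G^I)/\!\sim_I$, which the paper itself introduces later in Section~5. The map $\bar\psi_I\colon Q_I\to E_I$ induced by $\psi_I$ is a diffeomorphism onto $E_I$ with its subset diffeology. As you note, over the open part where all $x_i\neq0$ for $i\in I$, $\psi_I$ itself is a diffeomorphism onto its image. For this corrected statement, your steps (iii) and (iv) give a complete proof. Injectivity of $\bar\psi_I$ is immediate, because $\sim$ restricted to sequences supported in $I$ is exactly $\sim_I$. Your step (iv) carries the real content: every plot of $E_I$ lifts locally through $\psi_I$, i.e.\ $\psi_I\colon S_I\times G^I\to E_I$ is a subduction.

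Three small points to tighten:
\begin{enumerate}
\item Enlarging $J$ to $J\cup I$ only needs the identity $\psi_J=\psi_{J\cup I}\circ\iota_{J,J\cup I}$, not the proposition on change of charts.
\item The vanishing of $x_k$ for $k\in(J\cup I)\setminus I$ needs a reason: $\sim$ never changes the spherical coordinates, so every representative of a point of $E_I$ is supported in $I$.
\item It is the image of your lift in $Q_I$, not the lift itself, that equals $\bar\psi_I^{-1}\circ p$.
\end{enumerate}
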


\begin{proposition}
We have
\[
E^{\mathrm{sph}}(G) = \bigcup_{I} E_I.
\]
\end{proposition}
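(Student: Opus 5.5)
The equality $E^{\mathrm{sph}}(G)=\bigcup_I E_I$ is essentially set-theoretic, so I will prove it by double inclusion. The inclusion $\bigcup_I E_I\subset E^{\mathrm{sph}}(G)$ is immediate, because each $E_I=\psi_I(S_I\times G^I)$ is by definition the image of a map with values in $E^{\mathrm{sph}}(G)$. The real content is the reverse inclusion, and the key input is the finite support condition built into the definition of $\widetilde E^{\mathrm{sph}}(G)$.

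For the reverse inclusion, let $[x]\in E^{\mathrm{sph}}(G)$ and choose a representative $(x_i,g_i)_{i\in\mathbb N}\in\widetilde E^{\mathrm{sph}}(G)$. The finite support condition says that $x_i=0$ for all $i$ outside some finite set. I set
\[
I:=\{i\in\mathbb N\mid x_i\neq 0\}.
\]
This set is finite, and it is nonempty because $\sum_i x_i^2=1$. Then $(x_i)_{i\in I}$ satisfies $\sum_{i\in I}x_i^2=\sum_{i\in\mathbb N}x_i^2=1$, so it lies in $S_I$, and $(g_i)_{i\in I}\in G^I$.

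By construction, $\psi_I\bigl((x_i)_{i\in I},(g_i)_{i\in I}\bigr)$ is the class of the sequence that agrees with $(x_i)$ on $I$, vanishes off $I$, and carries some group entries (say the identity) off $I$. This sequence has the same spherical coordinates as the chosen representative. Its group coordinates agree with those of the representative wherever $x_i\neq 0$, that is, on $I$. By the definition of $\sim$ the two sequences are equivalent, so $[x]=\psi_I(\cdot)\in E_I$.

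The only point needing care, and it is a mild one, is that $\psi_I$ is well defined independently of the group entries chosen off $I$ and of the representative of $[x]$. Both facts follow from the definition of $\sim$, exactly as in the remark following the definition of the diffeology on $E^{\mathrm{sph}}(G)$. If one prefers to let $I$ be any finite set containing the support, the same argument applies verbatim, since entries with $x_i=0$ in $I$ are irrelevant under $\sim$.

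One could add as a remark that this decomposition is compatible with the directed system: $E_J\subset E_I$ whenever $J\subset I$, via $\iota_{J,I}$. This shows the union is directed and ties it to the colimit description established above.
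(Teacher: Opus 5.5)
Your argument is correct and is essentially the paper's. The paper gives no separate proof at this point; where it restates the stratification in Section~5, it simply says the equality follows from the finite support condition. Your double inclusion makes that explicit: you take $I$ to be the support of a representative and use $\sim$ to discard the group entries off $I$.
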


\begin{remark}
The family $(E_I)$ defines a stratification indexed by finite subsets of $\mathbb{N}$.
\end{remark}

\subsection{Behavior at the boundary}

\begin{proposition}
Let $x \in E_I$ such that some coordinate $x_j = 0$. Then $x$ lies in the image of $E_J$ for some strict subset $J \subset I$.
\end{proposition}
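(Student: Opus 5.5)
The plan is to argue directly from the definition of \(E_I=\psi_I(S_I\times G^I)\), the inclusions \(\iota_{J,I}\), and the equivalence relation \(\sim\) defining \(E^{\mathrm{sph}}(G)\).

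Let \(x\in E_I\) with \(x_j=0\) for some \(j\in I\). Choose a representative \(x=\psi_I\bigl((x_i)_{i\in I},(g_i)_{i\in I}\bigr)\) with \(\sum_{i\in I}x_i^2=1\). Set
\[
J:=\{i\in I\mid x_i\neq 0\}.
\]
Then \(j\notin J\), so \(J\subsetneq I\). Moreover \(J\neq\emptyset\), because \(\sum_{i\in I} x_i^2=1\) forces at least one nonzero coordinate. Hence \(S_J\) is a genuine sphere and \(E_J\) makes sense.

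Next, consider the point \(\bigl((x_i)_{i\in J},(g_i)_{i\in J}\bigr)\). It lies in \(S_J\times G^J\), since \(\sum_{i\in J}x_i^2=\sum_{i\in I}x_i^2=1\), the omitted terms being zero. Applying \(\iota_{J,I}\) gives the element of \(S_I\times G^I\) whose spherical coordinates are \(x_i\) for \(i\in J\) and \(0\) for \(i\in I\setminus J\); these coincide with the original \(x_i\). Its group coordinates are \(g_i\) for \(i\in J\) and \(e\) for \(i\in I\setminus J\).

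This element and the original representative agree in every coordinate with \(x_i\neq 0\). They may differ only in the group entries indexed by \(I\setminus J\), where \(x_i=0\). By the definition of \(\sim\) they are therefore equivalent in \(\widetilde E^{\mathrm{sph}}(G)\). Consequently
\[
\psi_J\bigl((x_i)_{i\in J},(g_i)_{i\in J}\bigr)=\psi_I\circ\iota_{J,I}\bigl((x_i)_{i\in J},(g_i)_{i\in J}\bigr)=x,
\]
so \(x\in E_J\).

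The only delicate point is the compatibility \(\psi_I\circ\iota_{J,I}=\psi_J\). This is exactly where the quotient by \(\sim\) is used: without it, the identity elements that \(\iota_{J,I}\) inserts in the vanishing slots would not match the original \(g_i\). I would state this compatibility explicitly, since both \(\psi_I\) and \(\psi_J\) are defined as extension by zero followed by passage to the quotient.

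As a remark, the argument also gives the minimal choice \(J=\operatorname{supp}(x)\). This is the natural statement for the stratification by support.
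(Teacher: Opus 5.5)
Your proof is correct and follows the same route as the paper, whose proof is the single observation that when $x_j=0$ the support of $(x_i)$ is contained in a strict subset $J\subset I$. You also make explicit two points the paper leaves implicit: why $J$ is nonempty, and the compatibility $\psi_I\circ\iota_{J,I}=\psi_J$ that comes from the relation $\sim$.
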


\begin{proof}
If $x_j = 0$, the support of $(x_i)$ is contained in a strict subset $J \subset I$.
\end{proof}

\begin{remark}
Thus, the boundary of each stratum is contained in lower-dimensional strata.
\end{remark}

\subsection{Smooth functions}

\begin{proposition}
A function $f : E^{\mathrm{sph}}(G) \to \mathbb{R}$ is smooth if and only if for every finite $I$, the pullback
\[
f \circ \psi_I : S_I \times G^I \to \mathbb{R}
\]
is smooth.
\end{proposition}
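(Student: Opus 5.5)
The plan is to prove both implications directly from the description of plots of $E^{\mathrm{sph}}(G)$ as the final diffeology generated by the maps $\psi_I$. We use the standard fact that a real-valued function on a diffeological space is smooth exactly when its composite with every plot is an ordinary smooth function on the plot's domain. Here $\mathbb R$ carries its standard diffeology.

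\emph{Necessity.} Assume $f$ is smooth. Each $\psi_I : S_I\times G^I\to E^{\mathrm{sph}}(G)$ is smooth, since the diffeology on $E^{\mathrm{sph}}(G)$ is the final one generated by the family $(\psi_I)_I$, which makes every generating map smooth. A composite of smooth maps is smooth, so $f\circ\psi_I$ is smooth for every finite $I$.

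\emph{Sufficiency.} Assume $f\circ\psi_I$ is smooth for every finite $I$, and let $p:U\to E^{\mathrm{sph}}(G)$ be an arbitrary plot. We must show that $f\circ p$ is smooth on $U$. The argument proceeds in three steps.
\begin{enumerate}
\item By the local description of plots stated at the beginning of this section, every $u\in U$ has an open neighborhood $V\subset U$, a finite set $I$, and a smooth lift $\widetilde p:V\to S_I\times G^I$ with $p|_V=\psi_I\circ\widetilde p$.
\item On $V$ we can therefore write $f\circ p|_V=(f\circ\psi_I)\circ\widetilde p$. This is a composite of a plot of $S_I\times G^I$ with a smooth real-valued function on $S_I\times G^I$, so it is a smooth function on $V$.
\item Smoothness of a real function on an open subset of Euclidean space is a local property. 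Hence $f\circ p$ is smooth on all of $U$, and $f$ is smooth.
\end{enumerate}

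The only point needing care is the legitimacy of the local description in step 1. A final diffeology is generated by the $\psi_I$ and closed under locality, but one might worry that it also contains some kind of "gluing" of lifts through different $I$'s. This is exactly why the description holds in the stated form. Any constant map is a plot because it lifts through some single $\psi_I$, and composing a lift with a smooth reparametrization gives another lift. So the family of maps that locally factor through some $\psi_I$ already satisfies the diffeology axioms. It is therefore the generated diffeology, and it is the description we used. The compatibility proposition for overlapping neighborhoods guarantees that different local lifts can be reconciled, but we do not need that here, since the argument is purely local.
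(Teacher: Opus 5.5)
Your proof is correct and takes the same route as the paper, whose proof simply says the result "follows directly from the definition of the final diffeology." You unpack this into necessity (each $\psi_I$ is smooth for the final diffeology and composites of smooth maps are smooth) and sufficiency (local factorization $p|_V=\psi_I\circ\widetilde p$ plus locality of smoothness on Euclidean domains); your added check that the locally-factoring maps already form a diffeology (constants lift because every point has nonempty finite support, and lifts are stable under reparametrization) correctly justifies the plot description the paper takes as given.
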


\begin{proof}
This follows directly from the definition of the final diffeology.
\end{proof}

\subsection{Summary}

The diffeological structure on $E^{\mathrm{sph}}(G)$ is characterized by:
\begin{itemize}
\item a colimit structure over finite-dimensional smooth spaces,
\item local finite-dimensionality of plots,
\item compatibility under inclusions of index sets,
\item and a stratified structure governed by supports of coordinates.
\end{itemize}
\section{Tangent structures and boundary behavior} \label{5}

\subsection{Local tangent model}

Let $E^{\mathrm{sph}}(G)$ be as in Section~3.

\begin{proposition}
Locally, near any point of support $I$, the space $E^{\mathrm{sph}}(G)$ is modeled on
\[
S_I \times G^I.
\]
\end{proposition}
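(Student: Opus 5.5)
We read ``locally modeled'' as the paper does in Section~4. Fix a point $x\in E^{\mathrm{sph}}(G)$ of support $I$, meaning $x_i\neq 0$ exactly for $i\in I$. We must show three things. First, $x$ has a distinguished local parametrization by $S_I\times G^I$ through the map $\psi_I$. Second, every plot passing through $x$ is, near the relevant parameter, the composite of a smooth map into some chart $S_{I'}\times G^{I'}$ with $\psi_{I'}$. Third, near $x$ the chart $S_I\times G^I$ carries the geometric data that matters: the $G^I$-coordinates are honest coordinates and the $S_I$ directions are the sphere directions. The tangent constructions that follow only use this local chart structure, so this is the statement we will prove.

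\textbf{Step 1.} We describe the chart at $x$. Choose a representative $(x_i,g_i)_{i\in I}\in S_I\times G^I$. The map $\psi_I$ sends it to $x$. On the open subset
\[
S_I^{\circ}=\{(x_i)\in S_I \mid x_i\neq 0 \ \text{for all } i\in I\}
\]
the equivalence relation $\sim$ is trivial, since no weight vanishes. So $\psi_I$ restricted to $S_I^{\circ}\times G^I$ is injective. By the proposition that $E_I$ is diffeomorphic to $S_I\times G^I$, this restriction is a diffeomorphism onto its image. This gives a neighbourhood of $x$ inside the stratum $E_I$ that is modeled on an open subset of $S_I\times G^I$.

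\textbf{Step 2.} We treat arbitrary plots through $x$. By the local finiteness proposition and the definition of plots, such a plot factors near the parameter as $\psi_{I'}\circ\widetilde p$. By the compatibility-of-charts proposition we may enlarge $I'$ so that $I'\supseteq I$. The coordinates $x_i$ with $i\in I$ are nonzero at $x$, so they stay nonzero on a smaller neighbourhood. Hence the components $(\widetilde p_i)_{i\in I}$ form a smooth map into the open set of points with nonzero $I$-weights. The remaining components, indexed by $I'\setminus I$, are small and contribute transverse directions through the inclusions $\iota_{I,I'}$. Therefore the germ of the plot splits into an $S_I\times G^I$ part together with directions coming from higher strata.

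\textbf{Step 3.} We conclude using the colimit description. The directed system $(S_I\times G^I,\iota_{J,I})$ shows that the germ of the diffeology at $x$ is generated by the chart $S_I\times G^I$ together with the inclusions into the larger charts. This is what ``modeled on $S_I\times G^I$'' means for the tangent analysis in this section.

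\textbf{Main obstacle.} Step 2 is where the difficulty lies. A neighbourhood of $x$ in $E^{\mathrm{sph}}(G)$ is not literally contained in $E_I$, because higher-support points accumulate at $x$. The statement must therefore be read as saying that $S_I\times G^I$ is the base local model, with the higher strata attaching through the maps $\iota_{I,I'}$. I would make this reading explicit and check it against how the next results use the proposition.
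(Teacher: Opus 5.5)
The paper gives no proof of this proposition. Its only support is the Section~4 description of plots as locally of the form $\psi_{I'}\circ\widetilde p$, which the first half of your Step~2 reproduces.

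You are right that the literal statement cannot hold. For every finite $K\subset\mathbb N\setminus I$, the curve $u\mapsto[(\sqrt{1-|K|u^2}\,x_I,g_I;\,(u)_{k\in K},\cdot)]$ is a plot through $x$. Hence every $D$-open neighbourhood of $x$ meets $E^\circ_{I\cup K}$. Moreover, the smooth functions $[(y,g)]\mapsto y_j$, $j\notin I$, separate these curves, so the tangent space at $x$ is infinite-dimensional.

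\textbf{The gap.} Having dropped the literal reading, your argument never establishes a precise replacement.
\begin{itemize}
\item Step~2's ``splitting of the germ'' is asserted, not proved, and as formulated it is not right. The spherical components $(\widetilde x_i)_{i\in I}$ of $\widetilde p$ do not lie on $S_I$: their squared norm is $1-\sum_{j\in I'\setminus I}\widetilde x_j^2$. The other components take values in the quotient $(\mathbb R^{I'\setminus I}\times G^{I'\setminus I})/\!\sim$, where the $G$-labels collapse at $0$, not in a space of ``directions''.
\item Step~3 is either false or empty. If it means $S_I\times G^I$ together with the maps $\iota_{I,I'}$, these have image in $E_I$ and only generate plots of $E_I$. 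If it means the larger charts themselves, it is just the definition of the final diffeology and singles out nothing about $S_I\times G^I$.
\item Step~1 rests on the paper's claim that $\psi_I:S_I\times G^I\to E_I$ is a diffeomorphism. This is false for $|I|\ge2$ and $G$ nontrivial, since $\psi_I$ forgets $g_i$ where $x_i=0$; that is why $Q_I$ appears later. On $S_I^\circ\times G^I$ your conclusion is true, but prove it directly: a plot with values in $\psi_I(S_I^\circ\times G^I)$ is locally $\psi_{I'}\circ\widetilde p$ with $I'\supseteq I$ and vanishing $x_j$-components for $j\in I'\setminus I$, so its $I$-components give a smooth lift.
\end{itemize}

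\textbf{The missing idea} is a product structure that makes ``modeled on $S_I\times G^I$'' precise. Let $U_I=\{[(y,g)]\mid y_i\neq0\ \forall i\in I\}$, which is $D$-open. Let $\mathcal C_I$ be the final-diffeology colimit, over finite $K\subset\mathbb N\setminus I$, of $(B_K\times G^K)/\!\sim$, where $B_K$ is the open unit ball of $\mathbb R^K$. Consider the maps
\[
[(y,g)]\longmapsto\Big(\tfrac{y_I}{\|y_I\|},\,g_I,\,[(y_j,g_j)_{j\notin I}]\Big),
\qquad
(w,g,[(z,h)])\longmapsto[(\sqrt{1-\|z\|^2}\,w,\,g;\,z,\,h)].
\]
They are mutually inverse and smooth; check this plotwise on finite charts, using $\|y_I\|>0$ and $\|z\|<1$. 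Hence $U_I\cong(S_I^\circ\times G^I)\times\mathcal C_I$. Under this identification $x\mapsto(x_I,g_I,\ast)$, with $\ast$ the cone point $z=0$, and $E_I^\circ$ maps to $S_I^\circ\times G^I\times\{\ast\}$.

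So near $x$ the space is an open piece of $S_I\times G^I$ times a transverse cone-type factor. Away from $\ast$, $\mathcal C_I$ is $(0,1)$ times a copy of $E^{\mathrm{sph}}(G)$ indexed by $\mathbb N\setminus I$; this is the usual local structure of a join.

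This reading is compatible with the paper's later first-order tangent theorem. It is not compatible with the definition placed right after the proposition, which takes $T_xS_I\oplus\bigoplus_{i\in I}T_{g_i}G$ as the whole tangent space. So the check you defer would fail there.
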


\begin{definition}
The tangent space at a point $(x_i,g_i)$ with support $I$ is defined locally as
\[
T_{(x,g)} E^{\mathrm{sph}}(G)
\simeq T_x S_I \oplus \bigoplus_{i\in I} T_{g_i} G.
\]
\end{definition}

\subsection{Tangent space to the sphere}

Recall that
\[
T_x S_I = \left\{ v \in \mathbb{R}^I \;\middle|\; \sum_{i\in I} x_i v_i = 0 \right\}.
\]

\begin{remark}
This is a codimension-one subspace of $\mathbb{R}^I$.
\end{remark}

\subsection{Behavior at the boundary}

Let $(x_i,g_i)$ be such that some coordinate $x_j = 0$.

\begin{proposition}
At such a point, the tangent space satisfies
\[
v_j \ \text{is unconstrained by the condition } \sum x_i v_i = 0.
\]
\end{proposition}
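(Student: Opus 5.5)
The plan is to read the statement through the local tangent model fixed just above. We take $(x,g)$ with support contained in a finite index set $I$, with $j\in I$ and $x_j=0$, and we identify the spherical part of the tangent space with $T_xS_I=\{v\in\mathbb R^I \mid \sum_{i\in I}x_iv_i=0\}$, as recalled earlier. The claim is that the coefficient of $v_j$ in the single defining linear equation is $x_j=0$. Hence the constraint reads $\sum_{i\in I\setminus\{j\}}x_iv_i=0$ and does not involve $v_j$. Concretely, we will prove the decomposition
\[
T_xS_I=T_{x'}S_{I\setminus\{j\}}\oplus\mathbb R\,e_j ,
\]
where $x'$ is the restriction of $x$ to $I\setminus\{j\}$. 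This makes precise that $v_j$ may take any real value independently of the other components.

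First we verify the algebraic statement. We have $\sum_{i\in I}x_iv_i=\sum_{i\neq j}x_iv_i+0\cdot v_j$. So $v\in T_xS_I$ if and only if its projection onto $\mathbb R^{I\setminus\{j\}}$ lies in $T_{x'}S_{I\setminus\{j\}}$, and $v_j$ is arbitrary. Since $\sum_{i\neq j}x_i^2=1$, the point $x'$ lies in $S_{I\setminus\{j\}}$, so that sphere is the correct one to use.

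Second, and this is the step that gives the statement actual geometric content, we check that every such $v_j$ is realized by a plot, so that the freedom is not merely formal. We will use the explicit curve $c(s)=(\cos s)\,x+(\sin s)\,e_j$ in $S_I$. It is smooth because $x\perp e_j$ and both have norm $1$. It is a plot of $E^{\mathrm{sph}}(G)$ through $\psi_I$, with the $g_i$ held constant, and the value of $g_j$ is irrelevant at $s=0$ because of the equivalence relation. Its velocity at $s=0$ is $e_j$. Adding velocities of curves lying in $S_{I\setminus\{j\}}$ then produces every vector in the decomposition.

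The main obstacle is to make sure that this freedom is compatible with the stratification of Section~4. The curve $c$ leaves the stratum $E_{I\setminus\{j\}}$, and the coordinate $x_j$ crosses $0$ with nonzero derivative. It is exactly here that the subset diffeology on $S_I$ differs from the stationary Milnor diffeology: in the latter such a plot would be forbidden, as in the proposition on non-smoothness of $\Phi$. We will handle this by relying explicitly on the choice, made in Section~3, to give $S_I$ the subset diffeology. Under that choice nothing forces $\dot x_j=0$, and the normal direction $e_j$ genuinely belongs to the tangent space at boundary points.
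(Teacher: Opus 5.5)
Your proof is correct, and its core is the same as the paper's one-line argument: since $x_j=0$, the coefficient of $v_j$ in $\sum_i x_iv_i=0$ vanishes, so $v_j$ does not enter the constraint. Your extra step, realizing $e_j$ by the curve $(\cos s)\,x+(\sin s)\,e_j$, is sound but not needed for this statement; the paper carries out such a realization only later, by normalizing $x+tv$, in the lemma describing $T_pQ_I$.
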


\begin{proof}
The constraint reads
\[
\sum_i x_i v_i = 0.
\]
If $x_j = 0$, then $v_j$ does not appear in the equation.
\end{proof}

\begin{remark}
This shows that directions corresponding to vanishing coordinates are tangent directions.
\end{remark}

\subsection{Absence of normal directions at first order}

\begin{proposition}
There is no distinguished normal direction at first order at boundary points.
\end{proposition}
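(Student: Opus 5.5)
The plan is to read the statement as saying that, at a point $(x,g)$ with $x_j=0$ for some index $j$, the first-order tangent data never singles out a direction transverse to the stratum of smaller support. I will show that the direction $e_j$ is already an ordinary tangent vector of the ambient local model, and that nothing intrinsic distinguishes it from the other tangent directions.

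I will work in the local model from the definition of the tangent space. Fix a finite index set $I$ containing $j$ and the support of $x$. Then
\[
T_{(x,g)}E^{\mathrm{sph}}(G)\simeq T_xS_I\oplus\bigoplus_{i\in I}T_{g_i}G .
\]
By the preceding proposition, the component $v_j$ is unconstrained when $x_j=0$. Hence $T_xS_I$ splits orthogonally as
\[
T_xS_I=T_{x}S_{I\setminus\{j\}}\oplus\mathbb R e_j .
\]

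The first step is to check that $e_j$ is realized by a plot, namely the curve
\[
c(s)=\psi_I\bigl(\cos s\, x+\sin s\, e_j,\ g\bigr).
\]
This curve is smooth into $S_I\times G^I$, because the spheres carry the subset diffeology and no stationarity condition is imposed. Its velocity at $s=0$ is $e_j$. So this direction is a genuine first-order tangent direction of the sphere $S_I$, exactly like the directions inside $S_{I\setminus\{j\}}$. It is not a boundary-normal vector in the sense of a manifold with boundary: the stratum $E_{I\setminus\{j\}}$ sits inside the smooth sphere $S_I$ as a great subsphere with no boundary, and there is no inward/outward side, since $c(-s)$ is equally a plot.

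The second step is independence of the chart. I will use the proposition on compatibility under change of charts: enlarging $I$ to $I'\supset I$ adds further unconstrained directions $e_k$ with $x_k=0$. These directions are permuted by the symmetries of the sphere that fix $x$, namely the orthogonal transformations of the coordinates $k\notin\operatorname{supp}x$. Hence there is no canonical choice among them, and in the colimit no single distinguished normal line exists. The $\mathbb Z_2$ action and the sign changes $x_j\mapsto -x_j$ also show that even the orientation of $e_j$ is not distinguished.

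I expect the main obstacle to be the $G$-factor. At $x_j=0$ the coordinate $g_j$ is irrelevant by the equivalence relation $\sim$, so the $T_{g_j}G$ summand degenerates in the quotient. This part might suggest extra directions that are only visible at second order. I will argue that variations in $g_j$ along $c(s)$ enter only multiplied by $x_j(s)=\sin s$, so they contribute no new first-order direction. This consistent treatment is what makes the statement specifically first-order, and it leaves higher-order normal behaviour to the subsequent analysis.
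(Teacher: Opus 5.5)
Your proposal is correct and rests on the same observation as the paper's proof: at $x_j=0$ the constraint $\sum_i x_iv_i=0$ leaves $v_j$ free, so the $x_j$-variation is an honest tangent direction and nothing is transverse. Your explicit plot $c(s)$ and the two-sidedness argument via $c(-s)$ go beyond the paper's one-line proof; the paper only states the sign-symmetry point in a later remark.

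One caveat on the optional refinements. Orthogonal mixing of inactive coordinates is a symmetry of $S_{I'}$ but does not act on $E^{\mathrm{sph}}(G)$, because it cannot be made compatible with the group labels. Permutations of inactive indices and single sign changes $x_j\mapsto -x_j$ are the symmetries to invoke here; the global $\mathbb{Z}_2$-action does not fix the point. The $G$-factor paragraph is not needed for this statement.
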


\begin{proof}
At a point where $x_j=0$, any variation of $x_j$ satisfies the tangent condition. Hence no direction is singled out as transverse.
\end{proof}

\begin{remark}
This contrasts with manifolds with boundary, where a normal direction is canonically defined.
\end{remark}

\subsection{Second-order behavior}

\begin{proposition}
Second-order variations detect normal directions.
\end{proposition}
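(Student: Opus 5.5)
The idea is to make precise what "detect normal directions" means, and then check it on a smooth path through a boundary point. We work in the local model $S_I\times G^I$ of the preceding proposition, at a point $(x,g)$ with $x_j=0$, and only need the spherical factor.

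We would interpret the statement through the quadratic weight $t_j=x_j^2$, which links the spherical model to the Milnor barycentric coordinates via $\iota$ and $\Phi$. Take a smooth curve $c(s)=(x_i(s))$ in $S_I$ with $c(0)=x$ and $x_j(0)=0$. Then
\[
\frac{d}{ds}\Big|_{s=0} x_j(s)^2 = 2x_j(0)\,x_j'(0)=0,
\qquad
\frac{d^2}{ds^2}\Big|_{s=0} x_j(s)^2 = 2\,x_j'(0)^2 .
\]
So at first order the weight $t_j$ cannot distinguish a direction that leaves the stratum $E_{I\setminus\{j\}}$ from one that stays inside it. This is consistent with the two preceding propositions: $v_j$ is unconstrained, and no direction is transverse. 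At second order, however, the quantity $v_j^2=x_j'(0)^2$ is nonzero exactly when the curve leaves the lower stratum. We would therefore define the second-order normal datum at a boundary point as the quadratic form $v\mapsto \sum_{j\notin \operatorname{supp}(x)} v_j^2$ on $T_xS_I$. It is obtained canonically as the second derivative of the smooth function $\sum_{j\notin\operatorname{supp}(x)} x_j^2$, which vanishes to second order along the stratum.

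Next we check that this datum is well defined, meaning independent of the chart $I$ and of the representative curve. By the proposition on compatibility under change of charts, any two local factorizations pass through $S_{I_1\cup I_2}$. The inclusions $\iota_{J,I}$ extend by zero, so the added coordinates contribute zero to the form. Independence of the curve holds because the second derivative of a function vanishing to first order at $s=0$ depends only on the first jet of the curve, namely $c'(0)$. Hence the form depends only on the tangent vector $v$.

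Finally we interpret the form: its kernel is exactly $T_x S_{\operatorname{supp}(x)}\oplus\{0\}$, the tangent space to the stratum. Its nonzero values single out the normal components $v_j$, $j\notin\operatorname{supp}(x)$, and they single them out only up to sign. This matches the $\mathbb Z_2$-ambiguity of the spherical model and the non-smoothness of $\Phi$. The main obstacle is definitional rather than computational: the statement is informal, so we must fix the meaning of "detect". The delicate step is showing that the second-order quantity is intrinsic, i.e.\ independent of plot and chart, and not merely coordinate-dependent. We would argue this through the vanishing-order property of the function $\sum_{j\notin\operatorname{supp}(x)}x_j^2$ and its invariance under the gluing maps.
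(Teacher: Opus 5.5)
Your argument is correct, and its core is the same computation as the paper's proof. The paper takes the single path $x_j(t)=t$ through a point with $x_j=0$, notes that $t_j=t^2$, and observes that the first derivative vanishes while the second does not.

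You run that computation along an arbitrary curve on $S_I$, obtaining $2x_j'(0)^2$. This is the identity the paper later records as $\ddot t_i(0)=2v_i^2$. You then add what the paper leaves implicit, a precise meaning for ``detect'': the second-order datum is the Hessian at $p$ of $F_J=\sum_{j\notin J}x_j^2$, with $J=\operatorname{supp}(p)$, and its kernel is exactly the tangent space of the stratum.

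The paper's example only exhibits one direction that is invisible to $t_j$ at first order and visible at second order. Your version characterizes exactly which tangent directions are detected, and shows the answer is independent of choices.

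The step you flag as delicate is in fact easy. On the sphere $F_J=1-\sum_{j\in J}x_j^2$. Since $\sim$ never alters the spherical coordinates, $F_J\circ\psi_I$ is smooth for every $I$. So $F_J$ is a global smooth function on $E^{\mathrm{sph}}(G)$, and chart independence is automatic. Curve independence holds because $p$ is a critical point of $F_J$: the differential $dF_J=2\sum_{j\notin J}x_j\,dx_j$ vanishes at $p$, which kills the $c''(0)$ term. State that property, rather than ``vanishing to first order''.

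Three small corrections:

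\begin{enumerate}
\item The claim that $x_j'(0)^2\neq 0$ \emph{exactly when the curve leaves the lower stratum} is false as written. The curve $s\mapsto(\sqrt{1-s^4},\,s^2)$ in $S_{\{1,2\}}$ leaves $\{x_2=0\}$, yet $x_2'(0)=0$. The right statement is that the datum is nonzero exactly when $c'(0)$ is not tangent to the stratum, which is what your kernel identification already says.
\item The sum $\sum_{j\notin J}v_j^2$ sees the normal part only as a whole. The individual components, up to sign, come from the separate functions $x_j^2$.
\item That sign ambiguity is coordinatewise, not merely the global $\mathbb{Z}_2$.
\end{enumerate}
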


\begin{proof}
Let $x_j=0$. Consider a path
\[
x_j(t) = t.
\]
Then
\[
t_j(t) = x_j(t)^2 = t^2.
\]

The first derivative vanishes at $t=0$, but the second derivative does not. Hence normal behavior appears at second order.
\end{proof}

\begin{remark}
Thus, normal directions are invisible at the level of the tangent space but appear in higher-order tangent structures.
\end{remark}

\subsection{Interpretation}

\begin{remark}
This phenomenon reflects the quadratic nature of the spherical constraint and the singular behavior of the map $x \mapsto x^2$ at zero.
\end{remark}

\begin{remark}
It also explains why the spherical model is better suited for metric constructions: it regularizes first-order singularities.
\end{remark}
\subsection{Stratification and first-order invisibility}

We formalize the stratified structure induced by the support of the spherical coordinates and its behavior with respect to tangent structures.

\begin{definition}
For each finite subset $I \subset \mathbb{N}$, define the stratum
\[
E_I := \psi_I(S_I \times G^I) \subset E^{\mathrm{sph}}(G).
\]
\end{definition}

\begin{proposition}
Each $E_I$ is a diffeological subspace of $E^{\mathrm{sph}}(G)$, and the map
\[
\psi_I : S_I \times G^I \longrightarrow E_I
\]
is a diffeomorphism.
\end{proposition}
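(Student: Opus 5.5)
The plan is to check the four properties that make $\psi_I : S_I \times G^I \to E_I$ a diffeomorphism: surjectivity, injectivity, smoothness, and smoothness of the inverse. Throughout, $E_I$ carries the subset diffeology from $E^{\mathrm{sph}}(G)$. The proposition in Section~4 already asserts that $E_I$ is diffeomorphic to $S_I \times G^I$; the present statement identifies $\psi_I$ as the map realizing this.

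\textbf{Bijectivity and smoothness.} Surjectivity holds by the definition $E_I := \psi_I(S_I\times G^I)$. For injectivity, two points of $S_I \times G^I$ with the same image have the same spherical coordinates $(x_i)_{i\in I}$, because $\sim$ never changes the $x_i$. By the definition of $\sim$, their group coordinates agree whenever $x_i \neq 0$. Where some $x_i = 0$, the coordinate $g_i$ is exactly the irrelevant datum discarded in $E^{\mathrm{sph}}(G)$. Injectivity therefore holds only after reading $S_I\times G^I$ with this same convention, which the paper uses implicitly in Sections~3 and~4: the group coordinate attached to a vanishing weight is not recorded. With this reading $\psi_I$ is a bijection. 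Smoothness is immediate: $\psi_I$ is one of the generating maps of the final diffeology, so $\psi_I \circ q$ is a plot for every plot $q$ of $S_I\times G^I$. It therefore corestricts to a smooth map into $E_I$ with its subset diffeology.

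\textbf{Smoothness of the inverse.} Let $p : U \to E^{\mathrm{sph}}(G)$ be a plot with values in $E_I$. I must show that $\psi_I^{-1}\circ p$ is a plot of $S_I\times G^I$. By the local description of plots and the local finiteness proposition, around each $u\in U$ there are a neighborhood $V$, a finite $J$, and a smooth $\widetilde p : V \to S_J\times G^J$ with $p|_V = \psi_J\circ \widetilde p$. By the compatibility-of-charts proposition, I may replace $J$ by $I\cup J$ and compose with $\iota_{J,I\cup J}$. So $\widetilde p$ can be taken to land in $S_{I\cup J}\times G^{I\cup J}$.

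Since $p(V)\subset E_I$, every coordinate $x_k\circ\widetilde p$ with $k\in J\setminus I$ vanishes identically on $V$. Hence the spherical part of $\widetilde p$ already lies in the smooth submanifold $S_I\subset S_{I\cup J}$ obtained by extension by zero. The corresponding group coordinates $g_k$, $k\notin I$, are irrelevant and may be discarded. Projecting onto the $I$-indexed factors gives a smooth map $V\to S_I\times G^I$, and composing it with $\psi_I$ recovers $p|_V$. This local lift is $\psi_I^{-1}\circ p|_V$. Smoothness of $\psi_I^{-1}\circ p$ on all of $U$ then follows from the locality axiom of diffeologies.

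\textbf{Main obstacle.} The delicate step is justifying that the projection $S_{I\cup J}\times G^{I\cup J}\to S_I\times G^I$ restricts smoothly on the locus $\{x_k=0,\ k\notin I\}$. For the spherical factor this is clear, since $S_I$ is a closed embedded submanifold of $S_{I\cup J}$ and the spheres carry the subset diffeology from Euclidean space. For the group factor one must check that forgetting the irrelevant $g_k$ is compatible with the quotient by $\sim$ at points where some $x_i$ with $i\in I$ also vanishes. I would handle this first by working on the open set where a fixed subset of the $x_i$, $i\in I$, is nonzero. Then I would use that these sets, together with the locality axiom, cover $U$.
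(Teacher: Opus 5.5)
\textbf{The gap is the injectivity step, and it cannot be repaired: the proposition is false as written.}

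Take $I=\{1,2\}$, $x=(1,0)\in S_I$, and $g_2\neq g_2'$ in $G$. Then $(x,(g_1,g_2))$ and $(x,(g_1,g_2'))$ are distinct points of $S_I\times G^I$ with the same image under $\psi_I$, because $\sim$ discards $g_2$ when $x_2=0$. So $\psi_I$ is not injective as soon as $|I|\ge 2$ and $G\neq\{e\}$.

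Your reading of $S_I\times G^I$ ``with the convention that the group coordinate attached to a vanishing weight is not recorded'' is not a convention of the paper. It replaces the domain by the quotient $Q_I=(S_I\times G^I)/\!\sim_I$, which is a different diffeological space. The paper introduces $Q_I$ only later, in the subsection on support stratification, precisely because $\psi_I$ factors through it.

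The failure is not confined to this particular map. Take $G$ discrete with $m\ge 2$ elements. Then $S_I\times G^I$ has $m^2$ smooth path components. By contrast, $E_I$ is smoothly path-connected: the image of the circle labelled $(g_1,g_2)$ meets that of $(g_1,h_2)$ at $x=(1,0)$, and that of $(h_1,g_2)$ at $x=(0,1)$. So $E_I$ is not diffeomorphic to $S_I\times G^I$ at all.

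The paper's one-line justification (``follows directly from the definition of the quotient and of the final diffeology'') overlooks the same point. The right move was to flag the statement as false and state the corrected version.

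\textbf{What your argument does prove.} The corrected version is that $\psi_I$ induces a diffeomorphism $Q_I\to E_I$, with $E_I$ carrying the subset diffeology, and your argument proves exactly this. Your lifting step enlarges $J$ to $I\cup J$, notes that $x_k\circ\widetilde p\equiv 0$ for $k\notin I$, and projects onto the $I$-indexed factors. This correctly shows that $\psi_I:S_I\times G^I\to E_I$ is a subduction, and its fibres are exactly the $\sim_I$-classes.

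However, the local lifts $V\to S_I\times G^I$ you construct are not unique and need not agree on overlaps. So the locality axiom does not glue them into a map with values in $S_I\times G^I$; they glue only after composing with the quotient map $S_I\times G^I\to Q_I$.

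Once the target is $Q_I$, the ``main obstacle'' you describe disappears. The projection is smooth on the whole locus $\{x_k=0,\ k\notin I\}$, and its composite with the quotient map is smooth. No cover by sets on which some $x_i$, $i\in I$, are nonzero is needed.
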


\begin{proof}
This follows directly from the definition of the quotient and of the final diffeology.
\end{proof}

\begin{proposition}
We have a stratification
\[
E^{\mathrm{sph}}(G) = \bigcup_{I \subset \mathbb{N},\ \text{finite}} E_I,
\]
and for $J \subset I$,
\[
E_J \subset \overline{E_I}.
\]
\end{proposition}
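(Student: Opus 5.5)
The proof has two parts: the covering identity, and the closure inclusion $E_J \subset \overline{E_I}$ for $J \subset I$. The closure is taken in the $D$-topology of $E^{\mathrm{sph}}(G)$, i.e.\ the finest topology making all plots continuous. Equivalently, a set is $D$-open iff its preimage under every $\psi_K$ is open in $S_K \times G^K$, since the diffeology is final for the $\psi_K$.

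For the covering, take a point of $E^{\mathrm{sph}}(G)$ and choose a representative $(x_i,g_i)$ in $\widetilde E^{\mathrm{sph}}(G)$. Its support $I=\{i : x_i \neq 0\}$ is finite by the finite-support condition. Restricting to $I$ gives an element of $S_I \times G^I$ whose image under $\psi_I$ is the given point. Hence $E^{\mathrm{sph}}(G)=\bigcup_I E_I$, which restates the earlier proposition of Section~4.

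For the closure statement, fix $J \subset I$ and a point $y=\psi_J(x,g)\in E_J$. Extend $g$ to $I$ by inserting the identity $e$ at indices in $I\setminus J$. Let $\sigma_{I\setminus J}$ be the vector with entries $1$ on $I\setminus J$ and $0$ on $J$. Define the path
\[
c(s)=\frac{x+s\,\sigma_{I\setminus J}}{\|x+s\,\sigma_{I\setminus J}\|},\qquad s\in\mathbb R .
\]
The denominator never vanishes, because $x$ and $\sigma_{I\setminus J}$ have disjoint supports and so are orthogonal, exactly as in the contractibility proof. Therefore $c$ is a smooth curve into $S_I$, and $s\mapsto \psi_I(c(s),g)$ is a plot of $E^{\mathrm{sph}}(G)$.

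At $s=0$ this plot equals $\iota_{J,I}(x,g)$, which is identified with $y$ by the equivalence relation $\sim$, since the extra coordinates vanish. For $s\neq 0$, every coordinate indexed by $I$ is nonzero, so $c(s)$ has full support $I$. If $E_I$ is read as the stratum of points with support exactly $I$, this shows $\psi_I(c(s),g)$ lies in that stratum for $s\neq0$.

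Plots are continuous for the $D$-topology, so $y=\lim_{s\to0}\psi_I(c(s),g)$. A limit of points of $E_I$ lies in $\overline{E_I}$, which gives $y\in\overline{E_I}$.

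The main obstacle is the definitional ambiguity of $E_I$. If $E_I=\psi_I(S_I\times G^I)$ literally, then $E_J\subset E_I$ already holds via $\iota_{J,I}$ and the inclusion is trivial. The content of the statement lies in the open-stratum reading, where $E_I$ consists of points with support exactly $I$. I would state both readings explicitly and give the path argument above for the second.

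A secondary point is whether $\overline{E_I}$ means $D$-closure or closure in some coarser topology. Since the $D$-topology is the finest one for which plots are continuous, limits along plots persist in any coarser topology, so the argument works in all such cases.
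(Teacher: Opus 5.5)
Your proof is correct. Under the definition the paper actually uses, $E_I=\psi_I(S_I\times G^I)$, the paper's one-line argument is exactly your trivial observation. It uses finite support for the covering, and ``vanishing coordinates define lower-dimensional strata'' for the closure, which amounts to $E_J=\psi_I\bigl(\iota_{J,I}(S_J\times G^J)\bigr)\subset E_I\subset\overline{E_I}$. On that reading the two proofs coincide.

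Your normalized path $c(s)$, combined with continuity of plots for the $D$-topology, goes further. It proves $E_J^\circ\subset\overline{E_I^\circ}$ for the exact-support strata $E_J^\circ$ that the paper defines only later. This is the version that gives the word ``stratification'' real content, since the literal $E_I$ are nested rather than disjoint, and the paper does not justify it.

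One small point of precision concerns your claim that $c(s)$ has support exactly $I$ for $s\neq 0$. That claim uses $x_j\neq 0$ for every $j\in J$, i.e.\ that $y$ has support exactly $J$. If you also want the mixed inclusion $\psi_J(S_J\times G^J)\subset\overline{E_I^\circ}$, replace $\sigma_{I\setminus J}$ by the indicator vector of $I\setminus\operatorname{supp}(x)$, and the same argument goes through.
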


\begin{proof}
The first statement follows from the finite support condition. The second follows from the fact that vanishing coordinates define lower-dimensional strata.
\end{proof}

\begin{theorem}[Stratification is invisible at first order]
Let $x \in E_J$ for some finite $J \subset \mathbb{N}$. Then the tangent space at $x$ satisfies
\[
T_x E^{\mathrm{sph}}(G) \simeq T_x E_I
\]
for any $I \supset J$.

In particular, the inclusion of strata
\[
E_J \hookrightarrow E_I
\]
does not induce any distinguished normal direction at the level of tangent spaces.
\end{theorem}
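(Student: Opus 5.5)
The argument will be carried out entirely in the local models $S_I\times G^I$, using the local tangent model of Section~5 and the compatibility of charts established in Section~4.

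\textbf{Step 1: fix the point and its charts.} Let $x=(x_i,g_i)\in E_J$ and let $I\supset J$ be finite. Through the inclusion $\iota_{J,I}$ (extension by zero and identities), $x$ is represented by a point $\tilde x\in S_I\times G^I$ with $x_i=0$ for $i\in I\setminus J$. By the compatibility proposition for changes of charts, any plot through $x$ locally factors through some $S_K\times G^K$. Replacing $K$ by $K\cup I$, we may assume $K\supset I$. Hence computing $T_xE^{\mathrm{sph}}(G)$ reduces to computing $T_{\tilde x}S_K\oplus\bigoplus_{i\in K}T_{g_i}G$ for index sets $K$ containing $I$.

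\textbf{Step 2: the sphere factor.} Using $T_xS_K=\{v\in\mathbb R^K\mid \sum_{i\in K}x_iv_i=0\}$ and the proposition on boundary behavior, the constraint involves only the coordinates in $\operatorname{supp}(x)\subset J$. The components $v_i$ with $i\in K\setminus J$ are therefore free. This gives a canonical splitting
\[
T_xS_K=T_xS_J\oplus\mathbb R^{K\setminus J},
\]
in which no direction of $\mathbb R^{K\setminus J}$ is distinguished from the tangent directions of the larger sphere. For $K=I$, this shows that $T_xE_I$, computed in the chart $S_I\times G^I$, is a genuine vector space, namely $T_{\tilde x}(S_I\times G^I)$. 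It contains $T_xE_J$ as the subspace with vanishing $(I\setminus J)$-components, and the complementary directions are ordinary tangent vectors rather than inward normals. This is the content of the proposition on the absence of first-order normal directions.

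\textbf{Step 3: identify $T_xE^{\mathrm{sph}}(G)$ with $T_xE_I$.} This is the step I expect to be the main obstacle. Two issues must be handled. First, the group components $g_i$ for $i$ with $x_i=0$ are killed by $\sim$, so I would show that the corresponding $T_{g_i}G$ contributions act trivially on germs of plots at $x$, and then identify the relevant tangent data with those of the chart $S_I\times G^I$. Second, $E^{\mathrm{sph}}(G)$ is a colimit, so extra directions coming from larger $K\supsetneq I$ must be identified with directions already present. My plan is to read the statement with $T_xE^{\mathrm{sph}}(G)$ meaning the tangent space in the local model at the point, as in the definition of Section~5, and to check that the isomorphism is induced by $d\iota_{I,K}$ and is compatible under the directed system. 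Under that reading, the map $T_xE_I\to T_xE^{\mathrm{sph}}(G)$ is the differential of the inclusion of the chart. Its injectivity and surjectivity onto the model tangent space follow from Step 2, since any first-order variation of a plot factoring through $S_K\times G^K$ decomposes as tangent data in the chart of $I$ plus free coordinates.

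\textbf{Step 4: conclude.} Combining Steps 2 and 3, the inclusion $E_J\hookrightarrow E_I$ induces a linear inclusion $T_xE_J\hookrightarrow T_xE_I$. Its cokernel $\mathbb R^{I\setminus J}$ consists of tangent directions with no sign or transversality constraint. Thus no distinguished normal direction arises at first order; normal behavior only appears at second order, as in the proposition on second-order behavior.
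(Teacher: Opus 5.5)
Your Step~2, together with the conclusion you draw in Step~4, is exactly the paper's proof. The paper only notes that in a chart $S_I\times G^I$ the constraint $\sum_{i\in I}x_iv_i=0$ does not involve $v_i$ for $i\in I\setminus J$, so these are unconstrained tangent directions, and concludes that the tangent space does not single out $E_J$ inside $E_I$. The paper never attempts the identification you set up in Step~3, and that step is a genuine gap. The injectivity and surjectivity you assert there do not follow from Step~2 and are in fact false, so the displayed isomorphism cannot be proved in the form you aim for.

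\emph{Injectivity.} The first half of your Step~3 is right: a curve $s\mapsto\psi_I(\tilde x,g(s))$ moving only some $g_i$ with $x_i=0$ is constant in $E^{\mathrm{sph}}(G)$. But this says precisely that $d\psi_I$ kills $\bigoplus_{i\in I\setminus J}T_{g_i}G$. Hence $T_{\tilde x}(S_I\times G^I)$, which you took as $T_xE_I$ in Step~2, does not inject into $T_xE^{\mathrm{sph}}(G)$. Indeed $\psi_I$ is not injective, so $E_I$ is not diffeomorphic to $S_I\times G^I$ for nontrivial $G$; the paper later replaces it by $Q_I$ for this reason.

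\emph{Surjectivity.} This fails for a more basic reason. For $k\notin I$, the coordinate $x_k$ is a smooth function on $E^{\mathrm{sph}}(G)$ that vanishes identically on $E_I$, so $dx_k$ annihilates the image of $T_xE_I$. Yet the curve $t\mapsto\psi_K\bigl((\tilde x+te_k)/\|\tilde x+te_k\|,g\bigr)$ with $K=I\cup\{k\}$ has $dx_k$-component $1$ at $t=0$. Your own phrase ``tangent data in the chart of $I$ plus free coordinates'' describes exactly this cokernel of $d\iota_{I,K}$. Note also that the statement, applied with $I=J$ and with some $I\supsetneq J$, would make $T_xE_J$ and $T_xE_I$ isomorphic. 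In the finite-dimensional chart models their dimensions differ, since $\dim T_xS_I=|I|-1$.

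\emph{The Section~5 reading.} Reinterpreting $T_xE^{\mathrm{sph}}(G)$ as the Section~5 local model does not rescue the claim. Either it is the stratum's tangent space, which is too small, or it depends on the chosen chart. In the latter case the claim is a tautology for one $I$ rather than an isomorphism for all $I\supset J$.

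\emph{What is true.} Both your Step~4 and the paper's argument actually establish the following. After discarding the inactive group directions, $T_xE_J\hookrightarrow T_xE_I$ is an inclusion with cokernel $\mathbb{R}^{I\setminus J}$. Into the colimit the cokernel is $\mathbb{R}^{(\mathbb{N}\setminus J)}$, as in the paper's later corollary. These cokernel directions carry no sign or half-space constraint, which is the real content of ``no distinguished normal direction''. Drop Step~3 and state the result in that form.
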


\begin{proof}
Let $x$ have support $J$. Locally, $x$ is represented in $S_J \times G^J$.

If we embed into a larger chart $S_I \times G^I$, the tangent space is
\[
T_x S_I = \left\{ v \in \mathbb{R}^I \;\middle|\; \sum_{i\in I} x_i v_i = 0 \right\}.
\]

For $i \in I \setminus J$, we have $x_i = 0$, hence no constraint is imposed on $v_i$. Therefore the additional directions corresponding to $I \setminus J$ are tangent directions.

Thus the tangent space does not distinguish between $E_J$ and its ambient $E_I$.
\end{proof}

\begin{corollary}
The stratification of $E^{\mathrm{sph}}(G)$ cannot be detected by first-order differential data.
\end{corollary}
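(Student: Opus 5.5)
The corollary will follow directly from the theorem just proved, which I take as established: for $x\in E_J$ and any $I\supset J$, $T_xE^{\mathrm{sph}}(G)\simeq T_xE_I$. The plan is to fix precisely what "first-order differential data" means, and then show that no such data can tell the strata apart.

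\textbf{Step 1: making "first order" precise.} I will interpret "detected by first-order data" as follows. There would have to be some construction, built only from the tangent spaces $T_xE^{\mathrm{sph}}(G)$ and the differentials of the chart maps $\psi_I$ and gluing maps $\iota_{J,I}$, that assigns to a point a value depending on its support. Typical examples are
\begin{itemize}
\item a subspace of $T_x$ tangent to the stratum,
\item a distinguished normal direction,
\item a jump in the dimension of the local model.
\end{itemize}

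\textbf{Step 2: the tangent models are stable and equidimensional.} Fix a point $x$ of support $J$ and take any $I\supset J$. In the local tangent model
\[
T_xS_I\oplus\bigoplus_{i\in I}T_{g_i}G,
\]
the coordinates $v_i$ with $i\in I\setminus J$ are unconstrained, as the theorem shows. A point $y$ of full support $I$ has tangent space $T_yS_I\oplus\bigoplus_{i\in I}T_{g_i}G$. Both $T_xS_I$ and $T_yS_I$ are hyperplanes in $\mathbb{R}^I$, by the earlier remark that $T_xS_I$ has codimension one. So the tangent spaces of points on different strata, computed in a common chart $S_I\times G^I$, are isomorphic. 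Moreover, the tangent space does not single out the subspace of directions tangent to $E_J$. By the proposition on absence of normal directions at first order, every variation of a vanishing coordinate $x_j$ satisfies the tangent condition.

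\textbf{Step 3: no first-order invariant separates strata.} Any first-order invariant that distinguished $E_J$ from $E_I$ would have to produce either a non-isomorphic tangent space or a canonical transverse subspace at $x$. Step 2 excludes both, so the corollary follows.

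\textbf{Main obstacle.} The delicate point is the claim that the whole tangent space is independent of the chart. A plot through $x$ can be factored through different index sets $I$. I will use the compatibility proposition to pass to the chart $I_1\cup I_2$, and check that the differentials of the maps $\iota_{J,I}$ identify the various local tangent models consistently. I expect this to be the only real work. The statement itself is essentially a restatement of the theorem, with the strata sitting inside a single spherical chart.
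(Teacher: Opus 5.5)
Your skeleton, deducing the corollary from the theorem just proved, is the paper's. Step~2, however, asserts more than that theorem gives, and with ``first-order data'' formalized as in your Step~1 those assertions are false.

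\textbf{The equidimensionality claim.} The model $T_xS_I\oplus\bigoplus_{i\in I}T_{g_i}G$ is the tangent space of the chart $S_I\times G^I$. It describes $E^{\mathrm{sph}}(G)$ only at points whose support is all of $I$. At a point $x$ of support $J\subsetneq I$ the map $\psi_I$ is not injective. All values of $g_i$, $i\in I\setminus J$, are identified, so a plot moving only such a $g_i$ is constant. The paper's own lemma on $T_pQ_I$ accordingly shows that no $T_{g_i}G$ summand survives for inactive $i$. Equivalently, the kernel of $d\psi_I$ at a preimage of $x$ is $\bigoplus_{i\in I\setminus J}T_{g_i}G$. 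This is the fiber of the degeneracy pseudo-bundle $\mathcal K$ of Section~6, which the paper identifies with $\ker d\pi$ and with $\ker g$. So the tangent model at $x$ has $|J|$ group summands instead of $|I|$. For $\dim G>0$ its dimension drops from $(|I|-1)+|I|\dim G$ to $(|I|-1)+|J|\dim G$. That is exactly the ``jump in the dimension of the local model'' your Step~1 counts as detection, and it is read off from the differentials of the chart maps that Step~1 allows.

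\textbf{The ``no distinguished subspace'' claim.} This fails even for discrete $G$. The $x_i$ are smooth functions on $E^{\mathrm{sph}}(G)$, since they are constant on $\sim$-classes. Hence $T_xE_J=\operatorname{im}(d\iota_{J,I})=\bigcap_{i\in I\setminus J}\ker(dx_i)_x$ is a canonically defined proper subspace of $T_xE_I$; the paper's later corollary gives the nonzero quotient $\mathbb R^{(\mathbb N\setminus J)}$. It also has a canonical $g$-orthogonal complement, spanned by the coordinate vectors $e_i$, $i\in I\setminus J$. The fact that the $v_i$ are unconstrained shows only that these are genuine, two-sided tangent directions, not that they are indistinguishable. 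So Step~3 does not follow.

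\textbf{What the theorem actually supports.} The theorem only inspects the spherical constraint at a single point $x$ while enlarging the chart from $J$ to $I$. It never compares tangent spaces at points of different strata. What it establishes is narrow: the condition $\sum_i x_iv_i=0$ places no restriction on the inactive components $v_i$. Hence there is no half-space tangent cone and no inward or outward normal at first order, in contrast with a manifold with boundary or with the simplicial Milnor coordinates, where $\dot t_i(0)=0$. That is the only reading under which the corollary holds, and under it the corollary is immediate from the theorem.

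You should therefore narrow Step~1 to this boundary-type meaning and drop the equidimensionality and ``no distinguished subspace'' claims of Step~2. The chart-independence issue you identify as the main obstacle is routine and is not where the difficulty lies.
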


\begin{proposition}[Second-order detection]
Let $x \in E_J$ and consider a curve $x(t)$ such that for some $i \notin J$,
\[
x_i(t) = t.
\]
Then
\\subsection{Support stratification and first-order tangent structure}

We now describe the tangent behavior of the spherical Milnor space along the support strata. The point is subtle: in the spherical model, a vanishing coordinate may be activated at first order, whereas its square is invisible at first order in the simplicial Milnor coordinates.

For a finite subset \(I\subset \mathbb N\), set
\[
Q_I
:=
(S_I\times G^I)/\!\sim_I,
\]
where
\[
(x_i,g_i)_{i\in I}\sim_I (x_i,g_i')_{i\in I}
\quad\Longleftrightarrow\quad
g_i=g_i' \ \text{for every } i\in I \text{ such that } x_i\neq 0.
\]
Thus the group coordinate \(g_i\) is forgotten whenever \(x_i=0\).

The map
\[
S_I\times G^I\longrightarrow E^{\mathrm{sph}}(G)
\]
factors through \(Q_I\), and we denote by
\[
\psi_I:Q_I\longrightarrow E^{\mathrm{sph}}(G)
\]
the induced map.

\begin{definition}
For a point
\[
p=[(x_i,g_i)]\in E^{\mathrm{sph}}(G),
\]
its \emph{support} is
\[
\operatorname{supp}(p):=\{i\in\mathbb N\mid x_i\neq 0\}.
\]
For a finite subset \(J\subset \mathbb N\), define the support stratum
\[
E_J^\circ
:=
\{p\in E^{\mathrm{sph}}(G)\mid \operatorname{supp}(p)=J\}.
\]
\end{definition}

\begin{remark}
The strata \(E_J^\circ\) are indexed by finite subsets \(J\subset\mathbb N\). Their closures meet along strata of smaller support. In particular, when a coordinate \(x_i\) tends to zero, the corresponding group coordinate \(g_i\) becomes irrelevant.
\end{remark}

\begin{lemma}
Let \(p=[(x_i,g_i)]\in E^{\mathrm{sph}}(G)\) have support \(J\), and let \(I\supset J\) be finite. Then the internal tangent space of \(Q_I\) at \(p\) is naturally identified with
\[
T_p Q_I
\simeq
\left\{
(v_i)_{i\in I}\in \mathbb R^I
\;\middle|\;
\sum_{j\in J} x_j v_j=0
\right\}
\oplus
\bigoplus_{j\in J} T_{g_j}G.
\]
No tangent component in \(T_{g_i}G\) occurs for indices \(i\in I\setminus J\).
\end{lemma}

\begin{proof}
Choose a representative of \(p\) in \(S_I\times G^I\). A plot through \(p\) is locally represented by a smooth map
\[
u\longmapsto (x_i(u),g_i(u))_{i\in I}
\]
with
\[
\sum_{i\in I}x_i(u)^2=1.
\]
Differentiating this relation at \(u=0\) gives
\[
2\sum_{i\in I}x_i(0)\dot x_i(0)=0.
\]
Since \(x_i(0)=0\) for \(i\in I\setminus J\), this becomes
\[
\sum_{j\in J}x_j\dot x_j(0)=0.
\]
Thus the spherical part of a tangent vector is an element
\[
(v_i)_{i\in I}
\quad\text{with}\quad
\sum_{j\in J}x_jv_j=0.
\]

For the group part, if \(j\in J\), then \(x_j\neq0\), and the coordinate \(g_j\) is meaningful in the quotient. A curve \(g_j(u)\) therefore contributes a tangent vector in \(T_{g_j}G\).

If \(i\in I\setminus J\), then \(x_i=0\) at the base point, and the quotient relation identifies all possible values of \(g_i\). Hence a curve varying only \(g_i\), with \(x_i\equiv0\), represents the zero tangent direction. More generally, at first order, the variation of \(g_i\) is not independently visible at the point where \(x_i=0\); only the variation of the spherical coordinate \(x_i\) is visible. Therefore no summand \(T_{g_i}G\) appears for \(i\notin J\).

Conversely, given
\[
(v_i)_{i\in I}
\quad\text{with}\quad
\sum_{j\in J}x_jv_j=0
\]
and tangent vectors \(\xi_j\in T_{g_j}G\) for \(j\in J\), choose smooth curves \(g_j(t)\) in \(G\) with
\[
g_j(0)=g_j,\qquad \dot g_j(0)=\xi_j.
\]
Set
\[
y(t)=x+tv\in \mathbb R^I.
\]
Since
\[
\frac{d}{dt}\bigg|_{t=0}\|y(t)\|^2
=
2\sum_{j\in J}x_jv_j=0,
\]
the normalized curve
\[
x(t)=\frac{y(t)}{\|y(t)\|}
\]
is well-defined for small \(t\) and satisfies
\[
x(0)=x,\qquad \dot x(0)=v.
\]
Together with the curves \(g_j(t)\), and arbitrary constant choices of \(g_i\) for \(i\notin J\), this gives a plot realizing the prescribed tangent vector. This proves the identification.
\end{proof}

\begin{theorem}[First-order tangent structure along support strata]
Let \(p=[(x_i,g_i)]\in E^{\mathrm{sph}}(G)\) have support \(J\). Then the internal tangent space of \(E^{\mathrm{sph}}(G)\) at \(p\) is the filtered colimit
\[
T_pE^{\mathrm{sph}}(G)
\simeq
\varinjlim_{I\supset J}
\left[
\left\{
(v_i)_{i\in I}\in \mathbb R^I
\;\middle|\;
\sum_{j\in J}x_jv_j=0
\right\}
\oplus
\bigoplus_{j\in J}T_{g_j}G
\right].
\]
Equivalently,
\[
T_pE^{\mathrm{sph}}(G)
\simeq
\left\{
(v_i)_{i\in\mathbb N}^{\mathrm{fin}}
\;\middle|\;
\sum_{j\in J}x_jv_j=0
\right\}
\oplus
\bigoplus_{j\in J}T_{g_j}G,
\]
where \((v_i)^{\mathrm{fin}}\) denotes a finitely supported real sequence.
\end{theorem}

\begin{proof}
By definition of the final diffeology, every plot of \(E^{\mathrm{sph}}(G)\) through \(p\) locally factors through one of the finite models \(Q_I\) with \(I\supset J\). Therefore the internal tangent space is generated by the tangent spaces \(T_pQ_I\), with compatibility under the inclusions \(I\subset K\).

For \(I\subset K\), the inclusion
\[
Q_I\hookrightarrow Q_K
\]
extends the coordinate vector \((v_i)_{i\in I}\) by zero on \(K\setminus I\), while leaving the group tangent components indexed by \(J\) unchanged. Hence the tangent spaces form a directed system.

Using the previous lemma, the directed colimit is precisely the space of finitely supported sequences \((v_i)\) satisfying
\[
\sum_{j\in J}x_jv_j=0,
\]
together with group tangent components only for active indices \(j\in J\). This gives the stated formula.
\end{proof}

\begin{corollary}
Let \(p\in E_J^\circ\). The tangent space to the stratum \(E_J^\circ\) embeds as
\[
T_pE_J^\circ
\simeq
\left\{
(v_j)_{j\in J}
\;\middle|\;
\sum_{j\in J}x_jv_j=0
\right\}
\oplus
\bigoplus_{j\in J}T_{g_j}G
\]
inside \(T_pE^{\mathrm{sph}}(G)\). The quotient consists of finitely supported variations of the inactive spherical coordinates:
\[
T_pE^{\mathrm{sph}}(G)/T_pE_J^\circ
\simeq
\mathbb R^{(\mathbb N\setminus J)}.
\]
\end{corollary}

\begin{proof}
The tangent space to the stratum is obtained by imposing \(v_i=0\) for all \(i\notin J\). The full tangent space allows finitely supported variations \(v_i\) for all inactive indices \(i\notin J\). Taking the quotient gives the stated direct sum.
\end{proof}

\begin{remark}
This is the precise sense in which the spherical model differs from the simplicial Milnor model. In the spherical model, an inactive coordinate \(x_i=0\) may be activated at first order. However, this first-order activation is symmetric in the sign of \(x_i\); it is not an outward normal direction of a manifold with boundary.
\end{remark}

\begin{theorem}[Quadratic invisibility in Milnor coordinates]
Let \(p\in E_J^\circ\), and let \(v\in T_pE^{\mathrm{sph}}(G)\) be a tangent vector with \(v_i\neq0\) for some inactive index \(i\notin J\). Under the quadratic map
\[
t_i=x_i^2,
\]
the corresponding first-order variation of the Milnor coordinate \(t_i\) vanishes:
\[
\dot t_i(0)=0.
\]
The first nonzero contribution occurs at second order:
\[
\ddot t_i(0)=2v_i^2.
\]
\end{theorem}

\begin{proof}
Let \(x_i(s)\) be a smooth curve with
\[
x_i(0)=0,\qquad \dot x_i(0)=v_i.
\]
Then
\[
t_i(s)=x_i(s)^2.
\]
Differentiating gives
\[
\dot t_i(s)=2x_i(s)\dot x_i(s),
\]
hence
\[
\dot t_i(0)=2x_i(0)\dot x_i(0)=0.
\]
Differentiating again,
\[
\ddot t_i(s)=2\dot x_i(s)^2+2x_i(s)\ddot x_i(s),
\]
and therefore
\[
\ddot t_i(0)=2\dot x_i(0)^2=2v_i^2.
\]
\end{proof}

\begin{remark}
Thus, directions that are visible at first order in the spherical model become second-order phenomena in the simplicial Milnor coordinates. This observation will be important for the comparison between the spherical metric and Fisher--Rao type structures.
\end{remark}]
and the corresponding direction vanishes at first order but not at second order.
\end{proposition}

\begin{proof}
Immediate computation.
\end{proof}

\begin{remark}
Thus, the stratification is only visible at the level of second-order tangent structures.
\end{remark}
\subsection{Summary}

The tangent structure of $E^{\mathrm{sph}}(G)$ exhibits:
\begin{itemize}
\item a standard local description on each stratum,
\item absence of normal directions at first order,
\item emergence of normal behavior at second order.
\end{itemize}
\section{Riemannian structure and energy formulation} \label{6}

\subsection{Local definition of the metric}

Let $G$ be a Lie group endowed with a fixed right-invariant Riemannian metric
\[
\langle \cdot,\cdot \rangle_G.
\]

For a finite index set $I$, consider the local model
\[
S_I \times G^I.
\]

\begin{definition}
On $S_I \times G^I$, define the metric
\[
g_I
=
\sum_{i\in I} dx_i^2
\;+\;
\sum_{i\in I} x_i^2 \,\langle \cdot,\cdot \rangle_G^{(i)},
\]
where $\langle \cdot,\cdot \rangle_G^{(i)}$ denotes the metric on the $i$-th copy of $G$.
\end{definition}

\begin{remark}
The first term is the restriction of the Euclidean metric to the sphere $S_I$, while the second term weights the group directions by the barycentric coefficients $x_i^2$.
\end{remark}

\subsection{Compatibility with the quotient}

\begin{proposition}
The metric $g_I$ is compatible with the equivalence relation defining $E^{\mathrm{sph}}(G)$.
\end{proposition}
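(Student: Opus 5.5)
The plan is to show that $g_I$ descends along two kinds of identifications. The first is the relation $\sim_I$ on $S_I\times G^I$ that forgets $g_i$ when $x_i=0$. The second is the family of face inclusions $\iota_{J,I}$ relating different local models. In both cases we want $g_I$ to induce a well-defined symmetric bilinear form on the first-order tangent spaces described earlier, namely the Lemma on $T_pQ_I$ and the Theorem identifying $T_pE^{\mathrm{sph}}(G)$ as a filtered colimit.

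\textbf{Step 1: degenerate directions.} Work at a point $p=[(x_i,g_i)]$ with support $J\subset I$. By the Lemma, the kernel of $T(S_I\times G^I)\to T_pQ_I$ is spanned by the vectors in $\bigoplus_{i\in I\setminus J}T_{g_i}G$. In $g_I$, such a vector $\xi_i$ pairs only through the term $x_i^2\langle\cdot,\cdot\rangle_G^{(i)}$, and this coefficient is $0$ at $p$. So these vectors lie in the radical of $g_I$ at $p$, and $g_I$ factors through $T_pQ_I$.

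\textbf{Step 2: choice of representative.} Two representatives of $p$ differ only in the values $g_i$ with $i\notin J$. The induced form on $T_pQ_I$ involves only the spherical part, which is Euclidean and does not depend on $g$, together with the terms $x_j^2\langle\cdot,\cdot\rangle_G^{(j)}$ for $j\in J$, which do not involve those $g_i$. Hence the induced form is independent of the representative. In the right-trivialized picture one can also note that right-invariance identifies all $T_{g}G$ with $\mathfrak g$ isometrically, but this is not needed.

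\textbf{Step 3: compatibility with the gluing maps.} For $J'\subset I\subset K$, pull back $g_K$ along $\iota_{I,K}$. The image of $\iota_{I,K}$ has $x_k=0$ for $k\in K\setminus I$, and since $dx_k\circ d\iota_{I,K}=0$ those coordinate terms vanish. The extra group terms carry the factor $x_k^2=0$. Therefore $\iota_{I,K}^*g_K=g_I$, and in particular $g_K$ restricted to the image of $T_pQ_I$ equals $g_I$. On the colimit $T_pE^{\mathrm{sph}}(G)$ the forms $g_I$ then assemble into one form: the Euclidean form on finitely supported $(v_i)$ with $\sum_J x_jv_j=0$, plus $\sum_{j\in J}x_j^2\langle\cdot,\cdot\rangle_G$. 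This form is also compatible with the group and $\mathbb Z_2$ actions, because the metric on $G$ is invariant and $x_i^2$ is even in $x_i$.

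\textbf{Main obstacle.} The main difficulty is making \emph{compatible} precise at the level of plots, beyond the pointwise computation. One needs smoothness of the induced pseudo-metric as a map on $T E^{\mathrm{sph}}(G)\times_X TE^{\mathrm{sph}}(G)$. A tangent vector with $x_i=0$ may be represented by plots whose $g_i(u)$ varies wildly, since no continuity of $g_i$ is forced where $x_i=0$.

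The proposed handling is to check smoothness plotwise. For a plot locally factoring as $\psi_I\circ\tilde p$, the pulled-back quantity $\tilde p^*g_I$ is a smooth symmetric tensor on $V$. Where $x_i=0$, the coefficient $x_i(u)^2$ kills the $g_i$-contribution, and it does so smoothly, because the whole term $x_i^2\langle d g_i,dg_i\rangle$ is smooth in $u$ for the smooth lift $\tilde p$. Any two lifts of the same plot agree on the components that $g_I$ actually sees, by Steps 1 and 2. Independence of the lift then follows, and this gives the compatibility asserted in the Proposition.
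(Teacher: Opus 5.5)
Your argument is correct and is essentially the paper's. The paper's proof is just your Step~1: the weight $x_i^2$ vanishes exactly where $g_i$ is forgotten, so the $T_{g_i}G$ contributions drop out. Your Step~3 is the paper's next proposition on inclusions.

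Your plotwise check that $\tilde p^*g_I$ does not depend on the lift is the form in which the claim is actually secured: lifts agree on the open set $\{x_i\neq0\}$, and $x_i^2$ vanishes off it. Lean on that check rather than on the pointwise Step~2, which inherits the paper's description of $T_pQ_I$. That description identifies activations of an inactive $x_i$ taken at different values of $g_i$. These are in fact distinct internal tangent vectors, since the smooth function $x_i\,\phi(g_i)$, with $\phi\in C^\infty(G)$ separating the two values, has different derivatives along them.
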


\begin{proof}
If $x_i = 0$, then the corresponding group coordinate $g_i$ is irrelevant in the quotient. At such a point, the metric coefficient $x_i^2$ vanishes, hence the contribution of $T_{g_i}G$ to the metric is zero. Therefore the metric descends to the quotient.
\end{proof}

\subsection{Compatibility under inclusions}

\begin{proposition}
For $J \subset I$, the metric $g_I$ restricts to $g_J$ under the natural inclusion
\[
S_J \times G^J \hookrightarrow S_I \times G^I.
\]
\end{proposition}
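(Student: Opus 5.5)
The plan is to verify the statement by pulling $g_I$ back along the inclusion
\[
\iota_{J,I}:S_J\times G^J\hookrightarrow S_I\times G^I,
\]
which extends the spherical coordinates by $x_i=0$ and the group coordinates by $g_i=e$ for $i\in I\setminus J$. The goal is to show that $\iota_{J,I}^*g_I=g_J$, by computing the pullback term by term.

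First, I will compute the differential of $\iota_{J,I}$ at a point $(x,g)\in S_J\times G^J$. A tangent vector $(v,\xi)$ with $v\in T_xS_J$ and $\xi=(\xi_j)_{j\in J}$, $\xi_j\in T_{g_j}G$, is sent to the vector $(\tilde v,\tilde\xi)$. Here $\tilde v_i=v_i$ for $i\in J$ and $\tilde v_i=0$ for $i\in I\setminus J$, since those coordinates are constantly zero along the image. Likewise $\tilde\xi_j=\xi_j$ for $j\in J$ and $\tilde\xi_i=0\in T_eG$ otherwise. The image indeed lies in $T_{\iota(x)}S_I$, since $\sum_{i\in I}x_i\tilde v_i=\sum_{j\in J}x_jv_j=0$.

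Second, I will evaluate the two summands of $g_I$ on this image. The Euclidean part gives
\[
\sum_{i\in I}\tilde v_i^2=\sum_{j\in J}v_j^2,
\]
which is the restricted Euclidean metric on $S_J$. The group part is $\sum_{i\in I}x_i^2\langle\tilde\xi_i,\tilde\xi_i\rangle_G^{(i)}$. For $i\in I\setminus J$, this term vanishes for two independent reasons: the weight $x_i^2$ is zero, and the vector $\tilde\xi_i$ is zero. The remaining sum is exactly $\sum_{j\in J}x_j^2\langle\xi_j,\xi_j\rangle_G^{(j)}$. Polarizing gives equality of the bilinear forms, hence $\iota_{J,I}^*g_I=g_J$.

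The main subtlety I expect is independence of the choice of extension for the group coordinates. In the quotient $E^{\mathrm{sph}}(G)$, the coordinates $g_i$ with $x_i=0$ are arbitrary rather than fixed to $e$. I would handle this with the proposition on compatibility with the quotient: at points where $x_i=0$, the weight $x_i^2$ kills every contribution of $T_{g_i}G$. Consequently any extension $g_i$ gives the same restricted form, and the restriction statement also holds on the quotient models $Q_J\hookrightarrow Q_I$. The same observation also covers the case where a plot through a point of support $J$ varies $x_i$ for $i\notin J$ at first order: such variations lie outside the image of $d\iota_{J,I}$, so they do not affect the restriction claim.

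Finally, I would conclude with a remark that this compatibility, combined with the directed system of Section 4, makes the family $(g_I)$ define a single pseudo-metric on $E^{\mathrm{sph}}(G)$.
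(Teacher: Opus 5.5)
Your proposal is correct and is essentially the paper's argument: the paper simply observes that $x_i=0$ for $i\in I\setminus J$ on the image of the inclusion, so the extra terms $dx_i^2$ and $x_i^2\langle\cdot,\cdot\rangle_G^{(i)}$ drop out and $g_I$ reduces to $g_J$. Your pullback computation via $d\iota_{J,I}$ makes the same point explicit, and your remark that the choice of $g_i$ at inactive indices does not matter is a harmless additional check.
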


\begin{proof}
On the inclusion, one has $x_i = 0$ for $i \in I \setminus J$, hence all additional terms vanish. The metric reduces exactly to $g_J$.
\end{proof}

\begin{corollary}
The family $(g_I)_I$ defines a global metric $g$ on $E^{\mathrm{sph}}(G)$.
\end{corollary}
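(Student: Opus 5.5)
The plan is to use the defining property of the final diffeology on $E^{\mathrm{sph}}(G)$. A covariant $2$-tensor, or pseudo-metric, on a space with final diffeology generated by the maps $\psi_I$ is determined plotwise. It therefore suffices to give, for each finite $I$, a smooth symmetric bilinear form on the local model $S_I\times G^I$ (equivalently on $Q_I$). These forms must satisfy two conditions: they descend through the equivalence relation $\sim_I$, and they are compatible with the gluing maps $\iota_{J,I}$. The two preceding propositions supply exactly these conditions. I would define $g$ at a point $p$ of support $J$, on a tangent vector represented in some $T_pQ_I$ with $I\supset J$, by the formula $g_I$. Then I would show that the result is independent of the chosen $I$ and of the chosen representative.

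First step: descent to $Q_I$. I would use the compatibility-with-the-quotient proposition. At a point where $x_i=0$, the weight $x_i^2$ kills the $T_{g_i}G$ term. So $g_I$ only depends on the spherical components $(v_i)_{i\in I}$ and on the group components indexed by the support $J$. This matches the description of $T_pQ_I$ in the Lemma on support strata. Right-invariance of $\langle\cdot,\cdot\rangle_G$ is not needed for this step. The point is only that changing $g_i$ for inactive $i$ does not affect the value.

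Second step: independence of $I$. I would use the inclusion proposition, $\iota_{J,I}^*g_I=g_J$. For $I\subset K$, the tangent map of $Q_I\hookrightarrow Q_K$ extends $(v_i)$ by zero and keeps the active group components. So $g_K$ evaluated on the image equals $g_I$. By the directed-colimit description of $T_pE^{\mathrm{sph}}(G)$ in the first-order tangent structure theorem, the forms $g_I$ assemble into a well-defined bilinear form on the colimit. Concretely, this gives $g_p(v\oplus\xi,v\oplus\xi)=\sum_i v_i^2+\sum_{j\in J}x_j^2\|\xi_j\|_G^2$ on finitely supported $v$.

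Third step: smoothness and compatibility with plots, which I expect to be the main obstacle. Given a plot $p:U\to E^{\mathrm{sph}}(G)$, locally $p=\psi_I\circ\widetilde p$, and I would set $p^*g:=\widetilde p^{\,*}g_I$. This is a smooth symmetric $2$-tensor on $U$, since $g_I$ is smooth on the finite-dimensional manifold $S_I\times G^I$. The difficulty is that two lifts $\widetilde p$ and $\widetilde p'$ may differ in the group coordinates $g_i(u)$ exactly where $x_i(u)=0$. On a set where $x_i\neq0$ the lifts agree, and on the interior of $\{x_i=0\}$ the coefficient $x_i^2$ vanishes. On the boundary of that set, I would argue by continuity of both pulled-back tensors, since each is smooth on $U$. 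To handle overlapping neighbourhoods with different $I_1,I_2$, I would pass to $I_1\cup I_2$ using the change-of-charts proposition together with the second step. Locality of the diffeology then glues the local $p^*g$ into a tensor on $U$, and naturality under smooth reparametrizations is immediate from the pullback formula. This yields the global metric $g$.
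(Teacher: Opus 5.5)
Your proposal follows the paper's route: there the corollary is read off from the two preceding propositions (the weight $x_i^2$ kills the inactive group terms, so $g_I$ descends through the quotient, and $\iota_{J,I}^*g_I=g_J$), and smoothness is then checked plotwise by factoring through $S_I\times G^I$, which is exactly your three steps. Your treatment of non-unique lifts is a refinement the paper omits, but the continuity argument on $\partial\{x_i=0\}$ is not needed: at every $u$ with $x_i(u)=0$ the term $x_i(u)^2\langle dg_i,dg_i\rangle_G$ vanishes for both lifts, and on the open set $\{x_i\neq0\}$ the two lifts coincide, so the pulled-back tensors agree pointwise.
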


\subsection{Smoothness}

\begin{proposition}
The metric $g$ is smooth in the diffeological sense.
\end{proposition}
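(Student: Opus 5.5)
We must show that $g$ is smooth in the diffeological sense. Concretely, this means that for every plot $p:U\to E^{\mathrm{sph}}(G)$ and every pair of smooth tangent vector fields along $p$, the function $u\mapsto g_{p(u)}(\cdot,\cdot)$ is smooth on $U$. Equivalently, the pull-back $p^*g$ is a smooth symmetric bilinear form on $TU$. The argument will rest on the local description of plots from Section~\ref{4}.

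\textbf{Reduction to a single chart.} Smoothness is a local property on $U$. By the characterization of plots of the final diffeology, every $u_0\in U$ has a neighborhood $V$, a finite $I$ and a smooth lift $\widetilde p:V\to S_I\times G^I$ with $p|_V=\psi_I\circ\widetilde p$. On $V$ I would set
\[
(p^*g)|_V:=\widetilde p^{\,*}g_I .
\]
Write $\widetilde p(u)=(x_i(u),g_i(u))_{i\in I}$ and let $\theta$ be the right Maurer--Cartan form. In coordinates,
\[
\widetilde p^{\,*}g_I=\sum_{i\in I}dx_i\otimes dx_i+\sum_{i\in I}x_i(u)^2\,\langle g_i^*\theta,g_i^*\theta\rangle_{\mathfrak g}.
\]
Each term is a product of smooth functions and smooth forms on $V$, because the $x_i$ and $g_i$ are smooth and $\langle\cdot,\cdot\rangle_G$ is right-invariant, so it is a constant inner product on $\mathfrak g$. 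Hence $\widetilde p^{\,*}g_I$ is a smooth symmetric $2$-tensor on $V$.

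\textbf{Independence of the lift.} This is the main obstacle, since the lift $\widetilde p$ is not unique, for two reasons.

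First, the index set $I$ can change. By the Proposition on compatibility under change of charts, two lifts through $I_1$ and $I_2$ both factor through $I_1\cup I_2$. The Proposition that $g_I$ restricts to $g_J$ under $\iota_{J,I}$ then shows that the local expressions agree, so it suffices to compare lifts with the same $I$.

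Second, even for a fixed $I$, the coordinates $g_i(u)$ with $x_i(u)=0$ are arbitrary. Two lifts $\widetilde p,\widetilde p'$ through the same $I$ have the same $x_i$, because the spherical coordinates are determined by $p$. They satisfy $g_i=g_i'$ on the open set $\{x_i\neq0\}$, so the group terms of $\widetilde p^{\,*}g_I$ and $\widetilde p'^{\,*}g_I$ coincide there. On the zero set $\{x_i=0\}$, both group terms carry the vanishing coefficient $x_i(u)^2$ pointwise, so they also coincide. The two tensors therefore agree at every point, and the compatibility Proposition for the quotient is exactly what is needed here.

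\textbf{Conclusion.} Since the local tensors agree on overlaps $V_1\cap V_2$, they glue to a smooth form $p^*g$ on $U$. Compatibility with smooth reparametrizations $f:U'\to U$ is immediate: $\widetilde p\circ f$ is a lift of $p\circ f$, and $(\widetilde p\circ f)^*g_I=f^*\widetilde p^{\,*}g_I$. Hence $g$ defines a smooth pseudo-metric in the plotwise sense of Section~\ref{sec:diff-prelim}.
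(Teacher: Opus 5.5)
Your proof is correct and takes the paper's route: locally factor the plot through $S_I\times G^I$ and use that $g_I$ is smooth on this finite-dimensional model. The one difference is that you check inside the proof that $p^*g$ does not depend on the chosen lift, whereas the paper leaves this to its preceding propositions on compatibility with the quotient and with the inclusions $\iota_{J,I}$.
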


\begin{proof}
By definition of diffeological smoothness, it suffices to check that for every plot
\[
p : U \to E^{\mathrm{sph}}(G),
\]
the pullback metric $p^*g$ is smooth.

Locally, $p$ factors through some $S_I \times G^I$, and the metric is smooth on this finite-dimensional manifold. Hence $p^*g$ is smooth.
\end{proof}

\subsection{Behavior at the boundary}

\begin{proposition}
The metric $g$ extends smoothly across strata where some coordinates $x_i$ vanish.
\end{proposition}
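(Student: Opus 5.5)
The plan is to reduce the statement to a computation in a single finite-dimensional chart, using the local finiteness of plots and the compatibility of the metrics $g_I$ under inclusions. Let $p$ be a point in a lower stratum, with support $J$. Any plot through $p$ locally factors, by the local finiteness proposition of Section~4, through some $S_I\times G^I$ with $I\supset J$. On this chart, $g_I=\sum_{i\in I}dx_i^2+\sum_{i\in I}x_i^2\langle\cdot,\cdot\rangle_G^{(i)}$. Here $S_I$ is an honest smooth submanifold of $\mathbb R^I$ carrying the subset diffeology, and the points with $x_i=0$ for $i\in I\setminus J$ are interior points of $S_I$, not boundary points. The first key step is therefore to observe that the coefficients of $g_I$, namely the constant Euclidean part restricted to $TS_I$ and the polynomial weights $x_i^2$ multiplying a fixed smooth right-invariant metric on $G$, are smooth functions on all of $S_I\times G^I$, including across the hypersurfaces $\{x_i=0\}$. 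Smoothness of $p^*g$ for a plot crossing strata then follows exactly as in the preceding smoothness proposition.

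The second step addresses well-definedness: the extension must be consistent across the strata. I would invoke the compatibility proposition: for $J\subset I$, the restriction of $g_I$ along $S_J\times G^J\hookrightarrow S_I\times G^I$ is $g_J$. I would also invoke the quotient-compatibility proposition: at points where $x_i=0$, the weight $x_i^2$ kills the $T_{g_i}G$ contribution, so the ambiguous group coordinate is never seen. Combining these with the filtered-colimit description of $T_pE^{\mathrm{sph}}(G)$ from the first-order tangent theorem, the value of $g$ at $p$ on the full tangent space is $\sum_i v_i^2+\sum_{j\in J}x_j^2|\xi_j|^2$ on finitely supported $v$ with $\sum_{j\in J}x_jv_j=0$. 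This expression is independent of the chart $I\supset J$ chosen, since enlarging $I$ only adds zero components. The limit of $g$ along any plot approaching $p$ from a higher stratum agrees with this value because the coefficients are continuous, indeed smooth, on the chart.

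The main obstacle is the group directions of inactive indices. Along a plot where $x_i(u)\to0$ while $g_i(u)$ varies wildly (or, in the quotient, is not even well defined at the limit), one must check that $x_i(u)^2\langle \dot g_i,\dot g_i\rangle$ stays smooth. I would handle this by working upstairs on $S_I\times G^I$, where the plot has a smooth lift $\widetilde p$ by the definition of the final diffeology. On the lift, $\dot g_i$ is a smooth, well-defined quantity, so the product with the smooth factor $x_i^2$ is smooth. Its value at $x_i=0$ is $0$ regardless of the chosen representative, which is precisely the degeneracy making the descent possible. I would close with a remark that the extension is smooth but degenerate in those directions, consistent with their absence from $T_pE^{\mathrm{sph}}(G)$ in the lemma describing $T_pQ_I$.
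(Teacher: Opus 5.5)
Your proposal is correct and uses the same idea as the paper, whose proof is the one-line observation that near $x_i=0$ the term $x_i^2\langle\cdot,\cdot\rangle_G$ vanishes smoothly, so no singularity occurs. The extra steps you add are a more careful spelling-out of that argument, combined with the paper's preceding smoothness and quotient-compatibility propositions: the reduction to a local lift in $S_I\times G^I$ (where $\{x_i=0\}$ is an interior hypersurface of the sphere and all coefficients are smooth), and the check that the weight $x_i^2$ kills the ambiguous inactive group directions independently of the chosen representative.
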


\begin{proof}
Near a point where $x_i=0$, the metric term
\[
x_i^2 \,\langle \cdot,\cdot \rangle_G
\]
vanishes smoothly. Therefore no singularity occurs.
\end{proof}

\begin{remark}
This reflects the fact that group directions associated with vanishing coordinates are suppressed by the metric.
\end{remark}

\subsection{Energy formulation}

\begin{definition}
For a tangent vector $(v_i,\xi_i)$, define the energy
\[
E(v,\xi)
=
\sum_i v_i^2
+
\sum_i x_i^2 \|\xi_i\|_G^2.
\]
\end{definition}

\begin{proposition}
The metric $g$ is the infinitesimal variation of this energy functional.
\end{proposition}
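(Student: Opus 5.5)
The plan is to read ``infinitesimal variation'' as a statement about the \emph{path energy} built from the pointwise energy $E$, not about $E$ itself. For a smooth curve $\gamma(s)=(x_i(s),g_i(s))_{i\in I}$ in a local model $S_I\times G^I$, I would set
\[
\mathcal E(\gamma)=\frac12\int_0^1\Big(\sum_{i}\dot x_i(s)^2+\sum_i x_i(s)^2\,\|\dot g_i(s)\|_G^2\Big)\,ds .
\]
I would then show that $g$ is recovered from $\mathcal E$ by an infinitesimal variation in two senses:
\begin{itemize}
\item short curves of speed $\varepsilon$ through a point have energy $\frac{\varepsilon^2}{2}g(w,w)+O(\varepsilon^3)$;
\item the second variation of $\mathcal E$ at a constant curve, in directions $u,w$, is $g(u,w)$.
\end{itemize}

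\textbf{Motivating the energy.} I would first justify the name ``barycentric'' by writing $\mathcal E$ in Milnor coordinates $t_i=x_i^2$. Away from $t_i=0$ one has $\dot x_i^2=\dot t_i^2/(4t_i)$, so the integrand becomes
\[
\sum_i \frac{\dot t_i^2}{4t_i}+\sum_i t_i\|\dot g_i\|^2 .
\]
This is the Fisher--Rao kinetic energy of the weights plus the $t$-weighted (barycentric) kinetic energy of the group factors, in the spirit of the Magnot--Watts averaging $\theta=\sum t_i\theta_i$. This fixes $\mathcal E$ intrinsically, and only afterwards is the quadratic form read off.

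\textbf{Computing the variation.} Take a plot-curve $c:(-\delta,\delta)\to S_I\times G^I$ with $c(0)=p$ and $\dot c(0)=w=(v,\xi)$. Let $\gamma_\varepsilon(s)=c(\varepsilon s)$ for $s\in[0,1]$. A change of variables gives
\[
\mathcal E(\gamma_\varepsilon)=\frac{\varepsilon^2}{2}\Big(\sum v_i^2+\sum x_i(0)^2\|\xi_i\|^2\Big)+O(\varepsilon^3).
\]
The error is controlled by Taylor expansion of the smooth integrand on the finite-dimensional model. Hence
\[
\lim_{\varepsilon\to0}\frac{2}{\varepsilon^2}\,\mathcal E(\gamma_\varepsilon)=g_I(w,w).
\]
For the bilinear form, I would take a two-parameter variation $c(\varepsilon s;a,b)$ with $\partial_a c=u$ and $\partial_b c=w$. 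Then
\[
\frac{\partial^2}{\partial a\,\partial b}\Big|_0\Big(\lim_{\varepsilon\to 0}\tfrac{2}{\varepsilon^2}\mathcal E\Big)=g_I(u,w),
\]
or equivalently one polarizes. The limit is independent of the chosen curve because it depends only on the $1$-jet.

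\textbf{Globalizing.} Independence of $I$ follows from the compatibility of $g_I$ with the inclusions $S_J\times G^J\hookrightarrow S_I\times G^I$ proved above. Compatibility with $\sim$ follows from the proposition on the quotient: when $x_i=0$ the weight $x_i^2$ kills $\xi_i$. Since plots locally factor through some $S_I\times G^I$, the identity holds on $E^{\mathrm{sph}}(G)$.

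\textbf{Main obstacle.} The hard part is behaviour across strata. There the Fisher--Rao form $\dot t_i^2/(4t_i)$ is singular at $t_i=0$, and the first-order activations of inactive indices $i\notin J$ are invisible in $t$; this is the quadratic invisibility theorem. I would handle it by never passing to $t$ in the proof. The curve computation is done entirely in the $x$-coordinates, where the integrand is smooth, and the $t$-form is used only as the interpretation on open strata. The agreement of the two forms on open strata, together with continuity, then pins down $g$ uniquely as the smooth extension.
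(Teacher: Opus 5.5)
Your proposal is correct, but it takes a different and longer route than the paper. The paper gives no proof of this proposition; it is stated right after the definition of $E$ and is meant to be read off from it. On each model $S_I\times G^I$ one has, term by term, $E(v,\xi)=g_I((v,\xi),(v,\xi))$. So ``infinitesimal variation'' is just polarization: $\partial_s E(w+sw')|_{s=0}=2\,g(w,w')$, or equivalently $g(u,w)=\tfrac12\,\partial_a\partial_b E(au+bw)|_{a=b=0}$. Compatibility with the inclusions and with $\sim$ is inherited from the propositions already proved for $g_I$.

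You instead promote $E$ to a path energy $\mathcal E$ and recover $g$ in two ways: from the short-curve asymptotics, and as the Hessian of $\mathcal E$ at constant curves. This is correct, and it gives the word ``variation'' an honest variational meaning. Your rewriting of the integrand as $\tfrac14 g_{\mathrm{FR}}+\sum_i t_i\|\dot g_i\|^2$ on the open strata also ties the statement to the appendix and to the barycentric averaging of Magnot--Watts.

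What it does not buy is extra mathematical content. The integrand of $\mathcal E$ is $E$ itself, so after a first-order Taylor expansion your computation collapses to the same pointwise identity. For the same reason, the ``main obstacle'' is illusory. $E$ and $g_I$ are literally the same smooth expression in the $x$-coordinates, so nothing special happens at $x_i=0$, and the uniqueness-by-continuity argument through the $t$-coordinates is not needed.

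There is one slip to fix. If the initial velocity is $au+bw$, then $\lim_{\varepsilon\to0}\frac{2}{\varepsilon^2}\mathcal E(\gamma_\varepsilon)=g(au+bw,au+bw)$. Its mixed derivative in $(a,b)$ is $2g(u,w)$, not $g(u,w)$, so either insert a factor $\tfrac12$ or polarize. The value $g(u,w)$ is instead $\partial_a\partial_b\mathcal E$ itself at the constant curve, with variation fields $su$ and $sw$; this is your first bullet, since that Hessian equals $\int_0^1 g_p(\dot U,\dot W)\,ds$. Also, since all your curves start at $p$, the condition should read $\partial_a\partial_s c|_{s=0}=u$ rather than $\partial_a c=u$.
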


\subsection{Relation with the stratification}

\begin{proposition}
The metric does not distinguish strata at first order.
\end{proposition}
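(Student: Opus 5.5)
We make precise what ``does not distinguish strata at first order'' should mean, and reduce it to the tangent description of Section~\ref{5} together with the inclusion compatibility of the metrics $g_I$. Fix $p\in E_J^\circ$ with representative $(x_i,g_i)$, $x_j\neq 0$ exactly for $j\in J$. The claim to prove is the following. For every finite $I\supset J$, the restriction of $g$ to the spherical part of $T_pE^{\mathrm{sph}}(G)$ is the restriction of the Euclidean inner product on finitely supported sequences. In particular, the inactive directions $v_i$, $i\notin J$, are neither degenerate nor singled out: they have unit-norm coordinate vectors orthogonal to $T_pE_J^\circ$, exactly as for any other coordinate direction of the ambient sphere. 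Hence no canonical ``normal'' datum attached to the stratum appears in $g_p$.

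\textbf{Step 1: write $g_p$ on the tangent space.} Use the first-order tangent structure theorem along support strata. A tangent vector at $p$ is $(v,\xi)$, with $v$ finitely supported, $\sum_{j\in J}x_jv_j=0$, and $\xi\in\bigoplus_{j\in J}T_{g_j}G$. By the local definition of $g_I$ on a chart $Q_I$ containing the support of $v$,
\[
g_p\big((v,\xi),(v,\xi)\big)=\sum_{i\in I}v_i^2+\sum_{j\in J}x_j^2\|\xi_j\|_G^2 ,
\]
since the group terms with $i\notin J$ carry the coefficient $x_i^2=0$. This is consistent with the compatibility proposition. By the inclusion-compatibility proposition, the expression is independent of the choice of $I$, so $g_p$ is well defined on the colimit.

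\textbf{Step 2: compare with the metric of the larger strata.} For a nearby point $p'\in E_I^\circ$, the same formula holds with $J$ replaced by $I$. The spherical part is the same Euclidean form in both cases. The only $x$-dependent part is the group weight $x_i^2$, which tends smoothly to $0$ (boundary proposition). Thus, on spherical directions, the quadratic form $\sum_i v_i^2$ is literally constant across strata. The orthogonal splitting
\[
T_pE^{\mathrm{sph}}(G)=T_pE_J^\circ\oplus \mathbb R^{(\mathbb N\setminus J)}
\]
from the corollary is simply the coordinate splitting of the ambient Euclidean space. It is therefore induced by the embedding of the finite models, not by any feature of $g$ that changes at the stratum. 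This is the precise statement that the stratification is not detected at first order.

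\textbf{Main obstacle.} The delicate point is the group part, since the $G$-directions indexed by $I\setminus J$ do disappear at $p$. One might object that the metric ``sees'' the stratum through this loss of rank. We would handle this by observing two things. First, by the tangent lemma those directions are already absent from $T_pQ_I$, so $g_p$ is nondegenerate on the actual tangent space $T_pE^{\mathrm{sph}}(G)$. Second, the degeneration occurs only through the smooth coefficient $x_i^2$, whose first derivative at $x_i=0$ vanishes by the quadratic invisibility theorem. So any dependence of $g$ on the stratum enters at second order in the transverse coordinate $x_i$, which is exactly the first-order invisibility claimed.
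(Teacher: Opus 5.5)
Your proposal is correct and takes essentially the paper's route. The paper's proof is just your Step~1 observation: the inactive indices $i\notin J$ enter $g_p$ only through the Euclidean term $\sum_i v_i^2$, while group components contribute only for active indices with $x_i\neq 0$, in agreement with the tangent description of Section~5. Your Step~2 (constancy of the spherical form across strata) and the remark that the weight $x_i^2$ varies only at second order are elaboration beyond the paper's one-line argument rather than a different method.
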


\begin{proof}
The tangent directions corresponding to inactive indices $i \notin J$ appear in the first term $\sum v_i^2$, but the group components only appear when $x_i \neq 0$. This matches the tangent structure described in Section~5.
\end{proof}

\subsection{Invariance properties}

\begin{proposition}
The metric $g$ is invariant under the action of $G$.
\end{proposition}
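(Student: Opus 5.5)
The plan is to check invariance locally on each finite model $S_I\times G^I$, using the corollary that the family $(g_I)_I$ defines $g$. Smooth maps and pullbacks on $E^{\mathrm{sph}}(G)$ are determined plotwise, and every plot factors locally through some $\psi_I$. Since the $G$-action $h\cdot(x_i,g_i)=(x_i,hg_i)$ preserves each $S_I\times G^I$ and commutes with $\psi_I$, it suffices to show that the diffeomorphism
\[
L_h:(x_i,g_i)_{i\in I}\longmapsto (x_i,hg_i)_{i\in I}
\]
of $S_I\times G^I$ satisfies $L_h^*g_I=g_I$ for every $h\in G$ and every finite $I$. Invariance of $g$ on $E^{\mathrm{sph}}(G)$ then follows, because for every plot $p$ one has $(h\cdot p)^*g=p^*L_h^*g_I=p^*g$ locally.

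The key computation is as follows. The differential of $L_h$ at $(x,g)$ sends $(v_i,\xi_i)$ to $(v_i,(dL_h)_{g_i}\xi_i)$, where $L_h$ also denotes left translation in $G$. The first term $\sum_i dx_i^2$ is unchanged, since the $x$-coordinates are not touched. The weights $x_i^2$ are also unchanged. It therefore remains to show that
\[
\|(dL_h)_{g_i}\xi_i\|_G=\|\xi_i\|_G .
\]
This is the main obstacle. The action is by \emph{left} multiplication, but the metric on $G$ was fixed to be \emph{right}-invariant. I would first check whether the convention can be matched: either the action should be read as right multiplication $g_i\mapsto g_ih^{-1}$, or the metric $\langle\cdot,\cdot\rangle_G$ should be taken left-invariant. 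With matching conventions, the identity above is exactly the invariance of $\langle\cdot,\cdot\rangle_G$, and the proof closes. If both conventions must be kept as stated, the statement needs a bi-invariant metric, which exists for instance when $G$ is compact, or more generally when the adjoint representation preserves the metric at the identity. I would add this hypothesis explicitly, or adjust the convention for the action.

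Finally, I would remark that the metric is compatible with the quotient $\sim$ and with the inclusions $\iota_{J,I}$ (both proved above), and that $L_h$ commutes with both. The local invariances therefore glue to a global invariance of $g$. The same argument shows invariance under the $\mathbb Z_2$-action, since $dx_i^2$ and $x_i^2$ are even in $x$.
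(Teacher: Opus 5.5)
Your proposal follows the paper's route: the action leaves the $x_i$ untouched, and with them $\sum_i dx_i^2$ and the weights $x_i^2$, so everything reduces to invariance of $\langle\cdot,\cdot\rangle_G$ under $g_i\mapsto hg_i$ on each $S_I\times G^I$. Your caveat is correct, and it is exactly what the paper's one-line proof misses when it simply invokes right-invariance: for $\xi=(dR_g)_eX$ one gets $\|(dL_h)_g\xi\|_G=|\operatorname{Ad}_hX|_e$, so the statement as written needs a left-invariant metric (which the paper itself assumes later for the local Dirac operator), a right action, or an $\operatorname{Ad}$-invariant (bi-invariant) metric.
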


\begin{proof}
The action of $G$ only affects the $g_i$ coordinates, and the metric on each copy of $G$ is right-invariant.
\end{proof}

\begin{proposition}
The metric $g$ is invariant under the $\mathbb{Z}_2$-action.
\end{proposition}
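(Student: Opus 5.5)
The plan is to work locally in the charts $S_I\times G^I$ and then pass to the quotient, following the proof of $G$-invariance just above. The $\mathbb{Z}_2$-action $\varepsilon\cdot(x_i,g_i)=(\varepsilon x_i,g_i)$ is induced on each local model by the map
\[
\sigma_\varepsilon : S_I\times G^I\to S_I\times G^I,\qquad (x,g)\mapsto(\varepsilon x,g).
\]
This map preserves $S_I$ and commutes with the inclusions $\iota_{J,I}$. So it suffices to show $\sigma_\varepsilon^*g_I=g_I$ for every finite $I$. Invariance of the global metric $g$ then follows from the corollary that the family $(g_I)_I$ defines $g$, together with the compatibility of $g_I$ with the quotient relation $\sim$.

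The computation on a chart goes in two steps.

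\begin{itemize}
\item The differential of $\sigma_\varepsilon$ at $(x,g)$ sends a tangent vector $(v,\xi)$ with $\sum_i x_iv_i=0$ to $(\varepsilon v,\xi)$, based at $(\varepsilon x,g)$. The constraint is preserved, since $\sum_i(\varepsilon x_i)(\varepsilon v_i)=\sum_i x_iv_i=0$.
\item Evaluating the metric at the image point gives
\[
(\sigma_\varepsilon^*g_I)_{(x,g)}\big((v,\xi),(v,\xi)\big)=\sum_i(\varepsilon v_i)^2+\sum_i(\varepsilon x_i)^2\|\xi_i\|_G^2=\sum_i v_i^2+\sum_i x_i^2\|\xi_i\|_G^2,
\]
because $\varepsilon^2=1$.
\end{itemize}

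Polarization then gives equality of the symmetric bilinear forms. Equivalently, in the energy formulation, $E(\varepsilon v,\xi)$ evaluated at $\varepsilon x$ equals $E(v,\xi)$ evaluated at $x$.

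To finish, I would check that this chartwise invariance is compatible with the quotient. At points where $x_i=0$, the action fixes $x_i=0$, and the $T_{g_i}G$ components are irrelevant on both sides. Using the first-order tangent structure theorem of Section~5, the differential on $T_pE^{\mathrm{sph}}(G)$ acts by $v\mapsto\varepsilon v$ on the finitely supported spherical part and by the identity on $\bigoplus_{j\in J}T_{g_j}G$. Hence it is well defined on the colimit, and the identity above holds there as well.

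I do not expect a real obstacle. The only point requiring care is the justification that pullback of the metric commutes with the colimit and quotient structure. I would handle this plotwise: for any plot $p$, the map $\varepsilon\cdot p$ locally factors through $\sigma_\varepsilon\circ\widetilde p$, so
\[
(\varepsilon\cdot p)^*g=\widetilde p^{\,*}\sigma_\varepsilon^*g_I=\widetilde p^{\,*}g_I=p^*g.
\]
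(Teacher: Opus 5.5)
Your proposal is correct and follows the same argument as the paper, whose proof is the single observation that $x_i\mapsto -x_i$ (equivalently $x_i\mapsto\varepsilon x_i$ with $\varepsilon^2=1$) leaves both $x_i^2$ and $dx_i^2$ unchanged. Your chartwise computation via $\sigma_\varepsilon$ and your plotwise compatibility check only make explicit the bookkeeping that the paper leaves implicit.
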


\begin{proof}
The transformation $x_i \mapsto -x_i$ leaves both $x_i^2$ and $dx_i^2$ invariant.
\end{proof}
\subsection{Non-degeneracy of the metric}

We analyze the degeneracy properties of the metric introduced above.

\begin{proposition}
Let $p \in E^{\mathrm{sph}}(G)$ have support $J$. Then the metric $g$ satisfies:
\begin{itemize}
\item it is positive definite on
\[
\left\{
(v_i)_{i\in\mathbb{N}}^{\mathrm{fin}}
\;\middle|\;
\sum_{j\in J} x_j v_j = 0
\right\}
\oplus
\bigoplus_{j\in J} T_{g_j}G,
\]
\item it vanishes on directions corresponding to variations of $g_i$ for indices $i \notin J$.
\end{itemize}
\end{proposition}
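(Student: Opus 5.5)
The plan is to compute $g$ directly in a finite chart and then read off both assertions from the tangent space description of Section~5. Fix $p=[(x_i,g_i)]$ with support $J$, and choose a finite $I\supset J$ large enough to contain the support of a given tangent vector. By the Theorem on the first-order tangent structure along support strata, such a vector is an element of $T_pQ_I$, written as $(v,\xi)$ with $v=(v_i)_{i\in I}$, $\sum_{j\in J}x_jv_j=0$, and $\xi=(\xi_j)_{j\in J}$, $\xi_j\in T_{g_j}G$. The proposition on compatibility under inclusions shows that $g_I$ restricts to $g_J$, so evaluating $g$ on this vector does not depend on the choice of $I$. By the definition of $g_I$, or equivalently the energy formulation, we get
\[
g\big((v,\xi),(v,\xi)\big)=\sum_{i\in I}v_i^2+\sum_{j\in J}x_j^2\,\|\xi_j\|_G^2 .
\]

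For positive definiteness, I would argue that every term is nonnegative, so a vanishing sum forces each term to vanish. This gives $v_i=0$ for all $i\in I$, and $x_j^2\|\xi_j\|_G^2=0$ for each $j\in J$. Since $x_j\neq0$ for $j\in J$ by the definition of the support, and $\langle\cdot,\cdot\rangle_G$ is a Riemannian metric, this forces $\xi_j=0$. Hence the quadratic form is positive definite on the displayed space. The inactive spherical directions $v_i$, $i\notin J$, enter through the Euclidean term $\sum v_i^2$ with coefficient $1$, so they are not degenerate even though they are transverse to the stratum.

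For the second bullet, I would work upstairs in $S_I\times G^I$, before passing to $Q_I$. Take a direction $\eta\in T_{g_i}G$ in the $i$-th group factor with $i\notin J$. Its only contribution to $g_I$ is through the term $x_i^2\langle\eta,\eta\rangle_G^{(i)}$, and $x_i=0$ at $p$. So $g_I(\eta,\cdot)=0$, and $\eta$ lies in the radical of $g_I$ at the representative point. This is consistent with the Lemma on $T_pQ_I$, which says such variations represent the zero vector in the quotient, and with the proposition on compatibility with the quotient.

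I expect the main obstacle to be formulating the second bullet coherently. In $T_pE^{\mathrm{sph}}(G)$ these directions are already zero, so "vanishes" has to be read on the local model $S_I\times G^I$, that is, on the pre-quotient plot level. I would state explicitly that the radical of $g_I$ at a representative is exactly $\bigoplus_{i\in I\setminus J}T_{g_i}G$, which is also the kernel of the differential of $S_I\times G^I\to Q_I$. That identification is what makes the two bullets fit together, and it shows that $g$ descends to a genuinely positive definite metric on $T_pE^{\mathrm{sph}}(G)$.
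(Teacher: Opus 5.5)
Your proposal is correct and follows the paper's own argument: both read positive definiteness, and the vanishing of the terms $x_i^2\|\xi_i\|_G^2$ for $i\notin J$, directly off the explicit expression $\sum_i v_i^2+\sum_i x_i^2\|\xi_i\|_G^2$. Your added points are refinements rather than a different route: you read the second bullet on the pre-quotient model $S_I\times G^I$, which is where the paper's proof tacitly works, since by the Lemma on $T_pQ_I$ these directions are already zero in $T_pE^{\mathrm{sph}}(G)$. You also note that the constraint $\sum_{j\in J}x_jv_j=0$ plays no role in positivity.
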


\begin{proof}
By definition, the metric reads
\[
g(v,\xi)
=
\sum_i v_i^2
+
\sum_i x_i^2 \|\xi_i\|_G^2.
\]

If $i \notin J$, then $x_i = 0$, hence the corresponding term
\[
x_i^2 \|\xi_i\|_G^2
\]
vanishes.

On the other hand, for $j \in J$, one has $x_j \neq 0$, hence the contribution
\[
x_j^2 \|\xi_j\|_G^2
\]
is strictly positive unless $\xi_j = 0$.

The spherical part $\sum v_i^2$ is positive definite under the constraint
\[
\sum_{j\in J} x_j v_j = 0.
\]

This proves the result.
\end{proof}

\begin{corollary}
The metric $g$ is a pseudo-metric in the sense of diffeological vector pseudo-bundles.
\end{corollary}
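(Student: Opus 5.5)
We need to show that \(g\) is a pseudo-metric in the sense of the definition in Section~2: a smooth, symmetric, fiberwise bilinear form on a diffeological vector pseudo-bundle \(V\to E^{\mathrm{sph}}(G)\). The plan has three parts: fix the pseudo-bundle, check the fiberwise properties, then check smoothness.

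\emph{The pseudo-bundle.} Take \(V\) to be the internal tangent pseudo-bundle \(TE^{\mathrm{sph}}(G)\). At a point \(p\) of support \(J\), the theorem on first-order tangent structure along support strata identifies the fiber with
\[
\{(v_i)^{\mathrm{fin}}\mid \textstyle\sum_{j\in J}x_jv_j=0\}\oplus\bigoplus_{j\in J}T_{g_j}G .
\]
This is a vector space, and it carries the colimit diffeology coming from the finite models \(TQ_I\). Fiberwise addition and scalar multiplication are smooth, because locally every plot of \(V\) factors through some \(TQ_I\), and that is the tangent bundle of a finite-dimensional quotient of \(S_I\times G^I\). Surjectivity of the projection \(V\to E^{\mathrm{sph}}(G)\) is clear. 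To see that it is a subduction, lift each plot of \(E^{\mathrm{sph}}(G)\) locally to \(S_I\times G^I\) and pair it with the zero section.

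\emph{Fiberwise properties.} On each fiber \(g\) is given by the energy formula
\[
g_p\bigl((v,\xi),(v',\xi')\bigr)=\sum_i v_iv_i'+\sum_{j\in J}x_j^2\langle\xi_j,\xi_j'\rangle_G .
\]
This is obtained by polarizing the energy \(E(v,\xi)\), so it is manifestly symmetric and bilinear. By the preceding proposition it is positive semidefinite, and in fact positive definite on the effective fiber.

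\emph{Smoothness.} It suffices to check that for every plot \(u\mapsto(P(u),P'(u))\) of \(V\times_X V\), the composite \(u\mapsto g(P(u),P'(u))\) is smooth. Locally, by the local finiteness proposition and the compatibility of charts, both plots factor through a common \(TQ_I\); in fact they lift to \(T(S_I\times G^I)\). On that manifold \(g_I\) is a smooth tensor, and it descends to \(Q_I\) by the compatibility-with-quotient proposition. Its restrictions agree under the inclusions \(\iota_{J,I}\) by the compatibility-under-inclusions proposition, so the pulled-back function is smooth. This is essentially the smoothness proposition already proved.

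The main obstacle is the descent through the relation \(\sim_I\) at points where \(x_i=0\). A lifted plot may carry arbitrary components \(\xi_i\in T_{g_i}G\) that represent zero tangent vectors. One must check that \(g_I\) does not depend on the chosen lift. I would handle this by expanding \(g_I\) on two lifts that differ only in such components. The difference consists only of terms weighted by \(x_i^2\), and these vanish at points with \(x_i=0\). Hence the function computed plotwise is independent of the lift, and smoothness follows.
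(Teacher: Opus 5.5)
Your proposal is correct and follows essentially the paper's route: the corollary is read off from the preceding proposition (the form is symmetric, bilinear and positive semidefinite, and positive on the active directions), combined with the earlier plotwise smoothness proposition proved through the local models $S_I\times G^I$ and the compatibility of $g_I$ with the quotient relation and with the inclusions $\iota_{J,I}$. Your lift-independence paragraph makes explicit what the paper leaves implicit, with one gap to close: the lifts of $P$ and $P'$ may differ in the base coordinates $g_i$ where $x_i=0$, not only in the components $\xi_i$, so $g_I$ cannot be evaluated on the pair as written; the clean fix is to write the group part as $\langle x_i\theta_i,x_i\theta_i\rangle$, with $\theta_i$ the right Maurer--Cartan form (matching the right-invariance of the metric), because $x_i\theta_i$ is a well-defined smooth $\mathfrak g$-valued $1$-form on $E^{\mathrm{sph}}(G)$ and can be evaluated on each plot separately.
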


\begin{remark}
The degeneracy directions correspond exactly to group components associated with vanishing spherical coordinates. These directions are invisible in the quotient defining $E^{\mathrm{sph}}(G)$.
\end{remark}

\begin{proposition}
The metric is non-degenerate when restricted to the tangent space of any stratum $E_J^\circ$.
\end{proposition}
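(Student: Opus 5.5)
The plan is to reduce the claim to the explicit description of $T_pE_J^\circ$ in the Corollary following the theorem on first-order tangent structure along support strata. There,
\[
T_pE_J^\circ\simeq\Big\{(v_j)_{j\in J}\;\Big|\;\sum_{j\in J}x_jv_j=0\Big\}\oplus\bigoplus_{j\in J}T_{g_j}G .
\]
On this space the metric is given by the energy formula
\[
g\big((v,\xi),(v,\xi)\big)=\sum_{j\in J}v_j^2+\sum_{j\in J}x_j^2\|\xi_j\|_G^2 .
\]
This holds because the restriction of $g_I$ to $S_J\times G^J$ is $g_J$, by the compatibility-under-inclusions proposition. Since $T_pE_J^\circ$ is a finite-dimensional real vector space (as $J$ is finite and $G$ is a finite-dimensional Lie group), non-degeneracy is equivalent to positive definiteness. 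So it suffices to show that $g((v,\xi),(v,\xi))=0$ forces $(v,\xi)=0$.

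The key steps are as follows.

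First, observe that on the stratum $E_J^\circ$ every $x_j$ with $j\in J$ is nonzero, by definition of the support. Consequently each weight $x_j^2$ is strictly positive. Since $\langle\cdot,\cdot\rangle_G$ is a genuine Riemannian metric on $G$, each term $x_j^2\|\xi_j\|_G^2$ is nonnegative and vanishes only when $\xi_j=0$.

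Second, the spherical part $\sum_{j\in J}v_j^2$ is the Euclidean norm restricted to the hyperplane $\sum_j x_jv_j=0$ in $\mathbb R^J$. It therefore vanishes only for $v=0$.

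Combining these, a vanishing sum of nonnegative terms forces every term to vanish, which gives $v=0$ and $\xi_j=0$ for all $j\in J$. This establishes positive definiteness, and hence non-degeneracy. The argument is precisely the first item of the preceding non-degeneracy proposition, specialized to the subspace on which all $v_i$ with $i\notin J$ vanish.

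The main point requiring care is to justify that the degenerate directions identified in the previous proposition, namely variations of $g_i$ for $i\notin J$, do not belong to $T_pE_J^\circ$. For this I would invoke the lemma describing $T_pQ_I$: no $T_{g_i}G$ summand occurs for inactive indices. The reason is that, on the quotient, curves that move only $g_i$ while keeping $x_i\equiv 0$ represent the zero tangent vector. Moreover, tangent vectors to the stratum have $v_i=0$ for $i\notin J$, so the inactive spherical directions are excluded as well. Once this identification is in place, the restriction of $g$ to the stratum is a bona fide Riemannian metric on the finite-dimensional manifold $E_J^\circ\cong\{x\in S_J: x_j\neq0\}\times G^J$, and the remaining verification is routine.
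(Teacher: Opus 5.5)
Your proposal is correct and follows the paper's proof: you restrict to the active indices $J$, where the metric reduces to $\sum_{j\in J}v_j^2+\sum_{j\in J}x_j^2\|\xi_j\|_G^2$ with every $x_j\neq0$, and conclude positive definiteness. The extra steps (identifying $T_pE_J^\circ$ via the corollary and excluding the inactive $T_{g_i}G$ directions) only make explicit what the paper leaves implicit, and you do not need $G$ to be finite-dimensional, since positive definiteness alone already forces $w\mapsto g(w,\cdot)$ to be injective.
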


\begin{proof}
On $E_J^\circ$, only the coordinates indexed by $J$ are active. The metric reduces to
\[
\sum_{j\in J} v_j^2
+
\sum_{j\in J} x_j^2 \|\xi_j\|_G^2,
\]
which is strictly positive definite.
\end{proof}

\begin{remark}
Thus, the degeneracy of the metric is purely transverse to the strata.
\end{remark}

\begin{proposition}
The kernel of $g$ at a point $p$ is given by
\[
\ker g_p
=
\bigoplus_{i\notin J} T_{g_i}G.
\]
\end{proposition}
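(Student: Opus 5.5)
The plan is to compute the kernel directly from the explicit formula for $g$, working on the local model $S_I\times G^I$ with $I\supset J$ finite. By local finiteness, every plot through $p$ factors locally through such a chart, so every tangent vector in the $S_I\times G^I$ model is a pair $(v,\xi)$ with $v\in T_xS_I$ and $\xi=(\xi_i)_{i\in I}$, $\xi_i\in T_{g_i}G$. The metric is the polarization of the energy functional:
\[
g_p\big((v,\xi),(w,\eta)\big)=\sum_{i\in I}v_iw_i+\sum_{i\in I}x_i^2\langle \xi_i,\eta_i\rangle_G .
\]
One point must be made explicit first. The kernel is only meaningful on the unreduced chart tangent space $T_xS_I\oplus\bigoplus_{i\in I}T_{g_i}G$, before the quotient $\sim_I$ kills the $T_{g_i}G$ for $i\notin J$. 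On the internal tangent space of the lemma, these summands are already absent and $g$ is positive definite. So the statement is to be read on the pre-quotient tangent space, and the union over $I$ is taken via the directed system, with compatibility under $I\subset K$ given by the proposition that $g_I$ restricts to $g_J$.

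\emph{Inclusion $\supseteq$.} Take $(0,\xi)$ with $\xi_j=0$ for $j\in J$. Pairing with any $(w,\eta)$ gives
\[
\sum_{i\notin J}x_i^2\langle\xi_i,\eta_i\rangle_G=0,
\]
since $x_i=0$ for $i\notin J$. Hence $(0,\xi)\in\ker g_p$.

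\emph{Inclusion $\subseteq$.} Let $(v,\xi)\in\ker g_p$ and pair it with itself:
\[
0=\sum_i v_i^2+\sum_{j\in J}x_j^2\|\xi_j\|_G^2 .
\]
Both sums are nonnegative, so $v=0$. Since $x_j\neq0$ for $j\in J$, this also forces $\xi_j=0$ for $j\in J$. Thus only the components $\xi_i$ with $i\notin J$ survive, and they are arbitrary. This is exactly the positive-definiteness proposition above, applied to the complement of $\bigoplus_{i\notin J}T_{g_i}G$.

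The main obstacle is not the algebra but making the direct sum over $i\notin J$ well-defined. The $g_i$ for $i\notin J$ are arbitrary representatives, so one must check that the subspace is independent of this choice. The plan is to note that the right-invariant metric identifies each $T_{g_i}G$ with $\mathfrak g$ by right translation, and a change of representative $g_i\mapsto g_i'$ maps the subspace isomorphically to itself. The sum is then understood as finitely supported, indexed by $i\in I\setminus J$ and passing to the colimit over $I$. In particular, it projects to zero in $T_pE^{\mathrm{sph}}(G)$, which is consistent with the earlier remark that the degeneracy directions are invisible in the quotient.
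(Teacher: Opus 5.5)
Your proof is correct and follows the same route as the paper, which simply reads the kernel off the explicit formula $g(v,\xi)=\sum_i v_i^2+\sum_i x_i^2\|\xi_i\|_G^2$; your two inclusions are exactly the details behind the paper's word ``immediate''. For the second inclusion you rightly use positive semi-definiteness, so that $g(u,u)=0$ already forces $v=0$ and $\xi_j=0$ for $j\in J$. Your remark that the statement only makes sense on the unreduced chart tangent space $T_xS_I\oplus\bigoplus_{i\in I}T_{g_i}G$ is well taken, since the internal tangent space of the lemma already omits these summands. It agrees with the paper's later identification of $\mathcal K$ with the kernel of $d\pi$ for $\pi:\widetilde E^{\mathrm{sph}}(G)\to E^{\mathrm{sph}}(G)$.
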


\begin{proof}
Immediate from the expression of the metric.
\end{proof}

\begin{remark}
This kernel corresponds precisely to directions that are not detected at the level of the quotient structure.
\end{remark}
\subsection{Diffeological interpretation of non-degeneracy}

We reinterpret the degeneracy of the metric in terms of diffeological vector pseudo-bundles.

\subsubsection{Tangent pseudo-bundle}

Let
\[
T E^{\mathrm{sph}}(G) \longrightarrow E^{\mathrm{sph}}(G)
\]
denote the internal tangent pseudo-bundle.

\begin{proposition}
The assignment
\[
p \longmapsto \ker g_p
\]
defines a diffeological vector pseudo-bundle
\[
\mathcal{K} \subset T E^{\mathrm{sph}}(G).
\]
\end{proposition}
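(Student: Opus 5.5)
The plan is to realize $\mathcal K$ at the level of the finite local models, where everything is classical, and then glue through the final diffeology of Section~4. For a finite $I$, consider on the manifold $S_I\times G^I$ the subset
\[
\mathcal K_I:=\bigl\{(x,g;0,\xi)\;\bigm|\; \xi_i\in T_{g_i}G,\ \xi_i=0 \text{ whenever } x_i\neq 0\bigr\}\subset T(S_I\times G^I),
\]
with the subset diffeology of $T(S_I\times G^I)$. By the proposition computing $\ker g_p$, the fiber of $\mathcal K_I$ over $(x,g)$ is exactly $\bigoplus_{i\in I\setminus J}T_{g_i}G$, where $J=\operatorname{supp}(x)$. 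I define $\mathcal K$ as the push-forward of $\coprod_I\mathcal K_I$ under the maps $T\psi_I$ into $TE^{\mathrm{sph}}(G)$, i.e.\ the final diffeology on $\coprod_p\ker g_p$ generated by these maps. This is compatible with the directed system $\iota_{J,I}$, because $T\iota_{J,I}$ sends $\mathcal K_J$ into $\mathcal K_I$: the new coordinates have $x_i=0$, and the new $\xi_i$ are set to zero.

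I then check the axioms of a vector pseudo-bundle in order. \emph{Subduction}: the projection $\mathcal K\to E^{\mathrm{sph}}(G)$ is smooth since it is smooth on each $\mathcal K_I$. It is a subduction because the zero section $x\mapsto(x,g;0,0)$ is a smooth section of each $\mathcal K_I$, so every plot of the base lifts locally through some $\psi_I$. \emph{Fiberwise linearity}: each fiber is a vector space, being the kernel of a bilinear form. \emph{Smoothness of the operations}: for addition, take two plots of $\mathcal K$ over the same base plot. Using the chart-compatibility proposition of Section~4, both locally factor through a common $\mathcal K_{I_1\cup I_2}$. There addition is the restriction of the smooth fiberwise addition of $T(S_I\times G^I)$, and the condition ``$\xi_i=0$ when $x_i\ne0$'' is preserved. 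Scalar multiplication is handled the same way.

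The main obstacle is that the fibers of $\mathcal K$ must be subspaces of $T_pE^{\mathrm{sph}}(G)$ as claimed, yet the Lemma on $T_pQ_I$ says the $T_{g_i}G$-components for inactive $i$ are identified with zero in the internal tangent space of the quotient. So there is a tension between ``$\ker g_p=\bigoplus_{i\notin J}T_{g_i}G$'' and ``$\mathcal K\subset TE^{\mathrm{sph}}(G)$''. I would resolve it by reading $TE^{\mathrm{sph}}(G)$ here as the pseudo-bundle obtained by pushing forward $\coprod_I T(S_I\times G^I)$ before imposing the quotient relation on tangent vectors. This is the same convention under which the previous proposition computed $\ker g_p$. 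With that reading, $\mathcal K$ is a diffeological sub-pseudo-bundle and the inclusion is smooth by construction. The projection to the internal tangent space of the Lemma then has kernel exactly $\mathcal K$, which is consistent with the remark that these directions are invisible in the quotient. I would also note that the fiber dimension jumps across strata, so no local triviality is claimed, in line with the definition of pseudo-bundle in Section~2.
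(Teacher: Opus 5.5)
Your route is the paper's. The paper also defines the kernel chartwise on $S_I\times G^I$ as $\bigoplus_{i\in I\setminus J}T_{g_i}G$, writes $TE^{\mathrm{sph}}(G)\simeq\varinjlim_I T(S_I\times G^I)$, and lets compatibility with the $\iota_{J,I}$ do the gluing. You are more careful about the pseudo-bundle axioms, and you correctly spot the clash with the Lemma on $T_pQ_I$. However, your resolution of that clash does not close the gap. The argument fails exactly where you treat $\ker g_p$ as one vector space attached to a point $p\in E^{\mathrm{sph}}(G)$.

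The map $\psi_I$ is not injective. If $p$ has support $J\subsetneq I$, its lifts to $S_I\times G^I$ are the pairs $(x,h)$ with $h_j=g_j$ for $j\in J$ and $h_i\in G$ arbitrary for $i\in I\setminus J$. The fiber of your $\mathcal K_I$ over such a lift is $\bigoplus_{i\in I\setminus J}T_{h_i}G$, so it depends precisely on the coordinates that the quotient forgets. This has three consequences:
\begin{enumerate}
\item ``$\coprod_p\ker g_p$'' is not a well-defined set.
\item The claim that each fiber is a vector space is unjustified.
\item In the addition step, two plots of $\mathcal K$ over the same base plot can sit over different lifts $(x(u),g(u))$ and $(x(u),g'(u))$, with $g_i(u)\neq g'_i(u)$ wherever $x_i(u)=0$. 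The chart-compatibility proposition of Section~4 concerns one plot factoring through two charts and does not produce a common lift, so the sum is undefined.
\end{enumerate}
Pushing forward $\coprod_I T(S_I\times G^I)$ ``before imposing the quotient relation on tangent vectors'' does not help while the base is still $E^{\mathrm{sph}}(G)$. The fiber over $p$ is then a union of the vector spaces over the various lifts, and vectors over different lifts cannot be added.

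There are two ways to repair this.
\begin{enumerate}
\item Work over the pre-quotient, i.e.\ the colimit of the $S_I\times G^I$ along the injective maps $\iota_{J,I}$. There lifts are unique and your argument goes through verbatim. $\mathcal K$ is then the kernel of $d\pi$, in line with the paper's later intrinsic characterization.
\item Trivialize $TG\cong G\times\mathfrak g$ by translations (e.g.\ right translations, matching the right-invariant metric of Section~6). The inactive part of the fiber then becomes the finitely supported sum $\bigoplus_{i\notin J}\mathfrak g$, independent of the forgotten $g_i$. Your chartwise argument then yields a vector pseudo-bundle over $E^{\mathrm{sph}}(G)$ itself.
\end{enumerate}
In both cases $\mathcal K$ lives in an \emph{enlarged} tangent object, not in the internal tangent pseudo-bundle computed in Section~5. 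On that internal tangent pseudo-bundle $g$ is positive definite, so literally $\ker g_p=0$ and the statement would be trivial. The paper's sketch passes over the same point, so you should say explicitly which of these two models of $TE^{\mathrm{sph}}(G)$ you are using.
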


\begin{proof}
By Section~5, the tangent pseudo-bundle is locally described by
\[
T E^{\mathrm{sph}}(G) \simeq \varinjlim_I T(S_I \times G^I).
\]

On each local model, the kernel of $g$ is given by
\[
\bigoplus_{i \notin I_{\mathrm{active}}} T_{g_i}G,
\]
which varies smoothly with respect to the plots.

Compatibility under inclusions $J \subset I$ ensures that these local kernels glue into a well-defined pseudo-bundle.
\end{proof}

\subsubsection{Quotient pseudo-bundle}

\begin{definition}
Define the quotient pseudo-bundle
\[
\mathcal{H}
:=
T E^{\mathrm{sph}}(G) / \mathcal{K}.
\]
\end{definition}

\begin{proposition}
The metric $g$ induces a non-degenerate bilinear form
\[
\bar g : \mathcal{H} \times \mathcal{H} \longrightarrow \mathbb{R}.
\]
\end{proposition}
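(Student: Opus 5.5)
The plan is the standard descent of a symmetric bilinear form to the quotient by its radical, carried out fiberwise and then checked plotwise for smoothness.

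\emph{Step 1 (well-definedness).} Fix $p\in E^{\mathrm{sph}}(G)$ with support $J$. For classes $[u],[w]\in\mathcal H_p=T_pE^{\mathrm{sph}}(G)/\mathcal K_p$, set $\bar g_p([u],[w]):=g_p(u,w)$. By the preceding proposition, $\mathcal K_p=\ker g_p$ consists of vectors $k$ with $g_p(k,\cdot)=0$. Changing representatives $u\mapsto u+k$ and $w\mapsto w+k'$ with $k,k'\in\mathcal K_p$ therefore leaves the value unchanged, by bilinearity and symmetry. Hence $\bar g_p$ is a well-defined symmetric bilinear form on $\mathcal H_p$.

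\emph{Step 2 (non-degeneracy).} Suppose $\bar g_p([u],[w])=0$ for all $[w]$. Then $g_p(u,w)=0$ for all $w\in T_pE^{\mathrm{sph}}(G)$, so $u\in\ker g_p=\mathcal K_p$, which gives $[u]=0$. For a more concrete check, use the earlier proposition to identify $\mathcal H_p$ with the space
\[
\Big\{(v_i)^{\mathrm{fin}}\;\Big|\;\sum_{j\in J}x_jv_j=0\Big\}\oplus\bigoplus_{j\in J}T_{g_j}G .
\]
On this space the explicit formula $\sum_i v_i^2+\sum_{j\in J}x_j^2\|\xi_j\|_G^2$ is positive definite, so $\bar g_p$ is in fact an inner product.

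\emph{Step 3 (smoothness).} The quotient pseudo-bundle $\mathcal H$ carries the pushforward diffeology of $TE^{\mathrm{sph}}(G)$. A plot of $\mathcal H\times_{E^{\mathrm{sph}}(G)}\mathcal H$ therefore lifts locally to a plot of $TE^{\mathrm{sph}}(G)\times_{E^{\mathrm{sph}}(G)}TE^{\mathrm{sph}}(G)$. Composing with $\bar g$ gives the composite of that lift with $g$, which is smooth because $g$ is smooth (Section~6). Thus $\bar g$ is smooth. Locally, by the colimit description, such a lift factors through some $T(S_I\times G^I)$, where everything reduces to the finite-dimensional formula.

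\emph{Main obstacle.} The delicate point is the lifting in Step 3. I would need to check that plots of the quotient pseudo-bundle admit local lifts, and that $\mathcal K$ is a genuine sub-pseudo-bundle, so that the quotient diffeology is the pushforward one. I would rely on the preceding proposition, which asserts that $\mathcal K$ is a vector pseudo-bundle. Given that, the lifting property follows from the definition of the quotient diffeology as a subduction.
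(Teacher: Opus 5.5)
Your proposal is correct and follows the paper's argument: $g$ vanishes exactly on $\mathcal K=\ker g$, so it descends to $\mathcal H=TE^{\mathrm{sph}}(G)/\mathcal K$, and non-degeneracy follows because $\mathcal K$ is the whole radical, or equivalently from positivity on the active spherical and group directions. Your Step 3, which checks smoothness of $\bar g$ by lifting plots locally along $TE^{\mathrm{sph}}(G)\to\mathcal H$, supplies a verification the paper leaves implicit rather than a different route; those local lifts exist automatically, since any quotient map is a subduction.
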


\begin{proof}
By construction, $g$ vanishes precisely on $\mathcal{K}$. Therefore it descends to the quotient.

Non-degeneracy follows from the positivity on active spherical and group directions.
\end{proof}

\subsubsection{Intrinsic characterization}

\begin{proposition}
The pseudo-bundle $\mathcal{K}$ coincides with the kernel of the differential of the projection
\[
\pi : \widetilde E^{\mathrm{sph}}(G) \to E^{\mathrm{sph}}(G).
\]
\end{proposition}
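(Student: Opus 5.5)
The plan is to compare the two pseudo-bundles pointwise and then check that the identification is compatible with plots. Everything is computed in the finite local models $S_I\times G^I$, using the colimit description $TE^{\mathrm{sph}}(G)\simeq\varinjlim_I T(S_I\times G^I)$ from Section~5.

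Fix a point $\tilde p=(x_i,g_i)\in\widetilde E^{\mathrm{sph}}(G)$ with image $p$ of support $J$. Choose a finite $I\supset J$ carrying the representative.

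\textbf{Computing $\ker d\pi$.} On $S_I\times G^I$ the tangent space is $T_xS_I\oplus\bigoplus_{i\in I}T_{g_i}G$. The map $d\pi_{\tilde p}$ sends this space to
\[
T_pQ_I\simeq\Bigl\{(v_i)\in\mathbb R^I\;\Big|\;\sum_{j\in J}x_jv_j=0\Bigr\}\oplus\bigoplus_{j\in J}T_{g_j}G,
\]
where the identification is the earlier Lemma on $T_pQ_I$. Concretely, $d\pi_{\tilde p}$ keeps every spherical component $v_i$ and every group component $\xi_j$ with $j\in J$. It kills the components $\xi_i\in T_{g_i}G$ for $i\in I\setminus J$.

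This reproduces the argument of that Lemma. A curve that varies only $g_i$, with $x_i\equiv 0$, is constant in the quotient. Conversely, the Lemma's surjectivity construction shows that the retained components are detected. Hence
\[
\ker d\pi_{\tilde p}\cap T_{\tilde p}(S_I\times G^I)=\bigoplus_{i\in I\setminus J}T_{g_i}G.
\]

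Passing to the colimit over $I$, which is compatible because the inclusions $\iota_{J,I}$ act as the identity on group components, gives $\ker d\pi_{\tilde p}=\bigoplus_{i\notin J}T_{g_i}G$. By the Proposition computing $\ker g_p$, this equals $\ker g_p=\mathcal K_p$. Here the metric $g$ is read on the lifted tangent vectors, which is exactly how $\mathcal K$ was defined.

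\textbf{Matching the diffeologies.} Both objects are subsets of the same local models $T(S_I\times G^I)$ and carry the subset diffeology. A plot of $\mathcal K$ is locally a smooth family of vectors $(0,\xi)$ with $\xi_i$ nonzero only where $x_i=0$. This is the same condition as being a plot of $\ker d\pi$. The identification is therefore a diffeomorphism of pseudo-bundles, not merely a fiberwise bijection.

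\textbf{Main obstacle.} The hard step is that $\mathcal K$ was only defined on $TE^{\mathrm{sph}}(G)$, while $\ker d\pi$ naturally lives on $T\widetilde E^{\mathrm{sph}}(G)$. In the quotient the vectors $\xi_i$ with $i\notin J$ are already zero.

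I would resolve this by interpreting the statement upstairs. Define $\mathcal K$ via the local models, as in the proof of the preceding Proposition, that is, on $T(S_I\times G^I)$, which is the tangent space of $\widetilde E^{\mathrm{sph}}(G)$ in the chart. Under this reading, the statement becomes exactly the equality of the two kernels inside $T\widetilde E^{\mathrm{sph}}(G)$, and $\mathcal H=T\widetilde E^{\mathrm{sph}}(G)/\ker d\pi$ is identified with the image of $d\pi$.

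For the surjectivity of $d\pi$ onto the internal tangent space, I would invoke the Lemma's explicit curve $x(t)=y(t)/\|y(t)\|$.
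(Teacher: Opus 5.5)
Your argument rests on the same observation as the paper's proof: a curve that moves only $g_i$ while $x_i\equiv 0$ projects to a constant plot, so its velocity lies in $\ker d\pi$. The paper stops there. That only gives $\mathcal K\subseteq\ker d\pi$, with the converse resting on the word ``precisely''. You go further: you compute $\ker d\pi_{\tilde p}$ completely in the chart $S_I\times G^I$ and compare it with the kernel of $g_I$, which also yields $\ker d\pi\subseteq\mathcal K$. You are also right that the statement only makes sense with $\mathcal K$ read upstairs, in $T(S_I\times G^I)$. By the Section~5 lemma, the summands $T_{g_i}G$, $i\notin J$, do not occur in $T_pE^{\mathrm{sph}}(G)$, and the paper's own computation of $\ker g_p$ is in fact done upstairs. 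The paper never fixes a diffeology on $\widetilde E^{\mathrm{sph}}(G)$, so state explicitly that you use the one generated by the charts $S_I\times G^I$, with the remaining group entries frozen. This is what makes the kernel the direct sum $\bigoplus_{i\notin J}T_{g_i}G$ appearing in the paper.

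Two corrections. First, the inclusion $\ker d\pi\subseteq\mathcal K$ does not follow from ``the Lemma's surjectivity construction''. Surjectivity says every vector downstairs is hit, not that $(v,\xi)$ with $v\neq0$, or with $\xi_j\neq0$ for some $j\in J$, has nonzero image. Argue by detection instead. Each $x_i$ is a well-defined smooth function on $E^{\mathrm{sph}}(G)$ with $x_i\circ\pi$ the $i$-th coordinate, so $dx_i(d\pi_{\tilde p}(v,\xi))=v_i$. For $j\in J$, the map $[(x,g)]\mapsto g_j$ is well defined and smooth on the D-open set $\{x_j\neq0\}$, so it recovers $\xi_j$.

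Second, detection is preferable to reading $d\pi$ off the lemma, because for nontrivial $G$ the lemma's model of $T_pQ_I$ is too small. Fix an inactive index $i$ and consider the curves $t\mapsto[(x+te_i)/\|x+te_i\|,\,g]$ with $g_i=h$. Their velocities are linearly independent for distinct $h\in G$. To see this, take a pointed plot $q$ with local lift $\tilde q=(\tilde x,\tilde g)$ and assign to it the form $\chi(\tilde g_i(0))\,d(x_i\circ q)_0$, where $\chi$ is the indicator function of $\{h\}$; set it to $0$ if the lift does not involve $i$. This does not depend on the lift, because two lifts with different $\tilde g_i(0)$ force $x_i\circ q\equiv0$ near $0$. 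It is compatible with pointed reparametrizations, so it defines a linear form on the internal tangent space. For example, with $G=\mathbb Z_2$, $Q_{\{j,i\}}$ is locally a wedge of two arcs, whose internal tangent space is two-dimensional rather than one-dimensional. Hence your closing claim that $d\pi_{\tilde p}$ is surjective onto the internal tangent space fails at points with inactive coordinates. It is not needed for the proposition, and your kernel computation is unaffected.
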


\begin{proof}
Directions corresponding to variations of $g_i$ with $x_i=0$ are precisely those that are collapsed by the quotient relation defining $E^{\mathrm{sph}}(G)$.

Hence they are tangent to the fibers of $\pi$, and therefore lie in the kernel of $d\pi$.
\end{proof}

\begin{corollary}
We have a short exact sequence of pseudo-bundles
\[
0 \longrightarrow \mathcal{K}
\longrightarrow T E^{\mathrm{sph}}(G)
\longrightarrow \mathcal{H}
\longrightarrow 0.
\]
\end{corollary}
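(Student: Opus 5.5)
The plan is to establish the three exactness properties fiberwise and then check that the fiberwise maps assemble smoothly. We argue on the local models $S_I\times G^I$ (or $Q_I$) and pass to the colimit, as in the theorem on the first-order tangent structure along support strata. Let $p$ have support $J$. The first map is the inclusion $\mathcal{K}\hookrightarrow TE^{\mathrm{sph}}(G)$ provided by the proposition asserting that $p\mapsto\ker g_p$ defines a sub-pseudo-bundle. The second map is the canonical projection onto $\mathcal{H}=TE^{\mathrm{sph}}(G)/\mathcal{K}$. Both maps are linear on fibers, and both commute with the projections to $E^{\mathrm{sph}}(G)$.

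Exactness is then essentially formal.
\begin{enumerate}
\item Injectivity at $\mathcal{K}$ holds because the first map is an inclusion of subspaces fiber by fiber.
\item Surjectivity onto $\mathcal{H}$ holds because a quotient map is onto.
\item Exactness in the middle holds because the kernel of $T_pE^{\mathrm{sph}}(G)\to T_pE^{\mathrm{sph}}(G)/\mathcal{K}_p$ is $\mathcal{K}_p$ by definition.
\end{enumerate}
As a consistency check, we would also use the description $\mathcal{K}_p=\ker g_p=\bigoplus_{i\notin J}T_{g_i}G$. Together with the intrinsic characterization $\mathcal{K}=\ker d\pi$, this shows that $\mathcal{H}_p$ carries the non-degenerate form $\bar g$ from the preceding proposition. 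In particular, $\mathcal{H}_p$ is identified with the space of active spherical and group directions.

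The remaining point is diffeological: the sequence should be a sequence of pseudo-bundles, not only of vector spaces. We give $\mathcal{K}$ the subset diffeology and $\mathcal{H}$ the quotient diffeology. Then the inclusion is smooth by the definition of the subset diffeology, and the projection is smooth, indeed a subduction, by the definition of the quotient diffeology. Fiberwise addition and scalar multiplication on $\mathcal{H}$ are smooth because they lift along plots to the corresponding operations on $TE^{\mathrm{sph}}(G)$. This lifting uses that plots of a quotient locally lift. Finally, each map commutes with the base projections, and so they are morphisms of pseudo-bundles over the identity of $E^{\mathrm{sph}}(G)$.

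The expected obstacle is making sense of $\mathcal{K}$ as a subset of $TE^{\mathrm{sph}}(G)$. By the earlier lemma, the $T_{g_i}G$ components with $i\notin J$ are already zero in $T_pQ_I$. Our approach is therefore to read the sequence on the local models $T(S_I\times G^I)$, where $\mathcal{K}$ genuinely sits as the subbundle $\bigoplus_{i\notin J}T_{g_i}G$. We would verify exactness there, which is again the formal argument above, and then pass to the directed colimit over $I$. The justification for this last step is that filtered colimits of vector spaces are exact and commute with the inclusions $\iota_{J,I}$. The step that needs care is this colimit step: we must check that the inclusions carry $\mathcal{K}$ into $\mathcal{K}$, which holds because inactive indices remain inactive under extension by zero.
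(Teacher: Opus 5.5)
Your items (1)--(3), together with the subset and quotient diffeologies on $\mathcal K$ and $\mathcal H$, are exactly what the paper relies on. The paper states the corollary without proof, as an immediate consequence of two things: the proposition that $p\mapsto \ker g_p$ is a sub-pseudo-bundle $\mathcal K\subset TE^{\mathrm{sph}}(G)$, and the definition $\mathcal H:=TE^{\mathrm{sph}}(G)/\mathcal K$. If you take that proposition as an input, everything before your last paragraph is already a complete proof. It is also more careful than the paper about smoothness of the quotient operations. The last paragraph is then not needed.

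That last paragraph, however, does not repair the obstacle you correctly spotted, and the step it rests on fails. Filtered colimits are exact, so the local sequences $0\to\mathcal K_I\to T_{(x,g)}(S_I\times G^I)\to T_{(x,g)}(S_I\times G^I)/\mathcal K_I\to 0$ do give an exact sequence of colimits. But by the very Lemma you cite, $T_{(x,g)}(S_I\times G^I)/\mathcal K_I\cong T_pQ_I$. By the Theorem of Section~5, $\varinjlim_I T_pQ_I\cong T_pE^{\mathrm{sph}}(G)$. Hence $TE^{\mathrm{sph}}(G)$ lands in the \emph{third} slot, and what you actually obtain is
\[
0\longrightarrow \mathcal K_p\longrightarrow \varinjlim_{I} T_{(x,g)}(S_I\times G^I)\longrightarrow T_pE^{\mathrm{sph}}(G)\longrightarrow 0 .
\]
This is the sequence for $d\pi$ suggested by $\mathcal K=\ker d\pi$, with $\mathcal H\cong TE^{\mathrm{sph}}(G)$. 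Its middle term is a tangent space of the unquotiented model $\widetilde E^{\mathrm{sph}}(G)$ at a chosen representative. It depends on the irrelevant labels $g_i$, $i\notin J$, as does $\bigoplus_{i\notin J}T_{g_i}G$ itself, so it is not even a fiber of a pseudo-bundle over $E^{\mathrm{sph}}(G)$.

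The compatibility you single out (inactive indices stay inactive) is harmless. The failing step is the unstated identification $\varinjlim_I T(S_I\times G^I)\cong TE^{\mathrm{sph}}(G)$. The paper asserts this in the proof that $p\mapsto\ker g_p$ is a pseudo-bundle, but it contradicts the Lemma. If you instead keep the Section~5 description of $T_pE^{\mathrm{sph}}(G)$ in the middle, the non-degeneracy proposition shows $g_p$ is positive definite there. Then $\ker g_p\cap T_pE^{\mathrm{sph}}(G)=0$, and the statement holds only in the degenerate form $\mathcal K=0$, $\mathcal H=TE^{\mathrm{sph}}(G)$.

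So you must either take $\mathcal K\subset TE^{\mathrm{sph}}(G)$ from the paper as given, or replace the middle term by $T\widetilde E^{\mathrm{sph}}(G)$. The colimit device cannot produce a non-trivial version of the sequence as stated.

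Note also that the Lemma itself is too optimistic. For $\phi\in C^\infty(G)$, the function $x_i\,\phi(g_i)$ is smooth on $E^{\mathrm{sph}}(G)$. Its differential at $p$ takes the value $\phi(h)$ on the velocity of $t\mapsto\psi_{J\cup\{i\}}\big((\cos t)x+(\sin t)e_i,(g_J,h)\big)$. So for nontrivial $G$ these activation directions are distinct for different $h$, whereas the Lemma identifies all of them with the single spherical direction $e_i$.
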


\subsubsection{Geometric interpretation}

\begin{remark}
The pseudo-bundle $\mathcal{K}$ encodes directions that are invisible in the quotient defining $E^{\mathrm{sph}}(G)$.

The quotient $\mathcal{H}$ corresponds to the effective geometric directions on which the metric is non-degenerate.
\end{remark}

\begin{remark}
This decomposition is canonical and compatible with the diffeological structure, providing an intrinsic replacement for the usual splitting into tangent and normal directions.
\end{remark}
\subsection{Summary}

The spherical Milnor space carries a natural Riemannian structure defined by:
\begin{itemize}
\item the Euclidean metric on the spherical coordinates,
\item a weighted metric on the group components,
\item compatibility with the quotient and gluing structure,
\item smooth behavior across strata.
\end{itemize}

\begin{remark}
This metric provides the geometric foundation for the differential operators constructed in the following sections.
\end{remark}
\section{Differential forms and Hodge-type structures} \label{7}

\subsection{Forms on a diffeological space}

Let $X$ be a diffeological space. Recall that a differential $k$-form on $X$ is given by an assignment
\[
p : U \to X \quad \longmapsto \quad \omega_p \in \Omega^k(U)
\]
compatible with smooth reparametrizations.

We denote by
\[
\Omega^k(X)
\]
the space of smooth $k$-forms on $X$.

\subsection{Horizontal forms}

Let
\[
\mathcal K \subset TE^{\mathrm{sph}}(G)
\]
be the degeneracy pseudo-bundle defined in Section~6.

\begin{definition}
A differential form $\omega \in \Omega^k(E^{\mathrm{sph}}(G))$ is called \emph{horizontal} if
\[
\iota_v \omega = 0
\quad \text{for all } v \in \mathcal K.
\]
\end{definition}

We denote by
\[
\Omega^k_{\mathrm{hor}}(E^{\mathrm{sph}}(G))
\]
the space of horizontal forms.

\begin{proposition}
The space $\Omega^\bullet_{\mathrm{hor}}(E^{\mathrm{sph}}(G))$ is stable under the exterior differential $d$.
\end{proposition}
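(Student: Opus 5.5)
The plan is to reduce the statement to the classical Cartan calculus on the finite-dimensional local models $S_I\times G^I$, and then to use the plotwise nature of forms and of $d$. By Section~4, every plot of $E^{\mathrm{sph}}(G)$ locally factors through some $\psi_I$. Moreover, $d\omega$ is defined plotwise by $(d\omega)_p=d(\omega_p)$. It therefore suffices to show two things: that horizontality of $\omega$ is equivalent to a condition on the pullbacks $\psi_I^*\omega\in\Omega^k(S_I\times G^I)$, and that this condition is preserved by $d$ on each $S_I\times G^I$.

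\textbf{Step 1 (local characterization).} By the lemma and the proposition of Section~6 identifying $\mathcal K$ with $\ker d\pi$, the fiber of $\mathcal K$ at a point of support $J$, viewed in a chart $S_I\times G^I$, is spanned by the vectors of $\bigoplus_{i\in I\setminus J}T_{g_i}G$. These are exactly the directions along the fibers of $\pi$, i.e.\ along the orbits $\{x_i=0\}\times G$ of the $i$-th group factor. So $\omega$ is horizontal if and only if, for every $I$, the form $\omega_I:=\psi_I^*\omega$ satisfies $\iota_{\xi}\omega_I=0$ at every point with $x_i=0$ and every $\xi\in T_{g_i}G$. The point is that $\omega_I$ is a genuine pullback under the quotient map. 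Hence $\omega_I$ is invariant under the local flows of such fiber vector fields, because these flows stay inside the fibers of $\psi_I$ over the locus $\{x_i=0\}$.

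\textbf{Step 2 (Cartan formula).} Let $Z$ be a vector field on $S_I\times G^I$ generating the $i$-th group directions, for example a left-invariant field on the $i$-th factor. On the closed submanifold $N_i=\{x_i=0\}\subset S_I\times G^I$, the field $Z$ is tangent to $N_i$. Its flow preserves $N_i$ and the fibers of $\psi_I$, so $\iota^*_{N_i}\mathcal L_Z\omega_I=0$ in the directions relevant to $\mathcal K$. Cartan's formula gives
\[
\iota_Z d\omega_I=\mathcal L_Z\omega_I-d\iota_Z\omega_I .
\]
The second term vanishes along $N_i$: since $\iota_Z\omega_I$ vanishes identically on $N_i$ and $d$ commutes with restriction to $N_i$, the restriction of $d\iota_Z\omega_I$ to $TN_i$ vanishes. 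The first term vanishes by the invariance from Step~1. Hence $\iota_Z d\omega_I=0$ at points of $N_i$, at least when evaluated on vectors tangent to $N_i$.

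\textbf{Step 3 (main obstacle).} The delicate point is the evaluation of $\iota_Zd\omega_I$ on vectors transverse to $N_i$, namely the direction $\partial_{x_i}$, which by Section~5 is a genuine tangent direction. I would handle it as follows. Horizontality holds at every point with $x_i=0$, and also in the degenerate limit of the $\sim$-relation. Since $\omega_I$ is a smooth pullback along the quotient, its components of the form $\omega_I(\partial_{x_i},Z,\dots)$ are constant in $g_i$ along $N_i$. One then argues that the derivative of $\omega_I(Z,\dots)$ in $x_i$ at $x_i=0$ is compensated in the formula
\[
d\omega_I(\partial_{x_i},Z,\ldots)=\partial_{x_i}\bigl(\omega_I(Z,\ldots)\bigr)-Z\bigl(\omega_I(\partial_{x_i},\ldots)\bigr)+\cdots .
\]
Here the $Z$-derivative term vanishes by the $g_i$-independence along $N_i$. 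The $\partial_{x_i}$ term is the real issue. If it cannot be controlled in general, I would fall back on restricting to forms that are horizontal on a neighborhood in the sense of the metric degeneracy $x_i^2\langle\cdot,\cdot\rangle_G$. For instance, one could require $\omega_I(Z,\ldots)=O(x_i^2)$; this is automatic for forms built from $g$, and it forces the $\partial_{x_i}$-derivative to vanish at $x_i=0$.

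\textbf{Step 4 (gluing).} Finally, I would check that these local conclusions are compatible with the inclusions $\iota_{J,I}$, which follows from naturality of $d$ under pullback. Since horizontality is a plotwise, pointwise condition and plots locally factor through the $\psi_I$, this gives $d\omega\in\Omega^{k+1}_{\mathrm{hor}}(E^{\mathrm{sph}}(G))$.
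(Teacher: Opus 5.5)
\textbf{Step~3 cannot be completed: under your reading the proposition is false.} Your reading of $\mathcal K$ is the right one, namely the inactive group directions $T_{g_i}G$ along $N_i=\{x_i=0\}$ in a chart $S_I\times G^I$. It is also the only non-vacuous reading: inside the internal tangent space of $E^{\mathrm{sph}}(G)$, the lemma of Section~5 makes these directions zero, so every form would be horizontal. Step~2 is fine for arguments tangent to $N_i$. But each of the two terms you isolate in Step~3 can be nonzero.

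The error enters in Steps~1 and~3. Descent of $\omega$ through the $\psi_I$ only forces the restriction $\iota_{N_i}^*\omega_I$ to be basic for the $i$-th group factor. Components containing $dx_i$ are unconstrained along $N_i$, because for any plot $P$ one has $d(x_i\circ P)=0$ on the interior of $\{x_i\circ P=0\}$. So $\omega_I$ need not be invariant under the fiber flows, and $Z\bigl(\omega_I(\partial_{x_i},\dots)\bigr)$ need not vanish.

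Concretely, fix $i$, a smooth $f:G\to\mathbb R$ and $\lambda\in\mathfrak g^*$. Put $\omega_I=f(g_i)\,dx_i$, resp.\ $\omega_I=x_i\,\lambda(\theta_i)$ with $\theta_i=g_i^{-1}dg_i$, and $\omega_I=0$ if $i\notin I$. Suppose $\psi_I\circ P=\psi_{I'}\circ P'$. Then $x_i\circ P=x_i\circ P'$, the two plots agree in $g_i$ on the open set where this function is nonzero, and both pullbacks vanish on the interior of its zero set. By continuity $P^*\omega_I=P'^*\omega_{I'}$, so these are genuine $1$-forms on $E^{\mathrm{sph}}(G)$, and both are horizontal. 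Yet at a point with $x_i=0$ and $\xi\in T_{g_i}G\subset\mathcal K$,
\[
\iota_\xi\, d\bigl(f(g_i)\,dx_i\bigr)=df(\xi)\,dx_i,\qquad
\iota_\xi\, d\bigl(x_i\,\lambda(\theta_i)\bigr)=-\lambda\bigl(\theta_i(\xi)\bigr)\,dx_i .
\]
For suitable $\xi$ these are nonzero as soon as $\dim G>0$, since $dx_i\neq0$ on $T_qS_I$ when $x_i=0$ (the component $v_i$ is unconstrained there). The first example kills your ``$Z$-derivative term vanishes''; the second realizes the $\partial_{x_i}$-term you worried about.

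\textbf{Your fallback does not rescue the statement.} It changes the statement, and it fails on its own terms. The class $\omega_I(Z,\dots)=O(x_i^2)$ is not $d$-stable: for $x_i^2\lambda(\theta_i)$, the contraction $\iota_Z$ of the differential is only $O(x_i)$. Moreover $f(g_i)\,dx_i$ satisfies $\iota_Z\omega_I\equiv0$ identically, yet $d\omega$ is not horizontal.

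\textbf{The missing idea is the one the paper's one-line proof (``standard Cartan calculus'') also passes over.} Even classically, horizontal forms alone are not $d$-stable; Cartan calculus gives stability only for \emph{basic} forms. That requires invariance $\mathcal L_Z\omega=0$ together with $\iota_Z\omega\equiv0$ on an open set, so that $d\iota_Z\omega=0$. Here $\mathcal K$ lives only on the closed strata $N_i$, so both $\mathcal L_Z\omega$ and $d\iota_Z\omega$ produce transverse $dx_i$-terms.

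What can actually be proved is that
\[
\{\omega\in\Omega^\bullet_{\mathrm{hor}}(E^{\mathrm{sph}}(G)) : d\omega\in\Omega^{\bullet+1}_{\mathrm{hor}}(E^{\mathrm{sph}}(G))\}
\]
is a $d$-stable subcomplex, trivially since $d^2=0$. Equivalently, these are the forms with $\iota_Z\omega_I=\iota_Z d\omega_I=0$ along every $N_i$. The proposition has to be replaced by this, or by a genuinely stronger horizontality condition.
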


\begin{proof}
If $\omega$ vanishes on $\mathcal K$, then so does $d\omega$ by standard Cartan calculus.
\end{proof}

\subsection{Metric pairing on horizontal forms}

Let
\[
\mathcal H
=
TE^{\mathrm{sph}}(G)/\mathcal K
\]
be the effective horizontal pseudo-bundle endowed with the non-degenerate
metric
\[
\bar g.
\]

\subsubsection{Musical map associated with the metric}

The metric defines the fiberwise bundle morphism
\[
\flat_{\bar g}:
\mathcal H
\longrightarrow
\mathcal H^*,
\qquad
v\mapsto \bar g(v,\cdot).
\]

\begin{definition}
We say that the metric \(\bar g\) is:
\begin{itemize}
\item weakly non-degenerate if
\[
\flat_{\bar g}
\]
is injective;
\item strongly non-degenerate if
\[
\flat_{\bar g}
\]
is an isomorphism onto the dual pseudo-bundle.
\end{itemize}
\end{definition}

\begin{remark}
In finite-dimensional local models, weak and strong non-degeneracy coincide.
In infinite-dimensional settings, the distinction becomes essential.
\end{remark}

\subsubsection{Pairing on horizontal one-forms}

Assume from now on that \(\bar g\) is strongly non-degenerate.

\begin{definition}
The inverse musical morphism
\[
\sharp_{\bar g}:
\mathcal H^*
\longrightarrow
\mathcal H
\]
is defined as the inverse of
\[
\flat_{\bar g}.
\]

For horizontal one-forms
\[
\alpha,\beta\in\Gamma(\mathcal H^*),
\]
define the metric pairing by
\[
\langle \alpha,\beta\rangle_{\bar g}
:=
\bar g(\sharp_{\bar g}\alpha,\sharp_{\bar g}\beta).
\]
\end{definition}

\begin{proposition}
The pairing
\[
\langle\cdot,\cdot\rangle_{\bar g}
\]
is symmetric and non-degenerate.
\end{proposition}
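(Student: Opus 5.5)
The plan is to work fiberwise and reduce both claims to the corresponding properties of \(\bar g\) itself, transported to \(\mathcal H^*\) by the musical isomorphism. Fix a point \(p\in E^{\mathrm{sph}}(G)\), and let \(\alpha,\beta\) be horizontal one-forms. Under the standing assumption that \(\bar g\) is strongly non-degenerate, \(\flat_{\bar g}:\mathcal H_p\to\mathcal H_p^*\) is a linear bijection with inverse \(\sharp_{\bar g}\). Then \(\langle\cdot,\cdot\rangle_{\bar g}\) at \(p\) is the pullback of \(\bar g_p\) along the linear isomorphism \(\sharp_{\bar g}\). Every property of a bilinear form that is invariant under linear isomorphisms therefore carries over.

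\textbf{Symmetry.} This is immediate from the definition:
\[
\langle\alpha,\beta\rangle_{\bar g}=\bar g(\sharp\alpha,\sharp\beta)=\bar g(\sharp\beta,\sharp\alpha)=\langle\beta,\alpha\rangle_{\bar g},
\]
using that \(\bar g\) is symmetric. The symmetry of \(\bar g\) is inherited from \(g\), which is symmetric on each local model \(S_I\times G^I\), since it is a sum of squares weighted by \(x_i^2\). Symmetry then descends to the quotient \(\mathcal H=TE^{\mathrm{sph}}(G)/\mathcal K\) because \(g\) vanishes on \(\mathcal K\).

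\textbf{Non-degeneracy.} First I will note the identity
\[
\langle\alpha,\beta\rangle_{\bar g}=\beta(\sharp\alpha),
\]
which holds because \(\flat(\sharp\beta)=\beta\). Now suppose \(\langle\alpha,\beta\rangle_{\bar g}=0\) for all \(\beta\in\mathcal H_p^*\), so that \(\beta(\sharp\alpha)=0\) for every \(\beta\). I will take \(\beta=\flat(\sharp\alpha)=\alpha\), which gives \(\bar g(\sharp\alpha,\sharp\alpha)=0\). By the positivity established in the non-degeneracy proposition of Section~6, this forces \(\sharp\alpha=0\); strictly, that proposition is what underlies the injectivity of \(\flat\) on \(\mathcal H_p\). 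Hence \(\alpha=\flat(\sharp\alpha)=0\). The argument in fact shows that the pairing is positive definite, which is stronger than non-degenerate.

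\textbf{Main obstacle.} The delicate step is making sure the positivity argument really applies on \(\mathcal H_p\) and not merely on a single finite chart. Here I will use the description
\[
T_pE^{\mathrm{sph}}(G)\simeq\{(v_i)^{\mathrm{fin}}\mid \textstyle\sum_{j\in J}x_jv_j=0\}\oplus\bigoplus_{j\in J}T_{g_j}G,
\]
as a filtered colimit of finite-dimensional pieces. Any given vector \(\sharp\alpha\) lies in one finite stage \(I\), where \(\bar g\) is a genuine positive definite inner product. The inequality \(\bar g(w,w)>0\) for \(w\neq0\) is thus checked inside a single finite-dimensional space, and the strong non-degeneracy hypothesis supplies the rest. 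Finally, I will remark that the construction is plotwise smooth: on each plot, \(\sharp\) is computed in a finite local model by inverting a smooth, fiberwise invertible matrix field. So the pairing is a smooth, symmetric, non-degenerate (indeed positive definite) form on \(\mathcal H^*\).
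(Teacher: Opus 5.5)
Your proof is correct. Its opening observation, that $\langle\cdot,\cdot\rangle_{\bar g}$ is the pullback of $\bar g_p$ along the linear isomorphism $\sharp_{\bar g}$, is exactly the paper's argument. The paper gets symmetry from the symmetry of $\bar g$ and non-degeneracy from the invertibility of $\sharp_{\bar g}$: if $\bar g(\sharp\alpha,\sharp\beta)=0$ for all $\beta$, surjectivity of $\sharp$ gives $\bar g(\sharp\alpha,w)=0$ for all $w\in\mathcal H_p$, i.e. $\flat(\sharp\alpha)=0$, hence $\alpha=0$.

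Where you depart is the non-degeneracy step. Instead of using surjectivity of $\sharp$, you test on the diagonal $\beta=\alpha$ and invoke the positivity of $\bar g$ from Section~6. This gives a stronger conclusion, namely that the pairing is positive definite. The cost is that you need positivity rather than mere strong non-degeneracy. Positivity does hold for the metric of Section~6, which is built from a Riemannian metric on $G$, so your argument is valid there. However, the paper's argument uses only the strong non-degeneracy assumed at the start of this subsection. It therefore also covers indefinite choices, such as the Killing-form pseudo-metric for $SO(1,n)$ in Section~10, where your diagonal test would fail.

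One minor inaccuracy: when $G$ is infinite-dimensional, the stages of the colimit describing $T_pE^{\mathrm{sph}}(G)$ are not finite-dimensional, because they contain the factors $T_{g_j}G$. Nothing breaks, though, since your positivity check $\bar g(w,w)=\sum_i v_i^2+\sum_{j\in J}x_j^2\|\xi_j\|_G^2$ is pointwise and does not use finite dimensionality. Likewise, your closing remark about inverting a matrix field is only literally true for finite-dimensional $G$, and it is not needed for the statement.
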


\begin{proof}
Symmetry follows from symmetry of the metric \(\bar g\). Non-degeneracy follows
from invertibility of the musical isomorphism
\[
\sharp_{\bar g}.
\]
\end{proof}

\begin{remark}
This formulation avoids introducing a formal inverse tensor
\[
\bar g^{-1},
\]
which may not exist globally in the diffeological or infinite-dimensional
setting.
\end{remark}

\subsubsection{Extension to higher-degree forms}

The metric pairing extends fiberwise to exterior powers
\[
\Lambda^k\mathcal H^*
\]
by multilinearity and determinant contraction.

Explicitly, if
\[
\alpha=\alpha_1\wedge\cdots\wedge\alpha_k,
\qquad
\beta=\beta_1\wedge\cdots\wedge\beta_k,
\]
define
\[
\langle\alpha,\beta\rangle_{\bar g}
=
\det\!\Big(
\langle\alpha_i,\beta_j\rangle_{\bar g}
\Big)_{1\leq i,j\leq k}.
\]

\subsection{Formal adjoint of the exterior differential}

We now define the codifferential without using a Hodge star operator.

\subsubsection{Finite-rank case}

Assume first that the horizontal geometry is locally of finite rank and that a
fiberwise density or volume form has been fixed.

\begin{definition}
Let
\[
d:
\Omega^k_{\mathrm{hor}}
(E^{\mathrm{sph}}(G))
\longrightarrow
\Omega^{k+1}_{\mathrm{hor}}
(E^{\mathrm{sph}}(G))
\]
be the horizontal exterior differential.

The codifferential
\[
\delta:
\Omega^{k+1}_{\mathrm{hor}}
(E^{\mathrm{sph}}(G))
\longrightarrow
\Omega^{k}_{\mathrm{hor}}
(E^{\mathrm{sph}}(G))
\]
is defined as the formal adjoint of \(d\) with respect to the metric pairing
and the chosen density.
\end{definition}

\begin{proposition}
In the finite-rank setting, the codifferential \(\delta\) is well-defined.
\end{proposition}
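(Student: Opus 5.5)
The plan is to reduce well-definedness of $\delta$ to the classical construction on each finite-dimensional local model, and then check that these local codifferentials are compatible with the gluing.

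\emph{Local construction.} Fix a finite $I$ and consider the model $S_I\times G^I$, or its open dense stratum where all $x_i\neq 0$ for $i\in I$.

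On this model the horizontal bundle $\mathcal H$ has finite rank. The induced metric $\bar g$ is non-degenerate there by the proposition on non-degeneracy along strata, and the chosen density $\mu$ is a genuine smooth density. The pairing $\langle\cdot,\cdot\rangle_{\bar g}$ on $\Lambda^k\mathcal H^*$ is then a smooth fiberwise inner product.

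The $L^2$ pairing $(\alpha,\beta)=\int\langle\alpha,\beta\rangle_{\bar g}\,\mu$ is defined on compactly supported horizontal forms. I will write $d$ in local coordinates (plot coordinates) as a first-order differential operator with smooth coefficients. Integration by parts, using $\mu$ in divergence form, then produces a unique first-order operator $\delta_I$ satisfying $(d\alpha,\beta)=(\alpha,\delta_I\beta)$ for all compactly supported $\alpha$.

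\emph{Uniqueness.} Uniqueness comes from non-degeneracy of the pairing, which the preceding proposition gives via invertibility of $\sharp_{\bar g}$, together with density of compactly supported forms. If $(\alpha,\gamma)=0$ for all such $\alpha$, then $\gamma=0$ pointwise. Hence $\delta_I$ is characterized intrinsically by the adjoint identity, and does not depend on the coordinate expression used to compute it.

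I would also check that $\delta_I$ preserves horizontality. The contraction $\iota_v$ for $v\in\mathcal K$ never enters, because the pairing is defined on $\Lambda^\bullet\mathcal H^*$ and $\sharp_{\bar g}$ lands in $\mathcal H$.

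\emph{Gluing.} For $J\subset I$, the metric $g_I$ restricts to $g_J$ under the inclusion (the proposition on compatibility under inclusions), and the density is assumed compatible with these inclusions. By the uniqueness of the formal adjoint, the restriction of $\delta_I$ to forms pulled back to the $J$-model agrees with $\delta_J$.

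Plots of $E^{\mathrm{sph}}(G)$ locally factor through the models $Q_I$, and two such factorizations factor jointly through $Q_{I_1\cup I_2}$ (the proposition on compatibility under change of charts). So the local codifferentials define a family $(\delta\omega)_p$ that is compatible with reparametrizations. This is exactly a diffeological form. Since $\delta_I$ is a differential operator with smooth coefficients, $\delta\omega$ is smooth whenever $\omega$ is.

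\emph{Main obstacle.} The hardest step is the boundary strata, where $x_i\to 0$. There the weights $x_i^2$ degenerate, and the quotient identifies the $g_i$ directions. The integration by parts must produce no boundary terms, and the coefficients of $\delta_I$ must not blow up.

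The first thing I would try is to use that $\delta_I$ only involves $\bar g$ on $\mathcal H=TE^{\mathrm{sph}}/\mathcal K$, which is non-degenerate on each stratum. I would then argue stratum by stratum, using the first-order invisibility theorem: the strata are not boundaries in the tangent sense, so the sphere $S_I$ is closed and no boundary terms occur.

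Should the coefficients $x_i^{-2}$ coming from $\sharp_{\bar g}$ on the group directions cause trouble, I would restrict the statement to forms whose group components carry matching weights. Alternatively, I would interpret $\delta$ on the open dense top stratum and extend it by continuity, which is consistent with the finite-rank hypothesis of the proposition.
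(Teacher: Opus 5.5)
Your outline is the same as the paper's: a formal adjoint on each finite model by integration by parts, then gluing. However, the gluing step fails, and uniqueness of the formal adjoint cannot justify it. That uniqueness is relative to the $L^2$ pairing on $S_I\times G^I$, and the image of $S_J\times G^J$ is a null set for that pairing. Moreover, $\iota_{J,I}^*(\delta_I\omega_I)$ involves derivatives of $\omega_I$ transverse to the smaller model, which $\iota_{J,I}^*\omega_I$ does not record. Unlike $d$, the codifferential commutes with pullback only under density-preserving isometric open embeddings, not under the face inclusions.

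Concretely, take $G$ trivial, so the models are round spheres and every finite-rank hypothesis holds. Take $k\in I\setminus J$ and the smooth function $F=x_k^2$ on $E^{\mathrm{sph}}(G)$. For any smooth positive density $\mu$ on $S_I$,
\[
\delta_I d(\psi_I^*F)=-\operatorname{div}_\mu\bigl(2x_k\nabla_S x_k\bigr)
=-2\,|\nabla_S x_k|^2-2x_k\operatorname{div}_\mu(\nabla_S x_k)=-2
\quad\text{on }\{x_k=0\},
\]
while $\psi_J^*F=0$ gives $\delta_J d(\psi_J^*F)=0$. Since $\psi_I\circ\iota_{J,I}=\psi_J$, no $0$-form on $E^{\mathrm{sph}}(G)$ can restrict to all the $\delta_I d(\psi_I^*F)$. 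Similarly, with the round density $\delta_I dx_1=(|I|-1)x_1$, which depends on the chart.

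So the local adjoints do not assemble into a diffeological form, for any choice of densities. The paper's proof handles this step only by asserting that ``compatibility of the local structures under the transition maps implies that these local codifferentials glue,'' so it does not supply the missing argument. What your proposal, and the paper's sketch, actually establishes is a codifferential on each fixed model.

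The boundary obstacle you single out is also genuine, and neither of your fallbacks removes it. For $\dim G>0$ the tensor $g_I$ is degenerate on $\{x_i=0\}\subset S_I\times G^I$. So the model is an ordinary Riemannian manifold only on the open stratum, and the adjoint computed there has unbounded coefficients.

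For instance, $F=x_if(g_i)$ is a smooth function on $E^{\mathrm{sph}}(G)$, since it is independent of $g_i$ where $x_i=0$. The $i$-th group component of its gradient is $x_i^{-2}\operatorname{grad}_G(x_if)=x_i^{-1}\operatorname{grad}_Gf$. For a density $\rho\,\mu_{S_I}\otimes\mu_G^{\otimes I}$ with $\rho$ smooth and positive, and with $\operatorname{grad}_{g_i}$ the gradient in the variable $g_i$,
\[
\delta_I dF=-\frac{1}{x_i}\Bigl(\Delta_Gf+\bigl\langle\operatorname{grad}_Gf,\operatorname{grad}_{g_i}\log\rho\bigr\rangle_G\Bigr)+(\text{bounded terms}),
\qquad \Delta_G=\operatorname{div}_G\operatorname{grad}_G .
\]
This diverges as $x_i\to0$ at any $g_i$ where $\operatorname{grad}_Gf=0\neq\Delta_Gf$. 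Hence extension by continuity from the top stratum fails, and restricting to specially weighted forms changes the statement. The closedness of $S_I$ is beside the point, because the problem is blow-up of coefficients rather than boundary terms.
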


\begin{proof}
On each finite local model, the horizontal geometry is an ordinary
finite-dimensional Riemannian geometry. The metric pairing together with the
chosen density defines an \(L^2\)-type pairing on horizontal forms.

The formal adjoint of \(d\) therefore exists locally by the standard
integration-by-parts construction. Compatibility of the local structures under
the transition maps implies that these local codifferentials glue.
\end{proof}

\begin{remark}
The codifferential depends not only on the metric but also on the choice of
density or integration functional used to define the formal adjoint.
\end{remark}

\subsubsection{Infinite-dimensional case}

In infinite dimension, the existence of a formal adjoint is substantially more
delicate.

\begin{remark}
Even if the metric is strongly non-degenerate, a codifferential operator does
not automatically exist in the absence of:
\begin{itemize}
\item a suitable measure or density theory;
\item an \(L^2\)-type completion;
\item and adequate functional-analytic properties of the differential complex.
\end{itemize}
\end{remark}

\begin{definition}
An admissible Hodge datum on the horizontal pseudo-bundle consists of:
\begin{itemize}
\item a strongly non-degenerate metric;
\item a compatible density or trace functional;
\item a Hilbert-type completion of horizontal forms;
\item and a densely defined differential operator \(d\).
\end{itemize}

Under such assumptions, the codifferential is defined as the Hilbert adjoint
of \(d\).
\end{definition}

\begin{remark}
Thus the codifferential should be regarded as an additional analytic structure,
rather than as a purely formal consequence of the metric.
\end{remark}
\subsection{Hodge-type Laplacian}

We now define a Laplace-type operator associated with the horizontal
differential complex.

\subsubsection{Finite-rank case}

Assume first that:
\begin{itemize}
\item the effective horizontal pseudo-bundle
\[
\mathcal H
=
TE^{\mathrm{sph}}(G)/\mathcal K
\]
has finite rank on the considered local models;
\item the metric \(\bar g\) is strongly non-degenerate;
\item a compatible density or integration functional has been fixed;
\item the codifferential
\[
\delta
\]
is defined as the formal adjoint of
\[
d.
\]
\end{itemize}

\begin{definition}
The horizontal Hodge-type Laplacian is
\[
\Delta
:=
d\delta+\delta d
\]
acting on horizontal differential forms.
\end{definition}

\begin{proposition}
Under the preceding assumptions, the operator
\[
\Delta
=
d\delta+\delta d
\]
is well-defined and formally self-adjoint.
\end{proposition}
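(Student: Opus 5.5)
The argument is formal and rests on the proposition stating that, in the finite-rank setting, $\delta$ exists and is the formal adjoint of $d$. Write $(\!(\cdot,\cdot)\!)$ for the $L^2$-type pairing on horizontal forms. It is obtained by integrating the fiberwise pairing $\langle\cdot,\cdot\rangle_{\bar g}$ on $\Lambda^k\mathcal H^*$ against the fixed density, on compactly supported horizontal forms inside each finite local model $Q_I$, or $S_I\times G^I$. The plan has three steps. First, show that $\Delta$ is well-defined. Second, prove formal self-adjointness on each local model. Third, glue the local statements using the compatibility of the structures $g_I$ under the inclusions $\iota_{J,I}$.

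\emph{Well-definedness.} Since $\Omega^\bullet_{\mathrm{hor}}$ is stable under $d$, and $\delta$ maps $\Omega^{k+1}_{\mathrm{hor}}$ to $\Omega^k_{\mathrm{hor}}$ by construction, both $d\delta$ and $\delta d$ send $\Omega^k_{\mathrm{hor}}$ to itself. Their sum is therefore a well-defined operator on horizontal $k$-forms. Locality also matters: on each local model, $\delta$ is the classical Riemannian codifferential corrected by the density, hence a differential operator. So $\Delta$ is determined plotwise, and the already established gluing of the local $\delta$'s transfers directly to $\Delta$.

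\emph{Formal self-adjointness.} For compactly supported horizontal $\alpha,\beta$ in a local model, apply the defining adjunction $(\!(d\alpha,\gamma)\!)=(\!(\alpha,\delta\gamma)\!)$ twice:
\[
(\!(\Delta\alpha,\beta)\!)=(\!(d\delta\alpha,\beta)\!)+(\!(\delta d\alpha,\beta)\!)=(\!(\delta\alpha,\delta\beta)\!)+(\!(d\alpha,d\beta)\!).
\]
The right-hand side is symmetric in $\alpha,\beta$ because $\langle\cdot,\cdot\rangle_{\bar g}$ is symmetric, so it equals $(\!(\alpha,\Delta\beta)\!)$. The same identity gives nonnegativity, $(\!(\Delta\alpha,\alpha)\!)\ge 0$, as a byproduct. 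Applying this on every local model then yields the global statement.

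\emph{Main obstacle.} The delicate point is the passage through the degenerate directions $\mathcal K$, together with the gluing of the pairing across strata. Two facts must hold. First, the integrals must not depend on the choice of $I\supset J$. Second, integration by parts must produce no boundary terms where coordinates $x_i$ vanish. I would first note that horizontal forms do not see $\mathcal K$, so they descend to $\mathcal H$, where $\bar g$ is non-degenerate. I would then use the fact that on $S_I\times G^I$ the loci $x_i=0$ are interior submanifolds of the sphere, not boundaries, as shown in Section~5. This is what should make integration by parts boundary-free for compactly supported forms.

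The remaining subtlety is the density. The group factor carries the weight $x_i^2$, which vanishes on inactive strata. I would therefore impose, as part of the phrase ``compatible density'', that the fixed density be smooth and restrict compatibly under $\iota_{J,I}$. With that assumption, the adjunction identity holds chart by chart and the argument closes.
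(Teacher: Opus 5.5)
Your argument is correct and is essentially the paper's: both rest only on the defining adjunction $\langle d\alpha,\beta\rangle=\langle\alpha,\delta\beta\rangle$, which the paper packages as $(d\delta)^*=\delta^*d^*=d\delta$ and $(\delta d)^*=d^*\delta^*=\delta d$, while you unfold it into the manifestly symmetric expression $(\!(\delta\alpha,\delta\beta)\!)+(\!(d\alpha,d\beta)\!)$, which also gives nonnegativity for free. Your ``main obstacle'' paragraph is not needed, since the existence of $\delta$ as formal adjoint is already a hypothesis; and if you did have to prove it, the real difficulty at $x_i=0$ is not boundary terms but the $1/x_i^2$ blow-up of the inverse metric on the collapsing group directions.
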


\begin{proof}
By construction, \(\delta\) is the formal adjoint of \(d\) with respect to
the chosen pairing on horizontal forms. Therefore:
\[
\langle d\alpha,\beta\rangle
=
\langle \alpha,\delta\beta\rangle
\]
for admissible forms.

Hence
\[
(d\delta)^*=\delta^*d^*=d\delta
\]
and similarly
\[
(\delta d)^*=d^*\delta^*=\delta d.
\]

Therefore
\[
\Delta^*
=
(d\delta+\delta d)^*
=
d\delta+\delta d
=
\Delta.
\]
Thus \(\Delta\) is formally self-adjoint.
\end{proof}

\subsection{Local expression}

We now describe the local structure of the Laplacian.

\begin{proposition}
On a finite local model
\[
S_I\times G^I,
\]
equipped with the warped metric
\[
g
=
g_{S_I}
+
\sum_{i\in I}x_i^2g_G^{(i)},
\]
the Hodge-type Laplacian decomposes locally as
\[
\Delta
=
\Delta_{S_I}
+
\sum_{i\in I}\frac1{x_i^2}\Delta_{G,i}
+
L,
\]
where:
\begin{itemize}
\item \(\Delta_{S_I}\) is the Laplacian associated with the spherical part;
\item \(\Delta_{G,i}\) is the Laplacian associated with the \(i\)-th copy of
\(G\);
\item \(L\) is a lower-order differential operator involving derivatives of
the barycentric coefficients \(x_i\).
\end{itemize}
\end{proposition}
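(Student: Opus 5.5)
We work on the open dense subset of $S_I\times G^I$ where all $x_i\neq 0$. There the metric
\[
g=g_{S_I}+\sum_{i\in I}x_i^2g_G^{(i)}
\]
is an honest Riemannian metric of multiply-warped-product type, with base $S_I$, fibers $G$ and warping functions $f_i=x_i$. We take the density to be the Riemannian volume density of $g$. On the region where all $x_i\neq0$, the integration-by-parts construction of the finite-rank case identifies the formal adjoint $\delta$ with the classical codifferential of this metric. So $\Delta=d\delta+\delta d$ is the classical Hodge Laplacian, and it suffices to compute it for a multiply warped product.

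\textbf{Step 1 (local coframe).} On $S_I$ we choose a local orthonormal coframe $(\theta^a)$. On each copy $G^{(i)}$ we take the right-invariant coframe $(\eta^\alpha_i)$, which is orthonormal for $g_G$. Then
\[
\{\theta^a\}\cup\{x_i\eta^\alpha_i\}
\]
is an orthonormal coframe for $g$. The volume density is
\[
\mathrm{vol}_g=\Big(\prod_i |x_i|^{\dim G}\Big)\,\mathrm{vol}_{S_I}\otimes\bigotimes_i \mathrm{vol}_G^{(i)}.
\]

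\textbf{Step 2 (principal part).} We use the Weitzenböck/coordinate formula $\Delta=-g^{jk}\partial_j\partial_k+(\text{first order})+(\text{zeroth order})$. The inverse metric is block diagonal: $g_{S_I}^{-1}$ on the spherical block and $x_i^{-2}g_G^{-1}$ on the $i$-th group block. Hence the second-order part equals the second-order part of
\[
\Delta_{S_I}+\sum_i x_i^{-2}\Delta_{G,i}.
\]
Here the operators act on forms through the decomposition
\[
\Lambda^\bullet T^*=\Lambda^\bullet T^*S_I\otimes\bigotimes_i\Lambda^\bullet T^*G^{(i)},
\]
extended with the usual sign conventions.

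\textbf{Step 3 (remainder).} Define
\[
L:=\Delta-\Delta_{S_I}-\sum_i x_i^{-2}\Delta_{G,i},
\]
and compute it explicitly from Cartan's structure equations. Two kinds of terms appear. First, $d(x_i\eta^\alpha_i)=dx_i\wedge\eta^\alpha_i+x_i\,d\eta^\alpha_i$, and the $dx_i$ contributions produce connection coefficients proportional to $dx_i/x_i$. Second, the divergence of the volume density contributes $\sum_i\dim G\,(\nabla x_i)/x_i$, which is the mean-curvature term of the fibers. Consequently $L$ is at most first order, with coefficients that are polynomial in $dx_i$, in $x_i^{-1}$, and in the Hessians of $x_i$ on $S_I$ (the zeroth-order curvature terms). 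This matches the claim that $L$ involves derivatives of the barycentric coefficients. On functions the computation gives concretely
\[
L=-\sum_i\dim G\,\frac{1}{x_i}\,\langle dx_i,d\,\cdot\rangle_{S_I},
\]
and this formula serves as a model check.

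\textbf{Main obstacle.} The hard part is making sense of the decomposition near strata where some $x_i=0$. There the coefficients $x_i^{-2}$ and $x_i^{-1}$ blow up, and by Section~6 the metric degenerates exactly along $\mathcal K$. I would handle this by restricting to horizontal forms, which do not see $\mathcal K$, and reading the decomposition as an identity on the dense stratum $E_I^\circ$, in the spirit of ``decomposes locally''. I would then note that the passage between charts $I\subset K$ respects it: the added indices have $x_i=0$, so they contribute only spherical directions. The identity should therefore be stated stratumwise. Its extension across lower strata remains a singular-coefficient statement, not a smooth one.
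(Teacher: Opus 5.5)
Your proposal is correct and follows the same route as the paper. The block-diagonal inverse metric $g_{S_I}^{-1}+\sum_i x_i^{-2}(g_G^{(i)})^{-1}$ gives the principal part with the $x_i^{-2}$ factors, and everything produced by derivatives of the $x_i$ is collected into $L$. You are more careful than the paper's proof on three points: you restrict to the dense set where all $x_i\neq0$; you fix the Riemannian density, so that $\delta$ is the classical codifferential and $L$ involves only derivatives of the $x_i$; and you verify $L=-\sum_i \dim G\, x_i^{-1}\langle dx_i, d\,\cdot\,\rangle_{S_I}$ on functions.
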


\begin{proof}
The inverse metric associated with
\[
g
=
g_{S_I}
+
\sum_i x_i^2g_G^{(i)}
\]
takes the form
\[
g^{-1}
=
g_{S_I}^{-1}
+
\sum_i \frac1{x_i^2}(g_G^{(i)})^{-1}.
\]

Consequently, the second-order contribution of the Laplacian in the group
directions carries the factors
\[
\frac1{x_i^2}.
\]

The derivatives of the coefficients \(x_i^2\) contribute additional first-
and zeroth-order terms, which are collected into the operator \(L\).
\end{proof}

\begin{remark}
The factors
\[
\frac1{x_i^2}
\]
are characteristic of warped-product geometries.
\end{remark}

\begin{remark}
The local formula should be understood at the level of finite local models.
In infinite-dimensional settings, additional analytic assumptions are required
in order to define the Laplacian rigorously.
\end{remark}

\subsection{Behavior near boundary strata}

The coefficients
\[
\frac1{x_i^2}
\]
appear singular near the loci where
\[
x_i=0.
\]

However, these apparent singularities are compensated by the degeneration of
the corresponding horizontal directions.

\begin{proposition}
The horizontal Hodge-type Laplacian extends through the boundary strata of the
spherical Milnor model.
\end{proposition}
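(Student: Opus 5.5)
The strategy is to show that every singular factor $x_i^{-2}$ in the local formula of the preceding proposition is always multiplied by a quantity that vanishes to compensating order. That quantity is a horizontal component along the $i$-th copy of $G$, measured in the frame that $\bar g$ actually sees. Working on a single finite model $S_I\times G^I$ is enough, since the compatibility of the metrics $g_I$ under the inclusions $\iota_{J,I}$ lets the local extensions glue as in Section~4.

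\textbf{Step 1: change of frame.} Near a point of support $J\subset I$, I would replace the coframe $(dx_i, \theta_i)$, where $\theta_i$ is a right-invariant orthonormal coframe on the $i$-th copy of $G$, by the rescaled coframe $(dx_i, x_i\theta_i)$. The warped metric becomes
\[
g=\sum_i dx_i^2+\sum_i (x_i\theta_i)^2,
\]
which is orthonormal in the new coframe. This is precisely the picture of $\mathcal H=TE^{\mathrm{sph}}(G)/\mathcal K$. The dual frame vectors $x_i^{-1}\xi_i$ span the effective directions, and as $x_i\to 0$ they collapse onto $\mathcal K$. Horizontal forms should be expanded in this rescaled coframe.

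\textbf{Step 2: the model computation.} Near an inactive index $i\notin J$, the pair $(x_i,g_i)$ locally looks like a cone over $G$ with radial coordinate $x_i\in\mathbb R$ and metric $dx_i^2+x_i^2g_G$. The sign of $x_i$ is unconstrained because we use the subset diffeology on $S_I$, so the cone is doubled across the vertex. The warped-product Laplacian splits as
\[
\partial_{x_i}^2+\frac{\dim G}{x_i}\partial_{x_i}+\frac1{x_i^2}\Delta_{G,i}+(\text{form-degree terms}).
\]
For horizontal forms that descend to the quotient, the dependence on $g_i$ along $\{x_i=0\}$ is trivial by the definition of $\sim$. Smoothness in the diffeology then forces the $\Delta_{G,i}$-part to vanish to order $x_i^2$. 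For this I would Taylor-expand a smooth horizontal form on the chart in $x_i$ with coefficients on $G$, and use that the zeroth-order coefficient is $g_i$-independent. The first-order terms $x_i^{-1}\partial_{x_i}$ pair with odd parts in $x_i$. These are regular because $\mathbb Z_2$-invariance of $g$ lets me decompose into even and odd parts, with a factor of $x_i$ extracted from the odd part. As a result, the operators $x_i^{-2}\Delta_{G,i}$ and $L$ applied to horizontal forms have smooth extensions across $x_i=0$.

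\textbf{Step 3: gluing.} I would check that the extended operators on $S_I\times G^I$ and $S_K\times G^K$ agree under $\iota_{I,K}$. The extra indices $K\setminus I$ sit at $x_k=0$, where by Step 2 they contribute only the flat term $\partial_{x_k}^2$. This is consistent with the first-order invisibility of the stratification proved in Section~5.

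\textbf{Main obstacle.} The hard part is Step 2. The cone vertex is a genuine singularity, and smoothness of $g_i$-independent data does not automatically give vanishing of order $x_i^2$ for the $G$-derivatives of all form components. This is especially true for mixed components involving $\theta_i$, where the rescaling by $x_i$ may instead produce a $1/x_i$ blow-up. My first approach would be to restrict the class of horizontal forms to those whose $\theta_i$-components lie in the ideal generated by $x_i$. This is consistent with $\iota_v\omega=0$ for $v\in\mathcal K$. I would then verify that $d$ and $\delta$ preserve this class, so that the extension statement is proved on a complex that is stable under the operators.
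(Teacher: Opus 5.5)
The gap is in Step 2. You suspect this yourself, but the failure is not confined to the mixed $\theta_i$-components: it already occurs for $0$-forms, where horizontality is vacuous.

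Descent only gives that $\omega|_{x_i=0}$ is independent of $g_i$. So for $f=f_0+x_if_1+O(x_i^2)$ with $f_0$ independent of $g_i$, you get $\Delta_{G,i}f=x_i\,\Delta_Gf_1+O(x_i^2)$. This vanishes to order one, not two.

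Your parity argument is also reversed. $x_i^{-1}\partial_{x_i}$ is regular on the part of $f$ that is even in $x_i$, and singular on the odd part, since $\partial_{x_i}(x_if_1)=f_1$ carries no factor $x_i$.

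Concretely, take $\Delta_G\geq 0$ and the Riemannian density $\prod_j|x_j|^{\dim G}\,\mathrm{vol}_{S_I}\otimes\mathrm{vol}_{G^I}$ that underlies the local formula. Then the smooth function $f=x_ih(g_i)$ on $E^{\mathrm{sph}}(G)$ satisfies, near a point of $\{x_i=0\}$ where the other coordinates in $I$ are nonzero,
\[
\Delta f=\frac{(\Delta_G-\dim G)\,h}{x_i}+O(1).
\]
For $h\equiv 1$ one gets exactly $\Delta x_i=\bigl(|I|-1+|I|\dim G\bigr)x_i-\dim G/x_i$ wherever all $x_j\neq 0$. For $G=U(1)$ and $|I|=2$ this is $\Delta\sin\phi=3\sin\phi-1/\sin\phi$ on $S^3$ in Hopf coordinates. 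On functions, changing the density only alters first-order terms, so it cannot cancel $x_i^{-1}\Delta_G h$ for all $h$. For the same reason Step 3 fails: an inactive index $k$ contributes $-\dim G\,x_k^{-1}\partial_{x_k}+x_k^{-2}\Delta_{G,k}$, not just $\partial_{x_k}^2$.

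So your proposed repair does not reach the problem. Restricting $\theta_i$-components to the ideal $(x_i)$ leaves the degree-zero examples untouched. Even restricting to forms that are even in $x_i$ fails: for $f=x_i^2h(g_i)$ the Laplacian is bounded, but on $\{x_i=0\}$ it equals $(\Delta_G-2\dim G-2)h$. That depends on $g_i$ for generic $h$, so $\Delta f$ does not descend to $E^{\mathrm{sph}}(G)$.

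What is missing is a genuinely additional regularity condition on the $x_i$-Taylor coefficients of the forms, which neither horizontality nor descent implies. At first order this is $\Delta_Gf_1=(\dim G)f_1$, with analogous eigenvalue conditions at every higher order; these express regularity at the vertex of the double cone $dx_i^2+x_i^2g_G$. The alternative is to settle for a weak, $L^2$-type notion of extension.

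The paper's proof does not supply this ingredient either. It only asserts that the collapse of $x_i^2g_G$ compensates the $x_i^{-2}$ coefficients, and it overlooks both the first-order vanishing and the $x_i^{-1}\partial_{x_i}$ term hidden in $L$. The examples above show the statement must first be reformulated (restricted class of forms, or weak sense) before an argument along your lines can be completed.
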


\begin{proof}
When
\[
x_i\to0,
\]
the metric contribution
\[
x_i^2g_G^{(i)}
\]
collapses the corresponding horizontal directions.

Thus the singular coefficient
\[
\frac1{x_i^2}
\]
acts only on directions whose effective geometric contribution vanishes in the
quotient horizontal geometry.

Equivalently, the degeneration of the metric compensates the blow-up of the
inverse metric coefficients, so that the resulting operator remains compatible
with the effective horizontal pseudo-bundle.
\end{proof}

\begin{remark}
This cancellation mechanism is analogous to standard phenomena in conic and
warped geometries.
\end{remark}

\subsection{Harmonic horizontal forms}

\begin{definition}
A horizontal differential form
\[
\omega
\]
is called harmonic if
\[
\Delta\omega=0.
\]
\end{definition}

\begin{remark}
The notion of harmonicity depends only on the effective horizontal geometry
encoded by
\[
\mathcal H
=
TE^{\mathrm{sph}}(G)/\mathcal K.
\]
\end{remark}

\begin{remark}
In infinite-dimensional settings, the analytic meaning of harmonicity depends
on the chosen Hilbert completion and on the domain of the Laplacian.
\end{remark}

\subsection{Interpretation}

\begin{remark}
The preceding construction provides a Hodge-type theory without introducing a
Hodge star operator.
\end{remark}

\begin{remark}
This is particularly useful in diffeological and infinite-dimensional
settings, where:
\begin{itemize}
\item duality operators may fail to exist globally;
\item top-degree forms may not be available;
\item or the tangent geometry may have non-constant rank.
\end{itemize}
\end{remark}

\begin{remark}
The restriction to horizontal forms removes the degenerate directions
associated with the kernel pseudo-bundle
\[
\mathcal K,
\]
which ensures that the effective metric geometry remains non-degenerate.
\end{remark}

\subsection{Summary}

Under the finite-rank or admissible analytic assumptions discussed above, we
have constructed:
\begin{itemize}
\item a horizontal differential complex;
\item a metric pairing on horizontal forms;
\item a codifferential defined as a formal adjoint;
\item a Hodge-type Laplacian adapted to the horizontal geometry;
\item and a corresponding notion of harmonic horizontal forms.
\end{itemize}

These constructions provide the analytic framework required for the
Dirac-type and spinorial operators introduced in the next section.
Therefore, all along next section we assume that Hodge structures and related operators are well defined. 
\section{Dirac-type operators on the spherical Milnor space} \label{8}

\subsection{Clifford structure on the horizontal pseudo-bundle}

Let
\[
\mathcal H = TE^{\mathrm{sph}}(G)/\mathcal K
\]
be the horizontal pseudo-bundle endowed with the non-degenerate metric $\bar g$.

\begin{definition}
The Clifford algebra bundle $\mathrm{Cl}(\mathcal H)$ is defined fiberwise by the relations
\[
v \cdot w + w \cdot v = -2 \bar g(v,w),
\qquad v,w \in \mathcal H.
\]
\end{definition}

\begin{remark}
This construction is well-defined since $\bar g$ is non-degenerate on $\mathcal H$.
\end{remark}

\subsection{Clifford modules}

\begin{definition}
A \emph{Clifford module} over $E^{\mathrm{sph}}(G)$ is a diffeological vector pseudo-bundle
\[
\mathcal E \longrightarrow E^{\mathrm{sph}}(G)
\]
equipped with a smooth fiberwise action of $\mathrm{Cl}(\mathcal H)$.
\end{definition}

\begin{remark}
In local charts $S_I \times G^I$, one can take
\[
\mathcal E = \Lambda^\bullet \mathcal H^*
\]
or spinor-type constructions when available.
\end{remark}

\subsection{Connection on the horizontal bundle}

Let us first give a statement applicable when $G$ is a classical, finite dimensional Lie group.

\begin{proposition}
Let \(\mathcal H\to E^{\mathrm{sph}}(G)\) be the effective horizontal
pseudo-bundle endowed with the non-degenerate metric \(\bar g\). Assume that:
\begin{enumerate}
\item on each finite local model \(S_I\times G^I\), the bundle
\(\mathcal H\) is represented by a finite-dimensional Riemannian vector bundle;
\item the transition maps preserve \(\bar g\);
\item \(\mathcal E\to E^{\mathrm{sph}}(G)\) is a Clifford module associated
with \(\mathrm{Cl}(\mathcal H)\);
\item the local Clifford modules glue compatibly with the transition maps.
\end{enumerate}
Then there exists a connection
\[
\nabla^{\mathcal E}:\Gamma(\mathcal E)
\longrightarrow
\Gamma(\mathcal H^*\otimes \mathcal E)
\]
compatible with the Clifford structure, i.e.
\[
\nabla^{\mathcal E}_X(c(v)s)
=
c(\nabla^{\mathcal H}_X v)s
+
c(v)\nabla^{\mathcal E}_X s
\]
for all local sections \(X,v\) of \(\mathcal H\) and all local sections
\(s\) of \(\mathcal E\).
\end{proposition}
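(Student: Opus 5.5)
The plan is to build the connection locally on each finite model and then glue it, following the paper's general strategy of constructing geometric data on finite-dimensional local models and transporting them through the diffeological gluing. On a local model $S_I\times G^I$, hypothesis (1) says $\mathcal H$ is an honest finite-dimensional Riemannian vector bundle $(\mathcal H_I,\bar g_I)$. First I would equip it with a metric connection $\nabla^{\mathcal H}_I$. The natural choice is the Levi-Civita connection of the warped metric $g_I$ restricted to the active stratum and transported to the quotient $\mathcal H=TE^{\mathrm{sph}}(G)/\mathcal K$. Failing that, I would take the orthogonal projection of the trivial connection on the ambient bundle $T\mathbb R^I\oplus\bigoplus_i TG$, trivialized by right-invariant frames, onto $\mathcal H_I$ via $\bar g_I$. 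Either choice is $\bar g_I$-compatible. By the standard finite-dimensional construction, on the local Clifford module $\mathcal E_I$ (hypothesis (3)) one then gets a Clifford connection $\nabla^{\mathcal E}_I$. Locally one picks an orthonormal frame $(e_a)$ of $\mathcal H_I$ and writes the connection $1$-form $\omega_{ab}$ of $\nabla^{\mathcal H}_I$. One then sets $\nabla^{\mathcal E}_I=d+\tfrac14\sum_{a,b}\omega_{ab}\,c(e_a)c(e_b)$ in a frame of $\mathcal E_I$ adapted to the Clifford action, for instance a spinor frame or the frame of $\Lambda^\bullet\mathcal H_I^*$. Changing frames, these patch into a global Clifford connection on $\mathcal E_I$ over the whole local model. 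The Leibniz identity $\nabla_X(c(v)s)=c(\nabla^{\mathcal H}_Xv)s+c(v)\nabla_Xs$ is the usual commutator computation: $[\tfrac14\omega_{ab}c(e_a)c(e_b),c(v)]=c(\omega(v))$, which I would only cite.

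Second, I would glue over the directed system $(S_I\times G^I,\iota_{J,I})$. The covariant derivative is defined plotwise in the sense of the Definition of covariant derivatives in Section~2, so it suffices to give, for each plot $p$, a connection $p^*\nabla^{\mathcal E}$ on $p^*\mathcal E$. By local finiteness (Section~4), $p$ locally factors through some $S_I\times G^I$, and we set $p^*\nabla^{\mathcal E}:=\tilde p^*\nabla^{\mathcal E}_I$. Hypotheses (2) and (4) say the transition maps are isometries of $\mathcal H$ intertwining the Clifford modules, and the compatibility of charts proposition lets any two local factorizations be compared in $S_{I_1\cup I_2}\times G^{I_1\cup I_2}$. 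To make the pullbacks agree, the choices must be natural. I would therefore insist that $\nabla^{\mathcal H}_I$ is built canonically from $\bar g_I$, e.g.\ Levi-Civita-type, so that $\iota_{J,I}^*\nabla^{\mathcal H}_I=\nabla^{\mathcal H}_J$. This uses the earlier proposition that $g_I$ restricts to $g_J$, and the spinor lift then inherits naturality. Compatibility with reparametrization is automatic from pullback functoriality.

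The main obstacle is this naturality under $\iota_{J,I}$ at boundary points. There the rank of $\mathcal H$ jumps, because group directions with $x_i\to0$ are collapsed into $\mathcal K$, and a Levi-Civita connection of the warped metric carries $1/x_i$ Christoffel terms. My first attempt would be the projection-of-trivial-connection choice, which involves no inverse coefficients. I would then check by hand that its restriction to the face $x_i=0$ agrees with the connection built on $S_J\times G^J$. If this fails, I would instead fall back on a partition-of-unity argument: Clifford connections on a fixed Clifford module form an affine space modeled on $\Omega^1(X,\mathrm{End}_{\mathrm{Cl}}(\mathcal E))$, by the affine-space theorem on $\mathcal{CD}(E)$. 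Convex combinations of local Clifford connections are therefore again Clifford connections, and existence, which is all the statement asks, follows once a smooth partition of unity subordinate to the strata is available on each finite model.
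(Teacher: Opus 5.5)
There is a genuine gap, and it is the one you flagged yourself: the local connections are never shown to be coherent along the directed system.

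\textbf{The local step is fine.} A Riemannian vector bundle over the finite-dimensional model $S_I\times G^I$ carries metric connections, and every Clifford module over it carries Clifford connections.

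\textbf{The gluing step is not proved.} A plotwise covariant derivative on the colimit needs a family with $\iota_{J,I}^*\nabla^{\mathcal E}_I=\nabla^{\mathcal E}_J$ for all $J\subset I$, and nothing in your construction produces one. Even with $\nabla^{\mathcal H}_I$ fixed canonically, the expression $d+\frac14\sum_{a,b}\omega_{ab}c(e_a)c(e_b)$ depends on the adapted frame of $\mathcal E_I$. Two such frames differ by a gauge transformation $h$ with values in $\mathrm{End}_{\mathrm{Cl}}(\mathcal E_I)$, which shifts the formula by $h^{-1}dh$. So for a general Clifford module as in hypothesis (3), these formulas neither patch automatically nor inherit naturality from $\nabla^{\mathcal H}$. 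That only happens for spinor bundles with $\mathrm{Spin}$-valued transitions.

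Neither of your two remedies repairs this.
\begin{itemize}
\item The projected trivial connection is not $\bar g_I$-compatible on the group blocks. For a right-invariant $g_G$-orthonormal frame, $\bar g_I(f^{(i)}_a,f^{(i)}_b)=x_i^2\delta_{ab}$. Its differential $2x_i\,dx_i\,\delta_{ab}$ is not reproduced by $d$. The metric correction $\nabla f^{(i)}_a=\frac{dx_i}{x_i}f^{(i)}_a$ reintroduces exactly the inverse coefficient you wanted to avoid.
\item The partition-of-unity fallback solves the wrong problem. Convex combinations preserve the Clifford property, but they only patch over an open cover of one fixed model, where existence is already standard. Moreover, $\iota_{J,I}^*\bigl(\sum_\alpha\rho_\alpha\nabla_\alpha\bigr)=\sum_\alpha(\rho_\alpha\circ\iota_{J,I})\,\iota_{J,I}^*\nabla_\alpha$ equals $\nabla_J$ only if every $\nabla_\alpha$ already restricts correctly. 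You cannot use partitions of unity on $E^{\mathrm{sph}}(G)$ itself either: no D-open subset of it lies in a single $E_I$.
\end{itemize}

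\textbf{What is missing} is an inductive extension argument over $|I|$. Assume coherent Clifford connections $\nabla^{\mathcal E}_J$ on all proper faces $\{x_i=0\}$ of $S_I\times G^I$, agreeing on intersections of faces and compatible with the $G$-coordinates the quotient forgets there. Choose any Clifford connection on $\mathcal E_I$ and correct it by an element of $\Omega^1(\mathrm{End}_{\mathrm{Cl}}(\mathcal E_I))$ that extends the prescribed differences off the normal-crossing union of faces. Before that, you must know that hypotheses (1), (2), (4) identify the restriction of $(\mathcal H_I,\mathcal E_I)$ to a face with $(\mathcal H_J,\mathcal E_J)$. That is precisely what the rank jump you describe puts in doubt.

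\textbf{Comparison with the paper.} The paper does not carry out this extension either. It takes the Levi-Civita connection on each model and lets the metric-preserving transition maps of hypothesis (2) identify these by uniqueness. It then invokes hypothesis (4) to declare that the chosen module connections agree on overlaps. In effect, it reads (4) as already containing coherence of the module connections; gluing of the modules alone would not force non-unique Clifford connections to agree. If you adopt that reading, your first option (the canonical Levi-Civita choice) closes the argument just as in the paper, and your boundary worries are absorbed into hypothesis (1). As written, however, your proposal leaves compatibility under $\iota_{J,I}$ unproved.
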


\begin{proof}
We proceed locally and then check compatibility under gluing.

Let \(S_I\times G^I\) be one of the finite-dimensional local models of
\(E^{\mathrm{sph}}(G)\). On this model, the horizontal pseudo-bundle
\(\mathcal H\) is represented by a finite-dimensional vector bundle endowed
with the non-degenerate metric \(\bar g\). Therefore, by the ordinary
fundamental theorem of Riemannian geometry, there exists a unique
Levi-Civita connection
\[
\nabla^{\mathcal H,I}
\]
on \(\mathcal H|_{S_I\times G^I}\), characterized by
\[
\nabla^{\mathcal H,I}\bar g=0
\]
and by the vanishing of torsion.

This connection induces canonically a connection on the Clifford algebra
bundle
\[
\mathrm{Cl}(\mathcal H)|_{S_I\times G^I}.
\]
Indeed, for a local section \(v\) of \(\mathcal H\), one defines
\[
\nabla^{\mathrm{Cl},I}_X c(v)
=
c(\nabla^{\mathcal H,I}_X v),
\]
and extends this rule to products by the Leibniz rule. This is well-defined
because \(\nabla^{\mathcal H,I}\) preserves the metric. To see this, recall
that the Clifford relation is
\[
c(v)c(w)+c(w)c(v)=-2\bar g(v,w).
\]
Applying \(\nabla^{\mathrm{Cl},I}_X\) gives
\[
c(\nabla_X v)c(w)+c(v)c(\nabla_X w)
+
c(\nabla_X w)c(v)+c(w)c(\nabla_X v)
=
-2X(\bar g(v,w)).
\]
Since \(\nabla^{\mathcal H,I}\bar g=0\), the right-hand side equals
\[
-2\bar g(\nabla_X v,w)-2\bar g(v,\nabla_X w),
\]
which is exactly the derivative of the Clifford relation. Hence the
connection preserves the Clifford algebra structure.

Now let \(\mathcal E|_{S_I\times G^I}\) be the corresponding local
Clifford module. By assumption, \(\mathcal E\) is associated to the
Clifford structure. Locally, one may choose a connection
\[
\nabla^{\mathcal E,I}
\]
on \(\mathcal E|_{S_I\times G^I}\) compatible with
\(\nabla^{\mathrm{Cl},I}\), meaning that
\[
\nabla^{\mathcal E,I}_X(c(a)s)
=
c(\nabla^{\mathrm{Cl},I}_X a)s
+
c(a)\nabla^{\mathcal E,I}_X s
\]
for every local Clifford algebra section \(a\) and every local module
section \(s\). In particular, for \(a=v\in\Gamma(\mathcal H)\), this gives
\[
\nabla^{\mathcal E,I}_X(c(v)s)
=
c(\nabla^{\mathcal H,I}_X v)s
+
c(v)\nabla^{\mathcal E,I}_X s.
\]

It remains to check that these local connections glue. Let
\(S_J\times G^J\) be another local model and suppose the two charts overlap
through a refinement or inclusion of supports. By hypothesis, the transition
maps preserve the metric \(\bar g\), hence they identify the corresponding
Levi-Civita connections:
\[
\nabla^{\mathcal H,I}
=
\nabla^{\mathcal H,J}
\]
on the overlap, after transport by the transition map. Consequently the
induced Clifford algebra connections also agree on overlaps.

Since the Clifford modules are assumed to glue compatibly with the same
transition maps, the local module connections
\(\nabla^{\mathcal E,I}\) and \(\nabla^{\mathcal E,J}\) agree on overlaps.
Therefore they define a global connection
\[
\nabla^{\mathcal E}:\Gamma(\mathcal E)
\to
\Gamma(\mathcal H^*\otimes\mathcal E).
\]

The compatibility identity is local, and it has been verified on every
finite-dimensional local model. Since the diffeology on
\(E^{\mathrm{sph}}(G)\) is generated by these local models, the identity
holds globally. Hence \(\nabla^{\mathcal E}\) is a Clifford-compatible
connection.
\end{proof}

\begin{remark}
The proposition should not be read as asserting that every abstract
Clifford module over an arbitrary diffeological pseudo-bundle admits a
compatible connection. The result applies to the Clifford modules obtained
from the local finite-dimensional models used in the construction, provided
that the metric, Clifford action, and transition data are compatible.
\end{remark}
Let us now precise the statements for more general Lie groups, in particular infinite dimensional ones. 
\begin{proposition}
Let \(G\) be a diffeological Lie group, possibly infinite-dimensional.
Assume that the effective horizontal pseudo-bundle
\[
\mathcal H \to E^{\mathrm{sph}}(G)
\]
is endowed with a non-degenerate metric \(\bar g\), and assume moreover that
there exists a metric connection
\[
\nabla^{\mathcal H}:\Gamma(\mathcal H)\to
\Gamma(\mathcal H^*\otimes \mathcal H)
\]
satisfying
\[
\nabla^{\mathcal H}\bar g=0.
\]
Let \(\mathcal E\to E^{\mathrm{sph}}(G)\) be a Clifford module associated with
\(\mathrm{Cl}(\mathcal H)\), and assume that \(\mathcal E\) is equipped locally
with transition data compatible with \(\nabla^{\mathcal H}\).

Then there exists a Clifford-compatible connection
\[
\nabla^{\mathcal E}:\Gamma(\mathcal E)\to
\Gamma(\mathcal H^*\otimes \mathcal E)
\]
satisfying
\[
\nabla^{\mathcal E}_X(c(v)s)
=
c(\nabla^{\mathcal H}_Xv)s
+
c(v)\nabla^{\mathcal E}_Xs.
\]
\end{proposition}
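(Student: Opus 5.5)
The plan is to mimic the finite-dimensional argument of the previous proposition, but to replace the Levi-Civita step (which used the fundamental theorem of Riemannian geometry) by the assumed metric connection \(\nabla^{\mathcal H}\). Everything is done plotwise, in the sense of the definition of covariant derivatives recalled in Section~2, so that no inverse metric tensor or global frame is needed.

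\emph{First step: Clifford algebra.} I would lift \(\nabla^{\mathcal H}\) to \(\mathrm{Cl}(\mathcal H)\). On generators, set \(\nabla^{\mathrm{Cl}}_X c(v):=c(\nabla^{\mathcal H}_X v)\), and extend to finite products of generators by the Leibniz rule. Well-definedness amounts to checking that the two-sided ideal generated by \(v\otimes w+w\otimes v+2\bar g(v,w)\) in the tensor algebra is preserved. This is exactly the computation already done in the finite-dimensional proof: differentiating the Clifford relation and using \(\nabla^{\mathcal H}\bar g=0\). Nothing in that computation uses finite rank; it is purely algebraic and fiberwise. So \(\nabla^{\mathrm{Cl}}\) is a derivation of the algebra bundle.

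\emph{Second step: the module, locally.} I would construct \(\nabla^{\mathcal E}\) locally from the transition data. The hypothesis says \(\mathcal E\) is equipped locally, on a covering family of plots or local models \(p:U\to E^{\mathrm{sph}}(G)\), with trivializations \(p^*\mathcal E\simeq U\times F\). On the typical fiber \(F\), \(\mathrm{Cl}\) acts through a fixed representation, and transitions are compatible with \(\nabla^{\mathcal H}\). My reading of "compatible transition data" is that, in these local trivializations, \(p^*\nabla^{\mathcal H}\) has connection form \(\omega_p\) with values in the metric-preserving endomorphisms \(\mathfrak{o}(p^*\mathcal H)\). It further assumes that \(\omega_p\) lifts, through the Clifford action, to a \(1\)-form \(\widetilde\omega_p\) with values in \(\mathrm{End}(F)\) satisfying \([\widetilde\omega_p(X),c(v)]=c(\omega_p(X)v)\). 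This is the analogue of the spin lift \(\mathfrak{o}\to\mathfrak{spin}\subset\mathrm{Cl}\) via \(A\mapsto\tfrac14\sum\langle Ae_a,e_b\rangle e_ae_b\).

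I then define \(p^*\nabla^{\mathcal E}=d+\widetilde\omega_p\). The compatibility identity becomes \(d(c(v)s)+\widetilde\omega_p c(v)s=c(dv+\omega_p v)s+c(v)(ds+\widetilde\omega_p s)\). This reduces to the commutator relation above, since \(c\) is constant in the trivialization.

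\emph{Third step: gluing.} I would show that the local definitions form a covariant derivative in the sense of Section~2. On overlaps, the transition functions \(\phi\) act on \(\mathcal H\) by \(\bar g\)-isometries intertwining the two connection forms. By hypothesis they lift to module transitions \(\widetilde\phi\) intertwining the \(\widetilde\omega\)'s. Hence \(d+\widetilde\omega\) transforms by \(\widetilde\phi\)-conjugation, and the local pieces agree. Compatibility with smooth reparametrizations holds because every ingredient is a pullback. The result is an element of \(\mathcal{CD}(\mathcal E)\); by the affine-space theorem of Section~2, any two such choices differ by an element of \(\Omega^1(E^{\mathrm{sph}}(G),\mathrm{End}(\mathcal E))\) commuting with Clifford multiplication.

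\emph{Main obstacle.} I expect the hard part to be the second step in infinite dimensions. The spin-type lift formula is an infinite sum over an orthonormal frame, which need not converge, and the frame need not exist when \(\bar g\) is only weakly non-degenerate. I would therefore not try to prove the existence of \(\widetilde\omega\). Instead I would make explicit that the phrase "transition data compatible with \(\nabla^{\mathcal H}\)" is interpreted to include such a lift, locally along plots. This is legitimate because every plot factors locally through some \(S_I\times G^I\), where only the finitely many active indices contribute to the spherical part. Any remaining infinite-dimensionality from \(T_{g_j}G\) is absorbed into the hypothesis. Remaining smoothness issues, namely diffeological smoothness of \(\widetilde\omega_p\) in \(u\), follow from the functional diffeology on forms.
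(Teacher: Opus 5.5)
Your proposal is correct and follows essentially the same route as the paper: you induce a connection on $\mathrm{Cl}(\mathcal H)$ from $\nabla^{\mathcal H}$ using $\nabla^{\mathcal H}\bar g=0$, take local module connections whose existence is built into the compatibility hypothesis, and glue them through the compatible transition data. Your version is more explicit, writing the local connection as $d+\widetilde\omega_p$ with $[\widetilde\omega_p(X),c(v)]=c(\omega_p(X)v)$, and like the paper it places the genuinely infinite-dimensional content (existence, smoothness and gluing of this lift) in the hypothesis rather than proving it; in particular, smoothness of $\widetilde\omega_p$ is part of that assumption, not a consequence of the functional diffeology on forms.
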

\begin{proof}
The key point is that, in the infinite-dimensional case, the existence of a
Levi-Civita connection is not automatic. We therefore assume the existence of
a metric connection \(\nabla^{\mathcal H}\) on \(\mathcal H\).

Since \(\nabla^{\mathcal H}\bar g=0\), it induces a connection on the Clifford
algebra bundle \(\mathrm{Cl}(\mathcal H)\). Indeed, for a section
\(v\in\Gamma(\mathcal H)\), define
\[
\nabla^{\mathrm{Cl}}_X c(v):=
c(\nabla^{\mathcal H}_Xv),
\]
and extend this rule to products by the Leibniz rule.

This is compatible with the Clifford relation
\[
c(v)c(w)+c(w)c(v)=-2\bar g(v,w),
\]
because metric compatibility gives
\[
X\bar g(v,w)
=
\bar g(\nabla^{\mathcal H}_Xv,w)
+
\bar g(v,\nabla^{\mathcal H}_Xw).
\]
Hence the induced connection preserves the Clifford algebra structure.

By assumption, the Clifford module \(\mathcal E\) is locally compatible with
this Clifford algebra connection. Therefore one can choose local module
connections \(\nabla^{\mathcal E}\) satisfying
\[
\nabla^{\mathcal E}_X(c(a)s)
=
c(\nabla^{\mathrm{Cl}}_Xa)s
+
c(a)\nabla^{\mathcal E}_Xs.
\]
For \(a=v\in\Gamma(\mathcal H)\), this gives the desired formula.

The transition data are assumed compatible with \(\nabla^{\mathcal H}\) and
with the Clifford action; hence the local module connections glue. This
defines a global Clifford-compatible connection on \(\mathcal E\).
\end{proof}
\subsection{Definition of the Dirac operator}

\subsection{Dirac-type operators}

We now define Dirac-type operators associated with the horizontal pseudo-bundle
\[
\mathcal H\to E^{\mathrm{sph}}(G).
\]

The construction must be treated with some care. If \(\mathcal H\) has finite
rank on each local model, the usual Clifford contraction gives a well-defined
Dirac operator. In the infinite-dimensional case, additional analytic
assumptions are required in order to make sense of the infinite Clifford
contraction.

\subsubsection{The finite-rank case}

Assume first that \(\mathcal H\) is locally of finite rank and is equipped with:
\begin{itemize}
\item a non-degenerate metric \(\bar g\),
\item a Clifford module \(\mathcal E\),
\item a Clifford-compatible connection \(\nabla^{\mathcal E}\).
\end{itemize}

\begin{definition}
Let \((e_1,\ldots,e_r)\) be a local \(\bar g\)-orthonormal frame of
\(\mathcal H\). The associated Dirac operator is
\[
D
=
\sum_{a=1}^r c(e_a)\nabla^{\mathcal E}_{e_a},
\]
where \(c(e_a)\) denotes Clifford multiplication.
\end{definition}

\begin{proposition}
The operator
\[
D=\sum_{a=1}^r c(e_a)\nabla^{\mathcal E}_{e_a}
\]
is independent of the choice of local orthonormal frame.
\end{proposition}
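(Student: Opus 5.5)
The plan is to reduce frame-independence to linearity of the expression in each slot and to the fact that any two local $\bar g$-orthonormal frames differ by a pointwise orthogonal matrix. On a finite local model $S_I\times G^I$, $\mathcal H$ is a finite-rank Riemannian vector bundle by the standing finite-rank assumption. So let $(e_1,\ldots,e_r)$ and $(f_1,\ldots,f_r)$ be two local $\bar g$-orthonormal frames on a common open set of a plot domain (or of a local model). We write $f_b=\sum_a A_{ab}\,e_a$. Here $A=(A_{ab})$ is a smooth map into $O(r)$: smoothness comes from $A_{ab}=\bar g(e_a,f_b)$ and smoothness of $\bar g$, and orthogonality comes from orthonormality of both frames.

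The key step is a pointwise computation. Clifford multiplication $c$ is $C^\infty$-linear in $\mathcal H$, and $\nabla^{\mathcal E}_X s$ is $C^\infty$-linear in $X$, as for any connection. Hence
\[
\sum_b c(f_b)\nabla^{\mathcal E}_{f_b}
=\sum_{a,a'}\Big(\sum_b A_{ab}A_{a'b}\Big)c(e_a)\nabla^{\mathcal E}_{e_{a'}}
=\sum_{a,a'}(AA^{T})_{aa'}\,c(e_a)\nabla^{\mathcal E}_{e_{a'}}
=\sum_a c(e_a)\nabla^{\mathcal E}_{e_a},
\]
using $AA^T=\mathrm{Id}$. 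No derivatives of $A$ appear, because the frame vectors enter $\nabla^{\mathcal E}$ only as the direction argument, never as the differentiated section. An equivalent frame-free formulation is a useful check: $D$ is the composition of $\nabla^{\mathcal E}:\Gamma(\mathcal E)\to\Gamma(\mathcal H^*\otimes\mathcal E)$, then $\sharp_{\bar g}\otimes\mathrm{id}$, then Clifford contraction $\mathcal H\otimes\mathcal E\to\mathcal E$. Indeed, $\sum_a e_a\otimes e_a$ is the canonical element of $\mathcal H\otimes\mathcal H$ associated with $\bar g$, and it does not depend on the frame.

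The remaining step is globalization. Since the operators defined from the two frames agree on overlaps of local models, they patch into a single operator on $\Gamma(\mathcal E)$. This uses the gluing hypotheses of the preceding proposition: the transition maps preserve $\bar g$, and the Clifford modules and connections are compatible. Compatibility with reparametrizations of plots follows because the frame-free description is natural.

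The main obstacle is being careful about what ``local frame'' means on the pseudo-bundle $\mathcal H$. The fiber of $\mathcal H$ is the quotient by $\mathcal K$, and its rank may jump across strata, since inactive spherical directions appear as $\mathbb R^{(\mathbb N\setminus J)}$. I would therefore run the argument plotwise on each finite model, where the rank is constant on the relevant open set. I would rely on the frame-free formulation, which is manifestly independent of any choice, to cover points where frames cannot be extended across strata.
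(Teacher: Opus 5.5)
Your argument is correct and is the same as the paper's. Both write one orthonormal frame in terms of the other through a smooth $O(r)$-valued matrix, use linearity of Clifford multiplication and of $\nabla^{\mathcal E}$ in the direction argument, and collapse the double sum by orthogonality: $AA^{T}=\mathrm{Id}$ in your index convention, $\sum_a A_{ab}A_{ac}=\delta_{bc}$ in the paper's. Your frame-free reformulation via $\sharp_{\bar g}$ and Clifford contraction is a useful check, and the globalization discussion is not needed for this statement, since the paper treats gluing separately in the next proposition.
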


\begin{proof}
Let \((e_a)\) and \((e'_a)\) be two local orthonormal frames. Then there exists
a smooth map
\[
A=(A_{ab})
\]
with values in the orthogonal group \(O(r)\) such that
\[
e'_a=\sum_b A_{ab}e_b.
\]
Since the connection is linear in the vector-field argument,
\[
\nabla^{\mathcal E}_{e'_a}
=
\sum_c A_{ac}\nabla^{\mathcal E}_{e_c}.
\]
Similarly, Clifford multiplication is linear:
\[
c(e'_a)
=
\sum_b A_{ab}c(e_b).
\]
Therefore
\[
\sum_a c(e'_a)\nabla^{\mathcal E}_{e'_a}
=
\sum_{a,b,c}A_{ab}A_{ac}c(e_b)\nabla^{\mathcal E}_{e_c}.
\]
Since \(A\in O(r)\),
\[
\sum_a A_{ab}A_{ac}=\delta_{bc}.
\]
Hence
\[
\sum_a c(e'_a)\nabla^{\mathcal E}_{e'_a}
=
\sum_b c(e_b)\nabla^{\mathcal E}_{e_b}.
\]
Thus the local expression for \(D\) is frame-independent.
\end{proof}

\begin{proposition}
The local operators glue to a global operator
\[
D:\Gamma(\mathcal E)\longrightarrow \Gamma(\mathcal E)
\]
whenever the metric, Clifford module and connection glue compatibly with the
diffeological transition maps.
\end{proposition}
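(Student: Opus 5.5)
The plan is to build $D$ from its local pieces and show they agree on overlaps, using only frame-independence and the gluing hypotheses. On each finite local model $S_I\times G^I$, the finite-rank assumption gives a local $\bar g$-orthonormal frame $(e^I_a)$ of $\mathcal H$. Set
\[
D_I=\sum_a c(e^I_a)\nabla^{\mathcal E}_{e^I_a}.
\]
By the preceding proposition, $D_I$ does not depend on the frame. So $D_I$ is an intrinsically defined first-order operator on $\Gamma(\mathcal E|_{S_I\times G^I})$. Equivalently, it is the composition
\[
\Gamma(\mathcal E)\xrightarrow{\nabla^{\mathcal E}}\Gamma(\mathcal H^*\otimes\mathcal E)\xrightarrow{\sharp_{\bar g}\otimes 1}\Gamma(\mathcal H\otimes\mathcal E)\xrightarrow{c}\Gamma(\mathcal E).
\]
I would use this frame-free description throughout, since it makes the gluing transparent.

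\textbf{Overlaps.} Let $\tau$ be a transition map between two local models; by the compatibility proposition of Section~4, these come from the inclusions $\iota_{J,I}$. By hypothesis, $\tau$ intertwines the three ingredients:
\begin{itemize}
\item it preserves $\bar g$, hence intertwines $\sharp_{\bar g}$;
\item it is compatible with the Clifford action $c$;
\item it transports $\nabla^{\mathcal E,I}$ to $\nabla^{\mathcal E,J}$.
\end{itemize}
Each factor in the composition is therefore natural under $\tau$, so $\tau^*D_J=D_I$ on the overlap. A concrete way to see this is to choose an orthonormal frame on the smaller model and transport it by $\tau$. The transported frame is still orthonormal, so frame-independence identifies the two expressions.

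\textbf{Rank changes.} This step concerns a point of support $J$ viewed inside a larger chart $S_I\times G^I$. There, $\mathcal H$ contains the extra spherical directions $v_i$, $i\in I\setminus J$, while the group directions for inactive indices lie in $\mathcal K$ and are quotiented away. The frame on the big chart can be taken to be a frame of the small chart completed by orthonormal vectors $e_i$ in the inactive spherical directions. These extra directions are exactly what the colimit description of $T_pE^{\mathrm{sph}}(G)$ in Section~5 says $D$ must include. Compatibility is then just the statement that the colimit system of local operators is coherent under $\iota_{J,I}$. I would state this as the precise meaning of "glue compatibly" and check it on the frame-free formula.

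\textbf{Global operator.} The local operators form a compatible family indexed by plots. Since the diffeology of $E^{\mathrm{sph}}(G)$ is the final diffeology of the $\psi_I$, and sections of $\mathcal E$ are determined plotwise, this family defines $D\colon\Gamma(\mathcal E)\to\Gamma(\mathcal E)$. Smoothness follows because each $D_I$ is a smooth differential operator on a finite-dimensional manifold.

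\textbf{Main obstacle.} I expect the rank-change step to be the hardest: guaranteeing that the orthonormal frames and the connection remain smooth as one passes from a stratum to a chart where new coordinates $x_i$ are activated. I would handle this by never choosing frames globally, and instead relying solely on the frame-free composition above and the gluing hypothesis.
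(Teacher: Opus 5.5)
Your skeleton is the paper's: frame-independence plus the compatibility hypothesis gives agreement of the local expressions, which then glue through the final diffeology. The frame-free form $c\circ(\sharp_{\bar g}\otimes 1)\circ\nabla^{\mathcal E}$ is a harmless repackaging of frame-independence. The paper stays at this formal level: it reads the hypothesis as saying that the transition maps identify $\bar g$, $c$ and $\nabla^{\mathcal E}$ on overlaps, as isomorphisms of finite-rank data would, and stops.

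The gap appears when you make the transition maps concrete as the inclusions $\iota_{J,I}$. These are not local diffeomorphisms. At a point $p$ of support $J$, the lemma of Section~5 shows that $T_pQ_I$ contains the inactive spherical directions $e_i=\partial_{x_i}$, $i\in I\setminus J$, which are absent from $T_pQ_J$. A frame of the small model transported by $\iota_{J,I}$ is therefore orthonormal but does not span $\mathcal H$ on the big model. Frame-independence compares two orthonormal frames of the \emph{same} space, so it gives nothing here. Your own rank-change paragraph computes the outcome:
\[
(D_Is)(p)=(D_Js)(p)+\sum_{i\in I\setminus J}c(e_i)\nabla^{\mathcal E}_{e_i}s(p).
\]
The extra term does not vanish: on $x_i s_0$ it equals $c(e_i)s_0(p)\neq 0$. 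This holds even when $\iota_{J,I}$ intertwines metric, Clifford action and connection as in your three bullets. So the identity $\tau^*D_J=D_I$ of your overlap paragraph is false, and the proposal both asserts agreement and computes a disagreement.

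Declaring coherence of the system $(D_I)$ under $\iota_{J,I}$ to be ``the precise meaning of glue compatibly'' then assumes the conclusion, and the natural local operators do not satisfy it: they never stabilize as $I$ grows. By the colimit theorem of Section~5, $\mathcal H_p$ contains all finitely supported inactive spherical variations. An operator including these directions is therefore an infinite Clifford contraction over $i\notin J$. That is the paper's infinite-dimensional case, with its convergence datum, not the finite-rank case you invoke.

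Your remedy for the obstacle you identified does not work either. Inside a single chart $S_I\times G^I$, the metric degenerates along $x_i=0$, $\mathcal H$ drops rank there, and $\sharp_{\bar g}$ blows up like $x_i^{-2}$ on the $i$-th group directions. The frame-free formula thus carries exactly the singularity of the frame $\frac{1}{x_i}f^{(i)}_a$ (the paper's $\frac1{x_i}D_{G,i}$ term). Consequently ``each $D_I$ is a smooth differential operator'' is not available.

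To get a proof at the level of the paper, take the hypothesis to mean that the transition maps are isomorphisms of the finite-rank data $(\mathcal H,\bar g,c,\nabla^{\mathcal E})$ on overlaps, so no rank change ever occurs. Then the overlap step is literally frame-independence. The rank-change discussion must be dropped, not absorbed into the hypothesis.
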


\begin{proof}
The definition of \(D\) is local, and the previous proposition shows that it
does not depend on the choice of orthonormal frame. On overlaps of local
models, the metric, Clifford multiplication and connection are assumed to be
identified by the transition maps. Hence the local expressions agree on
overlaps. Therefore they glue to a globally defined operator.
\end{proof}

\subsubsection{The infinite-dimensional case}

When \(G\) is infinite-dimensional, the horizontal pseudo-bundle
\(\mathcal H\) may be infinite-dimensional. In that case, the formal expression
\[
\sum_a c(e_a)\nabla^{\mathcal E}_{e_a}
\]
is generally not meaningful without additional convergence or trace-class
assumptions.

\begin{definition}
An infinite-dimensional Clifford-compatible Dirac datum consists of:
\begin{itemize}
\item a Hilbert or convenient completion \(\widehat{\mathcal H}\) of
\(\mathcal H\);
\item a non-degenerate metric or inner product on \(\widehat{\mathcal H}\);
\item a Clifford module \(\mathcal E\);
\item a compatible connection \(\nabla^{\mathcal E}\);
\item a choice of admissible orthonormal frame \((e_a)_{a\geq1}\);
\item a convergence condition ensuring that
\[
\sum_{a\geq1} c(e_a)\nabla^{\mathcal E}_{e_a}
\]
converges in a specified operator topology.
\end{itemize}
\end{definition}

\begin{definition}
Under such admissibility assumptions, the infinite-dimensional Dirac-type
operator is defined by
\[
D
=
\overline{\sum_{a\geq1} c(e_a)\nabla^{\mathcal E}_{e_a}},
\]
where the bar denotes the closure of the densely defined operator obtained
from the convergent series.
\end{definition}

\begin{remark}
In the infinite-dimensional case, frame independence is not automatic. It must
be required that changes of admissible frames act by transformations preserving
the convergence class of the series, for example by a restricted orthogonal
group or another suitable subgroup of the full orthogonal group.
\end{remark}

\begin{proposition}
Assume that admissible changes of frame belong to a restricted orthogonal group
under which the Clifford contraction series is invariant. Then the operator
\(D\) is independent of the admissible frame.
\end{proposition}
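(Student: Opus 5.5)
The plan is to mimic the finite-rank frame-independence argument, with the finite orthogonal sum replaced by an absolutely convergent double series. Let $(e_a)_{a\geq1}$ and $(e'_a)_{a\geq1}$ be two admissible orthonormal frames of $\widehat{\mathcal H}$. Write
\[
e'_a=\sum_{b\geq1}A_{ab}e_b ,
\]
where $A=(A_{ab})$ belongs to the restricted orthogonal group in the hypothesis. Orthogonality of $A$ on $\widehat{\mathcal H}$ gives $\sum_a A_{ab}A_{ac}=\delta_{bc}$, with the series converging in $\ell^2$. By continuity and linearity of Clifford multiplication and of $X\mapsto\nabla^{\mathcal E}_X$ in $X$, which are part of the Dirac datum, we get
\[
c(e'_a)=\sum_b A_{ab}c(e_b),\qquad \nabla^{\mathcal E}_{e'_a}=\sum_c A_{ac}\nabla^{\mathcal E}_{e_c},
\]
first as identities on the dense domain of smooth sections on which the series are defined.

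Next we fix a section $s$ in the common core of both series and form the partial sums
\[
D'_N s=\sum_{a\leq N}\sum_{b,c}A_{ab}A_{ac}\,c(e_b)\nabla^{\mathcal E}_{e_c}s .
\]
The aim is to let $N\to\infty$ and exchange the order of summation, so that $\sum_a A_{ab}A_{ac}=\delta_{bc}$ collapses the triple sum to $\sum_b c(e_b)\nabla^{\mathcal E}_{e_b}s=Ds$. This is the formal computation from the finite case. The content of the hypothesis ``the Clifford contraction series is invariant under the restricted orthogonal group'' is exactly that this rearrangement is legitimate, meaning the reindexed series converges in the specified operator topology to the same limit. I would state this explicitly as the interpretation of the assumption. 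Concretely, for restricted groups of the type $O_{\mathrm{res}}$ one has $A-\mathrm{Id}$ Hilbert--Schmidt. Splitting $A=\mathrm{Id}+K$, the identity part gives back $D$ trivially. The terms linear and quadratic in $K$ combine through $K+K^{T}+K^{T}K=0$ (orthogonality), and their convergence is controlled by the Hilbert--Schmidt norm of $K$ against the convergence class of the original series.

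Finally, the two series $\sum c(e_a)\nabla_{e_a}$ and $\sum c(e'_a)\nabla_{e'_a}$ coincide on a common dense core. Their closures therefore agree provided this core is a core for both closed operators. I would obtain this either by building it into the definition of admissibility, or by noting that the frame change $A$ maps the domain of one series onto the other. The main obstacle is the interchange of summation in the middle step: without an absolute convergence estimate tied to the restricted group, the cross terms $c(e_b)\nabla_{e_c}$ with $b\neq c$ need not cancel in the limit. I would first try to get the cancellation from the Hilbert--Schmidt bound on $K$ together with a uniform bound on $\|\nabla^{\mathcal E}_{e_c}s\|$. If that fails, I would fall back on reading the invariance assumption literally as providing the rearrangement.
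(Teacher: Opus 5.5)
Your proposal is essentially the paper's proof. The paper likewise reruns the finite-rank computation with $\sum_a A_{ab}A_{ac}=\delta_{bc}$ and lets the restricted-orthogonal hypothesis justify interchanging the infinite sums; your remark that the closures coincide only if the common domain is a core for both operators addresses a point the paper passes over. Your optional concrete estimate is not needed and, as sketched, would not close: a Hilbert--Schmidt $K=A-\mathrm{Id}$ need not have summable entries, so a uniform bound on $\|\nabla^{\mathcal E}_{e_c}s\|$ does not by itself control the cross terms (square-summability of these norms, with bounded Clifford multiplication, would), and the polarization-based $O_{\mathrm{res}}$ does not even make $A-\mathrm{Id}$ Hilbert--Schmidt, so, exactly as in the paper, the argument rests on reading the invariance hypothesis as granting the rearrangement.
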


\begin{proof}
The proof is formally the same as in finite dimension, but one must justify
interchanging infinite sums. This is precisely the role of the restricted
orthogonal admissibility assumption. It ensures that the matrix coefficients
of a change of frame preserve the convergence of the series and that the
orthogonality identity
\[
\sum_a A_{ab}A_{ac}=\delta_{bc}
\]
is valid in the chosen topology. Hence the transformed Clifford contraction
equals the original one.
\end{proof}

\begin{remark}
Thus, in the infinite-dimensional setting, the Dirac operator is not a purely
formal consequence of the metric and Clifford structure. It requires an
additional analytic datum specifying the class of admissible frames and the
operator topology in which the contraction converges.
\end{remark}

\subsubsection{Dirac operators on finite local models}

In the present article, the primary construction is performed on the finite
local models
\[
S_I\times G^I.
\]
If \(G\) is finite-dimensional, these are finite-dimensional manifolds or
finite-dimensional diffeological models, and the preceding finite-rank
construction applies directly.

If \(G\) is infinite-dimensional, the local models remain finite in the Milnor
index set \(I\), but the factors \(G^I\) may still be infinite-dimensional.
Therefore the Dirac construction requires either:
\begin{itemize}
\item an explicit Hilbert completion of the tangent spaces of \(G\);
\item a compatible metric connection;
\item a Clifford module;
\item and an admissibility condition for the Clifford contraction.
\end{itemize}

\textbf{conclusion}
For finite-dimensional structure groups, the Dirac operator is canonically
defined from the metric, Clifford module and compatible connection. For
infinite-dimensional structure groups, the same notation \(D\) will be used
only after an admissible infinite-dimensional Dirac datum has been fixed.

\subsection{Local expression of the Dirac operator}

Let
\[
S_I\times G^I
\]
be a finite local model of the spherical Milnor space. We denote by
\[
(x_i)_{i\in I}
\]
the spherical barycentric coordinates and by
\[
\mathfrak g
\]
the Lie algebra of \(G\).

Assume that:
\begin{itemize}
\item \(G\) is equipped with a left-invariant metric;
\item the horizontal pseudo-bundle \(\mathcal H\) is endowed with the
barycentric metric
\[
\bar g
=
g_{S_I}
+
\sum_{i\in I} x_i^2\, g_G^{(i)};
\]
\item a Clifford-compatible connection has been fixed.
\end{itemize}

\begin{proposition}
Locally, the Dirac operator decomposes as
\[
D
=
D_{S_I}
+
\sum_{i\in I}\frac{1}{x_i}D_{G,i}
+
L,
\]
where:
\begin{itemize}
\item \(D_{S_I}\) is the Dirac operator associated with the spherical part;
\item \(D_{G,i}\) is the Dirac operator acting on the \(i\)-th copy of \(G\);
\item \(L\) is a first-order operator involving derivatives of the barycentric
coefficients \(x_i\).
\end{itemize}
\end{proposition}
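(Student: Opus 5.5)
The plan is to work entirely on a fixed finite local model $S_I\times G^I$ at points where all $x_i\neq 0$, where the barycentric metric is an honest warped-product metric, and then read off the decomposition. This mirrors the Laplacian decomposition proved earlier, since $D^2$ should reproduce the factors $1/x_i^2$ found there.

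\textbf{Step 1: an adapted frame.} Fix a $g_{S_I}$-orthonormal frame $(f_\alpha)$ of $TS_I$. For each $i\in I$, fix a left-invariant $g_G$-orthonormal frame $(\epsilon^{(i)}_k)$ of the $i$-th copy of $G$, identified with an orthonormal basis of $\mathfrak g$. Since $\bar g=g_{S_I}+\sum_i x_i^2 g_G^{(i)}$, the family
\[
(f_\alpha)\cup\big(x_i^{-1}\epsilon^{(i)}_k\big)_{i,k}
\]
is a local $\bar g$-orthonormal frame of $\mathcal H$ on the open stratum. By the frame-independence proposition for $D$, we may compute with this frame.

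\textbf{Step 2: substitute the frame into $D$.} Inserting it into $D=\sum_a c(e_a)\nabla^{\mathcal E}_{e_a}$ gives
\[
D=\sum_\alpha c(f_\alpha)\nabla^{\mathcal E}_{f_\alpha}+\sum_{i\in I}\frac1{x_i}\sum_k c(\epsilon^{(i)}_k)\nabla^{\mathcal E}_{\epsilon^{(i)}_k},
\]
where $c(\epsilon^{(i)}_k)$ denotes Clifford multiplication by the unit vector $x_i^{-1}\epsilon^{(i)}_k$. The factor $1/x_i$ comes out of the covariant derivative by $C^\infty$-linearity of $\nabla_X$ in $X$.

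\textbf{Step 3: compare connections and collect $L$.} Write $\nabla^{\mathcal E}=\nabla^{S_I}\oplus\bigoplus_i\nabla^{G,i}+A$. Here $\nabla^{S_I}$ and $\nabla^{G,i}$ are the Clifford connections of the unwarped product $g_{S_I}\oplus\bigoplus_i g_G^{(i)}$, and $A$ is a $1$-form with values in $\mathrm{Cl}(\mathcal H)$-compatible endomorphisms. Using the Koszul formula for the warped metric, the Levi-Civita difference tensor is built from $x_i^{-1}dx_i$ and from the mixing terms $x_i\,dx_i\otimes g_G^{(i)}$. This is the standard warped-product computation. Its Clifford lift $A$ therefore involves only the barycentric coefficients and their first derivatives, and is of order zero in $s$.

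With $D_{S_I}:=\sum_\alpha c(f_\alpha)\nabla^{S_I}_{f_\alpha}$ and $D_{G,i}:=\sum_k c(\epsilon^{(i)}_k)\nabla^{G,i}_{\epsilon^{(i)}_k}$, define
\[
L:=\sum_a c(e_a)A_{e_a}.
\]
With this choice the stated identity holds by construction. The claim that $L$ only involves derivatives of the $x_i$ (hence is first-order, in fact of order zero) then follows from the description of $A$.

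\textbf{Main obstacle.} The hard part is Step 3: showing that the connection difference introduces no further derivative terms, so that nothing beyond $D_{S_I}$ and $x_i^{-1}D_{G,i}$ carries derivatives of $s$. I would first verify this on a single warped factor $S_I\times_{x_i}G$ by direct Christoffel computation, then sum over $i$, using that distinct group factors are $\bar g$-orthogonal.

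The decomposition is only asserted on the open strata $x_i\neq0$. At $x_i=0$, I would not attempt to extend it. I would only remark that the corresponding frame vectors leave $\mathcal H$ there, consistent with the behaviour of the Laplacian near boundary strata described earlier.
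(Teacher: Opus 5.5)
Your proposal is correct and follows the same route as the paper's proof. On the locus $x_i\neq0$ you rescale the group part of an orthonormal frame to $x_i^{-1}f^{(i)}_a$, insert it into the frame-independent Clifford contraction, pull one factor $1/x_i$ out of $\nabla_{x_i^{-1}f^{(i)}_a}$ while keeping Clifford multiplication by unit vectors (a convention the paper uses only implicitly), and collect the remaining terms into $L$. Your Step 3 makes explicit what the paper states in one line: $L=\sum_a c(e_a)A_{e_a}$ comes from the warped-product connection difference, so it involves only $x_i$ and $dx_i$ and is in fact of order zero, provided the comparison connections on the factors are chosen with the same twisting part as $\nabla^{\mathcal E}$.
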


\begin{proof}
Let
\[
(e_\alpha)
\]
be a local orthonormal frame for the spherical factor \(S_I\), and let
\[
(f_a^{(i)})
\]
be orthonormal frames for the metric \(g_G^{(i)}\) on the \(i\)-th copy of
\(G\).

Because the metric on the \(i\)-th group factor is multiplied by \(x_i^2\),
the corresponding orthonormal frame for the total metric \(\bar g\) is
\[
\widetilde f_a^{(i)}
=
\frac{1}{x_i}f_a^{(i)}
\]
whenever \(x_i\neq0\).

Hence the local Clifford contraction gives
\[
D
=
\sum_\alpha c(e_\alpha)\nabla_{e_\alpha}
+
\sum_{i,a}
c\!\left(\frac1{x_i}f_a^{(i)}\right)
\nabla_{\frac1{x_i}f_a^{(i)}}.
\]

Using linearity of Clifford multiplication and of the connection in the vector
field argument,
\[
D
=
D_{S_I}
+
\sum_i \frac1{x_i}D_{G,i}
+
L,
\]
where \(L\) collects the additional first-order terms produced by derivatives
of the scaling coefficients \(x_i\) through the connection.
\end{proof}

\begin{remark}
The factors \(1/x_i\) arise from the rescaling of the orthonormal frame under
the warped product-type metric
\[
g_{S_I}+\sum_i x_i^2 g_G^{(i)}.
\]
Thus they are geometric rather than singularity-theoretic in origin.
\end{remark}

\subsection{Square of the Dirac operator}

The classical Lichnerowicz formula does not automatically extend to arbitrary
diffeological or infinite-dimensional settings. We therefore restrict
ourselves to the finite-rank case.

\begin{proposition}
Assume that the horizontal pseudo-bundle \(\mathcal H\) has finite rank on the
considered local model and that the Clifford-compatible connection is induced
by a metric connection. Then locally,
\[
D^2
=
\nabla^*\nabla
+
\mathcal R,
\]
where:
\begin{itemize}
\item \(\nabla^*\nabla\) is the Bochner Laplacian associated with the chosen
connection;
\item \(\mathcal R\) is the curvature contribution induced by the Clifford
action of the curvature tensor.
\end{itemize}
\end{proposition}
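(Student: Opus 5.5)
The plan is to prove the Lichnerowicz-type identity pointwise on a single finite local model, where everything is classical. Afterwards I would use the gluing hypotheses of the previous propositions to conclude that both sides are compatible on overlaps. Because \(\mathcal H\) has finite rank and \(\bar g\) is non-degenerate there, I can fix a point and choose a local \(\bar g\)-orthonormal frame \((e_1,\ldots,e_r)\) of \(\mathcal H\) that is synchronous at that point, i.e. \(\nabla^{\mathcal H}e_a=0\) at the point. This uses only that \(\nabla^{\mathcal H}\) is a metric connection, via parallel transport along radial curves in the finite-dimensional chart. By the frame-independence proposition for \(D\), computing in this frame is legitimate.

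The core computation is to expand
\[
D^2=\sum_{a,b}c(e_a)\nabla^{\mathcal E}_{e_a}\bigl(c(e_b)\nabla^{\mathcal E}_{e_b}\bigr).
\]
First, the Clifford compatibility \(\nabla^{\mathcal E}_X(c(v)s)=c(\nabla^{\mathcal H}_Xv)s+c(v)\nabla^{\mathcal E}_Xs\) moves \(c(e_b)\) past the covariant derivative; the term involving \(\nabla e_b\) vanishes at the point. This gives \(\sum_{a,b}c(e_a)c(e_b)\nabla_{e_a}\nabla_{e_b}\). Next, I would split the sum into \(a=b\) and \(a\neq b\). The diagonal part gives \(-\sum_a\nabla_{e_a}\nabla_{e_a}\), which at the point equals \(\nabla^*\nabla\), since the correction \(\nabla_{\nabla_{e_a}e_a}\) vanishes there. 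In the off-diagonal part I would antisymmetrize using \(c(e_a)c(e_b)=-c(e_b)c(e_a)\), obtaining
\[
\tfrac12\sum_{a\neq b}c(e_a)c(e_b)\bigl(\nabla_{e_a}\nabla_{e_b}-\nabla_{e_b}\nabla_{e_a}\bigr)=\tfrac12\sum_{a,b}c(e_a)c(e_b)R^{\mathcal E}(e_a,e_b),
\]
because \([e_a,e_b]\) has no \(\nabla\)-correction at the point once the torsion term is accounted for. I then set \(\mathcal R:=\tfrac12\sum_{a,b}c(e_a)c(e_b)R^{\mathcal E}(e_a,e_b)\). This is a zeroth-order endomorphism, and it is frame-independent by the same \(O(r)\) argument used for \(D\).

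The main obstacle is the commutator step. The identity \(\nabla_{e_a}\nabla_{e_b}-\nabla_{e_b}\nabla_{e_a}=R(e_a,e_b)+\nabla_{[e_a,e_b]}\) requires the bracket of sections of \(\mathcal H=TE^{\mathrm{sph}}(G)/\mathcal K\) to make sense. The term \(\nabla_{[e_a,e_b]}\) is removed by the torsion-free property \([e_a,e_b]=\nabla_{e_a}e_b-\nabla_{e_b}e_a=0\) at the point. The hypothesis only says the connection is metric, so I would first try to work on the local model \(S_I\times G^I\) restricted to \(x_i\neq 0\) for the active indices. There \(\mathcal H\) is identified with the genuine tangent bundle of \(S_I\times G^J\), and I can take \(\nabla^{\mathcal H}\) to be its Levi-Civita connection, as in the first Clifford-connection proposition.

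If only a metric connection with torsion \(T\) is available, I would keep the extra term \(-\sum c(e_a)c(e_b)\nabla_{T(e_a,e_b)}\). For the Levi-Civita case I would then use the Bianchi identity to reduce the \(\mathrm{Cl}(\mathcal H)\)-curvature part of \(\mathcal R\) to \(\tfrac14\mathrm{Scal}\) plus a twisting curvature, noting this as the classical refinement rather than needed for the stated form. Finally, I would check that \(\nabla^*\nabla\) and \(\mathcal R\) are defined intrinsically, as the trace of \(\nabla^2\) and the Clifford contraction of the curvature. Their local expressions therefore agree under the metric-preserving transition maps, so the formula holds on every local model.
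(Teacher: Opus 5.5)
Your proposal is correct and follows the same route as the paper. The paper's proof just invokes the classical Weitzenb\"ock--Lichnerowicz argument on the finite-dimensional warped local model; you carry that computation out explicitly in a synchronous frame, obtaining \(\mathcal R=\tfrac12\sum_{a,b}c(e_a)c(e_b)R^{\mathcal E}(e_a,e_b)\).

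Your restriction to the torsion-free (Levi-Civita) connection is a necessary sharpening of the paper's hypothesis ``induced by a metric connection'', since with torsion first-order terms survive (your aside's torsion term also needs a factor \(\tfrac12\)). Also, the argument should run on the open set where all \(x_i\), \(i\in I\), are nonzero, where \(\mathcal H=T(S_I\times G^I)\); on a set where only the indices in \(J\) are required nonzero, \(\mathcal H\) jumps in rank and is not \(T(S_I\times G^J)\).
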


\begin{proof}
On each finite local model, the metric and Clifford structures are classical.
The local Dirac operator is therefore an ordinary Dirac-type operator on a
finite-dimensional Riemannian manifold with warped product metric. The usual
Weitzenböck--Lichnerowicz argument applies and gives
\[
D^2
=
\nabla^*\nabla+\mathcal R.
\]
The precise expression of \(\mathcal R\) depends on the chosen Clifford module
and connection.
\end{proof}

\begin{remark}
In the spin case, the curvature contribution reduces locally to the usual
scalar-curvature term
\[
\frac14\operatorname{Scal}.
\]
However, we shall not require this more specific formula in the sequel.
\end{remark}

\begin{remark}
In infinite dimension, the expression \(D^2\) requires additional analytic
assumptions on domains, closures, and convergence of the Clifford contraction.
For this reason, the above formula should be understood as a local
finite-rank identity.
\end{remark}

\subsection{Behavior near the boundary strata}

We now analyze the behavior of the Dirac operator near loci where some
barycentric coordinates vanish.

\begin{proposition}
Let
\[
x_i=0
\]
for some index \(i\). Then the apparent singularity in the term
\[
\frac1{x_i}D_{G,i}
\]
is compensated by the degeneration of the corresponding horizontal directions.
Consequently, the induced Dirac operator on the effective horizontal
pseudo-bundle extends continuously across the boundary stratum.
\end{proposition}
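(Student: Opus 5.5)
The plan is to argue exactly as for the Hodge-type Laplacian near the boundary strata, but at first order, working on a single finite local model \(S_I\times G^I\). I will use the local decomposition
\[
D=D_{S_I}+\sum_{i\in I}\frac1{x_i}D_{G,i}+L
\]
from the previous proposition. The key observation is that \(\frac1{x_i}D_{G,i}\) is not an intrinsic object. It is the Clifford contraction \(\sum_a c(\widetilde f^{(i)}_a)\nabla_{\widetilde f^{(i)}_a}\) along the rescaled frame \(\widetilde f^{(i)}_a=x_i^{-1}f^{(i)}_a\). This frame is \(\bar g\)-orthonormal only on \(\{x_i\neq0\}\).

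\textbf{Step 1: the singular term only involves \(\mathcal K\) at the stratum.} By the kernel proposition of Section~6, at a point of support \(J\) with \(i\notin J\), the \(G\)-directions \(T_{g_i}G\) lie in \(\ker g_p=\mathcal K_p\). They are therefore zero in \(\mathcal H_p=T_pE^{\mathrm{sph}}(G)/\mathcal K_p\). On the stratum, the effective orthonormal frame of \(\mathcal H\) thus consists only of a spherical frame (including the inactive directions \(v_i\), which remain tangent by the first-order tangent structure theorem) and the active group frames \(\widetilde f^{(j)}_a\) with \(j\in J\). The restriction \(D|_{E_J^\circ}\), defined intrinsically by the frame-independence proposition, therefore contains no \(1/x_i\) term. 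It equals \(D_{S}+\sum_{j\in J}x_j^{-1}D_{G,j}+L_J\), and its coefficients are smooth on the stratum.

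\textbf{Step 2: compatibility of the limit.} I will show that, for sections of \(\mathcal E\) that are pulled back from the effective bundle, the operator \(D\) on \(\{x_i\neq0\}\) converges to the stratum operator as \(x_i\to0\). Such sections are the ones that descend to the quotient \(E^{\mathrm{sph}}(G)\); in particular they are invariant in \(g_i\) at \(x_i=0\). For these sections I will Taylor-expand in \(x_i\). The relation \(\sim\) forces the \(g_i\)-dependence of a plot-smooth section to be of order \(x_i\), i.e. \(\nabla_{f^{(i)}_a}s=O(x_i)\), in the spirit of Section~5's quadratic invisibility. Then \(\frac1{x_i}D_{G,i}s=\frac1{x_i}\sum_a x_i^{-1}c(f_a)\,O(x_i)\). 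This bounds that piece by one power of \(x_i\) only, so the expansion must be sharpened. I will use the metric weighting instead: measured in \(\bar g\)-norm, the Clifford factor \(c(\widetilde f_a)\) is unitary. Combined with the refined vanishing \(\nabla_{f^{(i)}_a}s=O(x_i^2)\), which I expect from smoothness of plots through the quotient (cf. \(t_i=x_i^2\)), this makes the term \(O(x_i)\to0\). The terms of \(L\) containing \(dx_i/x_i\) are handled the same way. They arise from the Levi-Civita coefficients of the warped metric \(x_i^2g_G\), and they only pair with \(c(\widetilde f^{(i)}_a)\), whose action on effective sections collapses.

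\textbf{Step 3: gluing.} Continuity is a local statement. It is checked on each \(S_I\times G^I\) and is compatible with the inclusions \(\iota_{J,I}\), by the gluing proposition for \(D\). Hence \(D\) extends continuously across \(E_J^\circ\).

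The main obstacle is Step 2: establishing the vanishing order of the \(g_i\)-derivatives of effective sections near \(x_i=0\). With only \(O(x_i)\) vanishing, the term \(x_i^{-2}\) is not compensated. So I will first try to extract \(O(x_i^2)\) from smoothness through the quotient. If that fails, I will restrict the statement to sections locally constant in \(g_i\) near the stratum. This is the stationarity class inherited from the Milnor diffeology via \(\iota\), and for it the singular term vanishes identically.
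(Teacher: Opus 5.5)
Your Step~1 is essentially the whole of the paper's proof. The paper only observes that the weight $x_i^2g_G^{(i)}$ collapses the $T_{g_i}G$ directions of $\mathcal H$ at the stratum, so that the singular coefficient ``acts on directions whose norm tends to zero''. It gives no estimate, and in fact the Clifford contraction runs over the unit vectors $\widetilde f^{(i)}_a$, whose norm does not tend to zero. You were right that this does not close. But the obstacle in Step~2 is not technical, and neither of your exits works.

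\textbf{The refined vanishing is false.} The relation $\sim$ only forces $g_i$-independence on $\{x_i=0\}$, which gives $\nabla_{f^{(i)}_a}s=O(x_i)$ and nothing more. For $h\in C^\infty(G)$, the function $F=x_i\,h(g_i)$ is smooth on $E^{\mathrm{sph}}(G)$: its pull-back to $S_I\times G^I$ is smooth and vanishes on $\{x_i=0\}$. In the Clifford module $\Lambda^\bullet\mathcal H^*$ one has $DF=dF=h(g_i)\,dx_i+x_i\,d^{(i)}h$.
\begin{itemize}
\item The first term tends to $h(g_i)\,dx_i$, which depends on the representative $g_i$ of the limit point.
\item The second term has $\bar g$-norm $|dh|_{g_G}$, independent of $x_i$. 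It lives on the unit covectors dual to $\widetilde f^{(i)}_a$, which disappear from $\mathcal H^*$ at the stratum.
\end{itemize}
So $DF$ converges to nothing defined on the stratum. The analogy with $t_i=x_i^2$ runs backwards: quadratic vanishing is what the simplicial diffeology imposes, and the spherical diffeology was designed so that plots cross $x_i=0$ transversally.

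\textbf{The fallback fails too.} Restricting to sections locally constant in $g_i$ does not help, because the claim that the singular $L$-terms ``collapse'' is wrong. For $e_0=\operatorname{grad}x_i/|\operatorname{grad}x_i|$, the warped metric gives $\langle\nabla_{\widetilde f^{(i)}_a}e_0,\widetilde f^{(i)}_a\rangle=\sqrt{1-x_i^2}/x_i$. In the Clifford contraction these coefficients come with two factors $c(\widetilde f^{(i)}_a)$, and $c(\widetilde f^{(i)}_a)^2=-1$ is a scalar, not a collapsing quantity. What survives is the mean-curvature term, a zeroth-order term acting on $s$ itself; for spinors it is $\tfrac{\dim G}{2x_i}\,c(\operatorname{grad}x_i)$ up to sign.

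Concretely, take $s=dx_i$, which is horizontal and independent of every group variable. With the Levi-Civita connection one has $D=d+\delta$, with $\delta$ the Riemannian codifferential of $\bar g$. Then $Ds=\delta\,dx_i=-\operatorname{div}_{\bar g}(\operatorname{grad}x_i)$. Because the Riemannian density carries the factor $|x_i|^{\dim G}$, this contains $-\dim G\,(1-x_i^2)/x_i$, which is unbounded as $x_i\to0$ once $\dim G\geq1$.

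\textbf{Step~1 cannot supply the limiting operator.} Frame independence was proved for orthonormal frames on open sets. Since $\operatorname{rank}\mathcal H$ drops by $\dim G$ at the stratum, no such frame exists near a stratum point. Moreover, $D$ still differentiates in the inactive direction $v_i$, which leaves the stratum.

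Near $\{x_i=0\}$ the metric is a cone $dx_i^2+x_i^2g_G$, and $D$ is a genuinely regular-singular (conic) operator. As stated, the continuity claim fails for the very module $\Lambda^\bullet\mathcal H^*$ the paper proposes. A correct version would have to be formulated in cone-operator terms (domains, closed extensions), not as a continuous extension.
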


\begin{proof}
By construction of the spherical Milnor quotient, when \(x_i=0\), the
corresponding group component becomes irrelevant in the equivalence relation.
Geometrically, the metric contribution
\[
x_i^2 g_G^{(i)}
\]
collapses the corresponding horizontal directions.

The vector fields
\[
\widetilde f_a^{(i)}
=
\frac1{x_i}f_a^{(i)}
\]
form an orthonormal frame only away from the locus \(x_i=0\). However, the
effective horizontal pseudo-bundle itself loses the corresponding directions
at the boundary.

Thus the apparently singular term acts on directions whose norm tends to zero
in the collapsing geometry. The resulting Clifford contraction therefore
extends through the quotient geometry of the horizontal pseudo-bundle.

Equivalently, the singular coefficient is compensated by the vanishing of the
metric weight defining the corresponding component of the horizontal geometry.
\end{proof}

\begin{remark}
The boundary behavior is analogous to the standard cancellation phenomena
occurring in warped product geometries and in conic degenerations.
\end{remark}

\begin{remark}
This cancellation mechanism is one of the main geometric advantages of the
spherical Milnor model. The degeneration of the barycentric metric is
compatible with the quotient structure and prevents the appearance of genuine
operator singularities on the effective horizontal geometry.
\end{remark}
\subsection{Invariance properties}

\begin{proposition}
The Dirac operator is invariant under the $G$-action.
\end{proposition}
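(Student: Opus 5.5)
The plan is to reduce the statement to the \(G\)-invariance of each ingredient entering the definition of \(D\), namely the metric \(\bar g\), the Clifford structure, the Clifford module and the Clifford-compatible connection. Invariance of \(D\) then follows from the frame-independent definition
\[
D=\sum_a c(e_a)\nabla^{\mathcal E}_{e_a}.
\]
For \(h\in G\), write \(L_h\) for the diffeomorphism \((x_i,g_i)\mapsto(x_i,hg_i)\) of \(E^{\mathrm{sph}}(G)\), which was shown above to be well-defined and smooth. On each finite local model \(S_I\times G^I\), \(L_h\) acts diagonally on \(G^I\) and trivially on \(S_I\). It preserves the support of points, hence the strata \(E_J^\circ\) and the kernel pseudo-bundle \(\mathcal K\), since \(dL_h\) maps \(T_{g_i}G\) to \(T_{hg_i}G\). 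Therefore \(dL_h\) descends to an isomorphism of the horizontal pseudo-bundle \(\mathcal H=TE^{\mathrm{sph}}(G)/\mathcal K\).

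The first step is to check that \(dL_h\) is a \(\bar g\)-isometry of \(\mathcal H\). This uses the earlier proposition on \(G\)-invariance of \(g\), and the coefficients \(x_i^2\) are unaffected by \(L_h\). There is one point to fix: the invariance proposition uses a right-invariant metric on \(G\), whereas the action is by left multiplication and the local-expression subsection assumes a left-invariant metric. I would therefore state explicitly that \(\langle\cdot,\cdot\rangle_G\) is taken invariant under the multiplication defining the action, namely left-invariant for \(h\cdot g_i=hg_i\). Alternatively, one can assume a bi-invariant metric. With that convention \(L_h^*\bar g=\bar g\), and \(dL_h\) maps \(\bar g\)-orthonormal frames to \(\bar g\)-orthonormal frames. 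By the universal property, it also induces an algebra isomorphism \(\mathrm{Cl}(\mathcal H)_p\to\mathrm{Cl}(\mathcal H)_{h\cdot p}\).

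The second step is to lift the action to \(\mathcal E\) and show that the connection is preserved. In the finite-rank case, \(\nabla^{\mathcal H}\) is the Levi-Civita connection on each local model. Levi-Civita connections are natural under isometries, so \(L_h^*\nabla^{\mathcal H}=\nabla^{\mathcal H}\), and transition compatibility holds because \(L_h\) commutes with the inclusions \(\iota_{J,I}\). For \(\mathcal E\), I would assume, as part of the statement, that \(\mathcal E\) is a \(G\)-equivariant Clifford module. This means there is a smooth lift \(\tilde L_h:\mathcal E_p\to\mathcal E_{h\cdot p}\) intertwining Clifford multiplication, \(\tilde L_h c(v)=c(dL_h v)\tilde L_h\), and preserving \(\nabla^{\mathcal E}\). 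This holds automatically for \(\mathcal E=\Lambda^\bullet\mathcal H^*\), where the lift is induced by \(dL_h\) and the connection is induced from \(\nabla^{\mathcal H}\). In general it must be imposed.

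Finally, I compute
\[
\tilde L_h D s=\sum_a c(dL_h e_a)\nabla^{\mathcal E}_{dL_h e_a}(\tilde L_h s)=D(\tilde L_h s),
\]
using the frame-independence proposition applied to the transported orthonormal frame \((dL_h e_a)\). Near the boundary strata no new issue arises, because \(L_h\) fixes every \(x_i\). Consequently the decomposition \(D=D_{S_I}+\sum_i x_i^{-1}D_{G,i}+L\) is preserved termwise: \(D_{S_I}\) is untouched, and each \(D_{G,i}\) commutes with left translation for a left-invariant metric on \(G\). I expect the main obstacle to be the equivariance of \(\mathcal E\) and of \(\nabla^{\mathcal E}\), which the earlier propositions do not provide for an abstract Clifford module, together with the left/right invariance convention. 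In the infinite-dimensional case one additionally needs \(dL_h\) to preserve the admissible frame class, which holds if \(G\) acts by isometries of the chosen Hilbert completion of \(\mathfrak g\).
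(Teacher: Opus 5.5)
The paper gives no proof of this proposition. The only supporting material is the $G$-invariance of $g$ in Section~6 and the caveat in the $SU(n)$ example that the Clifford structures are invariant ``whenever the chosen connection is $SU(n)$-compatible''.

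Your argument is the natural proof and it is correct: transport $\bar g$, $\mathrm{Cl}(\mathcal H)$, $\mathcal E$ and $\nabla^{\mathcal E}$ by $dL_h$ and a lift $\tilde L_h$, then apply frame independence to the frame $(dL_h e_a)$. The two hypotheses you add are necessary, not cosmetic.

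\textbf{The metric convention.} The Section~6 proof of invariance of $g$ appeals to right-invariance of $\langle\cdot,\cdot\rangle_G$. That does not make $g_i\mapsto hg_i$ an isometry unless the metric is bi-invariant. Your left- or bi-invariant convention matches what the paper actually uses elsewhere: the local Dirac formula assumes a left-invariant metric, and the classical-group section uses Maurer--Cartan forms with $\operatorname{Ad}$-invariant products.

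\textbf{Equivariance of the connection.} The paper only \emph{chooses} $\nabla^{\mathcal E}$, so nothing forces it to be invariant. For any real $1$-form $\alpha\in\Gamma(\mathcal H^*)$, the connection $\nabla^{\mathcal E}+\alpha\,\mathrm{id}$ is still Clifford-compatible. It changes $D$ by $c(\alpha^\sharp)$, which fails to be $G$-invariant as soon as $\alpha$ does. So without your equivariance assumption on $(\mathcal E,\nabla^{\mathcal E})$, the statement is false as written.

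\textbf{A minor imprecision.} $L_h$ commutes with $\iota_{J,I}$ only after passing to $Q_I$. Indeed, $\iota_{J,I}$ fills the new slots with $e$, and $L_h$ turns these into $h$. This is harmless because $x_i=0$ in those slots.
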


\begin{proposition}
The Dirac operator is invariant under the $\mathbb{Z}_2$-action.
\end{proposition}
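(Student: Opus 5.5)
The proof will follow the construction of $D$ rather than try a direct computation on $E^{\mathrm{sph}}(G)$. Write $\varepsilon:E^{\mathrm{sph}}(G)\to E^{\mathrm{sph}}(G)$ for the diffeomorphism $(x_i,g_i)\mapsto(\varepsilon x_i,g_i)$. The aim is to show that $\varepsilon$ preserves every ingredient entering the definition of $D$, namely $\bar g$, $\mathcal K$, $\mathcal H$, the Clifford module, and the connection. Frame independence of the Clifford contraction then gives $\varepsilon^*\circ D=D\circ\varepsilon^*$.

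\textbf{Step 1: invariance of $\mathcal K$, $\mathcal H$ and $\bar g$.} On each finite local model $S_I\times G^I$, the map $\varepsilon$ restricts to the isometry $x\mapsto \varepsilon x$ of $S_I$ times the identity on $G^I$. It preserves supports, and therefore maps each stratum $E_J^\circ$ to itself. By the proposition on $\mathbb Z_2$-invariance of $g$, we have $\varepsilon^*g=g$, since both $dx_i^2$ and $x_i^2$ are even. Hence the differential $d\varepsilon$ maps $\ker g_p=\mathcal K_p$ onto $\mathcal K_{\varepsilon p}$. It therefore descends to an isometric bundle map $\overline{d\varepsilon}:\mathcal H\to\mathcal H$ covering $\varepsilon$, and $\overline{d\varepsilon}$ is compatible with the inclusions $\iota_{J,I}$.

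\textbf{Step 2: invariance of the Levi-Civita connection.} In the finite-rank case, $\nabla^{\mathcal H,I}$ is the unique metric torsion-free connection on the local model. The map $\varepsilon$ is an isometry of the local Riemannian structure, so uniqueness gives $\varepsilon^*\nabla^{\mathcal H,I}=\nabla^{\mathcal H,I}$. These identities glue because the local connections already glue by the earlier proposition.

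\textbf{Step 3: lifting $\varepsilon$ to the Clifford module.} This is the main obstacle. An isometry of $\mathcal H$ lifts canonically to $\mathrm{Cl}(\mathcal H)$ by functoriality, via $c(v)\mapsto c(\overline{d\varepsilon}\,v)$, so the lift to the algebra bundle is unproblematic. A lift to a general Clifford module $\mathcal E$ must instead be supplied as data.
\begin{itemize}
\item For $\mathcal E=\Lambda^\bullet\mathcal H^*$, the lift is canonical: it is the pullback $(\overline{d\varepsilon}^{-1})^*$. It intertwines Clifford multiplication $v\wedge\cdot-\iota_v$, and it commutes with the induced Levi-Civita connection.
\item For spinor-type modules, $\varepsilon$ reverses orientation on $S_I$ when $|I|$ is even. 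I would therefore assume, or build into the definition of the Clifford module, a $\mathbb Z_2$-equivariant structure $\tilde\varepsilon:\mathcal E\to\mathcal E$ satisfying $\tilde\varepsilon\, c(v)\,\tilde\varepsilon^{-1}=c(\overline{d\varepsilon}v)$ and $\tilde\varepsilon^*\nabla^{\mathcal E}=\nabla^{\mathcal E}$.
\item If the lift is not unique, I would first try to average the connection over $\mathbb Z_2$, namely $\tfrac12(\nabla^{\mathcal E}+\tilde\varepsilon^*\nabla^{\mathcal E})$. Compatibility with the Clifford structure is an affine condition, by the theorem that $\mathcal{CD}(E)$ is affine modeled on $\Omega^1(X,\operatorname{End}(E))$, so the averaged connection remains Clifford-compatible and is invariant.
\end{itemize}

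\textbf{Step 4: conclusion.} Take a local orthonormal frame $(e_a)$ on $\mathcal H$ near $p$. Then $(\overline{d\varepsilon}\,e_a)$ is an orthonormal frame near $\varepsilon p$. Transporting through $\tilde\varepsilon$ and using Steps 1--3 gives
\[
\tilde\varepsilon\, D\,\tilde\varepsilon^{-1}=\sum_a c(\overline{d\varepsilon}e_a)\nabla^{\mathcal E}_{\overline{d\varepsilon}e_a}=D,
\]
where the last equality is the frame-independence proposition. The frame change here is pointwise orthogonal, including the orientation-reversing case $A\in O(r)\setminus SO(r)$, which that proof allows.

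In the local expression, the terms $\frac1{x_i}D_{G,i}$ acquire a sign $\varepsilon$. This sign is absorbed because the rescaled frame $\frac1{x_i}f_a^{(i)}$ changes to $\frac{1}{\varepsilon x_i}f_a^{(i)}$, which is again orthonormal. The boundary-stratum behaviour is preserved because $\varepsilon$ preserves supports.

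In the infinite-dimensional case the same argument applies. One must additionally assume that $\varepsilon$ maps admissible frames to admissible frames, which holds for a restricted orthogonal group containing the diagonal sign changes.
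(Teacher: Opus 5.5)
The paper gives no proof of this proposition. The closest argument it offers is the one-line proof that $g$ is $\mathbb Z_2$-invariant because $x_i^2$ and $dx_i^2$ are even, with the tacit understanding that everything built from $g$ inherits this invariance. Your proposal makes that chain explicit, and the chain is sound. Since $\varepsilon$ is an isometry preserving supports, it preserves $\mathcal K$, $\mathcal H$ and $\bar g$. Uniqueness of the Levi-Civita connection then gives $\varepsilon^*\nabla^{\mathcal H}=\nabla^{\mathcal H}$. Conjugating $D$ by a Clifford-intertwining lift $\tilde\varepsilon$ with $\tilde\varepsilon^*\nabla^{\mathcal E}=\nabla^{\mathcal E}$ yields the Clifford contraction in the pushed-forward orthonormal frame. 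Frame independence, which allows any $A\in O(r)$, finishes the argument.

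What your route adds to the paper's bare assertion is a list of hypotheses, and these are genuinely needed, not just convenient. Since $D$ acts on sections of $\mathcal E$, invariance is meaningless without a lift $\tilde\varepsilon$ of the action to $\mathcal E$. Moreover, the paper's $\nabla^{\mathcal E}$ is only \emph{chosen}, so it need not be invariant. For example, on a complexified module, replace $\nabla^{\mathcal E}$ by $\nabla^{\mathcal E}+i\,a$ with $a=dx_1$, so that $\varepsilon^*a=-a$. This keeps Clifford compatibility but adds $i\,c(a^\sharp)$ to $D$, and that term is not invariant. So your averaging step proves invariance of \emph{a} Dirac operator, namely the one built from the averaged connection, not of an arbitrary one. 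For $\mathcal E=\Lambda^\bullet\mathcal H^*$ with the induced Levi-Civita connection, no extra data is needed.

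Three minor points:
\begin{enumerate}
\item The antipodal map of $S_I\cong S^{|I|-1}$ has degree $(-1)^{|I|}$, so it reverses orientation when $|I|$ is \emph{odd}, not even. This is harmless, since you assume the lift in all cases.
\item No sign actually arises in the group terms. Indeed $c(f/x_i)\nabla_{f/x_i}=x_i^{-2}\,c(f)\nabla_f$ is even in $x_i$, and $d\varepsilon$ is the identity on group directions.
\item For the same reason, in the infinite-dimensional case the frame change induced by $\varepsilon$ is nontrivial only on the finitely many spherical directions. Admissibility of frames is therefore automatic, and no assumption about a restricted orthogonal group is required.
\end{enumerate}
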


\subsection{Interpretation}

\begin{remark}
The operator $D$ acts only on the effective geometry encoded in $\mathcal H$, ignoring the degenerate directions.
\end{remark}

\begin{remark}
This construction provides a natural extension of Dirac operators to diffeological spaces with degenerate metrics.
\end{remark}
\subsection{Spin structures on the horizontal pseudo-bundle}

We now investigate the existence of spin structures on the effective
horizontal pseudo-bundle
\[
\mathcal H
=
TE^{\mathrm{sph}}(G)/\mathcal K.
\]

Since \(\mathcal H\) may be only a diffeological pseudo-bundle and may even be
infinite-dimensional, the construction must again be understood locally on the
finite Milnor models.

\subsubsection{Orthogonal structure}

Assume that the horizontal pseudo-bundle \(\mathcal H\) is endowed with a
non-degenerate metric
\[
\bar g.
\]

\begin{proposition}
On each finite local model
\[
S_I\times G^I,
\]
the metric \(\bar g\) defines a local orthonormal frame bundle.
\end{proposition}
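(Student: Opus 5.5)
The plan is to build the frame bundle explicitly on the finite model $S_I\times G^I$, first on the open dense part where all coordinates are active, and then to control what happens where some $x_i$ vanish. Fix $I$ and assume the finite-rank setting of the preceding subsections. The case of infinite-dimensional $G$ is handled by replacing the orthogonal group with the orthogonal group of the Hilbert completion, as in the Dirac datum.

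\textbf{Step 1: the open part.} On the open subset where $x_i\neq 0$ for all $i\in I$, the pseudo-bundle $\mathcal H$ coincides with $T(S_I\times G^I)$, since $\mathcal K$ is zero there by the kernel description. The metric $\bar g=g_{S_I}+\sum_i x_i^2 g_G^{(i)}$ is a genuine smooth Riemannian metric on this open set.

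\textbf{Step 2: explicit local frames.} On this open part I write down frames directly. I take a local orthonormal frame $(e_\alpha)$ of $TS_I$; on $S_I$ these exist locally, for instance by Gram--Schmidt applied to the projections of coordinate vectors. I then add the rescaled invariant frames $\widetilde f^{(i)}_a=x_i^{-1}f^{(i)}_a$, exactly as in the local expression of the Dirac operator. The orthonormal frame bundle $O(\mathcal H)$ is the set of all $\bar g$-orthonormal bases of the fibres. The frame just described is a smooth local section of it. Composing with the right $O(r)$-action gives local trivializations, where $r=\dim S_I+|I|\dim G$. Transition functions between two such sections are smooth $O(r)$-valued maps, by the same computation as in the frame-independence proof for $D$. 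Together these show that $O(\mathcal H)$ is a principal $O(r)$-bundle there.

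\textbf{Step 3: strata where some $x_i$ vanish.} Let $J$ be the active support. Here the effective fibre is $\mathcal H_p=T_pS_I\oplus\bigoplus_{j\in J}T_{g_j}G$, by the lemma on $T_pQ_I$ and the kernel proposition. So the rank drops, and the frame bundle on each support stratum is again a principal $O(r_J)$-bundle, built by the same recipe using only the active indices. I will therefore state the result stratum-wise: $O(\mathcal H)\to S_I\times G^I$ is the diffeological pseudo-bundle whose fibre over $p$ is the set of orthonormal bases of $(\mathcal H_p,\bar g_p)$. It carries the subset diffeology of $\mathcal H^{\oplus r}$ (with $r$ varying by stratum), and restricts to an honest principal bundle on each stratum. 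Smoothness of the fibrewise $O$-action and of the projection follows because both are restrictions of smooth linear operations on $\mathcal H^{\oplus r}$.

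\textbf{Main obstacle.} The hard step is the behaviour across $x_i=0$. The frame $\widetilde f^{(i)}_a=x_i^{-1}f^{(i)}_a$ blows up there, while the corresponding directions are quotiented out. So no frame of constant rank extends across the stratum, and one must accept a pseudo-bundle with fibres of jumping dimension. The check I expect to be delicate is that plots of $O(\mathcal H)$ crossing the stratum are consistent. My first attempt will be to define these plots as plots of $\mathcal H^{\oplus r}$ that are pointwise orthonormal. With that definition, a smooth orthonormal frame cannot approach the locus $x_i=0$ using group directions, since their $\bar g$-norms tend to zero. Such frames are forced to lie in the spherical and active group directions, which gives the required compatibility with the quotient.
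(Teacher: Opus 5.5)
Your route differs from the paper's. The paper's proof is a single sentence: on each $S_I\times G^I$ the horizontal pseudo-bundle ``is represented by a finite-rank vector bundle'' with non-degenerate $\bar g$, so the standard orthonormal frame bundle construction applies. Your Steps 1--3 show that this premise is not literally true inside one model, and replace it with something correct. The rank $\operatorname{rank}\mathcal H_p=|I|-1+|J|\dim G$ depends on the support $J$ of $p$. So as soon as $|I|\geq 2$ and $\dim G\geq 1$, there is no single principal $O(r)$-bundle of orthonormal frames over all of $S_I\times G^I$. There is, however, one over each support stratum, and your explicit frames $e_\alpha$, $x_j^{-1}f^{(j)}_a$ ($j\in J$) produce it, with $T_pS_I$ still of full dimension $|I|-1$. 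This stratum-wise statement is what can actually be proved. It is also all that the paper's follow-up remark about varying rank really supports.

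The gap is in your last paragraph, where you assemble these into one pseudo-bundle over $S_I\times G^I$ that is ``compatible across $x_i=0$''. With plots defined as pointwise orthonormal plots of $\mathcal H^{\oplus r}$, no plot crosses a stratum at all. Since $\bar g$ is smooth, the Gram matrix $\bigl(\bar g(e_k(u),e_l(u))\bigr)_{k,l}$ is smooth along a plot. If it is the identity wherever $x_i(u)\neq 0$, it is still the identity at a limit point of support $J$, where $\dim\mathcal H_p=r_J<r$, which is impossible. So the frames are not ``forced into the spherical and active group directions''; such families would not span $\mathcal H_p$ where $x_i\neq 0$. They simply cannot reach the lower stratum.

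Consequently $O(\mathcal H)$ is diffeologically the disjoint union of the stratum frame bundles. Its projection is smooth and surjective but not a subduction: the plot $t\mapsto\bigl((\cos t,\sin t),(g_1,g_2)\bigr)$ of $S_{\{1,2\}}\times G^{\{1,2\}}$ has no local lift at $t=0$. By the paper's definition it is therefore not a diffeological fibre pseudo-bundle over the model, and no compatibility with the quotient is gained. You should drop that claim and state the result stratum by stratum. Alternatively, change the definition (e.g.\ admit degenerate frames), at the cost of losing the principal $O(r)$-structure.

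Separately, your one-line extension to infinite-dimensional $G$ is not an argument. There $r$ is infinite, and neither smooth local orthonormal frames nor a principal structure for the orthogonal group of a completion come for free. Since the paper only claims the finite-rank case here, simply restrict to it.
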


\begin{proof}
On every finite local model, the horizontal pseudo-bundle is represented by a
finite-rank vector bundle equipped with the non-degenerate metric \(\bar g\).
Hence the usual orthonormal frame bundle construction applies locally.
\end{proof}

\begin{remark}
Globally, one should rather speak of a diffeological orthogonal
pseudo-bundle, since the rank of the horizontal geometry may vary across the
boundary strata.
\end{remark}

\subsubsection{Local structure of the horizontal pseudo-bundle}

\begin{proposition}
On a finite local model
\[
S_I\times G^I,
\]
the horizontal pseudo-bundle is locally represented by
\[
TS_I
\oplus
\bigoplus_{i\in I} TG,
\]
equipped with the warped metric
\[
g
=
g_{S_I}
\oplus
\bigoplus_{i\in I} x_i^2 g_G.
\]
\end{proposition}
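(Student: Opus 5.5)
The identification will be read off from three ingredients already established: the local tangent model of Section~5, the definition of $\mathcal H$ as the quotient $TE^{\mathrm{sph}}(G)/\mathcal K$ of Section~6, and the local form $g_I$ of the metric. Since the statement is local, I fix a finite $I$ and work on the chart $\psi_I:S_I\times G^I\to E^{\mathrm{sph}}(G)$. By the local tangent model, the tangent space at a representative $(x,g)$ of a point of $S_I\times G^I$ is $T_xS_I\oplus\bigoplus_{i\in I}T_{g_i}G$. This is the tangent bundle $TS_I\oplus\bigoplus_{i\in I}TG$ of the finite local model.

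The first step is to pass to the quotient by $\mathcal K$ on this chart. On the open dense part where all $x_i\neq0$ (support $J=I$), the kernel $\ker g_p=\bigoplus_{i\notin J}T_{g_i}G$ meets the chart trivially. Hence $\mathcal H$ coincides there with $TS_I\oplus\bigoplus_{i\in I}TG$, and nothing is divided out.

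The second step is the metric. On this part, $\bar g$ is by construction the descent of $g_I=\sum dx_i^2+\sum x_i^2\langle\cdot,\cdot\rangle_G^{(i)}$. The first summand is the restriction of the Euclidean metric to $S_I$, namely $g_{S_I}$. The second summand is block-diagonal across the copies of $G$. Therefore $\bar g=g_{S_I}\oplus\bigoplus_i x_i^2g_G$, and the splitting is $\bar g$-orthogonal because $g_I$ has no cross terms between the spherical and group factors.

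The third step, and the expected obstacle, is the points of the chart where some $x_i=0$. There the naive bundle $TS_I\oplus\bigoplus TG$ still carries the summand $T_{g_i}G$, whereas $\mathcal H$ has dropped it. The plan is to read ``locally represented'' as follows: the bundle $TS_I\oplus\bigoplus_{i\in I}TG$ with the degenerate warped form is a representative whose quotient by its null directions is $\mathcal H$. These null directions are exactly the $T_{g_i}G$ with $x_i=0$, by the kernel proposition of Section~6. The short exact sequence $0\to\mathcal K\to TE^{\mathrm{sph}}(G)\to\mathcal H\to0$ then gives the identification, with the warped metric descending to $\bar g$ since the weight $x_i^2$ vanishes precisely on $\mathcal K$.

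I would also check compatibility with the inclusions $S_J\times G^J\hookrightarrow S_I\times G^I$. Under such an inclusion the warped metric restricts to the one on the smaller model, by the inclusion-compatibility proposition for $g_I$. So these local representations agree on overlaps and are coherent with the colimit structure of Section~4.
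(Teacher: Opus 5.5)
Your proposal is correct and is essentially the paper's argument: the paper's one-line proof reads the statement off from the construction of $\mathcal H$ and from the splitting of $g_I$ into the spherical part and the $x_i^2$-weighted group parts, with no cross terms. Your extra care makes explicit two points the paper's proof glosses over: at points where some $x_i=0$ the statement holds only after quotienting by the radical $\bigoplus_{x_i=0}T_{g_i}G$ of $g_I$; and your identification of $\mathcal H$ with $TS_I\oplus\bigoplus_{i\in I}TG$ on the chart is really a statement about the image of $d\psi_I$, because the Section~5 colimit fiber of $TE^{\mathrm{sph}}(G)$ also contains inactive spherical directions $v_i$, $i\notin I$.
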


\begin{proof}
This follows directly from the construction of the horizontal geometry and
from the decomposition of the barycentric metric into spherical and group
components.
\end{proof}

\begin{remark}
If \(G\) is finite-dimensional, then each local model is finite-dimensional
and classical spin geometry applies locally.
\end{remark}

\begin{remark}
If \(G\) is infinite-dimensional, the tangent bundle \(TG\) generally requires
additional Hilbert or convenient completions before Clifford and spin
structures can be defined analytically.
\end{remark}

\subsubsection{Local spin structures}

\begin{definition}
A local spin structure on a finite local model is a lift of the local
orthonormal frame bundle from
\[
O(r)
\]
to
\[
\operatorname{Spin}(r),
\]
where \(r\) denotes the local rank of the horizontal geometry.
\end{definition}

\begin{proposition}
Assume that:
\begin{itemize}
\item the spheres \(S_I\) are endowed with their standard spin structures;
\item the Lie group \(G\) admits a spin structure compatible with the chosen
metric \(g_G\).
\end{itemize}
Then every finite local model
\[
S_I\times G^I
\]
admits a natural local spin structure.
\end{proposition}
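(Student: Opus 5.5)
The idea is to build the spin structure on $S_I\times G^I$ as a product of the given spin structures, using the fact that spin structures on orthogonal direct sums combine. By the previous proposition, on the open region of $S_I\times G^I$ where all $x_i\neq 0$, the horizontal pseudo-bundle is
\[
\mathcal H\simeq \mathrm{pr}_0^*TS_I\oplus\bigoplus_{i\in I}\mathrm{pr}_i^*TG,
\qquad
\bar g=g_{S_I}\oplus\bigoplus_{i\in I}x_i^2 g_G .
\]
Here $\mathrm{pr}_0$ and $\mathrm{pr}_i$ are the projections onto $S_I$ and onto the $i$-th copy of $G$.

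\textbf{Step 1: reduce to the product metric.} The conformal rescaling $v\mapsto x_i v$ on the $i$-th summand is an isometry from $(TG,x_i^2g_G)$ to $(TG,g_G)$. It therefore identifies the orthonormal frame bundle of $\bar g$ with the orthonormal frame bundle of the product metric $g_{S_I}\oplus\bigoplus_i g_G$. More precisely, the rescaling map $x_i^{-1}$ transports orthonormal frames. The $GL^+$-structure is unchanged and the rescaling is homotopic to the identity, so spin structures correspond bijectively. Since spin structures depend only on the oriented topological type of the bundle, which is insensitive to the choice of metric, the warping can be ignored.

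\textbf{Step 2: take the product of the spin structures.} Pull back the standard spin structure $P_{\mathrm{Spin}}(S_I)$ along $\mathrm{pr}_0$ and the given one $P_{\mathrm{Spin}}(G)$ along each $\mathrm{pr}_i$. The fibre product $P_{\mathrm{Spin}}(S_I)\times\prod_i P_{\mathrm{Spin}}(G)$ is a principal bundle with group $\mathrm{Spin}(n)\times\mathrm{Spin}(m)^{|I|}$, where $n=|I|-1$ and $m=\dim G$. Divide it by the kernel of the multiplication map to $\mathrm{Spin}(r)$, namely the subgroup of tuples of signs with product $1$; here $r=n+m|I|$. This uses the standard homomorphism $\mathrm{Spin}(a)\times\mathrm{Spin}(b)\to\mathrm{Spin}(a+b)$ covering the block inclusion $SO(a)\times SO(b)\to SO(a+b)$. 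The result is a $\mathrm{Spin}(r)$-bundle whose $\mathrm{Spin}\to SO$ quotient is the block-diagonal frame bundle extended to $SO(r)$. That is exactly the oriented orthonormal frame bundle of $\mathcal H$, so we obtain a lift. Naturality comes for free: the only choices are the given spin structures, and the construction commutes with permutations of $I$ and with the isometric $G$- and $\mathbb{Z}_2$-actions. For $\mathbb{Z}_2$ one must check that the antipodal map preserves the standard spin structure on $S_I$, which holds since $H^1(S_I;\mathbb{Z}_2)=0$ for $|I|\ge 3$ and can be checked directly otherwise. One could equally argue via $w_1,w_2$ and the Whitney sum formula, but the explicit fibre product gives a canonical structure rather than mere existence.

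\textbf{Main obstacle: the locus where some $x_i=0$.} There the rank of $\mathcal H$ drops, since the $i$-th $TG$ summand is killed by $\mathcal K$, and the rescaling in Step 1 degenerates. I would handle this by working stratum by stratum. On the closure of each stratum $E_J^\circ$ within $S_I\times G^I$, apply the same product construction with only the factors $j\in J$, together with the full sphere tangent directions, which remain nondegenerate by the earlier results of Section~6. Then I would check compatibility with the inclusions $\iota_{J,I}$: on a stratum, the spin structure for $I$ restricts to the one built from $J$ times a trivial spin structure on the collapsed directions, because $\mathrm{Spin}(a)\hookrightarrow\mathrm{Spin}(a+b)$ is compatible with the block maps. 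This is the delicate point. The first thing I would try is to interpret ``local spin structure'' stratumwise, which is consistent with the earlier remark that globally one only has a diffeological orthogonal pseudo-bundle of varying rank.
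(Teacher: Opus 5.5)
Your Step 2 is the paper's argument. The paper's proof observes that the tangent bundle of the local model splits as $TS_I\oplus\bigoplus_{i\in I}TG$, that each summand is spin by hypothesis, and that finite direct sums of spin bundles are spin. Your fibre product and the homomorphism $\mathrm{Spin}(n)\times\mathrm{Spin}(m)^{|I|}\to\mathrm{Spin}(r)$ make that last fact explicit.

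What fails is the way you set it up. As written, you obtain a spin structure only on the open set where every $x_i\neq0$. The locus $x_i=0$ is deferred to a stratumwise compatibility check that you do not carry out, so the proof is incomplete exactly there. A stratumwise family would in any case be a different, weaker object than a spin structure on $S_I\times G^I$.

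The obstacle is self-inflicted. Over the manifold $S_I\times G^I$, the bundle $\mathrm{pr}_0^*TS_I\oplus\bigoplus_i\mathrm{pr}_i^*TG$ has constant rank $r$, including where some $x_i$ vanish. Spin structures do not depend on the metric, as you yourself note in Step 1, so you may form the orthonormal frame bundle with the unwarped product metric $g_{S_I}\oplus\bigoplus_i g_G$, which never degenerates. Step 2 then runs verbatim on the whole local model. This is what the paper's proof does; it never uses the warped metric, so neither the rescaling nor the stratification is needed. If you keep the rescaling, use $|x_i|$: for $x_i<0$ and $\dim G$ odd, $v\mapsto x_iv$ reverses orientation and is not homotopic to the identity in $GL^+$.

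Your naturality claims go beyond the statement and are not correct in general.
\begin{itemize}
\item The $\mathbb Z_2$-action is the antipodal map on $S_I$ times the identity on $G^I$. The antipodal map of $S_I\cong S^{|I|-1}$ has degree $(-1)^{|I|}$. For $|I|$ odd it therefore reverses the orientation of the local model and cannot preserve any spin structure.
\item For even $|I|\ge4$, $H^1(S_I;\mathbb Z_2)=0$ only shows that the pulled-back spin structure is isomorphic to the original. The involution lifts to an involution of the spin bundle only if $\mathbb{RP}^{|I|-1}$ is spin, i.e.\ $|I|\equiv0\pmod 4$.
\item Odd permutations of $I$ have the same orientation problem when $\dim G$ is even.
\end{itemize}
None of this affects existence, which is what the paper's proof establishes.
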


\begin{proof}
The tangent bundle of the local model splits as
\[
TS_I
\oplus
\bigoplus_{i\in I}TG.
\]

By assumption, both \(TS_I\) and \(TG\) admit spin structures. Since finite
direct sums of spin vector bundles again admit spin structures, the total
horizontal geometry carries a local spin structure.
\end{proof}

\subsubsection{Compatibility under gluing}

\begin{proposition}
Assume that the transition maps between finite local models preserve the local
spin structures. Then the local spin data glue into a global diffeological
spin structure on the horizontal pseudo-bundle.
\end{proposition}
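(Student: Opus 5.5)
The plan is to build the global spin structure plotwise, in the same way the Clifford-compatible connection was glued earlier. By the proposition on local spin structures, each finite model $S_I\times G^I$ carries an orthonormal frame bundle $P_{SO}^I$ of $(\mathcal H|_{S_I\times G^I},\bar g)$. It also carries a $\operatorname{Spin}(r_I)$-bundle $P_{\mathrm{Spin}}^I$ with a double covering $\rho_I:P_{\mathrm{Spin}}^I\to P_{SO}^I$ that is equivariant along $\operatorname{Spin}(r_I)\to SO(r_I)$. Here I read "preserve the local spin structures" as follows. For every inclusion $J\subset I$, or more generally every transition map $\tau$ between local models on an overlap, we are given a lift $\tilde\tau$ of the induced isometric frame map $\tau_*$ on $P_{SO}$, compatible with the $\rho$'s, and these lifts satisfy the cocycle condition $\tilde\tau_{IK}=\tilde\tau_{IJ}\circ\tilde\tau_{JK}$ on triple overlaps. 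I would make this reading explicit at the start, since without the cocycle condition the gluing can fail by a $\mathbb Z_2$-valued Čech $2$-cocycle.

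First I would define the global objects as colimits in diffeological spaces: $P_{SO}:=\varinjlim_I P^I_{SO}$ and $P_{\mathrm{Spin}}:=\varinjlim_I P^I_{\mathrm{Spin}}$, each with the final diffeology generated by the local models. This follows the directed-system description of $E^{\mathrm{sph}}(G)$ in Section~4 and the colimit description of the tangent pseudo-bundle from Section~5. The $\rho_I$ then induce a smooth map $\rho:P_{\mathrm{Spin}}\to P_{SO}$ by the universal property, and the local group actions glue because the $\tilde\tau$ are equivariant. Next I would check that the projection $P_{\mathrm{Spin}}\to E^{\mathrm{sph}}(G)$ is a subduction, so that it is a diffeological fiber pseudo-bundle. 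Every plot of $E^{\mathrm{sph}}(G)$ locally factors through some $Q_I$, and on that model the local spin bundle is an honest locally trivial bundle, so local lifts of plots exist. Fiberwise, $\rho$ is a two-sheeted equivariant covering, because on any fiber it coincides with some $\rho_I$; the cocycle condition ensures this is independent of the chosen $I$.

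The main obstacle is that the rank $r_I$ changes across boundary strata. When an index $i$ becomes inactive, the $\dim G$ group directions collapse into $\mathcal K$, so there is no single structure group. Beyond ranks, "compatible with transition maps" has to be made precise, and the first thing I would try is this. Along a face inclusion, $\mathcal H$ at a point of support $J$ splits orthogonally as $\mathcal H^{J}\oplus\mathbb R^{I\setminus J}$, using the tangent description of Section~5 and the degeneracy analysis of Section~6. On the added trivial summand I would take the trivial spin structure, and then use the standard fact that $\operatorname{Spin}(r)\times_{\mathbb Z_2}\operatorname{Spin}(s)\to\operatorname{Spin}(r+s)$ induces a canonical spin structure on a direct sum. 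So $\tilde\tau_{JI}$ is the stabilization map $P^J_{\mathrm{Spin}}\times_{\mathbb Z_2}\operatorname{Spin}(|I\setminus J|)\to P^I_{\mathrm{Spin}}$. The global structure is then a compatible system $(P^I_{\mathrm{Spin}})_I$ of the stabilized type, that is, a diffeological spin pseudo-bundle rather than a principal bundle with fixed group.

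I would check that stabilization is associative up to the canonical isomorphisms, which gives the cocycle condition. Finally, I would verify that the Clifford module $\mathcal E=P_{\mathrm{Spin}}\times_{\operatorname{Spin}}\Sigma$ obtained this way is compatible with the Clifford structure of Section~8, which is also a local check.
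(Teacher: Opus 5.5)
Your first two paragraphs are, in substance, the paper's proof. The paper says only that the transition maps preserve $\bar g$ and so identify the local orthonormal geometries, that the spin lifts are compatible by hypothesis, and that the local structures therefore glue. Your colimit packaging, and above all the explicit cocycle requirement $\tilde\tau_{IK}=\tilde\tau_{IJ}\circ\tilde\tau_{JK}$, sharpen this. The paper leaves implicit that pairwise compatible lifts can still fail to glue by a $\mathbb Z_2$-valued \v{C}ech $2$-cocycle.

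The gap is in what you add in order to produce the lifts yourself. You correctly note that the rank of $\mathcal H$ drops as $x_i\to 0$, because $T_{g_i}G$ falls into $\mathcal K$. But this drop happens \emph{inside a single model} $Q_I$, and stabilization by $\mathbb R^{I\setminus J}$ does not address it. Stabilization only compares two charts at a point already on the face, and the directions it adds are spherical ones, which never collapse. Nothing in your proposal relates the $\operatorname{Spin}(r_I)$-structure on the open part of $Q_I$ to the face, and nothing can. Take a plot with $x_i(u)=u$ and the other coordinates in $I$ nonzero. The $\bar g$-orthonormal group vectors are then $u^{-1}f^{(i)}_a$. A continuous family of $r_I$ tangent vectors that is $\bar g$-orthonormal for $u\neq 0$ would have identity Gram matrix at $u=0$ with respect to $g_I$. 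But the rank of $g_I$ there is only $r_I-\dim G$. So for $\dim G\geq 1$, $P^I_{SO}$ and $P^I_{\mathrm{Spin}}$ are not locally trivial over $Q_I$, and plots of $E^{\mathrm{sph}}(G)$ crossing a face have no lifts. Your subduction step, which relies on ``an honest locally trivial bundle'', fails exactly there.

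There are two smaller problems with the stabilization paragraph. First, even on the spherical factor, where the rank is constant, stabilization is a constraint rather than a construction. The unique spin structure of $S_{\{1,2,3\}}=S^2$ restricts on the equator $S_{\{1,2\}}=S^1$ to the stabilization of the \emph{bounding} spin structure. So if $S_{\{1,2\}}$ carries the Lie-group spin structure, no $\tilde\tau$ exists; whether the lifts exist is exactly what the hypothesis supplies. Second, associativity only up to reordering does not give the strict cocycle. For $\{1\}\subset\{1,3\}\subset\{1,2,3\}$, the two-step stabilization appends $e_3,e_2$ and the direct one appends $e_2,e_3$, which are frames of opposite orientation.

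You can close the argument in one of two ways. One option is to adopt explicitly the paper's standing assumption that on each local model $\mathcal H$ is an honest finite-rank bundle with non-degenerate metric, and to take the $\tilde\tau$ as given data. Then your first two paragraphs suffice and the stabilization paragraph should be dropped. The other option is to work stratum by stratum over the $E^\circ_J$, where the rank is constant and stabilization is the natural transition. You then have to accept a stratified spin structure rather than a single pseudo-bundle whose projection is a subduction across strata.
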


\begin{proof}
The transition maps preserve the horizontal metric and identify the local
orthonormal geometries on overlaps. By assumption, the corresponding spin
lifts are compatible under these identifications. Hence the local spin
structures glue.
\end{proof}

\begin{remark}
This compatibility is automatic only if the transition maps lift coherently to
the spin coverings. In general, this constitutes the global obstruction to the
existence of a spin structure.
\end{remark}

\subsubsection{Spinor pseudo-bundle}

\begin{definition}
Let
\[
P_{\mathrm{Spin}}(\mathcal H)
\]
be a chosen spin lift of the horizontal orthogonal pseudo-bundle. The
associated spinor pseudo-bundle is
\[
\mathcal S
=
P_{\mathrm{Spin}}(\mathcal H)\times_\rho \Sigma,
\]
where
\[
\rho:\operatorname{Spin}(r)\to \operatorname{End}(\Sigma)
\]
is a spin representation.
\end{definition}

\begin{proposition}
The spinor pseudo-bundle \(\mathcal S\) carries a natural Clifford action
\[
\mathrm{Cl}(\mathcal H)\curvearrowright \mathcal S.
\]
\end{proposition}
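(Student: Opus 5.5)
The plan is to build the Clifford action fiberwise from the spin representation, check equivariance so it descends to the associated bundle, and then verify smoothness and gluing on the finite local models $S_I\times G^I$.

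\textbf{Step 1: the model fiber.} Let $r$ be the local rank of the horizontal geometry and $\mathbb R^r$ its model fiber with the standard inner product. The spin representation $\rho:\operatorname{Spin}(r)\to\operatorname{End}(\Sigma)$ will be taken, as usual, to be the restriction of a $\mathrm{Cl}(\mathbb R^r)$-module structure $\mathbf c:\mathrm{Cl}(\mathbb R^r)\to\operatorname{End}(\Sigma)$, using $\operatorname{Spin}(r)\subset\mathrm{Cl}(\mathbb R^r)$. The key algebraic identity is
\[
\mathbf c(\mathrm{Ad}_h w)\,\rho(h)=\rho(h)\,\mathbf c(w),
\qquad h\in\operatorname{Spin}(r),\ w\in\mathbb R^r,
\]
where $\mathrm{Ad}_h w=hwh^{-1}$ is the action through the double cover $\operatorname{Spin}(r)\to SO(r)$.

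\textbf{Step 2: the fiberwise action.} At a point $p$, a horizontal vector in $\mathcal H_p$ is written $v=[\tilde q,w]$ through $P_{\mathrm{Spin}}(\mathcal H)\times_{\operatorname{Spin}(r)}\mathbb R^r\simeq\mathcal H$, and a spinor is $s=[\tilde q,\sigma]$ with the same spin frame $\tilde q$. Define
\[
c(v)s:=[\tilde q,\mathbf c(w)\sigma].
\]
Independence of the chosen frame $\tilde q$ follows directly from the equivariance identity of Step 1. The relation $c(v)^2=-\bar g(v,v)$ holds because the metric $\bar g$ corresponds to the standard inner product under the frame identification. By the universal property, $c$ then extends to an action of $\mathrm{Cl}(\mathcal H_p)$.

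\textbf{Step 3: smoothness.} Work plotwise: every plot locally factors through some $S_I\times G^I$ (the local finiteness and final-diffeology results of Section~4). On that model, $\mathcal H$, $P_{\mathrm{Spin}}$ and $\mathcal S$ are classical finite-rank bundles by the local spin-structure proposition, and the action is the usual smooth associated-bundle map. Hence the action map $\mathcal H\times_X\mathcal S\to\mathcal S$ is smooth for the pseudo-bundle diffeologies.

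\textbf{Step 4: gluing (the main obstacle).} One must check compatibility with the transition maps between local models, in particular with inclusions $S_J\times G^J\hookrightarrow S_I\times G^I$, where the rank of $\mathcal H$ jumps. I would use the gluing hypothesis of the preceding proposition: transitions preserve $\bar g$ and lift to the spin coverings. The local actions then agree on overlaps, since they are built canonically from those lifts.

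The rank jump needs a separate treatment. I would assume that $\Sigma$ for rank $r_J$ is included in $\Sigma$ for rank $r_I$ equivariantly for $\mathrm{Cl}(\mathbb R^{r_J})\subset\mathrm{Cl}(\mathbb R^{r_I})$, as the chosen spin lift requires. With that assumption, the two actions restricted to $\mathcal H|_{E_J}$ coincide, and the local actions glue to a global action $\mathrm{Cl}(\mathcal H)\curvearrowright\mathcal S$.
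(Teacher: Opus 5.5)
Your proposal is correct and follows essentially the same route as the paper, whose proof invokes the standard Clifford action induced by the spin representation on each finite local model and then glues it using the compatibility of the spin transition maps. Your version spells out what those two lines leave implicit: that $\rho$ is the restriction of a $\mathrm{Cl}(\mathbb R^r)$-module structure, the frame-independence check via $\mathbf c(\mathrm{Ad}_h w)\rho(h)=\rho(h)\mathbf c(w)$, and the nested-spinor-module hypothesis for the rank jump under $S_J\times G^J\hookrightarrow S_I\times G^I$, which the paper tacitly absorbs into ``compatibility of the spin transition maps''.
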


\begin{proof}
This is the standard Clifford action induced by the spin representation on
each local model. Compatibility under gluing follows from compatibility of
the spin transition maps.
\end{proof}

\subsubsection{Spin connections}

\begin{proposition}
Assume that the horizontal metric admits a compatible metric connection
\[
\nabla^{\mathcal H}.
\]
Then the spinor pseudo-bundle \(\mathcal S\) carries an induced
Clifford-compatible spin connection
\[
\nabla^{\mathcal S}.
\]
\end{proposition}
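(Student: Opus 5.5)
The plan is to follow the classical construction of the spin connection locally on each finite model $S_I\times G^I$, and then glue exactly as in the proof of the Clifford-compatible connection proposition above.

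\textbf{Step 1: the $\mathfrak{so}(r)$-valued connection form.} On a local model, the metric connection $\nabla^{\mathcal H}$ induces a connection on the local orthonormal frame bundle. Concretely, I would choose a local $\bar g$-orthonormal frame $(e_a)$ of $\mathcal H$ and write
\[
\nabla^{\mathcal H}e_b=\sum_a\omega_{ab}\,e_a .
\]
Metric compatibility $\nabla^{\mathcal H}\bar g=0$ gives $\omega_{ab}=-\omega_{ba}$, so $\omega$ is an $\mathfrak{so}(r)$-valued one-form.

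\textbf{Step 2: lift to the spin bundle.} Since $\operatorname{Spin}(r)\to SO(r)$ is a covering, its differential is an isomorphism $\mathfrak{spin}(r)\cong\mathfrak{so}(r)$. The connection form therefore lifts uniquely to the chosen spin lift $P_{\mathrm{Spin}}(\mathcal H)$. On a local spinor frame I would set
\[
\nabla^{\mathcal S}s=ds+\tfrac14\sum_{a,b}\omega_{ab}\,c(e_a)c(e_b)\,s .
\]
Here I use that $\tfrac14\sum\omega_{ab}c(e_a)c(e_b)$ is the image of $\omega$ under this isomorphism composed with $\rho$.

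\textbf{Step 3: Clifford compatibility.} I would verify
\[
\nabla^{\mathcal S}_X(c(v)s)=c(\nabla^{\mathcal H}_Xv)s+c(v)\nabla^{\mathcal S}_Xs .
\]
This reduces to the commutator identity
\[
\Big[\tfrac14\sum\omega_{ab}c(e_a)c(e_b),\,c(e_d)\Big]=\sum_a\omega_{ad}\,c(e_a).
\]
That identity is a routine Clifford computation using the relation $c(v)c(w)+c(w)c(v)=-2\bar g(v,w)$ and the antisymmetry of $\omega$. With this, the local statement is an instance of the previous proposition, with $\mathcal E=\mathcal S$.

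\textbf{Step 4: independence of the frame and gluing — the main obstacle.} I expect this step to be the hardest. Under a change of orthonormal frame by $A:U\to SO(r)$, the form transforms as
\[
\omega\mapsto A^{-1}\omega A+A^{-1}dA .
\]
Any lift $\tilde A$ of $A$ to $\operatorname{Spin}(r)$ transforms the spin formula compatibly, because the differential of the covering intertwines the two Maurer–Cartan terms. Hence $\nabla^{\mathcal S}$ is independent of the spinor frame within a fixed spin lift. The sign ambiguity of $\tilde A$ is harmless, since $\pm\tilde A$ have the same logarithmic derivative.

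For gluing across models $S_J\times G^J\hookrightarrow S_I\times G^I$, I would use two hypotheses. First, the transition maps preserve $\bar g$ and $\nabla^{\mathcal H}$, as already used for the Clifford-compatible connection. Second, by the gluing proposition for spin structures, they lift coherently to the spin coverings. Then the local $\nabla^{\mathcal S}$ agree on overlaps and define a connection plotwise, compatible with reparametrizations. This is exactly the definition of a covariant derivative on a diffeological pseudo-bundle.

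The genuine subtlety is the variation of the rank of $\mathcal H$ across boundary strata where some $x_i\to0$. There the frames $\tilde f^{(i)}_a=x_i^{-1}f^{(i)}_a$ degenerate, and one must check that no singular term survives in $\omega$. I would argue as in the boundary-behaviour propositions: the collapsing directions lie in $\mathcal K$ and are quotiented out, so the formula is only required on the effective geometry. In the infinite-dimensional case, I would make the same argument assuming a restricted orthogonal (admissible) frame class, so that $\sum\omega_{ab}c(e_a)c(e_b)$ converges.
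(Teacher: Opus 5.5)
Your route is the paper's route. The paper's proof says only three things: the $\mathfrak{so}(r)$-valued connection form of $\nabla^{\mathcal H}$ lifts through the differential of the covering $\lambda:\operatorname{Spin}(r)\to SO(r)$, the spin representation then induces $\nabla^{\mathcal S}$, and Clifford compatibility is standard. Your Steps 1--4 make this explicit and add the frame-independence and gluing checks that the paper leaves implicit.

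\textbf{Sign error.} The explicit formula you wrote has the wrong sign for the paper's convention. Take your convention $\nabla^{\mathcal H}e_b=\sum_a\omega_{ab}e_a$ and the relation $c(v)c(w)+c(w)c(v)=-2\bar g(v,w)$ that you cite. Then
\[
[c(e_a)c(e_b),c(e_d)]=2\delta_{ad}\,c(e_b)-2\delta_{bd}\,c(e_a).
\]
Using $\omega_{db}=-\omega_{bd}$, this gives
\[
\Big[\tfrac14\sum_{a,b}\omega_{ab}\,c(e_a)c(e_b),\,c(e_d)\Big]=-\sum_a\omega_{ad}\,c(e_a),
\]
which is the opposite of what Step 3 needs. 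A quick check: for $r=2$ and $\nabla^{\mathcal H}e_1=\theta e_2$, your correction term is $-\tfrac12\theta\,c(e_1)c(e_2)$. Its commutator with $c(e_1)$ is $-\theta\,c(e_2)$, not $+\theta\,c(e_2)$.

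Equivalently, since $d\lambda(\xi)=[\xi,\cdot\,]$, the preimage of $A\in\mathfrak{so}(r)$ is $-\tfrac14\sum_{a,b}A_{ab}\,e_ae_b$. So your claim in Step 2 that $\tfrac14\sum\omega_{ab}c(e_a)c(e_b)$ is ``the image of $\omega$'' is off by the same sign. Your formula would be correct for the opposite convention $c(v)c(w)+c(w)c(v)=+2\bar g(v,w)$. With the paper's convention, the correct local formula is
\[
\nabla^{\mathcal S}s=ds-\tfrac14\sum_{a,b}\omega_{ab}\,c(e_a)c(e_b)s=ds+\tfrac14\sum_{a,b}\bar g(\nabla^{\mathcal H}e_a,e_b)\,c(e_a)c(e_b)s.
\]
With this correction, Steps 3 and 4 go through as you describe.

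\textbf{Boundary paragraph.} Your last paragraph does not actually establish anything. Quotienting by $\mathcal K$ only removes the group directions at points where $x_i=0$. The frames $\widetilde f^{(i)}_a=x_i^{-1}f^{(i)}_a$, however, blow up at nearby points where $x_i\neq0$, and there they are genuine unit vectors of $\mathcal H$. The statement already takes the existence of $\nabla^{\mathcal H}$ as a hypothesis, and the paper's proof, like yours, works on the local models. It is cleaner to let that hypothesis carry any regularity across strata than to claim it follows from quotienting by $\mathcal K$.
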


\begin{proof}
Locally, the metric connection on the orthogonal bundle lifts to the spin
bundle through the differential of the covering
\[
\operatorname{Spin}(r)\to SO(r).
\]
The associated spin representation then induces a connection on the spinor
bundle. Compatibility with Clifford multiplication follows from the standard
spinorial construction.
\end{proof}

\subsubsection{Spin Dirac operator}

We now define the spin Dirac operator.

\begin{definition}
Assume first that the local horizontal geometry has finite rank. Let
\[
(e_a)
\]
be a local orthonormal frame of the horizontal geometry. The local spin Dirac
operator is
\[
D_{\mathrm{spin}}
=
\sum_a c(e_a)\nabla^{\mathcal S}_{e_a}.
\]
\end{definition}

\begin{proposition}
The local operator \(D_{\mathrm{spin}}\) is independent of the choice of local
orthonormal frame.
\end{proposition}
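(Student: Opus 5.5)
The plan is to reduce the statement to the frame-independence argument already given for the general Dirac operator $D=\sum_a c(e_a)\nabla^{\mathcal E}_{e_a}$ in the finite-rank case. On a finite local model $S_I\times G^I$ the horizontal geometry has finite rank $r$. The spinor pseudo-bundle $\mathcal S$ is a Clifford module for $\mathrm{Cl}(\mathcal H)$, by the proposition giving the Clifford action on $\mathcal S$. The spin connection $\nabla^{\mathcal S}$ is Clifford-compatible, by the proposition on spin connections. Hence $D_{\mathrm{spin}}$ is exactly the Dirac operator attached to the data $(\mathcal H,\bar g,\mathcal S,\nabla^{\mathcal S})$. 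The only inputs used in the earlier proof are linearity of $c$ and of $\nabla^{\mathcal S}_X$ in $X$, which are available here.

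Concretely, I would take two local $\bar g$-orthonormal frames $(e_a)$ and $(e'_a)$ on a common open set. I would write $e'_a=\sum_b A_{ab}e_b$ with $A$ a smooth map into $O(r)$. Linearity gives
\[
\sum_a c(e'_a)\nabla^{\mathcal S}_{e'_a}=\sum_{a,b,c}A_{ab}A_{ac}\,c(e_b)\nabla^{\mathcal S}_{e_c}.
\]
Orthogonality, $\sum_a A_{ab}A_{ac}=\delta_{bc}$, then collapses this to $\sum_b c(e_b)\nabla^{\mathcal S}_{e_b}$. Equivalently, $D_{\mathrm{spin}}$ is the composition of $\nabla^{\mathcal S}:\Gamma(\mathcal S)\to\Gamma(\mathcal H^*\otimes\mathcal S)$, the musical isomorphism $\sharp_{\bar g}$ (which is bijective in finite rank), and Clifford multiplication $\mathcal H\otimes\mathcal S\to\mathcal S$. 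None of these maps refers to a frame. I would present this coordinate-free formulation as the conceptual reason for the result.

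The point needing care is that an orthonormal frame change is a map into $O(r)$, whereas spin structures only see $SO(r)$-valued changes lifted to $\mathrm{Spin}(r)$. One must check that $\nabla^{\mathcal S}$ and $c$ themselves do not depend on the frame; only the expression of $D_{\mathrm{spin}}$ in a given frame changes. Both are defined intrinsically on the associated bundle $P_{\mathrm{Spin}}(\mathcal H)\times_\rho\Sigma$, with $c$ induced by the $\mathrm{Spin}(r)$-equivariant Clifford action on $\Sigma$. The computation above therefore takes place entirely at the level of the vector arguments $e_a$, and the determinant of $A$ plays no role.

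I would also remark, and exclude from the claim, the following caveat. Near strata where $x_i\to0$, the orthonormal frames $x_i^{-1}f^{(i)}_a$ exist only where $x_i\neq0$. The statement is therefore local on each region where the rank $r$ is constant.
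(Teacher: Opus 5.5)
Your proposal is correct and follows the paper's own proof. The paper simply invokes the frame-independence computation for the general Dirac-type operator: you write the change of orthonormal frame as an $O(r)$-valued matrix $A$, use linearity of $c$ and of $\nabla^{\mathcal S}$ in the direction argument, and collapse the double sum via $\sum_a A_{ab}A_{ac}=\delta_{bc}$. Your intrinsic description of $D_{\mathrm{spin}}$ as Clifford multiplication composed with $\sharp_{\bar g}$ and $\nabla^{\mathcal S}$ is sound, as is your remark that $c$ and $\nabla^{\mathcal S}$ do not depend on the contraction frame, so orientation-reversing changes are harmless; the paper leaves both points implicit.
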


\begin{proof}
The proof is identical to the frame-independence proof for the general
Dirac-type operator. Orthogonal changes of frame preserve the Clifford
contraction because the orthogonal matrix coefficients satisfy
\[
\sum_a A_{ab}A_{ac}
=
\delta_{bc}.
\]
\end{proof}

\begin{proposition}
Assume that the local spin structures, spin connections, and Clifford actions
glue compatibly. Then the local spin Dirac operators define a global
diffeological spin Dirac operator.
\end{proposition}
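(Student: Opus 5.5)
The plan mirrors the proof already given for the general Dirac-type operator, with the spin data in place of an abstract Clifford module. First, on each finite local model \(S_I\times G^I\) I fix the local spin structure (from the proposition on local spin structures), the spinor pseudo-bundle \(\mathcal S|_{S_I\times G^I}\) with its Clifford action, and the lifted spin connection \(\nabla^{\mathcal S,I}\). This gives a local operator \(D^I_{\mathrm{spin}}=\sum_a c(e_a)\nabla^{\mathcal S,I}_{e_a}\). By the frame-independence proposition just proved, \(D^I_{\mathrm{spin}}\) does not depend on the chosen \(\bar g\)-orthonormal frame. It is therefore an intrinsic operator \(\Gamma(\mathcal S|_{S_I\times G^I})\to\Gamma(\mathcal S|_{S_I\times G^I})\).

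Second, I check agreement on overlaps. Let \(S_J\times G^J\) and \(S_I\times G^I\) be related by an inclusion \(\iota_{J,I}\), or more generally by factoring through \(S_{I\cup J}\times G^{I\cup J}\) as in the change-of-charts proposition of Section 4. The transition map carries a \(\bar g\)-orthonormal frame on one side to a \(\bar g\)-orthonormal frame on the other, since it preserves \(\bar g\). By the gluing hypothesis it also intertwines the spinor bundles, the Clifford multiplications and the spin connections. Transporting \(D^J_{\mathrm{spin}}\) by the transition map therefore gives exactly the Clifford contraction defining \(D^I_{\mathrm{spin}}\), computed in the transported frame. Frame independence then gives \(D^I_{\mathrm{spin}}=D^J_{\mathrm{spin}}\) on the overlap.

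Third, I assemble the global operator plotwise. For a section \(s\) of \(\mathcal S\) and a plot \(p:U\to E^{\mathrm{sph}}(G)\), I work locally: on a neighbourhood \(V\), \(p\) factors through some \(\psi_I\). There I set \((D_{\mathrm{spin}}s)\circ p|_V\) to be the pullback of \(D^I_{\mathrm{spin}}s\). The overlap compatibility ensures this is independent of the choice of \(I\). It is also compatible with smooth reparametrizations, since the local operators are natural under pullback. Hence these local values define a section of \(\mathcal S\). Smoothness is checked plotwise, on the finite models, where it is classical. This gives the global diffeological spin Dirac operator.

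The main obstacle is the behaviour at points where some \(x_i=0\). There the local rank of \(\mathcal H\) drops, and the orthonormal frame \(\frac1{x_i}f^{(i)}_a\) degenerates. As a result, "the same frame on both charts" is not literally available across strata. I would handle this as in the earlier proposition on boundary behaviour. I would define \(D^I_{\mathrm{spin}}\) on the open dense part \(\{x_i\neq0,\ i\in I\}\) and argue that it is the restriction of \(D^J_{\mathrm{spin}}\) for the smaller active set \(J\). I would then use the compensation mechanism: the collapsed directions lie in \(\mathcal K\), and \(\mathcal S\) is built over \(\mathcal H=TE^{\mathrm{sph}}(G)/\mathcal K\). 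This extends the operator across the stratum. I expect this step, together with the coherence of the spin lifts across rank changes, to rely essentially on the gluing hypothesis rather than being derivable from it.
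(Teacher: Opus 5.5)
Your argument is the paper's argument, spelled out in more detail. The paper's proof consists exactly of noting that the local spin Dirac operators are frame-independent and, by the compatible-gluing hypothesis, agree on overlaps, hence glue; your plotwise assembly in the third step just expands ``hence they glue''. Your closing worry about the strata $x_i=0$ is, as you suspect, settled by that hypothesis and not by frame independence: an inclusion $\iota_{J,I}$ with $J\subsetneq I$ does not send orthonormal frames to orthonormal frames, because the ranks differ. So drop the density/compensation extension and simply invoke the assumed compatibility, as the paper does.
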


\begin{proof}
The local expressions are frame-independent and compatible on overlaps by
construction. Hence they glue to a global operator.
\end{proof}

\subsubsection{Infinite-dimensional case}

\begin{remark}
If \(G\) is infinite-dimensional, the existence of spin structures becomes
substantially more delicate. In particular:
\begin{itemize}
\item infinite-dimensional Clifford algebras require Hilbert or convenient
completions;
\item spin representations may exist only for restricted orthogonal groups;
\item the corresponding Dirac operator requires additional convergence and
domain assumptions.
\end{itemize}
For this reason, the preceding spinorial construction should primarily be
understood on finite-rank local models unless additional analytic data are
specified.
\end{remark}

\subsubsection{Degeneracy directions}

\begin{remark}
The spin structure depends only on the effective horizontal pseudo-bundle
\[
\mathcal H
=
TE^{\mathrm{sph}}(G)/\mathcal K,
\]
and not on the degenerate directions contained in \(\mathcal K\).
\end{remark}

\begin{remark}
Consequently, the spin Dirac operator acts on the effective horizontal
geometry rather than on the full tangent pseudo-bundle of
\[
E^{\mathrm{sph}}(G).
\]
\end{remark}

\subsection{Summary}

Under the finite-rank or admissible analytic assumptions discussed above, we
have constructed:
\begin{itemize}
\item a Clifford structure on the horizontal pseudo-bundle;
\item a compatible metric connection;
\item local and global Dirac-type operators;
\item local spin structures and spinor pseudo-bundles;
\item a spin Dirac operator compatible with the horizontal geometry.
\end{itemize}

These constructions provide the differential-geometric framework required for
the Hodge- and Dirac-type theories developed in the sequel.
\section{Twisted structures and gerbes}

\subsection{$\mathbb{Z}_2$-local systems}

Let $X = E^{\mathrm{sph}}(G)$. Let
\[
\alpha \in H^1(X;\mathbb{Z}_2)
\]
be a $\mathbb{Z}_2$-valued cocycle.

\begin{definition}
The cocycle $\alpha$ defines a real line bundle $L_\alpha$ (or, more generally, a local system), with transition functions taking values in $\{\pm 1\}$.
\end{definition}

\begin{remark}
This twisting encodes sign changes compatible with the projective structure introduced in Section~3.
\end{remark}

\subsection{Twisted differential forms}

\begin{definition}
A twisted differential form is a differential form with values in $L_\alpha$:
\[
\Omega^k(X;L_\alpha).
\]
\end{definition}

\begin{proposition}
The exterior differential extends to twisted forms:
\[
d : \Omega^k(X;L_\alpha) \to \Omega^{k+1}(X;L_\alpha).
\]
\end{proposition}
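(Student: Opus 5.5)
The plan is to define the twisted differential plotwise and to use the fact that the transition functions of \(L_\alpha\) are \emph{locally constant}. By the conventions of Section~2, an element \(\omega\in\Omega^k(X;L_\alpha)\) assigns to each plot \(p:O_p\to X\) a smooth \(k\)-form \(\omega_p\) on \(O_p\) with values in \(p^*L_\alpha\). These forms are compatible with smooth reparametrizations. So it suffices to:
\begin{itemize}
\item construct \(d\omega_p\in\Omega^{k+1}(O_p;p^*L_\alpha)\) for each plot;
\item check that \(d\) commutes with reparametrization;
\item check that the result is again a smooth twisted form.
\end{itemize}

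\textbf{Local construction.} First, for a plot \(p\), the pullback \(p^*L_\alpha\) is a real line bundle over the open set \(O_p\). Its transition functions are \(\alpha\circ p\), with values in \(\{\pm1\}\). On a connected open \(V\subset O_p\) over which \(p^*L_\alpha\) is trivialized by a local section \(s\), write \(\omega_p=\eta\otimes s\) with \(\eta\in\Omega^k(V)\). Then set \(d\omega_p:=d\eta\otimes s\). If \(s'=\epsilon s\) is another trivializing section, then \(\epsilon\) is a continuous \(\{\pm1\}\)-valued function, hence locally constant, hence \(d\epsilon=0\). This gives \(d(\epsilon^{-1}\eta)\otimes s'=d\eta\otimes s\). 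So the local definitions agree on overlaps and glue to \(d\omega_p\) on all of \(O_p\). Equivalently, \(p^*L_\alpha\) carries a canonical flat connection \(\nabla^\alpha\) whose parallel sections are the \(\pm\)-trivializing sections, and \(d\) is the exterior covariant derivative \(d^{\nabla^\alpha}\). This is consistent with the remark in Section~2 that \(E\)-valued forms need connection data.

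\textbf{Naturality.} Second, for a smooth map \(f:O'\to O_p\), we have \(f^*(p^*L_\alpha)=(p\circ f)^*L_\alpha\), and the trivializing sections pull back to trivializing sections. Since ordinary \(d\) commutes with pullback, \(f^*(d\omega_p)=d(f^*\omega_p)=d\omega_{p\circ f}\). Hence \((d\omega_p)_p\) is a compatible family, that is, an element of \(\Omega^{k+1}(X;L_\alpha)\). Smoothness for the functional diffeology follows in the same way, because for a smooth family of forms the local coefficients \(\eta\) vary smoothly and \(d\) is a differential operator in the \(O_p\)-variables. As by-products, \(d\circ d=0\) holds (flatness) and the Leibniz rule \(d(\beta\wedge\omega)=d\beta\wedge\omega+(-1)^{|\beta|}\beta\wedge d\omega\) holds for untwisted \(\beta\).

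\textbf{Main obstacle.} The delicate step is the very first: making sense of \(L_\alpha\) as a diffeological line pseudo-bundle whose pullback along every plot is an honest locally trivial bundle with locally constant transition functions. A cohomology class on a diffeological space need not obviously come with a cover by plots. I would handle this by realizing \(\alpha\) as a diffeological \(\mathbb{Z}_2\)-covering \(\widetilde X\to X\), for instance through the \(\mathbb{Z}_2\)-action of Section~3 in the projective case. I would then set \(L_\alpha=\widetilde X\times_{\mathbb{Z}_2}\mathbb{R}\). Pulling a covering back along a plot gives a genuine double cover of \(O_p\), which is locally trivial with locally constant sign changes. This is exactly what the argument above requires.
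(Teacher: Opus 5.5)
Your argument is correct and matches the paper's implicit reasoning. The paper states this proposition without proof, and its only supporting remarks ($L_\alpha$ has $\{\pm1\}$-valued transition functions, and ``locally, the twist is invisible'') amount to your observation that locally constant transition functions commute with $d$, which you make rigorous plotwise, with naturality and a realization of $L_\alpha$ as a diffeological $\mathbb{Z}_2$-covering. One harmless slip: the $\mathbb{Z}_2$-action of Section~3 gives a double cover of the quotient $E^{\mathrm{sph}}(G)/\mathbb{Z}_2$, not of $X=E^{\mathrm{sph}}(G)$ itself (which the paper asserts is contractible, so every such twist on $X$ is trivial), but nothing in your construction depends on that example.
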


\subsection{Twisted Dirac operator}

\begin{definition}
The twisted Dirac operator
\[
D_\alpha : \Omega^\bullet(X;L_\alpha) \to \Omega^\bullet(X;L_\alpha)
\]
is defined by the same local expression as $D$, acting on sections of the twisted bundle.
\end{definition}

\begin{remark}
Locally, the twist is invisible, and $D_\alpha$ coincides with $D$. The twisting appears in the transition functions.
\end{remark}

\subsection{Relation with extensions of Lie groups}

The appearance of $\mathbb{Z}_2$-twists is closely related to extensions of Lie groups.

\begin{remark}
In the framework developed by Neeb~\cite{Neeb2007}, such twists correspond to nontrivial extensions and cocycles in group cohomology.
\end{remark}

\subsection{Gerbe interpretation}

The twisted structures considered above can be interpreted in terms of higher geometric objects.

\begin{definition}
A gerbe on $X$ is, roughly speaking, a sheaf of groupoids with a band given by an abelian group (typically $\mathbb{U}(1)$ or $\mathbb{Z}_2$).
\end{definition}

\begin{remark}
The twisting by $\alpha$ can be viewed as the simplest instance of a gerbe, corresponding to a class in $H^2(X;\mathbb{Z}_2)$.
\end{remark}

\subsection{Higher curvature}

\begin{remark}
In the presence of connections on gerbes, one obtains curvature forms of degree $3$, generalizing the curvature of line bundles. Such structures have been studied in~\cite{BreenMessing2005}.
\end{remark}

\subsection{Compatibility with the geometric structures}

\begin{proposition}
The twisted structures are compatible with:
\begin{itemize}
\item the diffeological structure,
\item the Riemannian metric,
\item and the Dirac operator.
\end{itemize}
\end{proposition}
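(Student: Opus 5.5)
The plan is to exploit that $L_\alpha$ is a flat real line bundle whose transition functions are locally constant with values in $\{\pm1\}$. The three compatibilities then become local statements on the finite models $S_I\times G^I$, plus a check that sign changes commute with every operation involved.

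\emph{Step 1: diffeological structure.} I would first realize $L_\alpha$ as a diffeological pseudo-bundle. Choose a plot-wise trivialization: on each plot $p:U\to X$, the pullback $p^*\alpha$ is a $\mathbb Z_2$-cocycle on $U$, so $p^*L_\alpha$ is an ordinary flat line bundle over $U$. Define the diffeology of $L_\alpha$ as the final diffeology generated by the local sections $u\mapsto(p(u),\lambda(u))$ with $\lambda$ smooth in a local trivialization. Changes of trivialization multiply by locally constant signs, which are smooth. Hence fiberwise addition and scalar multiplication are smooth, and $L_\alpha\to X$ is a vector pseudo-bundle in the sense of Section~2. For $\Omega^k(X;L_\alpha)$, the plotwise definition of $E$-valued forms then applies verbatim. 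The twisted $d$ is well defined because $d(\pm\omega)=\pm d\omega$ for constant signs; equivalently, I use the flat covariant derivative whose connection form vanishes in every local trivialization.

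\emph{Step 2: the metric.} Equip $L_\alpha$ with the fiber metric $\langle\lambda,\mu\rangle=\lambda\mu$ in any local trivialization. This is invariant under $\lambda\mapsto-\lambda$, hence globally defined. Tensoring with the pairing $\langle\cdot,\cdot\rangle_{\bar g}$ on $\Lambda^\bullet\mathcal H^*$ gives a pairing on twisted horizontal forms. Its restriction to each $S_I\times G^I$ coincides with the untwisted one of Section~7, so smoothness, the degeneracy along $\mathcal K$, and the descent to $\mathcal H$ follow from the corresponding results of Section~6. The codifferential and Laplacian are formal adjoints defined locally. Since the sign transition functions are isometries and commute with $d$, the local adjoints agree on overlaps and glue.

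\emph{Step 3: the Dirac operator.} Form $\mathcal E\otimes L_\alpha$, with Clifford action $c(v)\otimes\mathrm{id}$ and connection $\nabla^{\mathcal E}\otimes1+1\otimes\nabla^{L_\alpha}$. Because $\nabla^{L_\alpha}$ is flat and trivial in local frames, Clifford compatibility is inherited from the propositions of Section~8. In a local frame, $D_\alpha=\sum_a c(e_a)\nabla_{e_a}$ equals $D\otimes1$. A change of sign commutes with $D$ since $D$ is linear, so the local operators glue by the gluing proposition for Dirac operators.

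The main obstacle I expect is Step 1, specifically making sense of a \v{C}ech-type class $\alpha$ on the diffeological space $X$. I would work plotwise, or with a $D$-open cover together with the colimit description over $Q_I$ from Section~4, and must make sure the resulting local trivializations are compatible across the strata where the rank of $\mathcal H$ jumps. There is also a caveat: $E^{\mathrm{sph}}(G)$ is contractible, so $\alpha$ is in fact trivial. I would remark on this, and state that the same argument applies verbatim to the quotients $E^{\mathrm{sph}}(G)/\mathbb Z_2$ and $B^{\mathrm{sph}}(G)$, where nontrivial twists genuinely occur.
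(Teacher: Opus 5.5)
The paper gives no proof of this proposition. It is stated and followed immediately by the summary of Section~9. The only supporting text is the earlier remark that the twist is locally invisible and lives entirely in the $\{\pm1\}$ transition functions. Your proposal is a rigorous version of exactly that remark, and it is sound in outline. Every twisted object is a tensor product with $L_\alpha$, and every local identity from Sections~6--8 is simply tensored with $\mathrm{id}_{L_\alpha}$. All that has to be checked on overlaps is that locally constant signs commute with $d$, with the fiber pairing, with the formal adjoints and with $D$.

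Two steps should be tightened.

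\emph{Step~1.} Pulling $\alpha$ back along plots presupposes that $\alpha$ is already a diffeological object. The cleanest choice is a diffeological $\mathbb Z_2$-covering $\widetilde X\to X$, with $L_\alpha:=\widetilde X\times_{\mathbb Z_2}\mathbb R$ carrying the quotient diffeology. Local triviality of coverings along plots then produces your plotwise sign cocycles. This also removes the obstacle you anticipate at the strata. $L_\alpha$ never sees $\mathcal H$, and the rank jumps enter only through the untwisted factors $\Lambda^\bullet\mathcal H^*$ and $\mathcal E$, which the paper already handles.

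\emph{Steps~2--3.} A change of trivialization commutes with $\delta$ and $D$ not merely by linearity but by the Leibniz rule: $D(\epsilon s)=c(d\epsilon)s+\epsilon Ds$, with $d\epsilon=0$ for a locally constant sign $\epsilon$.

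Your contractibility caveat is correct, and it in fact settles the literal statement at once. $X=E^{\mathrm{sph}}(G)$ is connected with trivial $\pi_1$, so every diffeological double covering of it is trivial and $L_\alpha\cong X\times\mathbb R$ smoothly. Under this identification, twisted forms are ordinary forms and $D_\alpha=D$. (This relies on the contractibility theorem of Section~3. Its proof needs Milnor's passage to even-indexed positions, since $x$ and the consecutive shift $Sx$ generally share active indices carrying different group labels.)

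Your claim that everything transfers verbatim to the quotients needs qualification.
\begin{itemize}
\item For $E^{\mathrm{sph}}(G)/\mathbb Z_2$ it works, most transparently as follows. The $\mathbb Z_2$-action is free and preserves $g$ (and, as asserted in Section~8, $D$). Forms twisted by the tautological class are exactly the anti-invariant forms on $E^{\mathrm{sph}}(G)$. These are preserved by $d$ and $D$, and their pairwise pairings are invariant, hence descend.
\item For $B^{\mathrm{sph}}(G)$ the twisting step is still verbatim, but its untwisted input is missing. A $G$-invariant metric on $E^{\mathrm{sph}}(G)$ does not by itself descend to the $G$-quotient. One must first construct a submersion-type metric on the orthogonal complement of the orbits, together with its own horizontal bundle, Clifford module and Dirac operator.
\end{itemize}
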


\subsection{Summary}

We have introduced:
\begin{itemize}
\item twisted differential forms,
\item a twisted Dirac operator,
\item and a connection with non-abelian extensions and gerbes.
\end{itemize}

\begin{remark}
These structures provide a higher-geometric extension of the framework developed in this article.
\end{remark}
\section{Classical structure groups}
\label{sec:classical-groups}

We describe the spherical Milnor construction for the classical groups
\[
O(n),\quad SO(n),\quad SU(n),\quad SO(1,n).
\]
These examples illustrate how the general diffeological construction specializes to familiar geometric structures.

\subsection{The orthogonal group \(O(n)\)}

Let
\[
G=O(n).
\]
The spherical Milnor space is
\[
E^{\mathrm{sph}}(O(n))
=
\left\{
(x_i,A_i)_{i\in\mathbb N}
\;\middle|\;
\sum_i x_i^2=1,\ A_i\in O(n),\ \text{finite support}
\right\}/\sim,
\]
where
\[
(x_i,A_i)\sim (x_i,A_i')
\quad\Longleftrightarrow\quad
A_i=A_i'
\text{ whenever }x_i\neq0.
\]

The group \(O(n)\) acts by left multiplication:
\[
B\cdot (x_i,A_i)=(x_i,BA_i).
\]

Since \(O(n)\) is compact, it carries a bi-invariant metric induced by
\[
\langle X,Y\rangle_{\mathfrak o(n)}
=
-\operatorname{tr}(XY),
\qquad
X,Y\in\mathfrak o(n).
\]
The barycentric metric on \(E^{\mathrm{sph}}(O(n))\) is locally
\[
g
=
\sum_i dx_i^2
+
\sum_i x_i^2\,\langle \theta_i,\theta_i\rangle_{\mathfrak o(n)},
\]
where \(\theta_i=A_i^{-1}dA_i\) is the Maurer--Cartan form on the \(i\)-th copy of \(O(n)\).

\begin{proposition}
The metric \(g\) is \(O(n)\)-invariant.
\end{proposition}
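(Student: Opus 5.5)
The plan is to work on a single finite local model $S_I\times O(n)^I$ and then pass to the spherical Milnor space. By the smoothness and gluing results of Section~6, the metric $g$ is determined by the family $(g_I)_I$, and these are compatible under the inclusions $\iota_{J,I}$. So it suffices to show that each $g_I$ is preserved by the lifted action $B\cdot(x_i,A_i)=(x_i,BA_i)$ on $S_I\times O(n)^I$. The equivalence relation $\sim$ needs no extra work: the action preserves it, as in the earlier proposition on the $G$-action.

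The first step is the spherical part. The action leaves the coordinates $x_i$ unchanged, so $L_B^*(dx_i)=dx_i$. Hence the terms $\sum_i dx_i^2$ and the weights $x_i^2$ are invariant.

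The second step is the group part, and it reduces to the invariance of the Maurer--Cartan forms. Let $L_B(A)=BA$. On the $i$-th factor,
\[
L_B^*\theta_i=(BA_i)^{-1}d(BA_i)=A_i^{-1}B^{-1}B\,dA_i=A_i^{-1}dA_i=\theta_i,
\]
since $B$ is constant. So each $\theta_i=A_i^{-1}dA_i$ is invariant under left translation, and therefore so is $\langle\theta_i,\theta_i\rangle_{\mathfrak o(n)}$. The action does not mix the index $i$, so the pullback of
\[
\sum_i x_i^2\langle\theta_i,\theta_i\rangle_{\mathfrak o(n)}
\]
is term-by-term the same expression, and $L_B^*g_I=g_I$. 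One could also note that the pairing $-\operatorname{tr}(XY)$ is $\operatorname{Ad}$-invariant, so the metric is bi-invariant and invariance under either side of the action holds. This also covers the convention in Section~6, where a right-invariant metric was assumed.

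The only delicate point is consistency with the quotient. At points where $x_i=0$ the coordinate $A_i$ is undefined, but there the coefficient $x_i^2$ vanishes, so the computation does not depend on the representative chosen. This is the same argument as in the proposition on compatibility of $g_I$ with the equivalence relation. Invariance then holds along every plot, because plots factor locally through some $S_I\times O(n)^I$, and this gives the proposition.
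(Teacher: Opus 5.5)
Your proof is correct and follows the same route as the paper's. The left action fixes the spherical coordinates $x_i$, and the Maurer--Cartan forms $\theta_i=A_i^{-1}dA_i$ are left-invariant, so each term of $g$ is preserved. Your reduction to the local models $S_I\times O(n)^I$, the check at points with $x_i=0$, and the explicit computation $L_B^*\theta_i=\theta_i$ simply make explicit what the paper's three-line argument leaves implicit. That computation also shows, as you note, that the $\operatorname{Ad}$-invariance the paper cites is only needed to cover the other side of the action.
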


\begin{proof}
The coordinates \(x_i\) are unchanged by the left action. The Maurer--Cartan form is left-invariant and the inner product on \(\mathfrak o(n)\) is \(\operatorname{Ad}\)-invariant. Hence each term in the metric is invariant.
\end{proof}

Thus the construction gives a universal metric-type structure over
\[
B^{\mathrm{sph}}(O(n))
=
E^{\mathrm{sph}}(O(n))/O(n).
\]

\begin{remark}
Pulling this structure back by a classifying map
\[
f:X\to B^{\mathrm{sph}}(O(n))
\]
produces the corresponding geometric data on an \(O(n)\)-bundle over \(X\). This is the natural spherical analogue of the universal orthogonal frame bundle.
\end{remark}

\subsection{The special orthogonal group \(SO(n)\)}

For
\[
G=SO(n),
\]
one obtains
\[
E^{\mathrm{sph}}(SO(n))
\to
B^{\mathrm{sph}}(SO(n)).
\]
The Lie algebra is
\[
\mathfrak{so}(n)=\{X\in M_n(\mathbb R)\mid X^T+X=0\},
\]
with invariant inner product
\[
\langle X,Y\rangle_{\mathfrak{so}(n)}
=
-\operatorname{tr}(XY).
\]

The barycentric metric is
\[
g
=
\sum_i dx_i^2
+
\sum_i x_i^2\,\langle \theta_i,\theta_i\rangle_{\mathfrak{so}(n)}.
\]

\begin{proposition}
The spherical classifying space \(B^{\mathrm{sph}}(SO(n))\) carries the descended structures associated with oriented orthonormal bundles.
\end{proposition}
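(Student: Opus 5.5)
We need to prove that $B^{\mathrm{sph}}(SO(n))$ carries the descended structures associated with oriented orthonormal bundles. The plan is to specialize the general constructions of Sections 3--8 to $G=SO(n)$. Then we check that each structure built upstairs on $E^{\mathrm{sph}}(SO(n))$ is invariant under the left $SO(n)$-action, so that it descends to the quotient. For the quotient by $SO(n)$ alone, the descended objects are: the universal principal bundle, the barycentric metric (as a fibre metric on the horizontal part), and the canonical connection. The extra $\mathbb Z_2$-quotient is handled afterwards.

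\textbf{Step 1: the universal bundle.} The argument of Theorem~\ref{thm:MW} carries over to the spherical charts $S_I\times SO(n)^I$, since those charts are finite-dimensional and the action is free on the active coordinates. Together with contractibility of $E^{\mathrm{sph}}(G)$ proved in Section~3, this shows that
\[
E^{\mathrm{sph}}(SO(n))\to E^{\mathrm{sph}}(SO(n))/SO(n)
\]
is a principal diffeological $SO(n)$-bundle. The associated vector bundle $E^{\mathrm{sph}}(SO(n))\times_{SO(n)}\mathbb R^n$, with $SO(n)$ acting on $\mathbb R^n$ by the standard representation, is an oriented rank-$n$ bundle. It carries a fibre metric induced by the Euclidean inner product, which is well defined because $SO(n)$ preserves it, and an orientation induced because $\det=1$.

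\textbf{Step 2: metric and connection.} As in the $O(n)$ proposition just proved, the barycentric metric
\[
g=\sum_i dx_i^2+\sum_i x_i^2\langle\theta_i,\theta_i\rangle_{\mathfrak{so}(n)}
\]
is left-invariant, since the Maurer--Cartan forms $\theta_i$ are left-invariant. Hence $g$, the kernel pseudo-bundle $\mathcal K$, and the horizontal bundle $\mathcal H$ all descend. The spherical analogue of the Magnot--Watts connection, $\theta=\sum_i x_i^2\,\mathrm{Ad}(g_i)\theta_i$ (weighted by the barycentric coefficients $t_i=x_i^2$), satisfies two checks:
\begin{itemize}
\item it is compatible with the equivalence relation, because its weights vanish exactly where $g_i$ becomes irrelevant;
\item it is $\mathfrak{so}(n)$-valued and equivariant, so it is a connection on the oriented frame bundle, inducing a metric- and orientation-compatible connection on the associated bundle.
\end{itemize}

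\textbf{Step 3: the $\mathbb Z_2$-quotient.} The $\mathbb Z_2$-action commutes with the $SO(n)$-action, and the metric is $\mathbb Z_2$-invariant. Since the sign change fixes every $g_i$, all of the above data are $\mathbb Z_2$-invariant. By the corollary on commuting quotients, they descend further to $B^{\mathrm{sph}}(SO(n))=E^{\mathrm{sph}}(SO(n))/(SO(n)\times\mathbb Z_2)$. Pulling back along a classifying map $f$ then yields an oriented orthonormal bundle with connection, as in the $O(n)$ remark. Finally, composing with the inclusion $SO(n)\subset O(n)$ identifies this as a reduction of the universal $O(n)$ structure, so it encodes an orientation.

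\textbf{Main obstacle.} The $\mathbb Z_2$-quotient is the hardest step. The action $x\mapsto -x$ is free, but the combined $SO(n)\times\mathbb Z_2$-action is not a principal $SO(n)$-action on the full quotient. I would therefore phrase the descended data on $B^{\mathrm{sph}}$ as an $SO(n)$-bundle over $E^{\mathrm{sph}}/\mathbb Z_2$ modulo $SO(n)$. To get this, I would verify local triviality of $E^{\mathrm{sph}}/\mathbb Z_2\to B^{\mathrm{sph}}$ by choosing, on each chart, a section where the first nonzero $x_i$ is positive. The descended structures live on this bundle.
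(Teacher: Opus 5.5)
Your Steps 1--2 already contain the paper's entire argument. The paper repeats the $O(n)$ check: the $x_i$ are fixed by the left action and the Maurer--Cartan terms are invariant, so the barycentric metric is $SO(n)$-invariant and descends. It then attributes orientation to the connectedness of $SO(n)$. Your associated bundle with $\det=1$ and your connection $\sum_i x_i^2\,dg_i\,g_i^{-1}$ (the right Maurer--Cartan form, which is the correct choice for a left action) are sound refinements of this.

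The genuine error is in your ``main obstacle''. Normalizing the sign so that the first nonzero $x_i$ is positive would give a section of $E^{\mathrm{sph}}\to E^{\mathrm{sph}}/\mathbb Z_2$, or equivalently of the double cover $E^{\mathrm{sph}}/SO(n)\to B^{\mathrm{sph}}$. It does not act on the $SO(n)$-fibres at all, so it cannot prove local triviality of the $SO(n)$-bundle $E^{\mathrm{sph}}/\mathbb Z_2\to B^{\mathrm{sph}}$. It is not even continuous. Near a point whose first nonzero coordinate is $x_{i_0}$, an earlier inactive coordinate can be switched on with either sign; this is exactly the first-order activation of Section~5. Your rule then flips the whole vector: near $(0,1)$ it sends $(-\epsilon,\sqrt{1-\epsilon^2})$ to $(\epsilon,-\sqrt{1-\epsilon^2})$. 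The right trivialization ignores the sign. On the $D$-open, $SO(n)\times\mathbb Z_2$-invariant set $\{x_{i_0}\neq0\}$, the section $[(x,g)]\mapsto(x,g_{i_0}^{-1}g)$ of $E^{\mathrm{sph}}\to E^{\mathrm{sph}}/SO(n)$ commutes with $x\mapsto -x$, so it passes directly to the $\mathbb Z_2$-quotient. The obstacle is therefore not real, but the step as you wrote it fails.

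Moreover, the $\mathbb Z_2$-step is not needed. In Section~10 the paper uses $B^{\mathrm{sph}}(O(n))=E^{\mathrm{sph}}(O(n))/O(n)$, and the $SO(n)$ case follows that convention, so the proof ends after Step~2. If you insist on the Section~3 convention $B^{\mathrm{sph}}=E^{\mathrm{sph}}/(SO(n)\times\mathbb Z_2)$, be aware of what it changes. Since $E^{\mathrm{sph}}$ is contractible and carries a free $SO(n)\times\mathbb Z_2$-action, this space is a classifying space for $SO(n)\times\mathbb Z_2$, i.e.\ $\simeq BSO(n)\times\mathbb{RP}^\infty$. Likewise $E^{\mathrm{sph}}/\mathbb Z_2$ is a $K(\mathbb Z_2,1)$ rather than contractible. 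The descended oriented orthonormal data you obtain in Step~3 are fine, since your $\mathbb Z_2$-invariance check is correct. But maps into this space classify $SO(n)$-bundles together with a $\mathbb Z_2$-twist, so your closing appeal to classifying maps ``as in the $O(n)$ remark'' must be qualified accordingly.
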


\begin{proof}
The proof is the same as for \(O(n)\), using the fact that \(SO(n)\) is compact and that the metric on \(\mathfrak{so}(n)\) is \(\operatorname{Ad}\)-invariant. Since the group is connected for \(n\geq2\), the quotient corresponds to the oriented case.
\end{proof}

\begin{remark}
The passage from \(O(n)\) to \(SO(n)\) corresponds to imposing an orientation. Thus \(B^{\mathrm{sph}}(SO(n))\) is the spherical classifying model for oriented rank \(n\) real vector bundles.
\end{remark}

\subsection{Spin refinement}

If \(n\geq2\), the double covering
\[
\operatorname{Spin}(n)\longrightarrow SO(n)
\]
gives a further lift of the construction:
\[
E^{\mathrm{sph}}(\operatorname{Spin}(n))
\to
B^{\mathrm{sph}}(\operatorname{Spin}(n)).
\]

\begin{remark}
The obstruction to lifting an \(SO(n)\)-classified object to the spin spherical classifying space is the usual second Stiefel--Whitney obstruction, expressed in this setting through the pullback of the corresponding universal class.
\end{remark}

When such a lift exists, the Clifford module and Dirac structures constructed in the previous sections become globally defined on the pulled-back bundle.

\subsection{The special unitary group \(SU(n)\)}

Let
\[
G=SU(n).
\]
Then
\[
\mathfrak{su}(n)
=
\{X\in M_n(\mathbb C)\mid X^*+X=0,\ \operatorname{tr}X=0\}.
\]

We use the invariant inner product
\[
\langle X,Y\rangle_{\mathfrak{su}(n)}
=
-\operatorname{Re}\operatorname{tr}(XY).
\]

The spherical Milnor space is
\[
E^{\mathrm{sph}}(SU(n))
=
\left\{
(x_i,U_i)_{i\in\mathbb N}
\;\middle|\;
\sum_i x_i^2=1,\ U_i\in SU(n),\ \text{finite support}
\right\}/\sim.
\]

The barycentric metric is
\[
g
=
\sum_i dx_i^2
+
\sum_i x_i^2\,\langle U_i^{-1}dU_i,U_i^{-1}dU_i\rangle_{\mathfrak{su}(n)}.
\]

\begin{proposition}
The metric and the associated Hodge-type and Dirac-type structures are \(SU(n)\)-invariant.
\end{proposition}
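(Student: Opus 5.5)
Following the paper's own model for \(O(n)\), the plan is to reduce everything to invariance of the barycentric metric and then to naturality of the derived operators under isometries that preserve the auxiliary data. Let \(V\in SU(n)\) act by \(V\cdot(x_i,U_i)=(x_i,VU_i)\).

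\emph{Step 1: the metric.} The coordinates \(x_i\) are fixed by this action, so the terms \(dx_i^2\) and the weights \(x_i^2\) are unchanged. On the \(i\)-th factor, left multiplication \(L_V\) pulls back the Maurer--Cartan form \(\theta_i=U_i^{-1}dU_i\) to
\[
(VU_i)^{-1}d(VU_i)=U_i^{-1}V^{-1}V\,dU_i=\theta_i .
\]
Hence each summand \(x_i^2\langle\theta_i,\theta_i\rangle_{\mathfrak{su}(n)}\) is invariant, even before using \(\operatorname{Ad}\)-invariance. I would still note that \(-\operatorname{Re}\operatorname{tr}(XY)\) is \(\operatorname{Ad}\)-invariant, since \(\operatorname{tr}(VXV^{-1}VYV^{-1})=\operatorname{tr}(XY)\). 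This makes the group metric bi-invariant, so the right-invariant convention of Section~6 and the left-invariant one of Section~8 agree, and no ambiguity arises.

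\emph{Step 2: descent to \(\mathcal K\), \(\mathcal H\) and \(\bar g\).} The action acts plotwise on each local model \(S_I\times SU(n)^I\) and commutes with the equivalence relation. So it is a diffeomorphism of \(E^{\mathrm{sph}}(SU(n))\) whose differential preserves the local description of the tangent space from Section~5. It maps \(\ker g_p=\bigoplus_{i\notin J}T_{U_i}G\) onto \(\ker g_{V\cdot p}\), because supports are unchanged. Therefore it preserves \(\mathcal K\), induces a bundle map on \(\mathcal H=TE^{\mathrm{sph}}/\mathcal K\), and, by Step~1, that map is an isometry of \(\bar g\).

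\emph{Step 3: Hodge-type structures.} The pullback \(L_V^*\) commutes with \(d\) plotwise and preserves horizontality, by Step~2. It preserves the pointwise pairing \(\langle\cdot,\cdot\rangle_{\bar g}\), since \(\sharp_{\bar g}\) and \(\flat_{\bar g}\) are natural under isometries. I will take the density on each local model to be the Riemannian volume of \(g_I\), which is invariant because \(L_V\) is a local isometry; if the density is instead an extra datum, I add its invariance as a hypothesis. The \(L^2\) pairing is then invariant, so uniqueness of formal adjoints gives \(L_V^*\delta=\delta L_V^*\), and hence \(L_V^*\Delta=\Delta L_V^*\).

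\emph{Step 4: Dirac-type structures (the main obstacle).} The Levi-Civita connection of \(\bar g\) on each finite model is natural under isometries. So is the induced Clifford connection on \(\mathrm{Cl}(\mathcal H)\), as are orthonormal frames, which \(L_V\) maps to orthonormal frames. The difficulty is that the Clifford module \(\mathcal E\) and its compatible connection \(\nabla^{\mathcal E}\) are only chosen locally, and invariance of \(D\) requires the action to lift to \(\mathcal E\) preserving \(c\) and \(\nabla^{\mathcal E}\).
\begin{itemize}
\item For \(\mathcal E=\Lambda^\bullet\mathcal H^*\), the lift is the pullback of forms, and compatibility follows from Steps~2--3.
\item In the spin case, \(SU(n)\) is connected and simply connected, so the action of \(SU(n)\) on the orthonormal frame bundle lifts uniquely to the spin lift through the identity component. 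The spin connection is natural under this lift.
\end{itemize}
Granting the lift, frame-independence of \(D=\sum_a c(e_a)\nabla^{\mathcal E}_{e_a}\) gives
\[
V\cdot D\,V^{-1}=\sum_a c(V_*e_a)\nabla^{\mathcal E}_{V_*e_a}=D .
\]
Gluing across local models then follows from the compatibility propositions of Section~8. I expect this Clifford-module lifting to be the step needing the most care, and I will state explicitly that \(\mathcal E\) is taken to be one of these natural (invariant) choices.
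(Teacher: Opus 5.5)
Your proposal is correct and follows the same route as the paper: the metric is invariant because the spherical coordinates are fixed and the Maurer--Cartan terms are invariant, and the Hodge and Dirac structures are then invariant by naturality under an isometric action, provided the auxiliary choices are themselves invariant. You are more careful than the paper in two respects: it compresses those hypotheses (density, Clifford module and the lift of the action to it, connection) into the single phrase ``whenever the chosen connection is \(SU(n)\)-compatible,'' and it invokes \(\operatorname{Ad}\)-invariance where, as you note, left-invariance of \(\theta_i=U_i^{-1}dU_i\) already suffices.
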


\begin{proof}
The group \(SU(n)\) is compact, and the inner product on \(\mathfrak{su}(n)\) is \(\operatorname{Ad}\)-invariant. The spherical coordinates are fixed by the left action, and the Maurer--Cartan forms transform equivariantly. Therefore the metric is invariant. The induced Hodge-type and Clifford structures are invariant whenever the chosen connection is \(SU(n)\)-compatible.
\end{proof}

\begin{remark}
The quotient
\[
B^{\mathrm{sph}}(SU(n))
\]
is the spherical classifying model for rank \(n\) complex vector bundles with trivial determinant.
\end{remark}

\subsection{Chern--Weil forms in the \(SU(n)\)-case}

Let
\[
\Theta
=
\sum_i x_i^2\,\theta_i
\]
be the barycentric connection form, with
\[
\theta_i=U_i^{-1}dU_i.
\]
Its curvature is
\[
F_\Theta
=
d\Theta+\frac12[\Theta,\Theta].
\]

For every invariant polynomial
\[
P\in \operatorname{Sym}^k(\mathfrak{su}(n)^*)^{SU(n)},
\]
the form
\[
P(F_\Theta,\ldots,F_\Theta)
\]
is basic and descends to
\[
B^{\mathrm{sph}}(SU(n)).
\]

\begin{remark}
These descended forms represent the usual Chern--Weil characteristic classes in the spherical diffeological model.
\end{remark}

\subsection{The Lorentz group \(SO(1,n)\)}

We now consider the non-compact group
\[
G=SO(1,n),
\]
defined as the identity component or full group preserving the quadratic form
\[
q(x)=-x_0^2+x_1^2+\cdots+x_n^2.
\]

Its Lie algebra is
\[
\mathfrak{so}(1,n)
=
\{X\in M_{n+1}(\mathbb R)\mid X^T\eta+\eta X=0\},
\]
where
\[
\eta=\operatorname{diag}(-1,1,\ldots,1).
\]

The spherical Milnor space is
\[
E^{\mathrm{sph}}(SO(1,n))
=
\left\{
(x_i,A_i)_{i\in\mathbb N}
\;\middle|\;
\sum_i x_i^2=1,\ A_i\in SO(1,n),\ \text{finite support}
\right\}/\sim.
\]

The left action is again
\[
B\cdot (x_i,A_i)=(x_i,BA_i).
\]

Unlike the compact cases, \(SO(1,n)\) does not carry a positive definite bi-invariant Riemannian metric. The Killing form is non-degenerate but indefinite. Hence the natural invariant structure is pseudo-Riemannian rather than Riemannian.

Let
\[
\kappa(X,Y)=\operatorname{tr}(\operatorname{ad}_X\operatorname{ad}_Y)
\]
be the Killing form. The barycentric pseudo-metric is
\[
g
=
\sum_i dx_i^2
+
\sum_i x_i^2\,\kappa(\theta_i,\theta_i).
\]

\begin{proposition}
The barycentric pseudo-metric on \(E^{\mathrm{sph}}(SO(1,n))\) is \(SO(1,n)\)-invariant.
\end{proposition}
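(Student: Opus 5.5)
The plan is to argue exactly as in the $O(n)$ case: check invariance term by term on each finite local model $S_I\times SO(1,n)^I$, then use the gluing results of Section~\ref{4} to pass to the whole space. The metric is compatible with the restrictions $g_I|_{S_J\times G^J}=g_J$ and with the equivalence relation, so invariance on every local model gives invariance globally. Indeed, a plot of $E^{\mathrm{sph}}(SO(1,n))$ locally factors through some $\psi_I$, and the action of $B$ commutes with $\psi_I$ and preserves the index set $I$.

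On a fixed model we will split $g$ into its two summands. The spherical part $\sum_i dx_i^2$ needs no work: the action $B\cdot(x_i,A_i)=(x_i,BA_i)$ leaves every $x_i$, and hence every $dx_i$, unchanged. For the group part, we first compute the pullback of the Maurer--Cartan form under left multiplication $L_B$ on the $i$-th factor. We get
\[
L_B^*\theta_i=(BA_i)^{-1}d(BA_i)=A_i^{-1}B^{-1}B\,dA_i=\theta_i,
\]
because $B$ is constant. So $\theta_i$ is left-invariant, and $\kappa(\theta_i,\theta_i)$ is invariant without any appeal to $\operatorname{Ad}$-invariance of $\kappa$. The weight $x_i^2$ is also unchanged. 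Each summand $x_i^2\kappa(\theta_i,\theta_i)$ is therefore preserved, and so is their finite sum over $I$.

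It remains to check that invariance survives the quotient by $\sim$. When $x_i=0$, the $i$-th group term carries the coefficient $x_i^2=0$, and the action sends an equivalence class to an equivalence class by the earlier proposition that the $G$-action is well defined. So both sides of $L_B^*g=g$ descend consistently.

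The main point needing care is a convention mismatch rather than a computation. Section~\ref{6} fixed a right-invariant metric on $G$, but here the action is on the left and the metric is written through $\theta_i=A_i^{-1}dA_i$, which is left-invariant. I will say explicitly that the formula used here is the left-invariant one, so the left action is isometric. I will also note in a remark that, since $\kappa$ is $\operatorname{Ad}$-invariant, the same expression is in fact bi-invariant, so right-multiplication conventions would work as well. Since $\kappa$ is indefinite, the statement is only about pullback invariance of a symmetric bilinear form; no positivity is claimed, and nothing from the non-degeneracy discussion is needed.
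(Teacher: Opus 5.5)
Your argument is correct. Its skeleton matches the paper's: the spherical part is fixed by the action and the group part is invariant term by term. The key step, however, is different.

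The paper's proof invokes the $\operatorname{Ad}$-invariance of the Killing form, together with the assertion that the Maurer--Cartan forms ``transform by the adjoint action''. That transformation law holds for right multiplication, $R_B^*\theta=\operatorname{Ad}_{B^{-1}}\theta$. It also holds for the right-invariant form $dA\,A^{-1}$ under left multiplication. It does not hold for the stated left action $B\cdot(x_i,A_i)=(x_i,BA_i)$ with $\theta_i=A_i^{-1}dA_i$: there, as you compute, $L_B^*\theta_i=\theta_i$ exactly.

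So your route is the one actually matched to the action in the statement, and it uses no property of $\kappa$ at all. It would work verbatim for any inner product on $\mathfrak{so}(1,n)$, for instance the Cartan product $\langle\cdot,\cdot\rangle_\theta$. This shows that the positive barycentric metric built from the Cartan decomposition is equally invariant under this left action, which sits uneasily with the paper's later remark about its ``weaker equivariance properties''.

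The paper's appeal to $\operatorname{Ad}$-invariance buys something different: bi-invariance, i.e.\ invariance under right multiplication as well. That is exactly what your closing remark records, and it makes the conclusion robust to whichever multiplication convention is in force.

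You also make explicit two things the paper leaves implicit: the reduction to the local models $S_I\times SO(1,n)^I$, and compatibility with $\sim$. Your flag on the right-invariant versus left-action mismatch inherited from Section~6 is apt.
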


\begin{proof}
The Killing form is \(\operatorname{Ad}\)-invariant, and the Maurer--Cartan forms transform by the adjoint action. The spherical part is unchanged by the group action. Therefore the pseudo-metric is invariant.
\end{proof}

\begin{remark}
Because the metric is indefinite, the associated Hodge and Dirac theories must be interpreted in a pseudo-Riemannian or hyperbolic sense. This distinguishes the Lorentzian case from the compact cases \(O(n)\), \(SO(n)\), and \(SU(n)\).
\end{remark}

\subsection{Cartan decomposition and positive metrics for \(SO(1,n)\)}

If a positive definite metric is desired, one may use a Cartan decomposition
\[
\mathfrak{so}(1,n)=\mathfrak k\oplus\mathfrak p,
\]
where
\[
\mathfrak k\simeq \mathfrak{so}(n)
\]
is the Lie algebra of a maximal compact subgroup \(SO(n)\).

The Cartan involution \(\theta\) defines a positive definite inner product
\[
\langle X,Y\rangle_\theta
=
-\kappa(X,\theta Y).
\]

This gives a left-invariant Riemannian metric on \(SO(1,n)\), hence a barycentric Riemannian metric on
\[
E^{\mathrm{sph}}(SO(1,n)).
\]

\begin{proposition}
The Cartan metric is left-invariant but not bi-invariant in general.
\end{proposition}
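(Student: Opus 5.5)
The statement has two parts: the metric built from $\langle X,Y\rangle_\theta=-\kappa(X,\theta Y)$ is left-invariant, and it is not bi-invariant in general. I would treat these separately.

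\textbf{Left-invariance.} This is a matter of how the metric is defined. One declares, for $A\in SO(1,n)$ and $\xi,\eta\in T_A SO(1,n)$,
\[
\langle \xi,\eta\rangle_A:=\langle A^{-1}\xi,A^{-1}\eta\rangle_\theta .
\]
Equivalently, the local expression is $\langle\theta_A,\theta_A\rangle_\theta$ with $\theta_A=A^{-1}dA$, the same Maurer--Cartan form used in the barycentric metric. For $B\in SO(1,n)$ we have $L_B^*\theta_A=(BA)^{-1}d(BA)=A^{-1}dA$, so the metric is preserved by left translations. Positivity comes from the standard fact that $-\kappa(\cdot,\theta\,\cdot)$ is positive definite for a Cartan involution of a semisimple algebra. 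Concretely, $\kappa$ is negative definite on $\mathfrak k$ and positive definite on $\mathfrak p$, while $\theta=+1$ on $\mathfrak k$ and $\theta=-1$ on $\mathfrak p$.

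\textbf{Failure of bi-invariance.} A left-invariant metric is bi-invariant if and only if the inner product at the identity is $\operatorname{Ad}$-invariant. Since $SO(1,n)$ is connected for the identity component, this is equivalent to every $\operatorname{ad}_X$ being skew for $\langle\cdot,\cdot\rangle_\theta$. I would compute the adjoint of $\operatorname{ad}_X$ with respect to this inner product. Using $\operatorname{ad}$-invariance of $\kappa$ and the fact that $\theta$ is a Lie algebra automorphism,
\[
\langle[X,Y],Z\rangle_\theta=-\kappa([X,Y],\theta Z)=\kappa(Y,[X,\theta Z])=\kappa(Y,\theta[\theta X,Z])=-\langle Y,[\theta X,Z]\rangle_\theta .
\]
So $\operatorname{ad}_X^{*}=-\operatorname{ad}_{\theta X}$. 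Consequently:
\begin{itemize}
\item for $X\in\mathfrak k$, $\operatorname{ad}_X$ is skew;
\item for $X\in\mathfrak p$, $\operatorname{ad}_X$ is symmetric.
\end{itemize}
Bi-invariance would therefore force $\operatorname{ad}_X=0$ for all $X\in\mathfrak p$, that is, $\mathfrak p$ central. This is impossible, since $[\mathfrak p,\mathfrak p]\subset\mathfrak k$ is nonzero for $n\ge 1$.

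\textbf{Explicit witness.} For a concrete check, take the boost generator $X=E_{01}+E_{10}\in\mathfrak p$ and a nonzero $Y\in\mathfrak p$ with $[X,Y]\neq0$. For $n\ge2$, take the boost $Y=E_{02}+E_{20}$; then $[X,Y]$ is a rotation in $\mathfrak k$. Evaluating gives $\langle [X,Y],[X,Y]\rangle_\theta>0$, and since $\operatorname{ad}_X$ is symmetric this yields $\langle \operatorname{ad}_X Y,[X,Y]\rangle+\langle Y,\operatorname{ad}_X[X,Y]\rangle=2\|[X,Y]\|^2\neq 0$, which breaks skewness. For $n=1$, take $Y=X$; the algebra $\mathfrak{so}(1,1)$ is abelian, so that case is degenerate, which is what ``in general'' refers to.

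\textbf{Main obstacle.} The only delicate step is the adjoint identity. It needs $\theta$ to be an automorphism preserving $\kappa$, and it needs the Killing form to be replaced by the trace form up to a positive multiple, namely $(n-1)$, which requires $n\ge2$ for semisimplicity. I would verify this identity carefully before drawing the conclusion.
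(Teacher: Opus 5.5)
Your left-invariance argument matches the paper's: you left-translate an inner product on $\mathfrak{so}(1,n)$. Your proof that bi-invariance fails takes a genuinely different route. The paper simply invokes the structural fact that a non-compact semisimple group admits no positive definite bi-invariant metric. You instead compute $\operatorname{ad}_X^{*}=-\operatorname{ad}_{\theta X}$ with respect to $\langle\cdot,\cdot\rangle_\theta$, so that $\operatorname{ad}_X$ is skew for $X\in\mathfrak k$ and symmetric for $X\in\mathfrak p$, and you then break skewness with an explicit boost--boost bracket.

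The paper's route gives a stronger conclusion: no left-invariant Riemannian metric on $SO(1,n)$ is bi-invariant. It pays for this by relying on a cited classification theorem. Your route is self-contained and pins the obstruction on the boost directions. It also shows in passing that the Cartan metric is still right-invariant under $K=SO(n)$, which makes precise the paper's remark about ``weaker equivariance properties''. Your computation in fact recovers the paper's stronger claim in one line. For $0\neq X\in\mathfrak p$, $\operatorname{ad}_X$ is nonzero (the center is trivial) and $\langle\cdot,\cdot\rangle_\theta$-symmetric, so it has a nonzero real eigenvalue. That is impossible for an operator that is skew with respect to some $\operatorname{Ad}$-invariant inner product.

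Two small corrections. First, $[\mathfrak p,\mathfrak p]\neq 0$ requires $n\geq 2$, not $n\geq 1$. For $n=1$ the Killing form of $\mathfrak{so}(1,1)$ vanishes identically, so $-\kappa(X,\theta Y)$ is not a metric at all; that case is excluded rather than being an instance of bi-invariance, and both your argument and the paper's tacitly need $n\geq 2$. Second, the adjoint identity uses only $\operatorname{ad}$-invariance of $\kappa$ and the fact that $\theta$ is an involutive automorphism. The normalization $\kappa=(n-1)\operatorname{tr}$ is relevant only for the positivity check, not for that identity.
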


\begin{proof}
It is constructed by translating an inner product on the Lie algebra by left multiplication, hence it is left-invariant. It is not generally \(\operatorname{Ad}\)-invariant because \(SO(1,n)\) is non-compact and semisimple, and admits no positive definite bi-invariant metric.
\end{proof}

\begin{remark}
Thus, in the Lorentzian case, there are two distinct choices:
\begin{itemize}
\item an \(SO(1,n)\)-invariant pseudo-Riemannian structure from the Killing form;
\item a positive definite left-invariant Riemannian structure from the Cartan decomposition, which generally has weaker equivariance properties.
\end{itemize}
\end{remark}

\subsection{Lorentzian spin structures}

The double cover
\[
\operatorname{Spin}(1,n)\to SO(1,n)
\]
gives a corresponding spherical model
\[
E^{\mathrm{sph}}(\operatorname{Spin}(1,n))
\to
B^{\mathrm{sph}}(\operatorname{Spin}(1,n)).
\]

\begin{remark}
A lift from \(SO(1,n)\) to \(\operatorname{Spin}(1,n)\) gives the appropriate setting for Lorentzian spinor modules and Lorentzian Dirac operators. The resulting Dirac theory is hyperbolic rather than elliptic.
\end{remark}

\subsection{Summary}

The compact groups
\[
O(n),\quad SO(n),\quad SU(n)
\]
fit directly into the spherical Milnor framework with invariant positive definite metrics. Their spherical classifying spaces carry descended Riemannian, Hodge-type, and Dirac-type structures.

The Lorentz group
\[
SO(1,n)
\]
requires a distinction between invariant pseudo-Riemannian structures and positive definite Cartan metrics. This example shows that the spherical Milnor construction is flexible enough to include both compact gauge-type groups and non-compact groups arising in pseudo-Riemannian geometry.
\section{Classifying spaces of diffeomorphism groups and smooth fiber bundles}
\label{sec:diff-classifying}

Let \(M\) be a compact smooth manifold. We denote by
\[
Diff(M)
\]
the diffeomorphism group of \(M\), endowed either with its Fréchet Lie-group
structure when available, or with its natural diffeological structure.

The purpose of this section is to relate the spherical Milnor constructions of
the present article with the geometry of smooth fiber bundles and with the
descent problem for geometric operators.

\subsection{Classifying spaces of diffeomorphism groups}

The classifying space
\[
BDiff(M)
\]
classifies smooth fiber bundles with fiber \(M\). More precisely, if \(X\) is
a sufficiently regular smooth or diffeological space, then isomorphism classes
of smooth \(M\)-bundles
\[
M\longrightarrow E\longrightarrow X
\]
are represented by homotopy classes of classifying maps
\[
f:X\longrightarrow BDiff(M).
\]

The associated bundle is obtained by pullback from the universal bundle
\[
EDiff(M)\longrightarrow BDiff(M),
\]
namely
\[
E_f=f^*EDiff(M).
\]

\begin{remark}
In the diffeological setting, one should interpret the preceding statement in
the framework of diffeological principal bundles and diffeological homotopy
classes.
\end{remark}

\subsection{Spherical Milnor model}

Replacing the standard Milnor construction by the spherical construction of the
present article yields the universal pseudo-bundle
\[
E^{\mathrm{sph}}(Diff(M))
\longrightarrow
B^{\mathrm{sph}}(Diff(M)).
\]

A classifying map
\[
f:X\longrightarrow B^{\mathrm{sph}}(Diff(M))
\]
then determines a smooth \(M\)-bundle
\[
M\longrightarrow \mathcal E_f\longrightarrow X.
\]

\begin{remark}
The spherical construction does not modify the classification principle itself.
Rather, it enriches the universal geometry by additional structures:
\begin{itemize}
\item barycentric metrics;
\item horizontal pseudo-bundles;
\item Clifford structures;
\item Dirac-type operators;
\item and higher geometric structures associated with the spherical model.
\end{itemize}
\end{remark}

\subsection{Vertical geometry of the associated bundle}

Let
\[
\pi:\mathcal E_f\longrightarrow X
\]
be the bundle associated with a classifying map.

The vertical tangent pseudo-bundle is
\[
T^{\mathrm{vert}}\mathcal E_f
:=
\ker(d\pi).
\]

Locally, the bundle is obtained from transition functions valued in
\[
Diff(M).
\]
Therefore any geometric structure on the fiber \(M\) descends fiberwise only
if it is preserved by these transition functions.

Typical examples include:
\begin{itemize}
\item orientations;
\item Riemannian metrics;
\item symplectic structures;
\item spin structures;
\item Clifford modules;
\item Dirac operators.
\end{itemize}

\subsection{Reduction of the structure group}

Let \(\mathcal S\) be a geometric structure on \(M\). We denote by
\[
Diff(M,\mathcal S)
\subset
Diff(M)
\]
the subgroup preserving \(\mathcal S\).

Typical examples are:
\[
Diff^+(M),
\qquad
Isom(M,g),
\qquad
Diff^{\mathrm{spin}}(M).
\]

\begin{proposition}
A smooth \(M\)-bundle
\[
M\longrightarrow \mathcal E\longrightarrow X
\]
admits a fiberwise structure of type \(\mathcal S\) if and only if its
classifying map
\[
f:X\longrightarrow BDiff(M)
\]
lifts, up to homotopy, to
\[
BDiff(M,\mathcal S).
\]
\end{proposition}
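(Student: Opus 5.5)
The plan is the standard reduction-of-structure-group argument, phrased through the associated-bundle picture of the previous subsections. Write $H:=Diff(M,\mathcal S)$ and let $P\to X$ be the principal $Diff(M)$-bundle classified by $f$, so that $\mathcal E=P\times_{Diff(M)}M$. I would prove two equivalences and then compose them:
\[
\text{fiberwise $\mathcal S$ on }\mathcal E
\;\Longleftrightarrow\;
\text{reduction of $P$ to $H$}
\;\Longleftrightarrow\;
\text{homotopy lift of $f$ to } BH .
\]
Here a fiberwise structure of type $\mathcal S$ means a smooth family $(\mathcal S_x)_{x\in X}$ such that each $(\mathcal E_x,\mathcal S_x)$ is isomorphic to $(M,\mathcal S)$.

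\textbf{First equivalence.} Let $Q\subset P$ be the set of frames $p:M\to\mathcal E_{\pi(p)}$ satisfying $p^*\mathcal S_{\pi(p)}=\mathcal S$. Then:
\begin{itemize}
\item $Q$ is $H$-stable, and $H$ acts simply transitively on each fiber $Q_x$.
\item $Q$ is nonempty over every point, by the assumption that each $(\mathcal E_x,\mathcal S_x)$ is isomorphic to $(M,\mathcal S)$.
\item With the subset diffeology, $Q\to X$ is a principal $H$-bundle and $P\simeq Q\times_H Diff(M)$.
\end{itemize}
Conversely, given a reduction $Q$, the structure $\mathcal S$ on $M$ is $H$-invariant. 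Transporting it by any frame in $Q_x$ therefore defines $\mathcal S_x$ independently of the chosen frame.

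\textbf{Second equivalence.} This is the classifying property of Theorem~\ref{thm:MW}, applied to both $Diff(M)$ and $H$. The inclusion $H\hookrightarrow Diff(M)$ induces a smooth map $Bi:BH\to BDiff(M)$, namely $(t_i,h_i)\mapsto (t_i,h_i)$ on Milnor coordinates. One then has $(Bi)^*EDiff(M)\simeq EH\times_H Diff(M)$.
\begin{itemize}
\item If $f\simeq Bi\circ \tilde f$, then $P\simeq \tilde f^*EH\times_H Diff(M)$ by homotopy invariance of pullbacks, which gives a reduction.
\item Conversely, a reduction $Q$ is classified by some $\tilde f:X\to BH$. Then $Bi\circ\tilde f$ classifies $Q\times_H Diff(M)\simeq P$, so $Bi\circ\tilde f\simeq f$ by injectivity of the classification.
\end{itemize}
The spherical model $B^{\mathrm{sph}}$ can be substituted throughout, because it classifies the same bundles.

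\textbf{Main obstacle.} The hard step is the local triviality or $D$-numerability of $Q$ in the first equivalence. One needs local smooth sections of $Q$, that is, smoothly varying identifications $(M,\mathcal S)\simeq(\mathcal E_x,\mathcal S_x)$. For rigid structures such as orientations, spin structures, or Riemannian metrics with $H=Isom(M,g)$, I would obtain them by straightening a local trivialization of $P$ fiberwise. For orientations one composes with a fixed reflection. For spin structures the spin-preserving diffeomorphisms form a union of components, so this reduces to discrete data. For symplectic structures one uses a Moser-type isotopy depending smoothly on parameters, and for metrics the reduction requires the family $\mathcal S_x$ to be locally isometric to $\mathcal S$.

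I would therefore add, as a standing hypothesis, that a fiberwise structure of type $\mathcal S$ is locally trivial over a $D$-numerable cover. With this hypothesis the statement follows. Without it, the implication from a fiberwise structure to a lift can fail in the diffeological category.
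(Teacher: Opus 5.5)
Your proposal is correct and follows essentially the same route as the paper. The paper argues that a fiberwise $\mathcal S$-structure amounts to transition functions valued in $Diff(M,\mathcal S)$, i.e.\ a reduction of the structure group, and that such reductions are classified by homotopy lifts of $f$ to $BDiff(M,\mathcal S)$. Your version is more careful: it makes explicit the local-triviality/$D$-numerability hypothesis that the paper uses silently when it asserts that a fiberwise structure exists ``precisely when the transition functions preserve $\mathcal S$''. Your aside that $B^{\mathrm{sph}}$ can be substituted throughout is not backed by any classification result in the paper, but the statement does not need it.
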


\begin{proof}
The bundle is locally glued by transition functions valued in
\[
Diff(M).
\]
A fiberwise structure of type \(\mathcal S\) exists precisely when the
transition functions preserve \(\mathcal S\), that is, when they take values
in the subgroup
\[
Diff(M,\mathcal S).
\]

This is exactly the datum of a reduction of the structure group from
\[
Diff(M)
\]
to
\[
Diff(M,\mathcal S).
\]

Such reductions are classified by lifts of the classifying map to the
classifying space of the subgroup.
\end{proof}

\subsection{Fiberwise spin structures and Dirac operators}

Assume now that:
\begin{itemize}
\item \(M\) is spin;
\item the structure group reduces to
\[
Diff^{\mathrm{spin}}(M).
\]
\end{itemize}

Then the vertical tangent bundle
\[
T^{\mathrm{vert}}\mathcal E_f
\]
inherits a fiberwise spin structure.

\begin{proposition}
Under the preceding assumptions, there exists a vertical spinor pseudo-bundle
\[
S^{\mathrm{vert}}\mathcal E_f
\]
together with a family of vertical Dirac operators
\[
D_x:
\Gamma(S_{\mathcal E_x})
\longrightarrow
\Gamma(S_{\mathcal E_x}),
\qquad x\in X.
\]
\end{proposition}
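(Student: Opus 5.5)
The plan is to build the vertical geometry from the transition data of the associated bundle. Then we invoke the finite-rank Dirac construction of Section~8 fiberwise, taking the fibers to be the compact manifold \(M\) rather than the Milnor local models.

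First, we make precise what the reduction to \(Diff^{\mathrm{spin}}(M)\) provides. We interpret \(Diff^{\mathrm{spin}}(M)\) as the group of pairs \((\phi,\tilde\phi)\), where \(\phi\) preserves a fixed Riemannian metric \(g_M\) (or at least the spin structure together with a chosen metric) and \(\tilde\phi\) is a lift of the induced action on the oriented orthonormal frame bundle \(P_{SO}(M)\) to the spin bundle \(P_{\mathrm{Spin}}(M)\). By the reduction proposition above, the classifying map \(f\) lifts to \(B^{\mathrm{sph}}(Diff^{\mathrm{spin}}(M))\). Hence \(\mathcal E_f\) is obtained, locally on the plots of \(X\), by gluing trivial pieces \(U\times M\) with transition functions \(\phi_{UV}:U\cap V\to Diff^{\mathrm{spin}}(M)\) that are smooth in the diffeological sense. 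On each piece, \(T^{\mathrm{vert}}(U\times M)=U\times TM\) carries the metric \(g_M\) and the spin bundle \(U\times P_{\mathrm{Spin}}(M)\).

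Second, we glue. Since the transition functions are isometries and come with spin lifts, the fiberwise spin bundles glue into a principal \(\operatorname{Spin}(n)\)-pseudo-bundle over \(\mathcal E_f\), with \(n=\dim M\). The cocycle condition for the lifts holds because it holds in the group \(Diff^{\mathrm{spin}}(M)\). The associated bundle \(S^{\mathrm{vert}}\mathcal E_f=P_{\mathrm{Spin}}\times_\rho\Sigma\) is then a vector pseudo-bundle. The Clifford action of \(T^{\mathrm{vert}}\mathcal E_f\) is defined locally and is preserved by the transitions, exactly as in the spinor pseudo-bundle proposition of Section~8. Smoothness of the glued objects is checked plotwise: a plot of \(\mathcal E_f\) locally factors through \(U\times M\), and all structures there are smooth.

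Third, we define the Dirac operators. On each fiber \(\mathcal E_x\cong M\), which is identified only up to the transition isometries, we take the Levi-Civita connection of \(g_M\) and its spin lift, and set \(D_x=\sum_a c(e_a)\nabla^{S}_{e_a}\). By the frame-independence proposition of Section~8, applied in rank \(n\), together with naturality of the Levi-Civita connection and spin connection under spin isometries, \(D_x\) does not depend on the local identification \(\mathcal E_x\cong M\). The resulting family is smooth in \(x\), because locally it is the constant family \(D_M\) conjugated by the smooth spin-isometry cocycle.

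The main obstacle is the first step, namely justifying that a reduction to \(Diff^{\mathrm{spin}}(M)\) carries genuine spin lifts and a fixed fiber metric in the diffeological setting. This requires \(Diff^{\mathrm{spin}}(M)\) to be defined as a group of lifted isometries, a diffeological extension of \(Isom(M,g)\) by \(\mathbb Z_2\). If only metric-independent spin diffeomorphisms are intended, we would instead choose a fiberwise metric via a smooth partition of unity, relying on \(D\)-numerability, and use the contractibility of the space of metrics to lift the spin data. I would try the isometry version first.
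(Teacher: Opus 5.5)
Your proposal is correct and follows the paper's argument: the fibrewise spinor bundles are glued along $Diff^{\mathrm{spin}}(M)$-valued transition functions, and the local vertical Dirac operators are intertwined by the spinorial transition maps, so they form a smooth family. Your insistence that $Diff^{\mathrm{spin}}(M)$ consist of isometries equipped with spin lifts $\tilde\phi$ makes explicit two things the paper's two-line proof leaves tacit---without isometry the defect $\mathcal D_D(\varphi)$ introduced just afterwards need not vanish, and without the lifts the gluing can fail (the Hopf fibration $S^3\to S^2$ has rotation-valued transition functions, yet no vertical spin structure restricts to the bounding one on the fibres)---so keep the lifts as part of the structure group in your fallback as well.
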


\begin{proof}
The reduction of the structure group to
\[
Diff^{\mathrm{spin}}(M)
\]
ensures that the spin structures on the fibers are preserved by the transition
functions. Hence the local spinor bundles glue into a global vertical spinor
pseudo-bundle.

The local vertical Dirac operators are compatible under these spinorial
transition maps and therefore define a smooth family.
\end{proof}

\begin{remark}
The preceding construction is entirely fiberwise. It does not require a global
spin structure on the total space \(\mathcal E_f\).
\end{remark}

\subsection{Non-invariant geometric structures}

We now turn to the case where the geometric structure is not preserved by the
full diffeomorphism group.

Let
\[
D_M
\]
be a Dirac-type operator on \(M\). For
\[
\varphi\in Diff(M),
\]
define the transformed operator
\[
\varphi_*D_M\varphi_*^{-1}.
\]

\begin{definition}
The defect of invariance of the operator \(D_M\) under the diffeomorphism
\(\varphi\) is
\[
\mathcal D_D(\varphi)
:=
\varphi_*D_M\varphi_*^{-1}-D_M.
\]
\end{definition}

\begin{proposition}
The defect vanishes if and only if \(\varphi\) preserves the Dirac structure.
\end{proposition}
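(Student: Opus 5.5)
The statement is essentially definitional, so the plan is to unwind the definition of the defect and fix what ``preserves the Dirac structure'' means. I take it to mean that the pushforward $\varphi_*$, the induced linear isomorphism on sections $\Gamma(S_M)\to\Gamma(S_M)$ coming from a lift of $\varphi$ to the spinor bundle, intertwines $D_M$ with itself:
\[
\varphi_*\circ D_M=D_M\circ\varphi_* .
\]
A lift exists whenever $\varphi\in Diff^{\mathrm{spin}}(M)$, or more generally whenever the Clifford module data can be transported. The first step is to record that $\varphi_*$ is invertible with inverse $(\varphi^{-1})_*$. This follows from functoriality of the lift: $(\varphi\circ\psi)_*=\varphi_*\circ\psi_*$ and $(\mathrm{id}_M)_*=\mathrm{id}$. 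When lifts are only defined up to the deck sign $\pm1$ of $\operatorname{Spin}\to SO$, the sign cancels in the conjugation $\varphi_*D_M\varphi_*^{-1}$, so $\mathcal D_D(\varphi)$ is well defined regardless of which lift is chosen.

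The second step proves the two implications by composing with $\varphi_*$ on the right.

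\emph{Forward direction.} If $\mathcal D_D(\varphi)=0$, then $\varphi_*D_M\varphi_*^{-1}=D_M$. Composing on the right with $\varphi_*$ gives $\varphi_*D_M=D_M\varphi_*$, which is exactly the intertwining property.

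\emph{Converse.} If $\varphi_*D_M=D_M\varphi_*$, then composing on the right with $\varphi_*^{-1}$ yields $\varphi_*D_M\varphi_*^{-1}=D_M$, so the defect vanishes.

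Both manipulations are identities of linear operators on the common domain $\Gamma(S_M)$ of smooth sections. This domain is preserved by $\varphi_*$ because $\varphi$ is a diffeomorphism, so no domain issues arise.

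The main obstacle lies in the meaning of ``preserving the Dirac structure''. If that phrase is instead read geometrically, as $\varphi$ preserving the metric, the spin structure and the connection, then one must invoke naturality of the Dirac operator. By construction, $D_M=\sum_a c(e_a)\nabla^{S}_{e_a}$ is frame independent, as proved earlier in Section~8. Hence transporting all the data by $\varphi$ sends the Dirac operator of $(g,\text{spin},\nabla)$ to the Dirac operator of $(\varphi_*g,\varphi_*\text{spin},\varphi_*\nabla)$, and this gives
\[
\varphi_*D_M\varphi_*^{-1}=D_{\varphi_*(\text{data})}.
\]
The defect therefore vanishes precisely when $D_{\varphi_*(\text{data})}=D_M$.

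I would adopt the operator-level definition, so that the statement is a tautology, and remark on the geometric reading separately. Under the geometric reading, preserving the data implies that the defect vanishes, but the converse can fail. The principal symbol $c(\xi)$ recovers the metric. The connection, however, is determined by $D_M$ only modulo terms that do not change the operator, so an extra argument or hypothesis would be needed.
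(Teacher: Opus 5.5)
Your argument is correct and is the same as the paper's, which simply observes that $\mathcal D_D(\varphi)=0$ is literally $\varphi_*D_M\varphi_*^{-1}=D_M$, i.e.\ invariance of the operator under $\varphi$. Your closing aside, that the converse can fail under the geometric reading, is too pessimistic in the paper's Clifford/spin setting. The principal symbol already forces $\varphi$ to be an isometry, and then the spin connection is determined by the metric. More generally, if two Clifford connections differ by $A$ with values in the commutant of Clifford multiplication and give the same Dirac operator, then $A=0$: sandwiching $\sum_a c(e_a)A(e_a)$ between two copies of $c(e_b)$ isolates $c(e_b)A(e_b)$.
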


\begin{proof}
The condition
\[
\mathcal D_D(\varphi)=0
\]
is exactly
\[
\varphi_*D_M\varphi_*^{-1}=D_M,
\]
which means that the operator is invariant under the action of
\(\varphi\).
\end{proof}

\subsection{Infinitesimal defect}

Let
\[
X\in\mathfrak X(M)
\]
be a smooth vector field generating a local flow
\[
(\varphi_t).
\]

\begin{definition}
The infinitesimal defect of \(D_M\) along \(X\) is
\[
\mathcal D_D(X)
:=
[\mathcal L_X,D_M],
\]
where \(\mathcal L_X\) denotes the Lie derivative.
\end{definition}

\begin{proposition}
The infinitesimal defect vanishes if and only if the flow generated by \(X\)
preserves the Dirac structure infinitesimally.
\end{proposition}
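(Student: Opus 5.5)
The strategy is to differentiate the one-parameter family of conjugated operators
\[
t\longmapsto (\varphi_t)_*D_M(\varphi_t)_*^{-1}
\]
and compare it with the finite defect $\mathcal D_D(\varphi_t)$ defined above. Here ``preserves the Dirac structure infinitesimally'' is read as
\[
\frac{d}{dt}\Big|_{t=0}(\varphi_t)_*D_M(\varphi_t)_*^{-1}=0 .
\]
The stronger reading, $\mathcal D_D(\varphi_t)=0$ for all small $t$, is recovered by a flow argument below.

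\emph{Step 1 (the action on sections).} I would make precise the action $\varphi_*$ on sections of the spinor (or Clifford module) bundle. A flow of arbitrary diffeomorphisms does not act naturally on spinors. I would therefore assume, as in the preceding subsection, that $X$ is tangent to the relevant subgroup (for instance $Diff^{\mathrm{spin}}(M)$). Then $\varphi_t$ lifts to a one-parameter group of bundle automorphisms $(\varphi_t)_*$, smooth in $t$ on $\Gamma(S)$. The Lie derivative is then defined by
\[
\mathcal L_X s=-\frac{d}{dt}\Big|_{t=0}(\varphi_t)_*s .
\]
The sign is chosen so that $\varphi_{t*}=\exp(-t\mathcal L_X)$ formally. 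Any other convention changes $[\mathcal L_X,D_M]$ only by a sign, which does not affect the vanishing statement.

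\emph{Step 2 (differentiating the conjugation).} For a fixed smooth section $s$, apply the product rule to $(\varphi_t)_*D_M(\varphi_t)_*^{-1}s$. This is legitimate because $D_M$ is a differential operator with smooth coefficients, so it commutes with $t$-differentiation of a jointly smooth family of sections. The result is
\[
\frac{d}{dt}\Big|_{t=0}(\varphi_t)_*D_M(\varphi_t)_*^{-1}s=-\mathcal L_XD_Ms+D_M\mathcal L_Xs=-[\mathcal L_X,D_M]s .
\]
It follows that
\[
\frac{d}{dt}\Big|_{t=0}\mathcal D_D(\varphi_t)=-\mathcal D_D(X),
\]
which gives the equivalence under the first-order reading.

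\emph{Step 3 (the stronger reading).} Using the group property $\varphi_{t+s}=\varphi_t\circ\varphi_s$, the same computation at an arbitrary time gives
\[
\frac{d}{dt}\big((\varphi_t)_*D_M(\varphi_t)_*^{-1}\big)=-(\varphi_t)_*[\mathcal L_X,D_M](\varphi_t)_*^{-1}.
\]
So if $\mathcal D_D(X)=0$, the derivative vanishes identically. The conjugated operator is then constant in $t$, hence $\mathcal D_D(\varphi_t)=0$ for all $t$, which is the preceding proposition applied to each $\varphi_t$. Conversely, if every $\varphi_t$ preserves $D_M$, differentiating at $t=0$ gives $\mathcal D_D(X)=0$.

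The main obstacle is Step 1 together with the smoothness needed in Step 2. One must justify that the lifted flow acts smoothly on $\Gamma(S)$ and that differentiation in $t$ commutes with $D_M$. I would handle this by working with smooth sections on the compact manifold $M$ in the Fréchet topology, where the lift is given by a smooth family of bundle maps. There $t\mapsto(\varphi_t)_*s$ is smooth, and $D_M$ is continuous linear, so it commutes with $d/dt$.
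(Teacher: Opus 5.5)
Your proposal is correct and takes the same route as the paper, which differentiates the conjugation identity $\varphi_t^*D_M(\varphi_t^{-1})^*=D_M$ at $t=0$ to obtain $[\mathcal L_X,D_M]=0$. Your version adds care that the paper omits: you make the lift of the flow to the spinor bundle explicit, fix the sign convention, and use the group property in Step 3 to actually prove the converse direction, which the paper only asserts.
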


\begin{proof}
Differentiating the identity
\[
\varphi_t^*D_M(\varphi_t^{-1})^*
=
D_M
\]
at \(t=0\) yields
\[
[\mathcal L_X,D_M]=0.
\]
Conversely, vanishing of the commutator implies infinitesimal invariance along
the flow generated by \(X\).
\end{proof}

\subsection{Defect bundles and characteristic classes}

The preceding constructions suggest that the failure of invariance of a
geometric structure should itself define a geometric object over the
classifying space.

Roughly speaking, the defect measures the obstruction to descending a chosen
geometric operator from the universal fiber \(M\) to the associated bundle.

\begin{definition}
A defect bundle associated with a geometric operator \(D_M\) is a bundle-like
object encoding the variation of the transformed operators
\[
\varphi_*D_M\varphi_*^{-1}
\]
along the structure group.
\end{definition}

\begin{remark}
The precise analytic realization of such defect bundles depends strongly on
the operator class under consideration:
\begin{itemize}
\item finite-dimensional elliptic operators;
\item pseudodifferential operators;
\item infinite-dimensional Dirac operators;
\item current-group constructions;
\item determinant and gerbe-type extensions.
\end{itemize}
\end{remark}

\begin{remark}
In favorable situations, the defect construction gives rise to characteristic
classes measuring the obstruction to reducing the structure group to the
subgroup preserving the chosen geometric structure.
\end{remark}

\subsection{Outlook}

The spherical Milnor model therefore provides a natural framework for:
\begin{itemize}
\item universal geometric structures on classifying spaces;
\item fiberwise Clifford and Dirac geometries;
\item obstruction and defect theories;
\item determinant-type constructions;
\item and higher geometric structures associated with diffeomorphism groups.
\end{itemize}

These constructions will be further developed in subsequent works devoted to:
\begin{itemize}
\item current groups and restricted symmetry groups;
\item determinant bundles and Schwinger cocycles;
\item gerbe-theoretic formulations;
\item and applications to integrable systems and geometric field theories.
\end{itemize}

\appendix

\section{Relation with Fisher--Rao type structures}

\subsection{Fisher--Rao metric on the simplex}

Let
\[
\Delta_I
=
\left\{ (t_i)_{i\in I} \in \mathbb{R}^I \;\middle|\; t_i \ge 0,\ \sum_{i\in I} t_i = 1 \right\}.
\]

The Fisher--Rao metric on $\Delta_I$ is given by
\[
g_{\mathrm{FR}}
=
\sum_{i\in I} \frac{dt_i^2}{t_i},
\]
defined on the interior of the simplex.

\begin{remark}
This metric is singular at the boundary $t_i=0$.
\end{remark}

\subsection{Spherical parametrization}

Consider the change of variables
\[
t_i = x_i^2,
\qquad
\sum_i x_i^2 = 1.
\]

\begin{proposition}
Under this transformation, the Fisher--Rao metric becomes
\[
g_{\mathrm{FR}} = 4 \sum_i dx_i^2.
\]
\end{proposition}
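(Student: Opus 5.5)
The plan is a direct pullback computation in the interior of the simplex, followed by an interpretation at the boundary. Work on the open stratum where all \(t_i>0\) for \(i\in I\). There we may choose the branch \(x_i=\sqrt{t_i}>0\), which is exactly the restriction of the smooth embedding \(\iota\) of Section~3 to interior points. By the \(\mathbb Z_2\)-invariance of \(\sum_i dx_i^2\) (the sign-flip computation used for the invariance of \(g\) in Section~6), the result will not depend on the sign choice. So it suffices to compute on the positive orthant and extend by sign symmetry.

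The key step is the differential identity \(dt_i=2x_i\,dx_i\), obtained by differentiating \(t_i=x_i^2\). Substituting gives
\[
\frac{dt_i^2}{t_i}=\frac{4x_i^2\,dx_i^2}{x_i^2}=4\,dx_i^2,
\]
valid wherever \(x_i\neq0\). Summing over \(i\in I\) yields \(g_{\mathrm{FR}}=4\sum_i dx_i^2\) as quadratic forms on \(\mathbb R^I\). The constraints then have to be restricted consistently. The tangent condition \(\sum_i dt_i=0\) on \(\Delta_I\) corresponds to \(\sum_i x_i\,dx_i=0\), which is precisely \(T_xS_I\) as described in Section~5. Hence the identity holds as an equality of metrics on the interior of \(S_I\cap\{x_i>0\}\), i.e. the pullback of \(g_{\mathrm{FR}}\) is four times the round metric induced from the Euclidean one.

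The main obstacle is the boundary, where \(g_{\mathrm{FR}}\) is singular. I would state that the right-hand side \(4\sum_i dx_i^2\) is smooth on all of \(S_I\), and that it is therefore the unique smooth extension of the pulled-back Fisher--Rao metric, by density of the interior and continuity. This is the precise sense of ``becomes'' at points with \(x_i=0\). It is consistent with the quadratic invisibility theorem of Section~5: there \(\dot t_i=0\) while \(\dot x_i=v_i\) may be nonzero, so the \(0/0\) form of \(dt_i^2/t_i\) is resolved by the spherical coordinate. Finally, I would note that the spherical part of the barycentric metric \(g_I\) of Section~6 is \(\tfrac14 g_{\mathrm{FR}}\).
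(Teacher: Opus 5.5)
Your proposal is correct and follows the paper's proof, which is exactly the substitution $dt_i=2x_i\,dx_i$ giving $dt_i^2/t_i=4\,dx_i^2$, followed by summing over $i$. Your additional care makes precise what the paper leaves implicit: restricting to points with all $x_i\neq0$, matching the constraint $\sum_i dt_i=0$ with $\sum_i x_i\,dx_i=0$ so that the identity holds on $TS_I$, and reading the boundary statement as the unique continuous extension from the dense interior.
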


\begin{proof}
We compute
\[
dt_i = 2x_i dx_i,
\]
hence
\[
\frac{dt_i^2}{t_i}
=
\frac{4x_i^2 dx_i^2}{x_i^2}
=
4 dx_i^2.
\]
Summing over $i$ gives the result.
\end{proof}

\begin{remark}
Thus the Fisher--Rao metric becomes a multiple of the Euclidean metric on the sphere.
\end{remark}

\subsection{Interpretation}

\begin{remark}
This shows that the spherical normalization removes the singularity of the Fisher--Rao metric at the boundary of the simplex.
\end{remark}

\begin{remark}
The degeneracy observed in the spherical Milnor space is therefore not due to the metric itself, but to the quotient structure on the group coordinates.
\end{remark}

\subsection{Extension to the Milnor setting}

In the spherical Milnor space, the metric takes the form
\[
g = \sum_i dx_i^2 + \sum_i x_i^2 \, g_G.
\]

\begin{remark}
The first term corresponds to the Fisher--Rao metric in spherical coordinates, while the second term introduces a coupling with the group geometry.
\end{remark}

\subsection{Second-order effects}

\begin{proposition}
Let $x_i(t) = t$ near $t=0$. Then
\[
t_i(t) = x_i(t)^2 = t^2,
\]
and
\[
\frac{dt_i}{dt}(0) = 0.
\]
\end{proposition}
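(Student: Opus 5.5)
The statement is an elementary consequence of the chain rule, so I will argue directly from the defining formula $t_i=x_i^2$ of the spherical change of variables, in the same way as in the proof of the Theorem on quadratic invisibility in Milnor coordinates in Section~5. The plan is to compute the composite $t_i(t)=x_i(t)^2$, differentiate it once and evaluate at $t=0$. I will then note what this means for the Fisher--Rao metric written in the coordinates $t_i$.

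First, with $x_i(t)=t$ on a neighbourhood of $0$, squaring gives $t_i(t)=t^2$ on the same neighbourhood. Second, I use the general identity
\[
\frac{dt_i}{dt}(t)=2x_i(t)\,\dot x_i(t),
\]
valid for any smooth curve $x_i(t)$. Evaluating at $t=0$, where $x_i(0)=0$, yields $\frac{dt_i}{dt}(0)=0$. In the concrete case this is simply $\frac{d}{dt}t^2=2t$, which vanishes at $0$.

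To make the statement meaningful in the Milnor/spherical setting, I would realize the path as a genuine plot, as in the Lemma describing $T_pQ_I$. Take the active coordinates $x_j$ for $j\in J$ and the inactive index $i$. Set $y(t)=x+te_i$ and normalize, $x(t)=y(t)/\|y(t)\|$. Because $x_i(0)=0$, the derivative of the norm vanishes at $t=0$, so $x_i(t)=t+O(t^3)$. The conclusion $\dot t_i(0)=0$ still holds, since it only uses $x_i(0)=0$. I will also record $\ddot t_i(0)=2$, in agreement with the earlier quadratic-invisibility theorem.

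There is no real obstacle here. The only point needing care is that the literal curve $x_i(t)=t$ must be compatible with the spherical constraint $\sum_i x_i^2=1$. I would handle this either by normalizing as above or by treating the statement as purely about the coordinate function $t_i$, with the remaining coordinates adjusted accordingly.
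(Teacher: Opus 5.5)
Your proposal is correct and follows the paper's route. The appendix states this without a separate proof, and the matching results in Section~5 (the second-order behavior proposition and the quadratic invisibility theorem) are proved by exactly your computation $\dot t_i = 2x_i\dot x_i$, which vanishes wherever $x_i=0$; your extra step of realizing the curve on the sphere by normalization, giving $x_i(t)=t/\sqrt{1+t^2}=t+O(t^3)$, is a harmless refinement the paper does not include.
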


\begin{remark}
This shows that boundary directions are invisible at first order in Fisher--Rao coordinates.
\end{remark}

\begin{remark}
This phenomenon matches the second-order behavior described in Section~5.
\end{remark}

\subsection{Conclusion}

\begin{remark}
The spherical Milnor construction can be interpreted as a geometric desingularization of Fisher--Rao type metrics, combined with a coupling to group-valued degrees of freedom.
\end{remark}
\subsection{Outlook}

\begin{remark}
It would be interesting to further explore the relation between information geometry and the geometric structures developed in this article, in particular in connection with statistical or probabilistic interpretations.
\end{remark}

\vskip 12pt

\paragraph{Conflict of interest statement:} The author declares no conflict of interest.

\vskip 12pt

\paragraph{\bf Acknowledgements:} J.-P.M  thanks the France 2030 framework programme Centre Henri Lebesgue ANR-11-LABX-0020-01 
for creating an attractive mathematical environment.

\vskip 12pt

\paragraph{\bf Declaration of generative AI and AI-assisted technologies in the writing process}

During the preparation of this work the author used ChatGPT and Mistral AI in order to smoothen the expression in English. After using this tool/service, the author reviewed and edited the content as needed and takes full responsibility for the content of the publication.

\end{document}